\newcommand{\pdfgraphics}{\ifpdf\DeclareGraphicsExtensions{.pdf,.jpg}\else\fi}
\definecolor{citegreen}{rgb}{0,0.6,0}
\definecolor{refred}{rgb}{0.8,0,0}
\numberwithin{equation}{section}
\theoremstyle{plain}
\newtheorem*{theorem*}{Theorem}
\newtheorem{teo}{Theorem}[section]
\newtheorem{lemma}[teo]{Lemma}
\newtheorem{prop}[teo]{Proposition}
\newtheorem{cor}[teo]{Corollary}
\theoremstyle{definition}
\newtheorem{dfnz}[teo]{Definition}
\theoremstyle{remark}
\newtheorem{rem}[teo]{Remark}
\numberwithin{equation}{section}
\newcommand{\intbar}{\etaathop{\int\etaakebox(-13.5,0){\rule[4pt]{.7em}{0.3pt}}
		\kern-6pt}\nolimits}
\newcommand{\be}{\begin{equation}}
\newcommand{\ee}{\end{equation}}
\newcommand{\bea}{\begin{equation*}}
\newcommand{\eea}{\end{equation*}}
\def\dert{\partial_t}
\def\ders{\partial_s}
\def\pol{{\mathfrak{p}}}
\def\be{\begin{equation}}
\def\ee{\end{equation}}
\def\bea{\begin{eqnarray*}}
	\def\bean{\begin{eqnarray}}
	\def\eean{\end{eqnarray}}
	\def\eea{\end{eqnarray*}}
\newcommand{\vertiii}[1]{{\left\vert\kern-0.25ex\left\vert\kern-0.25ex\left\vert #1 
		\right\vert\kern-0.25ex\right\vert\kern-0.25ex\right\vert}}
\newcommand{\vertiiii}[1]{{\left\vert\kern-0.25ex\left\vert\kern-0.25ex\left\vert #1 
		\right\vert\kern-0.25ex\right\vert\kern-0.25ex\right\vert}}
\begin{document}
\pdfgraphics 

\title{Long Time Existence of Solutions to an Elastic Flow of Networks}

\author{Harald Garcke 
\footnote{Fakult\"at f\"ur Mathematik, Universit\"at Regensburg, Universit\"atsstrasse 31, 
93053 Regensburg, Germany}
 \and Julia Menzel
\footnotemark[1]
\and Alessandra Pluda
\footnotemark[1]
}

\maketitle

\begin{abstract}
\noindent The $L^2$--gradient flow of the elastic energy of networks leads to a Willmore type evolution law with non-trivial nonlinear boundary conditions. We show local in time existence and uniqueness for this elastic flow of networks in a Sobolev space setting under natural boundary conditions. In addition we show a regularisation property and geometric existence and uniqueness. The main result is a long time existence result using energy methods.
\end{abstract}

\textbf{Mathematics Subject Classification (2010)}:  35K52, 53C44
(primary); 35K61, 35A01 (secondary).

\section{Introduction}
In this paper we show long time existence of solutions to the elastic flow of planar networks 
and extend a previous result which showed that the flow 
was locally well posed in parabolic H\"{o}lder spaces.

A network $\mathcal{N}$ is a connected \emph{set} in the plane composed of a finite union 
of regular curves $\mathcal{N}^i$ that meet in triple junctions 
and may have endpoints fixed in the plane.

\begin{figure}[h]
\centering
\includegraphics{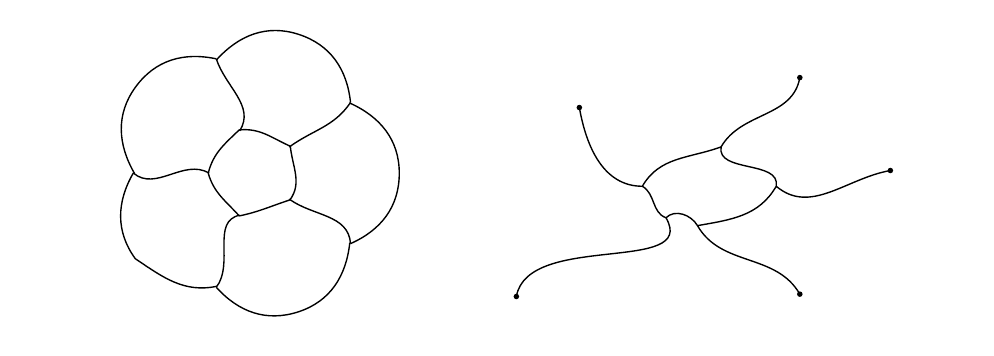}
\caption{Two examples of networks}	
\end{figure}

We let evolve a suitable initial network $\mathcal{N}_0$ 
by a linear combination of the Willmore and the Mean Curvature Flow:
each curve of the network moves with \emph{normal velocity} (at any time and point)
\begin{equation*}
V^\perp=- 2k_{ss}-k^3+\mu k\,,
\end{equation*}
where $k$ is the curvature, $s$ the arclength parameter 
and $\mu$ a positive constant. 
We vary also in tangential direction and allow the triple junctions to move.
It is shown in~\cite{bargarnu} that this motion equation coupled with 
suitable (geometric) conditions at the junctions and at the fixed endpoints
can be understood as the 
(formal) $L^2$--gradient flow of the Elastic Energy
\begin{equation}
E_\mu\left(\mathcal{N}\right):=\int_{\mathcal{N}}\left(k^{2}+\mu\right)ds
=\sum_{i}\int_{\mathcal{N}^{i}}\left((k^i)^{2}+\mu\right)ds^i\,.
\end{equation}
In our preceding work~\cite{garmenzplu} we have established 
well--posedness
for various sets of conditions at the boundary points.
More precisely, starting with a geometrically admissible initial network 
(a network satisfying all boundary conditions that are asked to be valid for the flow, 
see~\cite[Definition 3.2]{garmenzplu})
of class $C^{4+\alpha}$ with $\alpha\in(0,1)$ 
there exists a \emph{unique} evolution of networks 
in a possibly short time interval $[0,T]$ with $T>0$, 
that can be described by one parametrisation of class 
$C^{\frac{4+\alpha}{4},4+\alpha}\left([0,T]\times[0,1]\right)$.
Further, the initial network needs to be \emph{non--degenerate} in the sense that 
at each triple junction the three tangents are not linearly dependent. 

\begin{figure}[h]
\centering	 
\includegraphics{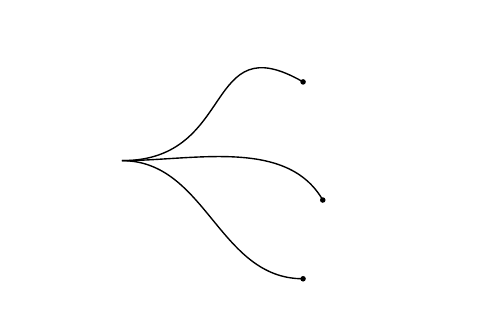}
 	\caption{A non--admissible initial network}	
 \end{figure}
 In this paper we lower the regularity of the initial datum 
 allowing admissible non--degenerate initial networks
 of class $W_p^{4-\nicefrac{4}{p}}$ for $p\in(5,\infty)$ 
 (see Definition~\ref{admg1}). 
 In contrast to the above mentioned classical case the initial datum has to satisfy 
 merely the boundary conditions of the flow and no
compatibility conditions of fourth order (see~\cite{solonnikov2}).
For this class of initial data we obtain a unique solution of the problem 
in a Sobolev setting.
\begin{teo}\label{shorttime}
Given an admissible and non--degenerate initial network $\mathcal{N}_0$ of regularity $W_p^{4-\nicefrac{4}{p}}$ with $p\in(5,\infty)$, there exists a positive time $T$ such that within the interval $[0,T]$ the elastic flow with natural boundary conditions, c.f.~\eqref{Triod0},
admits a geometrically unique solution $\mathcal{N}$ that
can be  
parametrised by maps of class 
$$
W_p^1\left((0,T);
L_p\left(0,1\right)\right)\cap L_p\left((0,T);W_p^4\left(0,1\right)\right)
\,.
$$
\end{teo}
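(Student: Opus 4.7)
The plan is to reduce the geometric evolution to a quasilinear fourth-order parabolic system for a parametrisation and to solve it by combining maximal $L_p$-regularity for the linearisation with a contraction mapping argument. Fix an admissible parametrisation $\Phi_0$ of the initial network $\mathcal{N}_0$ of class $W_p^{4-\nicefrac{4}{p}}$ and look for the evolving parametrisation in the form $\Phi = \Phi_0 + u$, with
\begin{equation*}
u \in \mathbb{E}_T := W_p^1((0,T); L_p(0,1)) \cap L_p((0,T); W_p^4(0,1))
\end{equation*}
(separately on each curve of the network, with the pieces coupled through the junction conditions). Inserting this ansatz into $V^\perp = -2k_{ss} - k^3 + \mu k$ together with a suitable tangential velocity, and into the concurrency, angle and natural third-order boundary conditions at triple junctions and at the fixed endpoints, reformulates the problem as $\mathcal{L} u = \mathcal{F}(u)$, where $\mathcal{L}$ is the full linearisation at $\Phi_0$ and $\mathcal{F}$ collects the lower-order nonlinearities as well as the deviations of the boundary operators from their linear parts.

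Next I would establish that $\mathcal{L}$ is an isomorphism from $\mathbb{E}_T$ onto the product of $L_p((0,T)\times(0,1))$ with the appropriate boundary and initial trace spaces. The principal interior symbol is $\partial_t + 2|\partial_s \Phi_0|^{-4}\partial_s^4$, parabolic of order $(1,4)$. At the triple junctions the linearised boundary operators couple the three incident curves through concurrency, angle and natural (third-order) conditions, and the non-degeneracy hypothesis on $\mathcal{N}_0$ is precisely what makes the corresponding boundary symbol satisfy the Lopatinskii--Shapiro complementing condition; at the fixed endpoints the linearised Navier-type conditions are classically Lopatinskii-regular. Invoking maximal $L_p$-regularity for parabolic boundary value problems (Solonnikov, Denk--Hieber--Pr\"uss) then yields the isomorphism. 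The hypothesis $p>5$ is essential here: it ensures $4-\nicefrac{4}{p} > 3 + \nicefrac{1}{p}$, so that the trace at $t=0$ of functions in $\mathbb{E}_T$ lies exactly in $W_p^{4-\nicefrac{4}{p}}$ and only the zeroth-order compatibility conditions at the boundary need to hold, which is the content of admissibility of $\mathcal{N}_0$.

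With $\mathcal{L}^{-1}$ at hand, the full problem becomes a fixed-point equation for $u\in\mathbb{E}_T$. Using the embedding $\mathbb{E}_T \hookrightarrow C([0,T]; W_p^{4-\nicefrac{4}{p}}(0,1)) \hookrightarrow C([0,T]; C^3([0,1]))$, valid since $p>5$, together with standard product estimates in mixed-norm Sobolev--Slobodeckij spaces, one shows that $\mathcal{F}$ is locally Lipschitz on a small ball in $\mathbb{E}_T$ around the linear solution corresponding to data $(0, \text{traces of }\Phi_0)$, with Lipschitz constant $C(R)\,\eta(T)$ where $\eta(T)\to 0$ as $T\to 0$. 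For $T$ sufficiently small, $\mathcal{L}^{-1}\circ \mathcal{F}$ is then a contraction on this ball and produces the unique parametrised solution. Geometric uniqueness follows by the standard reparametrisation argument: given two parametrised solutions of the same geometric flow, one constructs a time-dependent family of diffeomorphisms of $[0,1]$ between them that solves a lower-order quasilinear parabolic problem, and the parametrised uniqueness just established forces the two evolutions to coincide as sets.

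The main obstacle will be the verification of the Lopatinskii--Shapiro condition for the boundary operators at the triple junctions under the mere non-degeneracy assumption, since this is where the coupling of the three curves concentrates the algebraic complexity of the system. A secondary but delicate point is the sharp bookkeeping of the nonlinear estimates in mixed-norm Sobolev spaces of low regularity: it is this bookkeeping that rules out higher-order compatibility conditions at $t=0$ and so matches exactly the admissibility hypothesis on $\mathcal{N}_0$.
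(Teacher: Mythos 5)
Your proposal follows essentially the same route as the paper: fix a tangential velocity to turn the geometric problem into a quasilinear fourth-order ``Analytic Problem'', solve it by maximal $L_p$-regularity for the linearisation at the initial parametrisation (parabolicity plus the Lopatinskii--Shapiro condition, the latter resting on the non-degeneracy hypothesis) combined with a contraction argument on a ball in $W_p^{1,4}$, and then transfer uniqueness to the geometric evolution by a reparametrisation argument. Only two small imprecisions: the natural boundary conditions impose vanishing curvature rather than an angle condition at the triple junctions, and the reparametrisation maps used for geometric uniqueness solve a fourth-order (not lower-order) quasilinear parabolic system of the same structure as the Analytic Problem itself.
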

We emphasise that the elastic flow is a \emph{geometric} evolution equation of \emph{sets} in the plane
and that all the results of this paper hold in this geometric sense. Nevertheless,
we prove Theorem~\ref{shorttime}
using an auxiliary system of non--degenerate parabolic
partial differential equations for maps parametrising the networks that we call Analytic Problem.
The crucial difficulty is to find a tangential velocity turning system~\eqref{Triod0} into a well--posed parabolic boundary value problem without changing the geometric nature of the evolution. To this end one further needs to impose additional conditions on the parametrisations of the curves at the boundary points.

We solve the Analytic Problem using a linearisation procedure and a fixed point argument. 

One important feature of the parabolic structure of~\eqref{TriodC^0} is that solutions are \emph{smooth} for positive times. Indeed, using a cut-off function in time and an auxiliary linear parabolic system, the classical theory in~\cite{solonnikov2} implies that the solution starting in any admissible initial datum of regularity $W_p^{4-\nicefrac{4}{p}}$, $p\in(5,\infty)$, lies in
$C^{\infty}\left([\varepsilon,T]\times[0,1]\right)$ for all positive $\varepsilon$.

This allows us to improve the local existence Theorem~\ref{shorttime}.
\begin{teo}\label{parabolicsmoothing}
There exists a positive time $T$ such that within the interval $[0,T]$ the elastic flow
admits a unique solution $\mathcal{N}$ that is \emph{smooth} for positive times.
\end{teo}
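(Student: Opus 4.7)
The plan is to combine Theorem~\ref{shorttime} with a standard parabolic bootstrap argument, exploiting the fact that the Analytic Problem used to prove Theorem~\ref{shorttime} has the structure of a non--degenerate quasilinear parabolic system with boundary conditions that already satisfy the Lopatinskii--Shapiro conditions (this is precisely what made the short-time existence work via linearisation and fixed point).

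First, I would fix $\varepsilon>0$ and take the parametrisation $\gamma$ of the evolving network provided by Theorem~\ref{shorttime}, which lies in $W_p^1((0,T);L_p(0,1))\cap L_p((0,T);W_p^4(0,1))$. Then I would introduce a smooth temporal cutoff $\eta_\varepsilon\colon[0,T]\to[0,1]$ with $\eta_\varepsilon\equiv 0$ on $[0,\varepsilon/2]$ and $\eta_\varepsilon\equiv 1$ on $[\varepsilon,T]$ and consider $\widetilde{\gamma}=\eta_\varepsilon\gamma$. This new map satisfies the linearisation of the system around the known solution, with zero initial datum and with right--hand side in the equation and in the boundary conditions given in terms of $\gamma$ itself and $\eta_\varepsilon'\gamma$. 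Because the initial datum vanishes, all compatibility conditions at $t=0$ are automatically fulfilled to any order, so the obstruction that forced us to work in the Sobolev scale in the first place disappears for this auxiliary problem.

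At this point I would invoke the maximal $L_p$--regularity theory of~\cite{solonnikov2} for linear parabolic boundary value problems on networks, exactly in the form already used to establish Theorem~\ref{shorttime}, applied with higher regularity indices. Starting from the regularity known a posteriori for $\gamma$ and using the Sobolev embedding $W_p^1((0,T);L_p)\cap L_p((0,T);W_p^4)\hookrightarrow C^0([0,T];W_p^{4-4/p})$ with $p>5$, the right--hand sides of the linear problem gain regularity, so the solution $\widetilde\gamma$ gains one step in the Sobolev scale. Iterating this scheme, that is, differentiating the equation in time, using the PDE to trade temporal for spatial derivatives, and feeding the improved regularity back into the linear theory, one climbs arbitrarily high in the Sobolev scale. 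By the standard embedding theorems this yields $\widetilde\gamma\in C^\infty([0,T]\times[0,1])$, hence $\gamma\in C^\infty([\varepsilon,T]\times[0,1])$, and since $\varepsilon>0$ is arbitrary the smoothness for positive times follows. Uniqueness is already guaranteed geometrically by Theorem~\ref{shorttime}.

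The main technical obstacle is not the interior smoothing, which is classical, but rather ensuring that the full system including the nonlinear triple-junction conditions and the boundary conditions at the fixed endpoints remains a Solonnikov parabolic problem after each bootstrap step. Concretely, one must verify that the complementing (Lopatinskii--Shapiro) conditions, already checked for the linearisation used in proving Theorem~\ref{shorttime}, are preserved when one linearises around the smoother $\gamma$ appearing at later iterations, and that the non--degeneracy of the triple junctions (which is propagated along the flow on a possibly shorter time interval) is sufficient for the linear estimates to close. Once this is in place, the bootstrap is routine.
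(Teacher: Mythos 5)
Your proposal follows essentially the same route as the paper: a temporal cut--off $\eta\gamma$ with zero initial datum (so that all compatibility conditions are trivially satisfied), an auxiliary linear parabolic boundary value problem whose coefficients depend on the known solution, verification of parabolicity and the Lopatinskii--Shapiro condition for that linearisation, and an inductive bootstrap via the linear theory of~\cite{solonnikov2}. The only cosmetic difference is that the paper runs the bootstrap in the parabolic H\"older scale $C^{\frac{2k+2+\alpha}{4},2k+2+\alpha}$ (using \cite[Theorem 4.9]{solonnikov2}) rather than climbing the Sobolev scale, but the decomposition and the key mechanism are identical.
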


These local results are not only the foundation to any further analysis of the flow but also a key tool in proving the following:

\begin{teo}\label{main}
Let $p\in(5,10)$ and $\mathcal{N}_0$ be a geometrically admissible initial network. 
Suppose that $\left(\mathcal{N}(t)\right)_{t\in[0,T_{\max})}$ is a 
maximal solution to
the elastic flow with initial datum $\mathcal{N}_0$ 
in the maximal time interval $[0,T_{\max})$ with $T_{\max}\in (0,\infty)\cup\{\infty\}$. 
Then $$T_{max}=\infty$$ or at least one of the following happens:
\begin{itemize}
\item[(i)] the inferior limit of the length of at least one curve of $\mathcal{N}(t)$ is zero as 
$t\nearrow T_{max}$.
\item[(ii)]
at one of the triple junctions 
$\liminf_{t\nearrow T_{max}}\max\left\{\left\vert\sin\alpha^1(t)\right\vert,\left\vert\sin\alpha^2(t)\right\vert,\left\vert\sin\alpha^3(t)\right\vert\right\}=0$, 
where $\alpha^1(t)$, $\alpha^2(t)$ and $\alpha^3(t)$ are the angles 
at the respective triple junction.
\end{itemize}
\end{teo}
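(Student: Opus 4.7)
I would argue by contradiction: assume $T_{\max} < \infty$ and that neither (i) nor (ii) holds. Then there exist constants $\ell, \eta > 0$ such that every component of $\mathcal{N}(t)$ has length at least $\ell$ and at every triple junction $\max_i |\sin \alpha^i(t)| \geq \eta$, uniformly for $t \in [0, T_{\max})$. By Theorem~\ref{parabolicsmoothing} we may assume the flow is smooth on $(0, T_{\max})$. The strategy is to promote these two qualitative assumptions into uniform bounds on all tangential derivatives of the curvature, then into uniform parametric $W_p^{4-\nicefrac{4}{p}}$ estimates, extract a limit network $\mathcal{N}(T_{\max})$ that is still admissible and non-degenerate, and finally restart the flow via Theorem~\ref{shorttime} to contradict the maximality of $T_{\max}$.

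\textbf{Step 1: energy and base curvature control.} Since $E_\mu$ is a Lyapunov functional for the flow, the inequality $E_\mu(\mathcal{N}(t)) \leq E_\mu(\mathcal{N}_0)$ gives a uniform upper bound on the total length and on $\sum_i \int_{\mathcal{N}^i} k^2 \, ds$. Combined with the lower length bound $L^i(t) \geq \ell$ this controls $\|k\|_{L^2(\mathcal{N}^i(t))}$ uniformly in time for every curve.

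\textbf{Step 2: higher order curvature estimates -- the main obstacle.} I would differentiate in time the quantities $\int_{\mathcal{N}(t)} (\partial_s^m k)^2 \, ds$. The bulk contribution has the form $-2 \int (\partial_s^{m+2} k)^2 \, ds$ plus polynomial expressions in $k$ and its lower-order tangential derivatives, which can be absorbed using Gagliardo--Nirenberg interpolation along each curve (the constants being controlled by $\ell$). The delicate part, and in my view the \emph{core difficulty} of the whole theorem, is the boundary terms produced by repeated integration by parts at the triple junctions and at the fixed endpoints. Using the natural boundary conditions collected in~\eqref{Triod0} (concurrency, angle conditions, vanishing of curvature at fixed endpoints, third order flux-type conditions at junctions), together with the non-degeneracy bound $\eta$, one can express every such boundary quantity as a polynomial in $k$ and its tangential derivatives with coefficients depending only on $\ell, \eta, \mu$ and $E_\mu(\mathcal{N}_0)$. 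Using Young's inequality these are then absorbed into the dissipative leading term. Iterating over $m$ and closing with Gronwall produces uniform bounds $\|\partial_s^m k\|_{L^\infty(\mathcal{N}(t))} \leq C_m$ on $[0, T_{\max})$ for every $m$.

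\textbf{Step 3: parametric estimates and restart.} The bounds of Step~2 are geometric. To turn them into parametric bounds I would choose, as in~\cite{garmenzplu}, a tangential velocity that renders the Analytic Problem a non-degenerate parabolic system and prevents the parametrisation from degenerating (this uses the lower bound $\ell$ on lengths and the bound $\eta$ on angles). Under this choice the curvature estimates translate into uniform bounds in $C^\infty([\varepsilon, T_{\max}) \times [0,1])$ for every $\varepsilon > 0$, and in particular into uniform bounds in $W_p^{4-\nicefrac{4}{p}}$ in space. Sending $t \nearrow T_{\max}$ yields a limit network $\mathcal{N}(T_{\max})$ of the required Sobolev regularity that is still admissible and non-degenerate (thanks to the uniform bounds $\ell, \eta$). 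Theorem~\ref{shorttime} then provides a solution starting from $\mathcal{N}(T_{\max})$, which concatenated with the original maximal solution extends the flow past $T_{\max}$, contradicting the maximality of $T_{\max}$. Hence $T_{\max} = \infty$ or one of the alternatives (i), (ii) must occur.
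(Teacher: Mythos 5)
Your overall strategy is the one the paper uses: contradiction, energy decay for the base bounds, an energy estimate on derivatives of the curvature whose boundary terms are controlled via the natural boundary conditions and the non--degeneracy of the junction, translation into parametric $W_p^{4-\nicefrac{4}{p}}$ bounds, and a restart via the short time existence theorem. Two points differ in a way that matters. First, the paper does \emph{not} bound $\partial_s^m k$ for all $m$; it only establishes $\frac{\mathrm{d}}{\mathrm{d}t}\int|\partial_s^2k|^2\,\mathrm{d}s\le C(E_\mu(\mathcal{N}_0))$ (Proposition~\ref{estimatetogether}) and integrates this once over the finite interval. That single estimate, combined with the constant--speed reparametrisation of Lemma~\ref{onepara} and the identities $\vec{k}^i=\gamma^i_{xx}/(\ell^i)^2$, $\vec k^i_{ss}=\gamma^i_{xxxx}/(\ell^i)^4$, yields $\gamma\in L^\infty(W_2^4)$, and the Sobolev embedding $W_2^4\hookrightarrow W_p^{4-\nicefrac{4}{p}}$ is exactly what forces the hypothesis $p\in(5,10)$ in the statement. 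Your plan never explains where the restriction $p<10$ enters; if you genuinely had $L^\infty$ bounds on all $\partial_s^m k$ you would not need it, but obtaining them requires controlling boundary terms of arbitrarily high order at the junctions (involving repeated time differentiation of the concurrency and third order conditions), which is a substantially harder and unproved claim than the single $m=2$ estimate the argument actually needs.

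Second, there is a genuine gap for general networks: your Step~3 produces a full $W_p^{4-\nicefrac{4}{p}}$ bound and a limit network at $T_{\max}$, but when $\mathcal{N}_0$ has no fixed endpoints (e.g.\ a Theta--network) nothing prevents the configuration from translating off to infinity, so the $L^p$ part of the norm of the parametrisation is not controlled and no limit need exist. The paper handles this by proving a refined short time existence result (Theorem~\ref{shorttimethetarefined}) in which the existence time depends only on the homogeneous seminorm $\lvert\varphi\rvert_{W_p^{4-\nicefrac{4}{p}}}$, exploiting translation invariance of the system; it then restarts from $\gamma(T_{\max}-\delta)$ with $\delta$ half the uniform existence time rather than from a limit at $T_{\max}$. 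Without an argument of this kind your proof covers only networks with at least one fixed endpoint, not the full statement of Theorem~\ref{main}.
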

At this point we would like to say a few words about the strategy of the proof.
Thanks to our short time existence result to contradict the finiteness of $T_{max}$ 
it is enough to find parametrisations admitting uniform bounds 
in the norm $W_p^{4-\nicefrac{4}{p}}$ on $[0,T_{max})$. This simplifies the required energy type estimates.
Indeed to obtain such a bound it suffices to find 
a uniform in time estimate on the $L^2$--norm of the second arclength derivative of the curvature. 
These can be derived under the assumption that during the evolution 
all the lengths are uniformly bounded away from zero 
and that the network remains non--degenerate in a uniform sense 
(see condition~\eqref{nondegeneracy}).
It is important to underline that only estimates on geometric quantities, namely the curvature, are needed. In particular, the proof itself is independent of the choice of tangential velocity which corresponds to the very definition of the flow, where only the normal velocity is prescribed.
We notice that the possible behaviours $(i)$ and $(ii)$ are not merely technical assumptions but also quite realistic scenarios for the nature of potential singularities.
Moreover, none of the listed possibilities excludes the others:
as the time approaches $T_{max}$, both the lengths of one or several curves of the network and the angles between the curves at one or more triple junctions can 
go to zero, regardless of $T_{\max}$ being finite or infinite. 

\medskip

It is shown both in~\cite{dziukkuwertsch} and~\cite{poldenthesis} 
that the evolution of closed curves with respect to Willmore Flow
(both with a length penalization and fixed length) exists globally in time (for the Helfrich flow see also~\cite{glen}). 
The same result is obtained 
for several problems related to open curves with fixed endpoints or asymptotic
to a line at infinity  (see also~\cite{lin2, dalpoz,lin1, novok,okabe}).
While the geometric evolution of submanifolds has been extensively studied in the recent years, the situation is different when we consider motions of generalized possibly singular submanifolds. The simplest example of such objects are networks.
Short time existence results for geometric flows of triple junction clusters
have been obtained by several
authors (see~\cite{Bronsardreitich, DepnerGarckeKohsaka, Freire, GarckeItoKohsaka, michaelgoesswein, schulzewhite}). 
The motion of networks evolving by curvature has been extensively studied by Mantegazza-Novaga-et.al. (see~\cite{Carlo2, Carlo3, mannovtor, pluda}). Here
each curve of the
network moves with normal velocity equal to its curvature and 
at the triple junctions the angles are fixed to be equal.
The analysis of the 
long time behaviour
shows that as $t\to T_{\max}$ the curvature is unbounded 
or the length of one (or more) of the curves of the network goes to zero. In our case, the curvature can not become unbounded as the Willmore energy is decreasing during the evolution.
The second possibility is precisely one possible behaviour stated in our result. Since a non--degeneracy condition on the angles is needed for the well posedness
of our flow, it is not surprising that the evolution of the angles plays a role
in Theorem~\ref{main}.

An important aspect of the elastic flow of networks
that we have not considered yet
is the definition of the flow past singularities.
To attack this problem a short time existence result for
networks with junctions of order higher than three needs to be established.

Finally we refer to~\cite{bargarnu} for numerical analysis and simulations 
for the elastic flow of networks.

\medskip

After the completion of this paper we got aware of the work ``Flow of elastic networks: long--time existence result"  by Dall'Acqua, Lin, Pozzi, see~\cite{dallacqualinpozzi} where the authors give a long time existence result under the hypothesis that smooth solutions exist for a uniform short time interval.


\section{Elastic flow of networks}

In the following we will denote by $\left\lvert x\right\rvert$ the euclidean norm of a vector $x\in\mathbb{R}^d$ with $d\geq 1$. All norms of function spaces are taken with respect to the euclidean norm.

\begin{dfnz}
A planar network $\mathcal{N}$ is a connected set in the plane consisting 
of a finite union of regular curves $\mathcal{N}^i$ that meet at their end--points in junctions.
Each curve $\mathcal{N}^i$ admits a regular $C^1$--parametrisation, that is, a map $\gamma^i:[0,1]\to\mathbb{R}^2$ of class $C^1$ with $\vert\gamma^i_x\vert\neq 0$ on $[0,1]$ and $\gamma^i\left([0,1]\right)=\mathcal{N}^i$.
\end{dfnz}

\begin{dfnz}
Let $k>1$ and $1\leq p\leq\infty$ with $p>\frac{1}{k-1}$. A network $\mathcal{N}$ is of class $C^k$ (or $W_p^{k}$, respectively) if it admits
a regular parametrisation 
of class $C^k$ (or $W_p^{k}$, respectively).
\end{dfnz}

\begin{dfnz}
Consider two networks $\mathcal{N}$ and $\widehat{\mathcal{N}}$ with regular parametrisation $\gamma$ and $\widehat{\gamma}$
of class $C^k$
(or $W_p^{k}$), respectively.
We say $\mathcal{N}=\widehat{\mathcal{N}}$ if 
they coincide as sets in $\mathbb{R}^2$ and if there exists 
a reparametrisation $\sigma^i:[0,1]\to[0,1]$ with $\sigma^i(0)=0$, $\sigma^i(1)=1$
such that $\gamma^i\circ\sigma^i=\widehat{\gamma}^i$.
\end{dfnz}

We restrict to networks with \emph{triple} junctions.
In particular we focus on two different topologies of planar networks
(triods and Theta--networks) that can be regarded as prototypes of more general configurations.

\begin{dfnz}\label{deftheta}
	A Theta--network $\Theta$ is a network in $\mathbb{R}^2$ composed of three regular 
	curves that intersect each other at their endpoints in two triple junctions.
\end{dfnz}

\begin{figure}[H]
\centering
\includegraphics{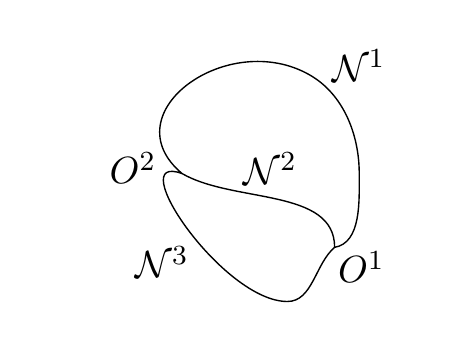}
 \begin{caption}{A Theta--network}
\end{caption}
\end{figure}

\begin{dfnz}
A triod $\mathbb{T}$ is a planar network composed of three regular curves  $\mathbb{T}^i$
that meet at one triple junction and have the other endpoints fixed in the plane. 
\end{dfnz}

Moreover we will adapt the following convention. Every regular parametrisation $\gamma$ of a triod is such that the triple junction denoted by $O$
coincides with $\gamma^1(0)=\gamma^2(0)=\gamma^3(0)$
and $\gamma^i(1)=P^i$ with $i\in\{1,2,3\}$ are the three fixed \textit{distinct} endpoints.
Denoting by $\tau_1, \tau_2,\tau_3$ the unit tangent vectors to the three curves
of a triod,
we call  $\alpha_3,\alpha_1$ and $\alpha_2$ the angle at the triple junctions
between $\tau^1$ and  
 $\tau^2$, $\tau^2$ and  $\tau^3$, and $\tau^3$ and $\tau^1$, respectively.

\begin{figure}[H]
\centering
\includegraphics{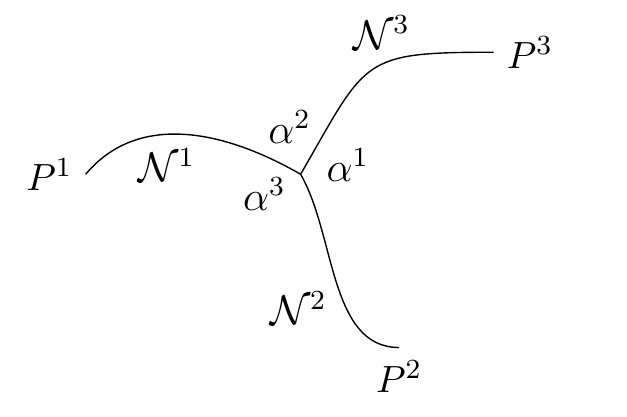}
\begin{caption}{A Triod}
\end{caption}
\end{figure}

In the following we will consider a time dependent family of networks
$\left(\mathcal{N}(t)\right)_{t\in[0,T)}$ for some $T>0$ that are parametrised by 
$\gamma=(\gamma^1,\ldots,\gamma^j)$
in a suitable regularity class
and evolve according to the $L^2$--gradient flow of the elastic energy
\begin{equation}\label{eef}
E_\mu\left(\mathcal{N}\right):=\int_{\mathcal{N}}\left(k^{2}+\mu\right)ds
=\sum_{i}\int_{\mathcal{N}^{i}}\left((k^i)^{2}+\mu\right)ds^i\,
\end{equation}
with $\mu>0$. Here $k^i$ and $s^i$ denote the curvature 
and arc--length parameter of $\gamma^i$, respectively.

\medskip

While the normal velocity
\begin{equation}
-A^i:=-\left(2k_{ss}^{i}+\left(k^{i}\right)^{3}-\mu k^{i}\right)
\end{equation}
of each curve is prescribed by the $L^2$--gradient of $E_\mu$ 
(at any point of the curve and at any time), 
we have some freedom in choosing the tangential velocity. 
In~\cite{garmenzplu} it is discussed in detail that 
\begin{align}\label{Tang}
T^{i}&:=\left\langle 2\frac{\gamma^i_{xxxx}}{\left|\gamma^i_{x}\right|^{4}}
-12\frac{\gamma^i_{xxx}\left\langle \gamma^i_{xx},\gamma^i_{x}\right\rangle }
{\left|\gamma^i_{x}\right|^{6}}
-5\frac{\gamma^i_{xx}\left|\gamma^i_{xx}\right|^{2}}{\left|\gamma^i_{x}\right|^{6}}
\right. \nonumber \\
&\left. -8\frac{\gamma^i_{xx}\left\langle \gamma^i_{xxx},\gamma^i_{x}\right\rangle }
{\left|\gamma^i_{x}\right|^{6}}
+35\frac{\gamma^i_{xx}\left\langle \gamma^i_{xx},\gamma^i_{x}\right\rangle ^{2}}
{\left|\gamma^i_{x}\right|^{8}}
-\mu\frac{\gamma_{xx}^{i}}{\left|\gamma_{x}^{i}\right|^{2}},\tau^{i}\right\rangle \,.
\end{align}
is a meaningful choice. Thus the motion we consider is given by
\begin{align}\label{motion}
\gamma_t^i=-A^i\nu^i-T^i\tau^i
\end{align}
where $\nu^i$ and $\tau^i$ denote the unit normal and tangent vector of $\gamma^i$, respectively.

In fact, the flow is given as (see~\cite{garmenzplu})
\begin{equation*}
\gamma_t^i=-2\frac{\gamma^i_{xxxx}}{\vert\gamma^i_x\vert^4}+l.o.t.\,,
\end{equation*}
where $l.o.t.$ contains terms of order lower than four in $\gamma^i$ and the highest order term $\frac{\gamma^i_{xxxx}}{\vert\gamma^i_x\vert^4}$ ensures that the equation is parabolic as long as $\vert\gamma^i_x\vert$ is uniformly bounded from above and below.
\medskip

The motion equations~\eqref{motion} can be coupled 
with different boundary conditions. In~\cite{garmenzplu}
we prove
(geometric) existence and uniqueness of the motion in several cases of constraints at the boundary provided that the initial datum is
of class $C^{4+\alpha}([0,1])$ and satisfies the respective compatibility conditions. The result also justifies the special choice of the tangential velocity~\eqref{Tang}.

\medskip

In this paper we focus our attention on
networks
evolving according to the elastic flow without restriction on the angles at the triple junction.
Our proof of the long time existence Theorem~\ref{main} relies on 
a short time existence result in a Sobolev setting.

As we are considering a fourth order parabolic problem, the natural solution space is given by the intersection of Bochner spaces
\begin{equation*}
W_p^1\left((0,T);L_p\left((0,1);\mathbb{R}^d\right)\right)\cap L_p\left((0,T);W_p^4\left((0,1);\mathbb{R}^d \right)\right)=:W_p^{1,4}\left((0,T)\times(0,1);\mathbb{R}^d\right)
\end{equation*}
for $T$ positive and  $p\in(1,\infty)$ large enough. We will often simply write $W_p^{1,4}\left((0,T)\times(0,1)\right)$.
The initial values need to be elements of the respective trace space which is given by the Sobolev Slobodeckij space $W_p^{4-\nicefrac{4}{p}}\left((0,1);\mathbb{R}^d \right)$. In the case that $4-\nicefrac{4}{p}$ is not an integer, this space coincides with the Besov space $B_{pp}^{4-\nicefrac{4}{p}}\left((0,1);\mathbb{R}^d\right)$.

\begin{dfnz}
	Given $d\geq 1$, $p\in [1,\infty)$ and $\theta\in(0,1)$ the \textit{Slobodeckij seminorm} of an element $f\in L_p\left((0,1);\mathbb{R}^d\right)$ is defined by
	\begin{equation*}
	\left[f\right]_{\theta,p}:=\left(\int_{0}^{1}\int_{0}^{1}\frac{\left\lvert f(x)-f(y)\right\rvert^p}{\lvert x-y\rvert^{\theta p+1}}\mathrm{d}x\mathrm{d}y\right)^{\nicefrac{1}{p}}\,.
	\end{equation*}
	Let $s\in(0,\infty)$ be a non--integer. The Sobolev--Slobodeckij space $W_p^s\left((0,1);\mathbb{R}^d\right)$ is defined by
	\begin{equation*}
	W_p^s\left((0,1);\mathbb{R}^d\right):=\left\{f\in W_p^{\lfloor s\rfloor}\left((0,1);\mathbb{R}^d\right):\left[f\right]_{s-\lfloor s\rfloor,p}<\infty\right\}\,.
	\end{equation*}
\end{dfnz}
\begin{rem}
	We will restrict to the case $p\in (5,\infty)$. Then by the Sobolev Embedding Theorem 
	\begin{equation*}
	W_p^{4-\nicefrac{4}{p}}\left((0,1);\mathbb{R}^2\right)\hookrightarrow C^{3+\alpha}\left([0,1];\mathbb{R}^2\right)
	\end{equation*}
	for $\alpha\in\left(0,1-\nicefrac{5}{p}\right)$
	(see for instance~\cite[Theorem 4.12]{AdamsFournier}). In particular, the boundary constraints for the initial network in Definition~\ref{admg1} and~\ref{adm} should be understood pointwise.
\end{rem}

For sake of presentation we first describe in details the case of the triod.

\subsection{Elastic flow of a triod}

\begin{dfnz}[Elastic flow of a triod]\label{geometricsolution1}
	Let $p\in (5,\infty)$ and $T>0$.
	Let $\mathbb{T}_0$ be a geometrically admissible initial triod with fixed endpoints $P^1$, $P^2$, $P^3$.
	A time dependent family of triods $\left(\mathbb{T}(t)\right)$ is a solution to
	the elastic flow with initial datum $\mathbb{T}_0$ 
	in  $[0,T]$ 
	if and only if there exists a collection of time dependent
	parametrisations
	\begin{equation*}
		\gamma^i_n\in  W^1_p(I_n;L_p((0,1);\mathbb{R}^2))\cap L_p(I_n;W^4_p((0,1);\mathbb{R}^2))\,,
	\end{equation*}
		with $n\in\{0,\dots, N\}$ for some $N\in\mathbb{N}$, $I_n:=(a_n,b_n)\subset \mathbb{R}$, $a_n\leq a_{n+1}$, $b_n\leq b_{n+1}$, $a_n<b_n$
	and $\bigcup_n (a_n,b_n)=(0,T)$ such that for all $n\in\{0,\dots,N\}$ and $t\in I_n$, $\gamma_n(t)=\left(\gamma^1(t),\gamma^2(t),\gamma^3(t)\right)$ is a regular parametrisation of $\mathbb{T}(t)$.
	Moreover each 
	$\gamma_n$
	needs to satisfy the following system
	\begin{equation}\label{Triod0}
	\begin{cases}
	\begin{array}{lll}
	\left\langle \gamma_t^{i}(t,x), \nu^i(t,x)\right\rangle  \nu^i(t,x)
	=-A^{i}(t,x)\nu^{i}(t,x)& &\text{motion,}\\
	\gamma^{1}\left(t,0\right)=\gamma^{2}\left(t,0\right)=\gamma^{3}\left(t,0\right)&
	&\text{concurrency condition,}\\
	k^{i}(t,0)=0 & &\text{curvature condition,}\\
	\sum_{i=1}^{3}\left(2k_{s}^{i}\nu^{i}-\mu\tau^{i}\right)(t,0)=0& &\text{third order condition,}\\
	\gamma^i(t,1)=P^i& &\text{fixed endpoints,}\\
	k^{i}(t,1)=0 & &\text{curvature condition,}
	\end{array}
	\end{cases}
	\end{equation}
	for every  $t\in  I_n,x\in\left(0,1\right)$ and for $i\in\{1,2,3\}$. Finally we ask that $\gamma_n(0,[0,1])=\mathbb{T}_0$ whenever $a_n=0$. 

\end{dfnz}

\begin{dfnz}\label{admg1}
Let $p\in(5,\infty)$. A triod $\mathbb{T}_0$ is a geometrically admissible initial network for 
 system~\eqref{Triod0}  if
\begin{itemize}
	\item[-] there exists a parametrisation $\sigma=(\sigma^1,\sigma^2,\sigma^3)$ 
	of $\mathbb{T}_0$ 
	such that every curve $\sigma^i$ is regular and
	\begin{equation*}
	\sigma^i\in W_p^{4-\nicefrac{4}{p}}((0,1);\mathbb{R}^2)\,.
	\end{equation*}	
\item[-]  the three curves meet in one triple junction
with $k^i_0=0$ ,

$\sum_{i=1}^{3}\left(2k_{0,s}^{i}\nu_0^{i}-\mu\tau^{i}_0\right)=0$ and at least two curves form a strictly positive angle;
\item[-] each curve has zero curvature at its fixed endpoint.
\end{itemize}
\end{dfnz}

\begin{dfnz}\label{smoothsol}
	A time dependent family of triods $\left(\mathbb{T}(t)\right)$ is a \emph{smooth}
	solution to	the elastic flow with initial datum $\mathbb{T}_0$ 
	in  $(0,T]$ 
	if and only if there exists a collection $\gamma_n$ of time dependent
	parametrisations with
 $n\in\{0,\dots, N\}$ for some $N\in\mathbb{N}$, $I_n:=(a_n,b_n)\subset \mathbb{R}$, $a_n\leq a_{n+1}$, $b_n\leq b_{n+1}$, $a_n<b_n$
	and $\bigcup_n (a_n,b_n)=(0,T)$. For all $n\in\{0,\dots,N\}$ and $t\in I_n$, $\gamma_n(t)=\left(\gamma^1(t),\gamma^2(t),\gamma^3(t)\right)$ is a regular parametrisation of $\mathbb{T}(t)$ satisfying~\eqref{Triod0}.
Moreover we require $a_1>0$ and
$$
\gamma^i_n\in C^{\infty}(\overline{I_n}\times[0,1];\mathbb{R}^2)
$$	
for all $n\in\{1,\dots,N\}$ and further for all $\varepsilon\in(0,b_0)$
$$
\gamma^i_0\in  W^1_p((a_0,b_0);L_p((0,1);\mathbb{R}^2))\cap L_p((a_0,b_0);W^4_p((0,1);\mathbb{R}^2))\cap C^\infty\left([\varepsilon,b_0]\times[0,1];\mathbb{R}^2\right)\,.
$$
\end{dfnz}


\section{Short time existence of the Analytic Problem in Sobolev spaces}\label{shorttimeexistence}

We now pass from 
the geometric problem to a system of PDEs that we call ``Analytic Problem".
The curves are now described in terms of parametrisations.
Moreover the tangential component of the velocity is fixed to be $T$ which is defined in~\eqref{Tang}.


\begin{dfnz}[Admissible initial parametrisation]\label{adm}
Let $p\in(5,\infty)$. A parametrisation $\varphi=(\varphi^1,\varphi^2,\varphi^3)$
of an initial triod $\mathbb{T}_0$
is an admissible initial parametrisation for system~\eqref{TriodC^0} if 
\begin{itemize}
\item[-] each curve $\varphi^i$ is regular and $\varphi^i\in W_p^{4-\nicefrac{4}{p}}((0,1);\mathbb{R}^2)$ for $i\in\{1,2,3\}$;
\item[-] the network is non degenerate in the sense that $\mathrm{span}\{\nu^1_0(0),\nu^2_0(0),\nu^3_0(0)\}=\mathbb{R}^2$;
\item[-] the compatibility conditions for system~\eqref{TriodC^0} are fulfilled, namely
the 
parametrisations $\varphi^i$
satisfy the concurrency, second and third order condition at $y=0$ and
the second order condition at $y=1$.
\end{itemize}
\end{dfnz}

\begin{dfnz}\label{solutionanalytic}
Let $T>0$ and $p\in(5,\infty)$. Given an admissible initial parametrisation 
$\varphi$
the time dependent parametrisation $\gamma=(\gamma^1,\gamma^2,\gamma^3)$
is a solution of the Analytic Problem with initial value $\varphi$ in $[0,T]$ if
\begin{equation}
\gamma\in W^1_p\left((0,T);L_p((0,1),(\mathbb{R}^2)^3))\cap L_p((0,T);W^4_p((0,1),(\mathbb{R}^2)^3))\right)
 =:\boldsymbol{E}_T\,,
\end{equation}
the curve $\gamma^i(t)$ is regular for all $t\in[0,T]$ and the following system is satisfied 
for almost every $t\in\left(0,T\right),x\in\left(0,1\right)$ and for $i\in\{1,2,3\}$.
\begin{equation}\label{TriodC^0}
\begin{cases}
\begin{array}{lll}
\gamma_t^{i}(t,x)=-A^{i}(t,x)\nu^{i}(t,x)-T^{i}(t,x)\tau^{i}(t,x)& &\text{motion,}\\
\gamma^{1}\left(t,0\right)=\gamma^{2}\left(t,0\right)=\gamma^{3}\left(t,0\right)&
&\text{concurrency condition,}\\
\gamma^i(t,1)=\varphi^i(1) & &\text{fixed endpoints,}\\
\gamma_{xx}^{i}(t,y)=0 &\text{for}\,y\in\{0,1\} &\text{second order condition,}\\
\sum_{i=1}^{3}\left(2k_{s}^{i}\nu^{i}-\mu\tau^{i}\right)(t,0)=0 &  &
\text{third order condition,}\\
\gamma^i(0,x)=\varphi^i(x)&  &
\text{initial condition}\,.
\end{array}
\end{cases}
\end{equation}
\end{dfnz}

To prove existence and uniqueness of a solution to~\eqref{TriodC^0} in $\boldsymbol{E}_T$ 
for some (possibly small) positive time $T$ we follow the same strategy 
we used in~\cite{garmenzplu}
based on a fixed point argument. Due to the different function space setting 
some arguments need to be revised.

\subsection{Existence of a unique solution to the linearised system}\label{linearisation}
Linearising the highest order term of the motion equation of system~\eqref{TriodC^0} 
around the fixed initial parametrisation $\varphi$ we get
\begin{align}
\gamma^i_t
+\frac{2}{\vert\varphi^i_x\vert^4}\gamma^i_{xxxx}
&=\left(\frac{2}{\vert\varphi^i_x\vert^4} -\frac{2}{\vert\gamma^i_x\vert^4}\right)\gamma^i_{xxxx} 
+\tilde{f}^i(\gamma^i_{xxx},\gamma^i_{xx},\gamma^i_x)=:f^i(\gamma^i_{xxxx},\gamma^i_{xxx},
\gamma^i_{xx},\gamma^i_{x})\,.
\end{align}
The linearised version of  the third order condition
takes the form 
\begin{align}
-\sum_{i=1}^3\frac{1}{\vert \varphi^i_x\vert^3}
\left\langle \gamma^i_{xxx},\nu_0^i\right\rangle \nu_0^i =
-\sum_{i=1}^3\frac{1}{\vert \varphi^i_x\vert^3}
\left\langle \gamma^i_{xxx},\nu_0^i\right\rangle \nu_0^i
+\sum_{i=1}^3\left(\frac{1}{\vert \gamma^i_x\vert^3}
\left\langle \gamma^i_{xxx},\nu^i\right\rangle \nu^i 
+h^i(\gamma^i_x)\right)=:b(\gamma)
\,.
\end{align}

All the other conditions are already linear.
The linearised system associated to~\eqref{TriodC^0} is 
\begin{equation}\label{lyntriod}
\begin{cases}
\begin{array}{lll}
\gamma^i_t(t,x)+\frac{2}{\vert\varphi^i_x\vert^4}\gamma^i_{xxxx}(t,x)&=f^i(t,x)
&\;\text{motion,}\\
\gamma^{1}(t,0)-\gamma^{2}(t,0)&=0 &\;\text{concurrency,}\\
\gamma^{1}(t,0)-\gamma^{3}(t,0)&=0 &\;\text{concurrency,}\\
\gamma^{i}(t,1)&=\varphi^i(1) &\;\text{fixed endpoints,}\\
\gamma^i_{xx}(t,y)&=0 &\;\text{second order,}\\
-\sum_{i=1}^3\frac{1}{\vert \varphi^i_x(0)\vert^3}
\left\langle \gamma^i_{xxx}(t,0),\nu_0^i(0)\right\rangle \nu_0^i(0)&=b(t,0)
&\;\text{third order,}\\
\gamma^{i}(0,x)&=\psi^{i}(x) &\;\text{initial condition}\\
\end{array}
\end{cases}
\end{equation}
for $i\in\{1,2,3\}$, $t\in (0,T)$, $x\in (0,1)$ and $y\in\{0,1\}$.
Here $(f,b,\psi)$ is a general right hand side with $\psi$ satisfying the linear compatibility conditions with respect to $b$.
\begin{rem}\label{identification}
	Integration with respect to the volume element on the $0$--dimensional manifold $\{0\}$ is given by integration with respect to the counting measure, see for example~\cite[page~406]{Lee}. 
Given $p\in[1,\infty)$ we will identify the space 
	\begin{equation*}
	L^p\left(\{0\};\mathbb{R}^2\right):=\left\{f:\{0\}\to\mathbb{R}^2:\int_{\{0\}}^{}\left\lvert f\right\rvert^p\,\mathrm{d}\sigma=\left\lvert f(0)\right\rvert^p <\infty\right\}
	\end{equation*}
	 with $\mathbb{R}^2$ via the isometric isomorphism $I:L_p\left(\{0\};\mathbb{R}^2\right)\to\mathbb{R}^2$, $f\mapsto f(0)$. As this operator restricts to $I:W_p^s\left(\{0\};\mathbb{R}^2\right)\to \mathbb{R}^2$ for every $s>0$, we have an isometric isomorphism
	 \begin{equation*}
	 W_p^{\nicefrac{1}{4}-\nicefrac{1}{4p}}\left((0,T);L_p\left(\{0\};\mathbb{R}^2\right)\right)\cap L_p\left((0,T); W_p^{1-\nicefrac{1}{p}}\left(\{0\};\mathbb{R}^2\right)\right) \simeq W_p^{\nicefrac{1}{4}-\nicefrac{1}{4p}}\left((0,T);\mathbb{R}^2\right)
	 \end{equation*}
	 via the map $f\mapsto \left(t\mapsto f(t,0)\right)$. We will use this identification in the following.
\end{rem}
\begin{dfnz}\label{lincompcond}[Linear compatibility conditions]
A function $\psi\in W^{4-\nicefrac{4}{p}}_p((0,1);(\mathbb{R}^2)^3)$ 
with $p\in (5,\infty)$ satisfies the linear compatibility conditions 
for system~\eqref{lyntriod} with respect to $b\in W_p^{\nicefrac{1}{4}-\nicefrac{1}{4p}}\left((0,T);\mathbb{R}^2\right)$ if for $i,j\in\{1,2,3\}$  it holds
$\psi^i(0)=\psi^j(0)$, $\psi^i_{xx}(0)=0$, $\psi^i(1)=\varphi^i(1)$, $\psi^i_{xx}(1)=0$
and
\begin{equation*}
-\sum_{i=1}^3\frac{1}{\vert \varphi^i_x(0)\vert^3}\left\langle \psi^i_{xxx}(0),\nu_0^i(0) \right\rangle 
\nu_0^i(0) =b(0,0)\,.
\end{equation*}
\end{dfnz}

\begin{teo}\label{linearexistence}
Let $p\in (5,\infty)$.
For every $T>0$ the 
system~\eqref{lyntriod} admits a unique solution $\gamma\in\boldsymbol{E}_T$

provided that
\begin{itemize}
\item[-] $f^i\in L_p((0,T); L_p((0,1);\mathbb{R}^2))$ 
for $i\in\{1,2,3\}\,$;
\item[-]  $b\in W_p^{\nicefrac{1}{4}-\nicefrac{1}{4p}}\left((0,T);\mathbb{R}^2\right)\,$;
\item[-]  $\psi\in W_p^{4-\nicefrac{4}{p}}((0,1);(\mathbb{R}^2)^3)$ and
\item[-]  $\psi$ satisfies the linear compatibility conditions~\ref{lincompcond} 
with respect to $b$.
\end{itemize}
Moreover, for all $T>0$ there exists a constant $C(T)>0$ such that all
solutions satisfy  the inequality
\begin{equation}
\Vert \gamma\Vert_{{\boldsymbol{E}}_T} 
\leq C(T)\left( 
\sum_{i=1}^3 \Vert f^i\Vert_{ L_p((0,T); L_p((0,1);\mathbb{R}^2))}+
\Vert b\Vert_{ W_p^{\nicefrac{1}{4}-\nicefrac{1}{4p}}\left((0,T);\mathbb{R}^2\right)}+
\Vert \psi\Vert_{ W_p^{4-\nicefrac{4}{p}}\left((0,1);\left(\mathbb{R}^2\right)^3\right)}
\right)\,.
\end{equation} 
\end{teo}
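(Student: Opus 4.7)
System~\eqref{lyntriod} is a linear fourth-order parabolic problem in the six scalar unknowns $(\gamma^1,\gamma^2,\gamma^3)\colon(0,T)\times(0,1)\to(\mathbb{R}^2)^3$, decoupled at $x=1$ and coupled at $x=0$ through the concurrency and third-order conditions. The plan is to cast it in the abstract form to which the general $L_p$-theory of parabolic initial--boundary value problems of Solonnikov~\cite{solonnikov2} applies, and then to verify the three hypotheses of that theory: parabolicity in the sense of Petrovskii, the Lopatinskii--Shapiro complementing condition at both endpoints, and the (already assumed) compatibility conditions between initial and boundary data.

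Parabolicity is immediate: the principal part on the $i$-th curve is $\partial_t+\frac{2}{|\varphi^i_x|^4}\partial_x^4$ acting component-wise, and the embedding $W_p^{4-4/p}\hookrightarrow C^{3+\alpha}$ together with regularity of the reference parametrisation $\varphi^i$ ensures that $\frac{2}{|\varphi^i_x|^4}$ is continuous in $x$ and uniformly bounded above and below. The principal symbol $\lambda+\frac{2}{|\varphi^i_x|^4}\xi^4$ thus has exactly two roots in $\xi$ with positive imaginary part for every $\lambda$ with $\mathrm{Re}\,\lambda\geq 0$, $\lambda\neq 0$. For the Lopatinskii--Shapiro check at $x=1$ the boundary operator decouples into three copies of the scalar pair $(\gamma^i,\gamma^i_{xx})$ and the resulting half-line problem is classical. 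At $x=0$ one has to analyse the coupled $12\times 12$ boundary operator consisting of concurrency (4 scalar conditions), the second-order conditions $\gamma^i_{xx}=0$ (6 scalar conditions) and the third-order condition (2 scalar conditions), and to show that for every admissible $\lambda$ the resulting linear system in the twelve amplitudes of the stable modes is uniquely solvable. The non-degeneracy assumption $\mathrm{span}\{\nu^1_0(0),\nu^2_0(0),\nu^3_0(0)\}=\mathbb{R}^2$ is what guarantees this: the third-order condition, being a vector equation in $\mathbb{R}^2$ built from projections onto the $\nu^i_0(0)$, combines with $\gamma^i_{xx}=0$ and concurrency to determine both normal and tangential mode amplitudes on each curve. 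This is essentially the same algebraic check carried out in~\cite{garmenzplu} in the Hölder framework, and it transfers verbatim since it depends only on the frozen principal symbol.

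Once parabolicity and the complementing condition are established, and the boundary datum is identified as an element of $W_p^{1/4-1/(4p)}((0,T);\mathbb{R}^2)$ via Remark~\ref{identification}, Solonnikov's theory yields existence and uniqueness of a solution $\gamma\in\boldsymbol{E}_T$ for every right hand side $(f,b,\psi)$ satisfying the compatibility conditions of Definition~\ref{lincompcond}. The stated a priori estimate then follows from the open mapping theorem applied to the continuous linear solution operator from the closed subspace of admissible data into $\boldsymbol{E}_T$; the constant $C(T)$ is finite for each fixed $T>0$ but in general depends on $T$ and on the reference parametrisation $\varphi$. The main obstacle, as anticipated, is the Lopatinskii--Shapiro verification at the triple junction, but since it reduces to the same linear algebra as in~\cite{garmenzplu}, the remainder of the proof is a direct invocation of the general theory.
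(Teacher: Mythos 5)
Your proposal is correct and follows essentially the same route as the paper: the paper's proof is a direct appeal to Solonnikov's $L_p$-maximal-regularity theorem \cite[Theorem 5.4]{solonnikov2}, with parabolicity and the Lopatinskii--Shapiro condition at the triple junction deferred to \cite[Section 3.3.2]{garmenzplu}, exactly as you describe. The only cosmetic difference is that you obtain the a priori estimate via the open mapping theorem, whereas Solonnikov's theorem already delivers it as part of the maximal regularity statement; both are valid.
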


\begin{proof}
The theorem follows from~\cite[Theorem 5.4]{solonnikov2} 
as the system~\eqref{lyntriod} is parabolic and the Lopatinskii--Shapiro condition is satisfied.
These two properties are proven in~\cite[Section 3.3.2]{garmenzplu}.
\end{proof}

As a consequence of  Theorem~\ref{linearexistence} we have the following 
\begin{lemma}\label{Lisomorphism}
	Given $p\in(5,\infty)$ and $T$ positive we define
	\begin{align*}
	\mathbb{E}_T:=\{&\gamma\in 
	\boldsymbol{E}_T
	\;\text{such that for}\;i\in\{1,2,3\},   t\in(0,T)\,,y\in\{0,1\}\,\text{it holds}\,\\
	&
	\,\gamma^1(t,0)=\gamma^2(t,0)=\gamma^3(t,0)\,,\gamma^i(t,1)=\varphi^i(1)\,,\gamma^i_{xx}(t,y)=0\}
	\,,\\
	\mathbb{F}_T:=\{&(f,b,\psi)\in 
	L_p((0,T); L_p((0,1);(\mathbb{R}^2)^3))
	\times 
	W_p^{\nicefrac{1}{4}-\nicefrac{1}{4p}}\left((0,T);\mathbb{R}^2\right)
	\times  W_p^{4-\nicefrac{4}{p}}((0,1);(\mathbb{R}^2)^3)\\
	&\,\text{such that the linear compatibility conditions hold}
	\}
	\,.
	\end{align*}
	The map $L_{T}:\mathbb{E}_T\to \mathbb{F}_T$ defined by
	$$
	L_{T}(\gamma):=
	\begin{pmatrix}
	\left(\gamma^i_t+\frac{2}{\vert\varphi^i_x\vert^4}\gamma^i_{xxxx}\right)_{i\in\{1,2,3\}}\\
	-\left(\sum_{i=1}^3\frac{1}{\vert \varphi^i_x\vert^3}
	\left\langle \gamma^i_{xxx},\nu_0^i\right\rangle \nu_0^i\right)_{\vert x=0}\\
	\gamma_{\vert t=0}
	\end{pmatrix}
	$$
	is a continuous isomorphism.
\end{lemma}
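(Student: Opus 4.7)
The plan is to deduce the lemma essentially as a corollary of Theorem~\ref{linearexistence}: that theorem provides, for every right--hand side $(f,b,\psi)\in\mathbb{F}_T$, a unique $\gamma\in\boldsymbol{E}_T$ solving~\eqref{lyntriod}, together with a bound of $\|\gamma\|_{\boldsymbol{E}_T}$ by the data. Granted this, the main work that remains is (a) to verify that $L_T$ maps $\mathbb{E}_T$ continuously into $\mathbb{F}_T$, (b) to check that the unique solution produced by Theorem~\ref{linearexistence} actually lies in the affine subspace $\mathbb{E}_T$ (so that $L_T$ is bijective), and (c) to read off continuity of $L_T^{-1}$ from the a priori estimate.

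First I would verify continuity of $L_T$. The spaces $\mathbb{E}_T$ and $\mathbb{F}_T$ are closed affine subspaces of Banach spaces (the defining conditions are continuous linear constraints by trace theory), so it suffices to bound each component of $L_T(\gamma)$. Since $\varphi\in W_p^{4-\nicefrac{4}{p}}\hookrightarrow C^{3+\alpha}$ with $|\varphi^i_x|$ bounded away from zero, the coefficients $\frac{2}{|\varphi^i_x|^4}$ and $\frac{1}{|\varphi^i_x|^3}$ lie in $L_\infty$, so the parabolic component is controlled in $L_p((0,T);L_p((0,1);(\mathbb{R}^2)^3))$ by $\|\gamma_t\|_{L_p L_p}+\|\gamma_{xxxx}\|_{L_p L_p}\le C\|\gamma\|_{\boldsymbol{E}_T}$. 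For the boundary datum, by the mixed trace theory for anisotropic Sobolev spaces (see for instance~\cite{solonnikov2}) the trace operator $\gamma\mapsto \gamma^i_{xxx}(\cdot,0)$ maps $\boldsymbol{E}_T$ continuously into $W_p^{\nicefrac{1}{4}-\nicefrac{1}{4p}}((0,T);\mathbb{R}^2)$, using the identification from Remark~\ref{identification}; multiplication by the constant vectors $\nu_0^i(0)/|\varphi^i_x(0)|^3$ preserves this space. Finally the time--trace at $t=0$ maps $\boldsymbol{E}_T$ continuously into $W_p^{4-\nicefrac{4}{p}}((0,1);(\mathbb{R}^2)^3)$ by the standard trace theorem. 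The linear compatibility conditions of Definition~\ref{lincompcond} are satisfied because the relations defining $\mathbb{E}_T$ evaluated at $t=0$ together with the third--order boundary equation at $t=0$ are exactly those compatibility conditions.

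Next I would establish bijectivity. Given $(f,b,\psi)\in\mathbb{F}_T$, Theorem~\ref{linearexistence} yields a unique $\gamma\in\boldsymbol{E}_T$ satisfying the full system~\eqref{lyntriod}. The concurrency, fixed endpoint, and second--order rows of~\eqref{lyntriod} are precisely the constraints defining $\mathbb{E}_T$, hence $\gamma\in\mathbb{E}_T$, and the remaining rows give $L_T(\gamma)=(f,b,\psi)$; this proves surjectivity. Injectivity follows because if $L_T(\gamma)=L_T(\widetilde\gamma)$ with both $\gamma,\widetilde\gamma\in\mathbb{E}_T$, then both solve~\eqref{lyntriod} with the same data, so they coincide by uniqueness in Theorem~\ref{linearexistence}.

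Finally, the estimate
\[
\|\gamma\|_{\boldsymbol{E}_T}\le C(T)\bigl(\|f\|_{L_p L_p}+\|b\|_{W_p^{\nicefrac{1}{4}-\nicefrac{1}{4p}}}+\|\psi\|_{W_p^{4-\nicefrac{4}{p}}}\bigr)
\]
from Theorem~\ref{linearexistence} is exactly continuity of $L_T^{-1}:\mathbb{F}_T\to\mathbb{E}_T$. Combined with the continuity of $L_T$ proved in the first step, this shows $L_T$ is a continuous isomorphism. I expect the only nontrivial point to be the boundary--trace bookkeeping in step (a): one must confirm that the mixed parabolic trace space for $\gamma^i_{xxx}(\cdot,0)$ is precisely the $W_p^{\nicefrac{1}{4}-\nicefrac{1}{4p}}$--space appearing in $\mathbb{F}_T$, which is why the identification in Remark~\ref{identification} is invoked. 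Everything else is a direct repackaging of Theorem~\ref{linearexistence}.
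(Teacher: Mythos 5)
Your proposal is correct and is exactly the argument the paper intends: the lemma is stated as an immediate consequence of Theorem~\ref{linearexistence}, with bijectivity coming from existence and uniqueness, continuity of $L_T^{-1}$ from the a priori estimate, and continuity of $L_T$ from the boundedness of the coefficients together with the trace continuity of $\gamma\mapsto\gamma^i_{xxx}(\cdot,0)$ into $W_p^{\nicefrac{1}{4}-\nicefrac{1}{4p}}\left((0,T);\mathbb{R}^2\right)$ (which the paper records separately as Lemma~\ref{derivativesol}, citing the same Solonnikov trace theory you invoke). No gaps.
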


\subsection{Uniform in time estimates}

To obtain uniform in time estimates we need to change norms.
\begin{teo}\label{embeddingBUC}
	Let $T$ be positive, $p\in(5,\infty)$ and $\alpha\in\left(0,1-\nicefrac{5}{p}\right]$. We have continuous embeddings
	\begin{equation*}
	W_p^{1,4}\left((0,T)\times(0,1)\right)\hookrightarrow BUC\left([0,T];W_p^{4-\nicefrac{4}{p}}\left((0,1)\right)\right)\hookrightarrow BUC\left([0,T];C^{3+\alpha}\left([0,1]\right)\right)\,.
	\end{equation*}
\end{teo}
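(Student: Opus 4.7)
The plan is to recognise both inclusions as instances of classical results from the theory of anisotropic Sobolev spaces, so that the proof reduces to quoting (i) a trace/interpolation theorem for vector--valued Bochner spaces and (ii) the one--dimensional Sobolev embedding for fractional--order spaces.

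For the first embedding I would apply the following well--known fact: if $X_1\hookrightarrow X_0$ is a continuous dense embedding of Banach spaces and $I=(0,T)$, then every element of $W^1_p(I;X_0)\cap L_p(I;X_1)$ admits a representative in $BUC\left(\bar I;(X_0,X_1)_{1-\nicefrac{1}{p},p}\right)$ with continuous inclusion, where $(\cdot,\cdot)_{\theta,p}$ denotes the real interpolation functor. Specialising to $X_0=L_p(0,1)$ and $X_1=W_p^4(0,1)$ yields
\begin{equation*}
W_p^{1,4}\left((0,T)\times(0,1)\right)\hookrightarrow BUC\bigl([0,T];\,(L_p(0,1),W_p^4(0,1))_{1-\nicefrac{1}{p},p}\bigr).
\end{equation*}
It then remains to identify the interpolation space on the right. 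By the classical characterisation of real interpolation of Sobolev spaces,
\begin{equation*}
(L_p(0,1),W_p^4(0,1))_{1-\nicefrac{1}{p},p}=B_{pp}^{4-\nicefrac{4}{p}}(0,1),
\end{equation*}
and since $4-\nicefrac{4}{p}\in(3,4)$ for $p\in(5,\infty)$ is non--integer, this Besov space agrees with the Sobolev--Slobodeckij space $W_p^{4-\nicefrac{4}{p}}(0,1)$ (as already noted in the excerpt preceding the theorem).

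For the second embedding I would invoke the one--dimensional Sobolev embedding theorem in its fractional form: $W_p^s(0,1)\hookrightarrow C^{m+\beta}([0,1])$ holds whenever $s-\nicefrac{1}{p}\geq m+\beta$ with $m\in\mathbb{N}_0$ and $m+\beta$ non--integer. With $s=4-\nicefrac{4}{p}$ we have $s-\nicefrac{1}{p}=4-\nicefrac{5}{p}\in(3,4)$ for $p\in(5,\infty)$, so choosing $m=3$ we may take any $\beta=\alpha\in\bigl(0,1-\nicefrac{5}{p}\bigr]$, with the endpoint admissible precisely because $4-\nicefrac{5}{p}$ is itself non--integer. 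This yields the stated inclusion into $BUC\bigl([0,T];C^{3+\alpha}([0,1])\bigr)$ by composing with the first embedding.

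The only real obstacle is verifying that the two identifications (the interpolation identity and the closed--endpoint Sobolev embedding) hold exactly in the parameter range $p\in(5,\infty)$ dictated by the short time existence theory, i.e.\ making sure that all relevant exponents stay away from integer values where the characterisation of real interpolation and the sharp Sobolev embedding become more delicate. Since $p>5$ guarantees $4-\nicefrac{4}{p}\notin\mathbb{Z}$ and $4-\nicefrac{5}{p}\notin\mathbb{Z}$, both steps go through, and the theorem follows by citing the relevant results from, e.g., the monographs of Triebel, Amann, or Lunardi.
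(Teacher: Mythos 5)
Your proposal is correct and follows essentially the same route as the paper: the authors obtain the first embedding by citing the trace-space result \cite[Chapter III, Theorem 4.10.2]{Amann} (which is precisely the statement $W^1_p(I;X_0)\cap L_p(I;X_1)\hookrightarrow BUC\bigl(\bar I;(X_0,X_1)_{1-\nicefrac{1}{p},p}\bigr)$ together with the identification of the interpolation space that you spell out), and the second by the Sobolev Embedding Theorem \cite[Theorem 4.12]{AdamsFournier}. Your additional care about the non-integrality of $4-\nicefrac{4}{p}$ and $4-\nicefrac{5}{p}$ for $p\in(5,\infty)$ is exactly the point that makes both identifications unproblematic.
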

\begin{proof}
The first embedding follows from~\cite[Chapter III, Theorem 4.10.2]{Amann}, the second is an immediate consequence of the Sobolev Embedding Theorem, see for example~\cite[Theorem 4.12]{AdamsFournier}.
\end{proof}
\begin{prop}\label{hölderspacetime}
	Let $T$ be positive, $p\in(5,\infty)$ and $\theta\in\left(\frac{1+\nicefrac{1}{p}}{4-\nicefrac{4}{p}},1\right)$. Then
	\begin{equation*}
	W_p^{1,4}\left((0,T)\times(0,1)\right)\hookrightarrow C^{(1-\theta)\left(1-\nicefrac{1}{p}\right)}\left([0,T];C^{1}\left([0,1]\right)\right)
	\end{equation*}
	with continuous embedding.
\end{prop}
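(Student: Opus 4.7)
The plan is to combine the spatial regularity already provided by Theorem~\ref{embeddingBUC} with a Gagliardo--Nirenberg type interpolation inequality in space and with the $L_p$-in-time control on $u_t$. The key observation is that the assumption on $\theta$ is precisely the threshold ensuring that the intermediate Sobolev--Slobodeckij space $W_p^{\theta(4-\nicefrac{4}{p})}(0,1)$ embeds into $C^1([0,1])$.

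First, I would verify the two static ingredients. Since $\theta(4-\nicefrac{4}{p})>1+\nicefrac{1}{p}$ for $\theta$ in the stated range, the Sobolev embedding yields
$$W_p^{\theta(4-\nicefrac{4}{p})}\left((0,1)\right)\hookrightarrow C^1\left([0,1]\right).$$
Second, interpolating (either complex or real, $p$-summability) between $L_p(0,1)$ and $W_p^{4-\nicefrac{4}{p}}(0,1)$ gives the standard estimate
$$\|f\|_{W_p^{\theta(4-\nicefrac{4}{p})}}\le C\|f\|_{W_p^{4-\nicefrac{4}{p}}}^{\theta}\|f\|_{L_p}^{1-\theta}.$$
Combining the two yields the pointwise inequality
$$\|f\|_{C^1([0,1])}\le C\|f\|_{W_p^{4-\nicefrac{4}{p}}}^{\theta}\|f\|_{L_p}^{1-\theta}$$
for every $f\in W_p^{4-\nicefrac{4}{p}}(0,1)$.

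Next, given $u\in W_p^{1,4}((0,T)\times(0,1))$ and $0\le s<t\le T$, I would apply this inequality to $u(t)-u(s)$ and bound the two factors separately. The $W_p^{4-\nicefrac{4}{p}}$-factor is controlled uniformly by Theorem~\ref{embeddingBUC}:
$$\|u(t)-u(s)\|_{W_p^{4-\nicefrac{4}{p}}}\le 2\|u\|_{BUC([0,T];W_p^{4-\nicefrac{4}{p}})}\le C\|u\|_{W_p^{1,4}}.$$
The $L_p$-factor is handled by writing $u(t)-u(s)=\int_s^t u_\tau\,d\tau$, Minkowski and Hölder in time:
$$\|u(t)-u(s)\|_{L_p}\le (t-s)^{1-\nicefrac{1}{p}}\|u_t\|_{L_p((0,T);L_p)}\le (t-s)^{1-\nicefrac{1}{p}}\|u\|_{W_p^{1,4}}.$$
Multiplying the two bounds with exponents $\theta$ and $1-\theta$ respectively gives
$$\|u(t)-u(s)\|_{C^1([0,1])}\le C(t-s)^{(1-\theta)(1-\nicefrac{1}{p})}\|u\|_{W_p^{1,4}},$$
which is exactly the claimed Hölder estimate.

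The only mild obstacle is justifying the interpolation inequality above uniformly in $\theta$, because $\theta(4-\nicefrac{4}{p})$ may occasionally be an integer and the Sobolev--Slobodeckij scale has to be replaced by the equivalent Besov scale there; this is routine and, if one wishes, can be avoided by perturbing $\theta$ slightly (the stated Hölder exponent depends continuously on $\theta$). All other steps are standard and once both ingredients are in place the Hölder estimate follows directly.
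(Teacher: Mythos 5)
Your argument is correct and is essentially the paper's own proof with the abstract steps unpacked: the paper likewise combines the bound $\left\lVert u(t)-u(s)\right\rVert_{L_p}\leq (t-s)^{1-\nicefrac{1}{p}}\left\lVert u_t\right\rVert_{L_p(L_p)}$ (quoted from Simon's Corollary~26 rather than derived by hand), the $BUC\left([0,T];W_p^{4-\nicefrac{4}{p}}\right)$ bound from Theorem~\ref{embeddingBUC}, the real interpolation identity $W_p^{\theta(4-\nicefrac{4}{p})}=\left(L_p;W_p^{4-\nicefrac{4}{p}}\right)_{\theta,p}$, and the Sobolev embedding into $C^1$, differing only in that it first lands in $C^{(1-\theta)(1-\nicefrac{1}{p})}\left([0,T];W_p^{\theta(4-\nicefrac{4}{p})}\right)$ and embeds in space afterwards. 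Your remark about perturbing $\theta$ to avoid integer smoothness is a legitimate way to sidestep the Besov-scale caveat, since decreasing $\theta$ within the admissible range only strengthens the Hölder exponent.
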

\begin{proof}
By~\cite[Corollary~26]{Simon}, $W_p^1\left((0,T);L_p\left((0,1)\right)\right)\hookrightarrow C^{1-\nicefrac{1}{p}}\left([0,T];L_p\left((0,1)\right)\right)$. A direct calculation shows that for all Banach spaces $X_0$, $X_1$ and $Y$ such that $X_0\cap X_1\subset Y$ and $\left\lVert y\right\rVert_Y\leq C\left\lVert y\right\rVert_{X_0}^{1-\theta}\left\lVert y\right\rVert^{\theta}_{X_1}$
for all $y\in X_0\cap X_1$, one has the continuous embedding
\begin{equation*}
BUC\left([0,T];X_1\right)\cap C^{\alpha}\left([0,T];X_0\right)\hookrightarrow C^{(1-\theta)\alpha}\left([0,T];Y\right)\,.
\end{equation*}
For all $\theta\in(0,1)$ the real interpolation method gives 
\begin{equation*}
W_p^{\theta\left(4-\nicefrac{4}{p}\right)}\left((0,1)\right)=\left(L_p\left((0,1)\right);W_p^{4-\nicefrac{4}{p}}\left((0,1)\right)\right)_{\theta,p}
\end{equation*}
and hence using Theorem~\ref{embeddingBUC} the arguments above imply for all $\theta\in(0,1)$,
\begin{equation*}
		W_p^{1,4}\left((0,T)\times(0,1)\right)\hookrightarrow C^{(1-\theta)\left(1-\nicefrac{1}{p}\right)}\left([0,T];W_p^{\theta\left(4-\nicefrac{4}{p}\right)}\left((0,1)\right)\right)\,.
\end{equation*}
The assertion now follows from the Sobolev Embedding Theorem.
\end{proof}
\begin{cor}\label{equivalent norm $E_T$}
	Let $p\in(5,\infty)$. For every $T>0$,
	\begin{equation*}
	\vertiii{g}_{W_p^{1,4}\left((0,T)\times(0,1)\right)}:=\left\lVert g\right\rVert_{W_p^{1,4}\left((0,T)\times(0,1)\right)}+\left\lVert g(0)\right\rVert_{W_p^{4-\nicefrac{4}{p}}\left((0,1)\right)}
	\end{equation*}
	defines a norm on $W_p^{1,4}\left((0,T)\times(0,1)\right)$ that is equivalent to the usual one.
\end{cor}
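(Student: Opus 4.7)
The proof is essentially immediate from Theorem~\ref{embeddingBUC}, so the task is just to spell out the two directions of the equivalence. The fact that $\vertiii{\cdot}$ is a norm is straightforward: positive homogeneity and the triangle inequality both follow from the corresponding properties of $\|\cdot\|_{W_p^{1,4}}$ and $\|\cdot\|_{W_p^{4-\nicefrac{4}{p}}}$ applied summand by summand. Positive definiteness is also clear, since if $\vertiii{g}=0$ then already $\|g\|_{W_p^{1,4}((0,T)\times(0,1))}=0$, hence $g=0$.

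One direction of the equivalence is trivial: since the second summand in $\vertiii{\cdot}$ is non-negative,
\begin{equation*}
\|g\|_{W_p^{1,4}((0,T)\times(0,1))}\;\leq\;\vertiii{g}_{W_p^{1,4}((0,T)\times(0,1))}.
\end{equation*}

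For the reverse inequality, I would invoke the first embedding in Theorem~\ref{embeddingBUC}, namely
\begin{equation*}
W_p^{1,4}((0,T)\times(0,1))\;\hookrightarrow\;BUC\bigl([0,T];W_p^{4-\nicefrac{4}{p}}((0,1))\bigr),
\end{equation*}
which supplies a constant $C(T)>0$ such that
\begin{equation*}
\|g(0)\|_{W_p^{4-\nicefrac{4}{p}}((0,1))}\;\leq\;\sup_{t\in[0,T]}\|g(t)\|_{W_p^{4-\nicefrac{4}{p}}((0,1))}\;\leq\;C(T)\,\|g\|_{W_p^{1,4}((0,T)\times(0,1))}.
\end{equation*}
Adding $\|g\|_{W_p^{1,4}}$ to both sides yields $\vertiii{g}\leq(1+C(T))\,\|g\|_{W_p^{1,4}}$, completing the equivalence. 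There is no real obstacle here; the only subtlety worth flagging is that the constant $C(T)$ produced by the BUC embedding need not be uniform in $T$, which is consistent with the statement, since the corollary asserts equivalence only for each fixed $T>0$.
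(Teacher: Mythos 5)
Your proof is correct and follows exactly the route the paper intends: the corollary is stated without proof precisely because it is an immediate consequence of the first embedding in Theorem~\ref{embeddingBUC}, which bounds $\left\lVert g(0)\right\rVert_{W_p^{4-\nicefrac{4}{p}}\left((0,1)\right)}$ by $C(T)\left\lVert g\right\rVert_{W_p^{1,4}\left((0,T)\times(0,1)\right)}$, the converse inequality being trivial. Your remark that the constant need not be uniform in $T$ is also consistent with the paper, which addresses uniformity separately in Lemma~\ref{embeddingBUCimeindependent} via the extension operator.
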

\begin{lemma}\label{extensionE_T}
	Let $T$ be positive and $p\in(5,\infty)$. There exists a linear operator
	\begin{equation*}
	\boldsymbol{E}: W_p^{1,4}\left((0,T)\times(0,1)\right)\to W_p^{1,4}\left((0,\infty)\times(0,1)\right)
	\end{equation*}
	such that for all $g\in W_p^{1,4}\left((0,T)\times(0,1)\right)$, $\left(\boldsymbol{E}g\right)_{|(0,T)}=g$ and
	\begin{equation*}
	\left\lVert\boldsymbol{E}g\right\rVert_{W_p^{1,4}\left((0,\infty)\times(0,1)\right)}\leq C_p\left(\left\lVert  g\right\rVert_{W_p^{1,4}\left((0,T)\times(0,1)\right)}+\left\lVert  g(0)\right\rVert_{W_p^{4-\nicefrac{4}{p}}(0,1)}\right)=C_p\vertiii{g}_{W_p^{1,4}\left((0,T)\times(0,1)\right)}
	\end{equation*}
	with a constant $C_p$ depending only on $p$.
\end{lemma}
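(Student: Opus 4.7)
The plan is the following. The technically sensitive requirement is that the operator norm must be bounded by a constant depending only on $p$, and in particular independent of $T$. A direct extension-by-reflection approach --- set $\boldsymbol{E}g(t,\cdot) = g(2T-t,\cdot)$ on $(T,2T)$, zero on $[2T,\infty)$, multiplied by a temporal cutoff $\chi$ --- would fail precisely here, since such a $\chi$ must drop from $1$ to $0$ across $[T, 2T]$ and therefore has $\|\chi'\|_\infty \gtrsim 1/T$. The remedy, which simultaneously explains the appearance of $\|g(0)\|_{W_p^{4-\nicefrac{4}{p}}}$ on the right-hand side of the stated estimate, is to peel off the initial trace before reflecting, so that no cutoff in time is needed.

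Concretely, first I would use standard parabolic trace theory (which also underlies Theorem~\ref{embeddingBUC}, see~\cite[Chapter III]{Amann}) to obtain a bounded linear right inverse $\mathcal{L}\colon W_p^{4-\nicefrac{4}{p}}((0,1)) \to W_p^{1,4}((0,\infty)\times(0,1))$ of the initial trace map $f\mapsto f(0,\cdot)$. Set $g_0 := \mathcal{L}(g(0,\cdot))$, so that $\|g_0\|_{W_p^{1,4}((0,\infty)\times(0,1))} \le C_p\|g(0)\|_{W_p^{4-\nicefrac{4}{p}}}$ with $C_p$ depending only on $p$. Then define $h := g - g_0|_{(0,T)\times(0,1)}$, so that $h(0,\cdot)=0$ in the trace sense. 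Because $h \in C^0([0,T]; W_p^{4-\nicefrac{4}{p}})$ by Theorem~\ref{embeddingBUC}, the zero extension $\hat h$ of $h$ to $t<0$ belongs to $W_p^{1,4}((-\infty,T)\times(0,1))$ with the same norm --- the vanishing trace prevents any Dirac mass in $\partial_t\hat h$, while the $x$-derivatives are trivially controlled by a cutoff in $t$ alone.

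Next I would reflect at $t=T$: define $H(t,x) := \hat h(2T-t,x)$ for $t>T$ and $H := \hat h$ for $t\le T$. Continuity of $\hat h$ at $t=T$ implies $H\in W_p^{1,4}(\mathbb{R}\times(0,1))$ with norm at most $2\|h\|_{W_p^{1,4}((0,T)\times(0,1))}$; crucially $H \equiv 0$ for $t\ge 2T$, since there $2T-t \le 0$. The extension operator is then $\boldsymbol{E}g := H|_{(0,\infty)\times(0,1)} + g_0$, which restricts to $g$ on $(0,T)\times(0,1)$ and satisfies
\begin{equation*}
\|\boldsymbol{E}g\|_{W_p^{1,4}((0,\infty)\times(0,1))} \le 2\|h\|_{W_p^{1,4}((0,T)\times(0,1))} + C_p\|g(0)\|_{W_p^{4-\nicefrac{4}{p}}} \le C_p \vertiii{g}_{W_p^{1,4}((0,T)\times(0,1))}.
\end{equation*}
Linearity of $\boldsymbol{E}$ is inherited from the linearity of $\mathcal{L}$, the reflection, and the restriction.

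I expect the main obstacle to be the first step: producing a right inverse $\mathcal{L}$ whose operator norm is genuinely independent of the time interval $(0,\infty)$. This is standard but not immediate; one concrete realisation is to extend $g(0,\cdot)$ spatially to $\mathbb{R}$ via a bounded Stein-type extension and then apply the biharmonic heat semigroup on $\mathbb{R}$ before restricting back to $(0,1)$. Once $\mathcal{L}$ is in hand with a $p$-only bound, the remaining operations are manifestly bounded by universal multiplicative constants, which is exactly what prevents the $T$-dependence that would be picked up in a naive reflection-plus-cutoff argument.
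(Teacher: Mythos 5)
Your proposal is correct and follows essentially the same route as the paper: the paper likewise handles the case $g(0)=0$ by reflection at $t=T$ and reduces the general case to it by subtracting the maximal-regularity solution of a fourth-order linear parabolic problem with initial datum $g(0)$ (citing \cite[Proposition 3.4.3]{Prusssimonett}), which is exactly your operator $\mathcal{L}$ realised via the biharmonic heat semigroup. Your additional details — the zero extension to $t<0$ so that the reflection vanishes for $t\ge 2T$, and the explanation of why a naive cutoff would introduce a $T$-dependence — are sound elaborations of the same argument.
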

\begin{proof}
	In the case that $g(0)=0$, the function $g$ can be extended to $(0,\infty)$ by reflecting it with respect to the axis $t=T$. The general statement can be deduced from this case by solving a linear parabolic equation of fourth order and using results on maximal regularity as given in~\cite[Proposition 3.4.3]{Prusssimonett}.
\end{proof}
Applying $\boldsymbol{E}$ to every component we obtain an extension operator on $W_p^{1,4}\left((0,T)\times(0,1);\mathbb{R}^d\right)$ for $d\geq 1$. The following Lemma is an immediate consequence of the Sobolev Embedding 
Theorem~\cite[Theorem 4.12]{AdamsFournier}.
\begin{lemma}
	Let $p\in(5,\infty)$. For every positive $T$,
	\begin{equation*}
	\vertiii{b}_{W_p^{\nicefrac{1}{4}-\nicefrac{1}{4p}}\left((0,T);\mathbb{R}\right)}:=\left\lVert b\right\rVert_{W_p^{\nicefrac{1}{4}-\nicefrac{1}{4p}}\left((0,T);\mathbb{R}\right)}+\vert b(0)\vert
	\end{equation*}
	defines a norm on $W_p^{\nicefrac{1}{4}-\nicefrac{1}{4p}}\left((0,T);\mathbb{R}\right)$ that is equivalent to the usual one.
\end{lemma}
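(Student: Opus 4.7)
The plan is to derive the equivalence directly from the Sobolev Embedding Theorem for fractional spaces. Setting $s := \nicefrac{1}{4}-\nicefrac{1}{4p} = \nicefrac{(p-1)}{(4p)}$, the hypothesis $p\in(5,\infty)$ is equivalent to $sp>1$, that is $s>\nicefrac{1}{p}$: precisely the threshold at which fractional Sobolev functions on an interval are forced to be (H\"older) continuous.

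Applying~\cite[Theorem 4.12]{AdamsFournier} one obtains the continuous embedding
\begin{equation*}
W_p^{\nicefrac{1}{4}-\nicefrac{1}{4p}}\bigl((0,T);\mathbb{R}\bigr)\hookrightarrow C^{0,(p-5)/(4p)}\bigl([0,T];\mathbb{R}\bigr),
\end{equation*}
so the point evaluation $b\mapsto b(0)$ is a bounded linear functional on $W_p^{s}\bigl((0,T);\mathbb{R}\bigr)$: there exists $C=C(p,T)>0$ with $|b(0)|\leq C\,\|b\|_{W_p^{s}((0,T);\mathbb{R})}$. Combined with the trivial estimate $\|b\|_{W_p^{s}}\leq \vertiii{b}_{W_p^{s}}$, this immediately yields $\vertiii{b}_{W_p^{s}}\leq (1+C)\,\|b\|_{W_p^{s}}$ and hence equivalence of the two norms.

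The only genuinely delicate point, worth noting because the analogous statements in Corollary~\ref{equivalent norm $E_T$} and Lemma~\ref{extensionE_T} come with constants depending only on $p$, is the $T$--dependence of $C$ above. To keep the constant independent of $T$ (which is what the downstream fixed--point and extension arguments ultimately require), I would extend $b$ from $(0,T)$ to the reference interval $(0,1)$, for instance by even reflection across $t=T$ followed by a smooth cut--off, obtaining $\widetilde b$ with $\widetilde b(0)=b(0)$ and $\|\widetilde b\|_{W_p^{s}((0,1))}\leq C_p\,\|b\|_{W_p^{s}((0,T))}$; the embedding constant on the fixed interval $(0,1)$ is then a universal $C_p$, and one concludes $|b(0)|\leq C_p'\,\|b\|_{W_p^{s}((0,T))}$ with $C_p'$ depending only on $p$. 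Checking that the reflection plus cut--off does not blow up the Slobodeckij seminorm is the only estimate requiring actual computation, and is the main (and only) technical obstacle.
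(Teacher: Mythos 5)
Your proof is correct and is essentially the paper's argument: the paper derives this lemma as an immediate consequence of the Sobolev Embedding Theorem \cite[Theorem 4.12]{AdamsFournier}, exactly as in your first two paragraphs (since $sp=\nicefrac{(p-1)}{4}>1$ for $p>5$, point evaluation at $t=0$ is bounded, and the reverse inequality is trivial). Your final paragraph on making the constant independent of $T$ addresses something the lemma does not claim (the statement allows $C=C(p,T)$); the uniformity in $T$ needed downstream is obtained in the paper not through this norm equivalence but through the extension operator of Lemma~\ref{extensionboundarydata}, whose constant depends only on $p$.
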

\begin{lemma}\label{extensionboundarydata}
	Let $T$ be positive and $p\in(5,\infty)$. There exists a linear operator
	\begin{equation*}
	E:W_p^{\nicefrac{1}{4}-\nicefrac{1}{4p}}\left((0,T);\mathbb{R}\right)\to W_p^{\nicefrac{1}{4}-\nicefrac{1}{4p}}\left((0,\infty);\mathbb{R}\right)
	\end{equation*}
	such that for all $b\in W_p^{\nicefrac{1}{4}-\nicefrac{1}{4p}}\left((0,T);\mathbb{R}\right)$, $\left(Eb\right)_{|(0,T)}=b$ and
	\begin{equation*}
	\left\lVert Eb\right\rVert_{W_p^{\nicefrac{1}{4}-\nicefrac{1}{4p}}\left((0,\infty);\mathbb{R}\right)}\leq C_p\left(\left\lVert b\right\rVert_{W_p^{\nicefrac{1}{4}-\nicefrac{1}{4p}}\left((0,T);\mathbb{R}\right)}+\lvert b(0)\rvert\right)=C_p	\vertiii{b}_{W_p^{\nicefrac{1}{4}-\nicefrac{1}{4p}}\left((0,T);\mathbb{R}\right)}
	\end{equation*}
	with a constant $C_p$ depending only on $p$.
\end{lemma}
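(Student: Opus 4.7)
The proof will closely follow that of Lemma~\ref{extensionE_T}: I will first reduce to the case $b(0)=0$ by subtracting a fixed cutoff, and then extend the remainder by reflection at the right endpoint $t=T$. Throughout write $s:=\nicefrac{1}{4}-\nicefrac{1}{4p}$. Since $p>5$ we have $sp=(p-1)/4>1$, so by Sobolev embedding every element of $W_p^{s}((0,T);\mathbb{R})$ admits a well--defined pointwise trace at $t=0$, and the splitting below is meaningful.

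I would fix once and for all a cutoff $\chi\in C_c^\infty([0,\infty);\mathbb{R})$ with $\chi(0)=1$ and $\spt\chi\subset [0,1)$, and set $C_\chi:=\|\chi\|_{W_p^{s}((0,\infty);\mathbb{R})}$, a constant depending only on $p$. For the given $b$, define the residual $r(t):=b(t)-b(0)\chi(t)$ on $(0,T)$. Then $r(0)=0$ in the trace sense and
$$\|r\|_{W_p^{s}((0,T);\mathbb{R})}\leq\|b\|_{W_p^{s}((0,T);\mathbb{R})}+C_\chi|b(0)|,$$
a bound uniform in $T$. I would then extend $r$ to all of $(0,\infty)$ by
$$\widetilde r(t):=\begin{cases}r(t)&0<t<T,\\ r(2T-t)&T\leq t\leq 2T,\\ 0&t>2T,\end{cases}$$
and set $Eb:=\widetilde r+b(0)\chi$, so that manifestly $(Eb)|_{(0,T)}=b$. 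The vanishing trace $r(0)=0$ guarantees that $\widetilde r$ is continuous at $t=2T$.

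The core of the argument is the uniform bound
$$\|\widetilde r\|_{W_p^{s}((0,\infty);\mathbb{R})}\leq C_p\|r\|_{W_p^{s}((0,T);\mathbb{R})}$$
with $C_p$ independent of $T$. Splitting the Slobodeckij double integral for $\widetilde r$ into the regions $(0,T)^2$, $(T,2T)^2$, $(0,T)\times(T,2T)$ and the tail regions meeting $(2T,\infty)$, the first three contributions are controlled directly by $[r]_{s,p}^p$ via the change of variables $t'=2T-t$ together with the elementary inequality $2T-x-y'\geq|x-y'|$ for $x,y'\in(0,T)$. On the tail regions, where $\widetilde r$ vanishes, an explicit computation yields estimates of the form
$$\int_0^T\!\!\int_{2T}^\infty\frac{|r(x)|^p}{(y-x)^{sp+1}}\,dy\,dx \;=\; \frac{1}{sp}\int_0^T\frac{|r(x)|^p}{(2T-x)^{sp}}\,dx \;\leq\; \frac{1}{sp\,T^{sp}}\|r\|_{L^p(0,T)}^p,$$
and the factor $T^{-sp}$ must be absorbed. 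This is precisely where I would invoke the Hardy--type inequality for Slobodeckij functions vanishing at the origin: since $r(0)=0$ and $sp>1$, one has $\|r\|_{L^p(0,T)}^p\leq C_p\,T^{sp}[r]_{s,p}^p$, giving a $T$--independent estimate. Combining everything yields $\|Eb\|_{W_p^{s}((0,\infty);\mathbb{R})}\leq C_p(\|b\|_{W_p^{s}((0,T);\mathbb{R})}+|b(0)|)=C_p\vertiii{b}$. The main obstacle I foresee is exactly this Hardy step, which is also the place where the standing assumption $p>5$ (equivalently $sp>1$) is essential.
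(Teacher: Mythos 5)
Your proof is correct and follows essentially the same route as the paper's (two--line) argument: reduce to the case $b(0)=0$ using a right inverse of the temporal trace at $t=0$ --- your $b(0)\chi$ is exactly such a right inverse --- and then extend the zero--trace remainder by reflection about $t=T$. The only point to watch is that the tail region with $x\in(T,2T)$, $y>2T$ produces $\tfrac{1}{sp}\int_0^T|r(u)|^p\,u^{-sp}\,\mathrm{d}u$ rather than $T^{-sp}\|r\|_{L^p}^p$, so there you need the full fractional Hardy inequality $\int_0^T|r(u)|^p\,u^{-sp}\,\mathrm{d}u\le C_p\,[r]_{s,p}^p$ (valid since $sp>1$ and $r(0)=0$) and not merely the Poincar\'e--type corollary you display; this is still delivered by the hypothesis you invoke.
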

\begin{proof}
In the case $b(0)=0$ the operator obtained by reflecting the function with respect to the axis $t=T$ has the desired properties. The general statement can be deduced from this case using surjectivity of the temporal trace
$
_{|t=0}:W_p^{\nicefrac{1}{4}-\nicefrac{1}{4p}}\left((0,\infty);\mathbb{R}\right)\to\mathbb{R}\,.
$
\end{proof}
As a consequence for every positive $T$ the spaces $\mathbb{E}_T$ and $\mathbb{F}_T$ endowed with the norms
\begin{equation*}
\vertiii{\gamma}_{\mathbb{E}_T}:=\vertiii{\gamma}_{W_p^{1,4}\left((0,T)\times(0,1);(\mathbb{R}^2)^3\right)}=\left\lVert\gamma\right\rVert_{W_p^{1,4}\left((0,T)\times(0,1);(\mathbb{R}^2)^3\right)}+\left\lVert\gamma(0)\right\rVert_{W_p^{4-\nicefrac{4}{p}}\left((0,1);(\mathbb{R}^2)^3\right)}
\end{equation*}
and
\begin{equation*}
\vertiii{(f,b,\psi)}_{\mathbb{F}_T}:=\left\lVert f\right\rVert_{L_p\left((0,T);L_p((0,1);(\mathbb{R}^2)^3)\right)}+\vertiii{b}_{W_p^{\nicefrac{1}{4}-\nicefrac{1}{4p}}\left((0,T);\mathbb{R}^2\right)}+\left\lVert\psi\right\rVert_{W_p^{4-\nicefrac{4}{p}}\left((0,1);(\mathbb{R}^2)^3\right)}\,,
\end{equation*}
respectively, are Banach spaces. Given a linear operator $A:\mathbb{F}_T\to\mathbb{E}_T$ we let
\begin{equation*}
\vertiii{A}_{\mathcal{L}\left(\mathbb{F}_T,\mathbb{E}_T\right)}:=\sup\{\vertiii{A(f,b,\psi)}_{\mathbb{E}_T}:(f,b,\psi)\in\mathbb{F}_T,\vertiii{(f,b,\psi)}_{\mathbb{F}_T}\leq 1\}\,.
\end{equation*}
\begin{lemma}\label{Luniformlybounded}
Let $p\in (5,\infty)$. For all $T_0>0$ there exists a constant $c(T_0,p)$ such that
\begin{equation*}
\sup_{T\in (0,T_0]}\vertiii{L_T^{-1}}_{\mathcal{L}(\mathbb{F}_T,\mathbb{E}_T)}\leq c(T_0,p)\,.
\end{equation*}
\end{lemma}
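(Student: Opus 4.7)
The plan is, for fixed $T\in(0,T_0]$ and data $(f,b,\psi)\in\mathbb{F}_T$ with $\vertiii{(f,b,\psi)}_{\mathbb{F}_T}\leq 1$, to produce a solution $\gamma\in\mathbb{E}_T$ of $L_T\gamma=(f,b,\psi)$ with $\vertiii{\gamma}_{\mathbb{E}_T}\leq c(T_0,p)$ by the classical extension--solve--restrict strategy: extend the data from $(0,T)$ to the \emph{fixed} interval $(0,T_0)$, apply $L_{T_0}^{-1}$ there (whose operator norm is the \emph{fixed} constant $C(T_0)$ provided by Theorem~\ref{linearexistence}), and then restrict the resulting solution back to $(0,T)$.

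For the extension step I would set $\tilde f$ to be the extension of $f$ by zero to $(0,T_0)$, so that $\|\tilde f\|_{L_p((0,T_0);L_p((0,1);(\mathbb{R}^2)^3))}=\|f\|_{L_p((0,T);L_p((0,1);(\mathbb{R}^2)^3))}$; I would extend $b$ componentwise via the operator supplied by Lemma~\ref{extensionboundarydata}, obtaining $\tilde b$ with $\tilde b(0)=b(0)$ and $\|\tilde b\|_{W_p^{\nicefrac{1}{4}-\nicefrac{1}{4p}}((0,T_0);\mathbb{R}^2)}\leq C_p\,\vertiii{b}_{W_p^{\nicefrac{1}{4}-\nicefrac{1}{4p}}((0,T);\mathbb{R}^2)}$; and I would leave $\psi$ unchanged. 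Because the temporal trace of $\tilde b$ at $t=0$ equals $b(0)$, the linear compatibility conditions of Definition~\ref{lincompcond} relative to $\tilde b$ coincide with those relative to $b$, so $\psi$ still satisfies them and $(\tilde f,\tilde b,\psi)\in\mathbb{F}_{T_0}$.

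Next I would set $\tilde\gamma:=L_{T_0}^{-1}(\tilde f,\tilde b,\psi)\in\mathbb{E}_{T_0}$ and $\gamma:=\tilde\gamma\big|_{(0,T)\times(0,1)}\in\mathbb{E}_T$, which satisfies $L_T\gamma=(f,b,\psi)$. Since restriction to a subinterval does not increase the $W_p^{1,4}$ norm and $\gamma(0,\cdot)=\tilde\gamma(0,\cdot)=\psi$, one has
\[
\vertiii{\gamma}_{\mathbb{E}_T}\leq \|\tilde\gamma\|_{W_p^{1,4}((0,T_0)\times(0,1))}+\|\psi\|_{W_p^{4-\nicefrac{4}{p}}}\leq \bigl(C(T_0)+1\bigr)\Bigl(\|\tilde f\|_{L_p}+\|\tilde b\|_{W_p^{\nicefrac{1}{4}-\nicefrac{1}{4p}}}+\|\psi\|_{W_p^{4-\nicefrac{4}{p}}}\Bigr),
\]
and combining this with the extension bounds and the trivial estimate $\|b\|\leq\vertiii{b}$ yields $\vertiii{\gamma}_{\mathbb{E}_T}\leq c(T_0,p)\,\vertiii{(f,b,\psi)}_{\mathbb{F}_T}$ with a constant $c(T_0,p)$ independent of $T\in(0,T_0]$.

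The key subtlety, and precisely the reason the $\vertiii{\cdot}$ norms were introduced in this section, is that the naive extension operator from $W_p^{\nicefrac{1}{4}-\nicefrac{1}{4p}}((0,T);\mathbb{R})$ to $W_p^{\nicefrac{1}{4}-\nicefrac{1}{4p}}((0,\infty);\mathbb{R})$ has operator norm that degenerates as $T\to 0$; Lemma~\ref{extensionboundarydata} bypasses this degeneracy by bounding the extension against the augmented norm $\vertiii{b}$, which incorporates $|b(0)|$ explicitly. I expect this to be the only real obstacle: once the extension of $b$ is controlled uniformly in $T\in(0,T_0]$, the remaining steps follow from the $T$-independent boundedness of $L_{T_0}^{-1}$ and the contractivity of restriction.
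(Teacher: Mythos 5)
Your proposal is correct and follows essentially the same route as the paper's proof: extend $f$ by zero and $b$ via the operator of Lemma~\ref{extensionboundarydata} to the fixed interval $(0,T_0)$, apply $L_{T_0}^{-1}$ with its $T$--independent bound, restrict back, and control everything through the augmented norms $\vertiii{\cdot}$. You also correctly identify the uniform-in-$T$ control of the extension of $b$ as the crux, which is exactly why those norms are introduced in this section.
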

\begin{proof}
Let $T\in (0,T_0]$ be arbitrary, $\left(f,b,\psi\right)\in\mathbb{F}_T$ and $E_{T_0}b:=\left(Eb\right)_{|(0,T_0)}$ where $E$ is the extension operator defined in Lemma~\ref{extensionboundarydata}. Extending $f$ by $0$ to $E_{T_0}f\in L_p\left((0,T_0);L_p\left((0,1)\right)\right)$ we observe that $\left(E_{T_0}f,E_{T_0}b,\psi\right)$ lies in $\mathbb{F}_{T_0}$. As $L_T$ and $L_{T_0}$ are isomorphisms, there exist unique $\gamma\in\mathbb{E}_T$ and $\tilde{\gamma}\in\mathbb{E}_{T_0}$ such that $L_T\gamma=(f,b,\psi)$ and $L_{T_0}\widetilde{\gamma}=\left(E_{T_0}f,E_{T_0}b,\psi\right)$ satisfying
\begin{equation*}
L_T\gamma=(f,b,\psi)=\left(E_{T_0}f,E_{T_0}b,\psi\right)_{|(0,T)}=\left(L_{T_0}\widetilde{\gamma}\right)_{|(0,T)}=L_T\left(\widetilde{\gamma}_{|(0,T)}\right)
\end{equation*}
and thus $\gamma=\widetilde{\gamma}_{|(0,T)}$. Using Lemma~\ref{Lisomorphism} and the equivalence of norms on $\mathbb{E}_{T_0}$ this implies
\begin{align*}
\vertiii{L_T^{-1}\left(f,b,\psi\right)}_{\mathbb{E}_T}&=\vertiii{\left(L_{T_0}^{-1}\left(E_{T_0}f,E_{T_0}b,\psi\right)\right)_{|(0,T)}}_{\mathbb{E}_T}\leq \vertiii{L_{T_0}^{-1}\left(E_{T_0}f,E_{T_0}b,\psi\right)}_{\mathbb{E}_{T_0}}\\
&\leq c\left(T_0,p\right)\left\lVert L_{T_0}^{-1}\left(E_{T_0}f,E_{T_0}b,\psi\right)\right\rVert_{\mathbb{E}_{T_0}}\leq c\left(T_0,p\right)\left\lVert\left(E_{T_0}f,E_{T_0}b,\psi\right)\right\rVert_{\mathbb{F}_{T_0}}\\
&\leq c\left(T_0,p\right)\left(\left\lVert f\right\rVert_{L_p\left((0,T);L_p\left((0,1)\right)\right)}+\vertiii{b}_{W_p^{\nicefrac{1}{4}-\nicefrac{1}{4p}}\left((0,T);\mathbb{R}\right)}+\left\lVert\psi\right\rVert_{W_p^{4-\nicefrac{4}{p}}\left((0,1);\mathbb{R}\right)}\right)\,.
\end{align*}
\end{proof}
\begin{lemma}\label{embeddingBUCimeindependent}
	Let $p\in(5,\infty)$ and $T_0$ be positive. There exist constants $C(p)$ and $C\left(T_0,p\right)$ such that for all $T\in (0,T_0]$ and all $g\in W_p^{1,4}\left((0,T)\times(0,1)\right)$,
	\begin{equation*}
	\left\lVert g\right\rVert_{BUC\left([0,T];C^3\left([0,1]\right)\right)}\leq C(p)\left\lVert g\right\rVert_{BUC\left([0,T];W_p^{4-\nicefrac{4}{p}}\left((0,1)\right)\right)}\leq C\left(T_0,p\right)\vertiii{g}_{W_p^{1,4}\left((0,T)\times(0,1)\right)}\,.
	\end{equation*}
\end{lemma}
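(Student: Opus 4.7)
The statement splits into two independent inequalities, both requiring constants that do not depend on $T \in (0, T_0]$.

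For the first inequality, the plan is to apply the classical Sobolev embedding $W_p^{4-\nicefrac{4}{p}}((0,1)) \hookrightarrow C^3([0,1])$ pointwise in time. Since $p > 5$ one computes $4 - \nicefrac{4}{p} - \nicefrac{1}{p} = 4 - \nicefrac{5}{p} > 3$, so the embedding holds (in fact into $C^{3+\alpha}$ for some $\alpha > 0$), as already recorded in the Remark after the definition of the Slobodeckij space. Because the spatial interval $(0,1)$ is fixed, the embedding constant depends only on $p$; taking the supremum over $t \in [0,T]$ on both sides gives the first inequality with constant $C(p)$.

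For the second inequality, the obstruction to using Theorem~\ref{embeddingBUC} directly is that its embedding constant in principle depends on the length of the time interval. The idea is to reduce to a fixed time interval by means of the extension operator $\boldsymbol{E}$ constructed in Lemma~\ref{extensionE_T}. Given $g \in W_p^{1,4}((0,T)\times(0,1))$, set $\tilde{g} := \boldsymbol{E}g \in W_p^{1,4}((0,\infty)\times(0,1))$, which satisfies
\begin{equation*}
\|\tilde{g}\|_{W_p^{1,4}((0,\infty)\times(0,1))} \leq C_p \vertiii{g}_{W_p^{1,4}((0,T)\times(0,1))}\,.
\end{equation*}
Restricting to the fixed interval $(0,T_0)$ and applying Theorem~\ref{embeddingBUC} on that fixed interval yields
\begin{equation*}
\|\tilde{g}\|_{BUC([0,T_0]; W_p^{4-\nicefrac{4}{p}}((0,1)))} \leq C(T_0,p) \|\tilde{g}\|_{W_p^{1,4}((0,T_0)\times(0,1))} \leq C(T_0,p)\, C_p\, \vertiii{g}_{W_p^{1,4}((0,T)\times(0,1))}\,.
\end{equation*}
Since $g = \tilde{g}_{|(0,T)}$ and $[0,T] \subseteq [0,T_0]$, the $BUC$ norm on $[0,T]$ is dominated by the $BUC$ norm on $[0,T_0]$, yielding the desired bound.

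The only slightly delicate point is the reliance on the extension operator, but this has already been provided by Lemma~\ref{extensionE_T}; beyond that, the argument is a routine chaining of a time-independent Sobolev embedding in space with the maximal-regularity-based extension that trades the $T$-dependence of the embedding constant for dependence on the fixed reference time $T_0$.
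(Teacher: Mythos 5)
Your proposal is correct and follows essentially the same route as the paper: the first inequality via the time-independent Sobolev embedding $W_p^{4-\nicefrac{4}{p}}((0,1))\hookrightarrow C^3([0,1])$ applied pointwise in time, and the second by extending $g$ with the operator $\boldsymbol{E}$ from Lemma~\ref{extensionE_T}, restricting to the fixed interval $(0,T_0)$, and invoking Theorem~\ref{embeddingBUC} there. No gaps.
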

\begin{proof}
 Let $T\in (0,T_0]$ be arbitrary, $g\in W_p^{1,4}\left((0,T)\times(0,1)\right)$ and $\boldsymbol{E}g$ the extension according to Lemma~\ref{extensionE_T}. Then $\left(\boldsymbol{E}g\right)_{|\left(0,T_0\right)}$ lies in $W_p^{1,4}\left(\left(0,T_0\right)\times(0,1)\right)$ and Theorem~\ref{embeddingBUC} implies
 \begin{align*}
 \left\lVert g\right\rVert_{BUC\left([0,T];W_p^{4-\nicefrac{4}{p}}\left((0,1)\right)\right)}&\leq \left\lVert \left(\boldsymbol{E}g\right)_{|\left(0,T_0\right)}\right\rVert_{BUC\left(\left[0,T_0\right];W_p^{4-\nicefrac{4}{p}}\left((0,1)\right)\right)}\\
 &\leq C\left(T_0,p\right)\left\lVert\left(\boldsymbol{E}g\right)_{|\left(0,T_0\right)}\right\rVert_{W_p^{1,4}\left(\left(0,T_0\right)\times(0,1)\right)}\\
 &\leq C(T_0,p)\left\lVert\boldsymbol{E}g\right\rVert_{W_p^{1,4}\left((0,\infty)\times(0,1)\right)}\leq C\left(T_0,p\right)\vertiii{g}_{W_p^{1,4}\left((0,T)\times(0,1)\right)}\,.
 \end{align*}
 The first inequality follows from the Sobolev Embedding Theorem.
\end{proof}
\begin{lemma}\label{uniformcalphac1}
	Let $p\in(5,\infty)$, $\theta\in\left(\frac{1+\nicefrac{1}{p}}{4-\nicefrac{4}{p}},1\right)$ and $T_0$ be positive. There exists a constant $C\left(T_0,p,\theta\right)$ such that for all $T\in(0,T_0]$ and all $g\in W_p^{1,4}\left((0,T)\times(0,1)\right)$,
	\begin{equation*}
	\left\lVert g\right\rVert_{C^{(1-\theta)\left(1-\nicefrac{1}{p}\right)}\left([0,T];C^1\left([0,1]\right)\right)}\leq C\left(T_0,p,\theta\right)\vertiii{g}_{W_p^{1,4}\left((0,T)\times(0,1)\right)}\,.
	\end{equation*}
\end{lemma}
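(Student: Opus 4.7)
The plan is to reduce the time-uniform estimate on the varying interval $(0,T)$ to a single fixed-interval estimate on $(0,T_0)$ via the extension operator $\boldsymbol{E}$ constructed in Lemma~\ref{extensionE_T}. The key point is that the norm $\vertiii{\cdot}_{W_p^{1,4}}$ was designed precisely so that extension is bounded independently of $T\in(0,T_0]$, which is what will transfer the $T$-independence to the H\"older-in-time norm.

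First, given $T\in(0,T_0]$ and $g\in W_p^{1,4}((0,T)\times(0,1))$, I would set $\widetilde{g}:=\bigl(\boldsymbol{E}g\bigr)_{|(0,T_0)\times(0,1)}\in W_p^{1,4}((0,T_0)\times(0,1))$. Since restriction of functions to a subinterval does not increase H\"older seminorms, I have
\begin{equation*}
\left\lVert g\right\rVert_{C^{(1-\theta)(1-\nicefrac{1}{p})}([0,T];C^1([0,1]))}\le \left\lVert\widetilde{g}\right\rVert_{C^{(1-\theta)(1-\nicefrac{1}{p})}([0,T_0];C^1([0,1]))}.
\end{equation*}
Next, applying Proposition~\ref{hölderspacetime} on the fixed interval $(0,T_0)$, there is a constant $C(T_0,p,\theta)$ (depending only on $T_0,p,\theta$, not on $T$) with
\begin{equation*}
\left\lVert\widetilde{g}\right\rVert_{C^{(1-\theta)(1-\nicefrac{1}{p})}([0,T_0];C^1([0,1]))}\le C(T_0,p,\theta)\,\left\lVert\widetilde{g}\right\rVert_{W_p^{1,4}((0,T_0)\times(0,1))}.
\end{equation*}

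Finally, monotonicity of the $L_p$ norm in the temporal domain together with the extension estimate from Lemma~\ref{extensionE_T} yields
\begin{equation*}
\left\lVert\widetilde{g}\right\rVert_{W_p^{1,4}((0,T_0)\times(0,1))}\le \left\lVert\boldsymbol{E}g\right\rVert_{W_p^{1,4}((0,\infty)\times(0,1))}\le C_p\,\vertiii{g}_{W_p^{1,4}((0,T)\times(0,1))}.
\end{equation*}
Chaining the three inequalities delivers the claim with $C(T_0,p,\theta):=C(T_0,p,\theta)\cdot C_p$.

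There is no substantial obstacle: all the analytical content (real interpolation, time-H\"older embedding, surjectivity of the temporal trace used to build $\boldsymbol{E}$) has been packaged into the preceding lemmas. The only thing one must be careful about is that the extension constant $C_p$ from Lemma~\ref{extensionE_T} truly does not depend on $T$, which is exactly what is ensured by using the modified norm $\vertiii{\cdot}$ on the left-hand side rather than the ordinary $W_p^{1,4}$ norm; this is what makes the composite estimate uniform over $T\in(0,T_0]$.
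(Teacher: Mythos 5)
Your proof is correct and follows essentially the same route as the paper: extend $g$ by the operator $\boldsymbol{E}$ of Lemma~\ref{extensionE_T}, restrict to the fixed interval $(0,T_0)$, apply the embedding of Proposition~\ref{hölderspacetime} there, and conclude with the $T$-independent extension bound in the $\vertiii{\cdot}$ norm. No gaps.
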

\begin{proof}
	Similarly to the previous proof it holds for $T\in (0,T_0]$ and $g\in W_p^{1,4}\left((0,T)\times(0,1)\right)$,
	\begin{align*}
		\left\lVert g\right\rVert_{C^{(1-\theta)\left(1-\nicefrac{1}{p}\right)}\left([0,T];C^1\left([0,1]\right)\right)}&\leq \left\lVert \left(\boldsymbol{E}g\right)_{|\left(0,T_0\right)}\right\rVert_{C^{(1-\theta)\left(1-\nicefrac{1}{p}\right)}\left(\left[0,T_0\right];C^1\left([0,1]\right)\right)}\\
		&\leq C\left(T_0,p,\theta\right)\left\lVert\left(\boldsymbol{E}g\right)_{|\left(0,T_0\right)}\right\rVert_{W_p^{1,4}\left(\left(0,T_0\right)\times(0,1)\right)}\\
		&\leq C\left(T_0,p,\theta\right)\vertiii{g}_{W_p^{1,4}\left((0,T)\times(0,1)\right)}\,.
	\end{align*}
\end{proof}

For $T$ positive and $p\in(5,\infty)$ we consider the complete metric spaces
\begin{align*}
\mathbb{E}^\varphi_T:=&\left\{\gamma\in 
\mathbb{E}_T\,\text{such that }\,\gamma_{\vert t=0}=\varphi\right\}\,,\\
\mathbb{F}^\varphi_T:=&\left\{(f,b)\,\text{such that }\, (f,b,\varphi)\in\mathbb{F}_{T} \right\}
\times\left\{\varphi\right\}
\,.
\end{align*}
Given a positive time $T$ and a radius $M$ we let 
\begin{equation*}
\overline{B_M}:=\left\{\gamma\in\boldsymbol{E}_T:\vertiii{\gamma}_{\mathbb{E}_T}\leq M\right\}\,.
\end{equation*}
\begin{lemma}\label{curveregular}
	Let $p\in(5,\infty)$ and $T_0$, $M$ be positive and
	\begin{equation*}
	\boldsymbol{c}:=\frac{1}{2}\min_{i\in\{1,2,3\}} \inf_{x\in[0,1]}\vert\varphi^i_x(x)\vert >0\,.
	\end{equation*} 
	There exists a time $\widetilde{T}\left(\boldsymbol{c},M\right)\in (0,T_0]$ 
	such that for all $\gamma\in \mathbb{E}_{T}^\varphi\cap\overline{B_M}$ with $T\in(0,\widetilde{T}\left(\boldsymbol{c},M\right)]$ and all $i\in\{1,2,3\}$ we have
	\begin{equation*}
	\inf_{t\in[0,T],x\in[0,1]}\left\lvert \gamma^i_x(t,x)\right\rvert \geq \boldsymbol{c}\,.
	\end{equation*}
	In particular the curves $\gamma^i(t)$ are regular for all $t\in [0,T]$. 
	Moreover, given a polynomial $\mathfrak{p}$ in $\vert\gamma^i_x\vert^{-1}$ 
	there exists a constant $C^i>0$ depending on $\boldsymbol{c}$ such that
	\begin{equation*}
	\sup_{t\in[0,T],x\in[0,1]}\left\lvert\mathfrak{p}\left(
	\frac{1}{\vert\gamma^i_x\vert}\right)\right\rvert\leq C^i\left(\boldsymbol{c}\right)\,.
	\end{equation*}
\end{lemma}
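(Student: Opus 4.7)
The plan is to exploit the time--Hölder regularity given by Lemma~\ref{uniformcalphac1} to compare $\gamma^i_x(t,x)$ with the initial datum $\varphi^i_x(x)=\gamma^i_x(0,x)$ on a short enough time interval. Concretely, fix $\theta\in\bigl(\tfrac{1+1/p}{4-4/p},1\bigr)$ and set $\alpha:=(1-\theta)(1-1/p)>0$. Lemma~\ref{uniformcalphac1} yields a constant $C=C(T_0,p,\theta)$ such that for every $T\in(0,T_0]$ and every $\gamma\in W_p^{1,4}((0,T)\times(0,1);(\mathbb{R}^2)^3)$,
\begin{equation*}
\lVert \gamma\rVert_{C^{\alpha}([0,T];C^1([0,1]))}\leq C\,\vertiii{\gamma}_{\mathbb{E}_T}\leq C\,M
\end{equation*}
whenever $\gamma\in\overline{B_M}$.

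First I would use the initial condition $\gamma(0,\cdot)=\varphi$ together with the above bound to estimate, for $i\in\{1,2,3\}$, $t\in[0,T]$ and $x\in[0,1]$,
\begin{equation*}
\bigl\lvert \gamma^i_x(t,x)-\varphi^i_x(x)\bigr\rvert
=\bigl\lvert \gamma^i_x(t,x)-\gamma^i_x(0,x)\bigr\rvert
\leq C\,M\,t^{\alpha}\leq C\,M\,T^{\alpha}.
\end{equation*}
By definition of $\boldsymbol{c}$, $\lvert\varphi^i_x(x)\rvert\geq 2\boldsymbol{c}$ on $[0,1]$, so the reverse triangle inequality gives
\begin{equation*}
\lvert \gamma^i_x(t,x)\rvert \geq 2\boldsymbol{c}-C\,M\,T^{\alpha}.
\end{equation*}
Choosing $\widetilde{T}(\boldsymbol{c},M)\in(0,T_0]$ so small that $C\,M\,\widetilde{T}(\boldsymbol{c},M)^{\alpha}\leq \boldsymbol{c}$ yields the desired uniform lower bound $\lvert\gamma^i_x(t,x)\rvert\geq\boldsymbol{c}$ for every $T\in(0,\widetilde{T}(\boldsymbol{c},M)]$, which in particular makes $\gamma^i(t,\cdot)$ a regular curve for all $t\in[0,T]$.

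For the last assertion on polynomial expressions $\mathfrak{p}(1/\lvert\gamma^i_x\rvert)$, the pointwise bound $\lvert\gamma^i_x\rvert\geq\boldsymbol{c}$ immediately implies $\lvert\gamma^i_x\rvert^{-1}\leq\boldsymbol{c}^{-1}$, and any polynomial is bounded on the bounded set $[0,\boldsymbol{c}^{-1}]$ by a constant depending only on $\boldsymbol{c}$ and the coefficients of $\mathfrak{p}$. The only delicate point in the whole argument is ensuring that the embedding constant does not blow up as $T\to 0^+$; this is precisely why we rely on the uniform--in--$T$ estimate of Lemma~\ref{uniformcalphac1}, which was proven via the extension operator $\boldsymbol{E}$ of Lemma~\ref{extensionE_T} together with the equivalent norm $\vertiii{\cdot}_{W_p^{1,4}}$ from Corollary~\ref{equivalent norm $E_T$}. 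This guarantees that $\widetilde{T}(\boldsymbol{c},M)$ can be chosen depending only on $\boldsymbol{c}$, $M$ (and the fixed $T_0,p,\theta$), as claimed.
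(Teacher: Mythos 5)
Your argument is correct and follows essentially the same route as the paper: both fix $\theta$, set $\alpha=(1-\theta)(1-\nicefrac{1}{p})$, use Lemma~\ref{uniformcalphac1} to bound $\lvert\gamma^i_x(t,x)-\gamma^i_x(0,x)\rvert\leq T^\alpha C(T_0,p)M$, apply the reverse triangle inequality against $\lvert\varphi^i_x\rvert\geq 2\boldsymbol{c}$, and choose $\widetilde{T}$ so that $C M\widetilde{T}^\alpha\leq\boldsymbol{c}$. Your additional remarks on the polynomial bound and on the uniformity of the embedding constant in $T$ are accurate and consistent with the paper's reasoning.
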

\begin{proof}
Let $\theta\in\left(\frac{1+\nicefrac{1}{p}}{4-\nicefrac{4}{p}},1\right)$ be fixed and $\alpha:=\left(1-\theta\right)\left(1-\nicefrac{1}{p}\right)$. Given $T\in (0,T_0]$ and $\gamma\in\mathbb{E}_T^\varphi\cap\overline{B_M}$ we have for $i\in\{1,2,3\}$, $t\in[0,T]$, $x\in[0,1]$,
\begin{equation*}
\vert\gamma^i_x(t,x)\vert\geq \vert\varphi^i_x(x)\vert-\vert\gamma^i_x(t,x)-\gamma^i_x(0,x)\vert\geq 2\boldsymbol{c}-\left\lVert\gamma^i(t)-\gamma^i(0)\right\rVert_{C^1\left([0,1]\right)}\,.
\end{equation*}
Lemma~\ref{uniformcalphac1} implies for all $t\in[0,T]$,
\begin{equation*}
\left\lVert\gamma^i(t)-\gamma^i(0)\right\rVert_{C^1\left([0,1]\right)}\leq t^\alpha\left\lVert\gamma^i\right\rVert_{C^\alpha\left([0,T];C^1\left([0,1]\right)\right)}\leq T^\alpha C\left(T_0,p\right)\vertiii{\gamma}_{\mathbb{E}_T}\leq T^\alpha M C\left(T_0,p\right)\,.
\end{equation*}
The claim follows considering $\widetilde{T}$ so small that $\widetilde{T}^\alpha M C\left(T_0,p\right)\leq \boldsymbol{c}$.
\end{proof}
\subsection{Contraction estimates}
In the following we let $T_0\equiv 1$ and $\widetilde{T}=\widetilde{T}(\boldsymbol{c},M)\in(0,1]$ be as in Lemma~\ref{curveregular}. Given $T\in(0,\widetilde{T}\left(\boldsymbol{c},M\right)]$ and $p\in(5,\infty)$ we consider the mappings
\begin{align*}
N_{T,1}:&
\begin{cases}
\mathbb{E}^\varphi_T\cap\overline{B_M} &\to L_p((0,T); L_p((0,1);(\mathbb{R}^2)^3))\,,\\
\gamma&\mapsto
f(\gamma):=(f^i(\gamma^i))_{i\in\{1,2,3\}}\,,
\end{cases}\\
N_{T,2}:&
\begin{cases}
\mathbb{E}^\varphi_T\cap\overline{B_M}&\to  W_p^{\nicefrac{1}{4}-\nicefrac{1}{4p}}\left((0,T);\mathbb{R}^2\right)\,, \\
\gamma&\mapsto b(\gamma)\,,
\end{cases}
\end{align*}
where the functions $f^i(\gamma^i):=f^i(\gamma^i_{xxxx},\gamma^i_{xxx},\gamma^i_{xx},\gamma^i_x)$ and $b(\gamma):=b(\gamma_{xxx},\gamma_x)$ 
are defined in \S~\ref{linearisation}.
\begin{prop}\label{N_1 contractive}
Let $p\in(5,\infty)$ and $M$ be positive. For every $T\in (0,\widetilde{T}\left(\boldsymbol{c},M\right)]$ the map $N_{T,1}$ is well defined and there exists a constant $\sigma\in (0,1)$ and a constant $C$ depending on $\boldsymbol{c}$ and $M$ such that for all $\gamma$, $\widetilde{\gamma}\in\mathbb{E}_T^\varphi\cap\overline{B_M}$,
\begin{equation*}
\left\lVert N_{T,1}(\gamma)-N_{T,1}\left(\widetilde{\gamma}\right)\right\rVert_{L_p\left((0,T);L_p\left((0,1);\left(\mathbb{R}^2\right)^3\right)\right)}\leq C\left(\boldsymbol{c},M\right)T^\sigma \vertiii{\gamma-\widetilde{\gamma}}_{\mathbb{E}_T}\,.
\end{equation*}
\end{prop}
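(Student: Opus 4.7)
The plan is to first verify well-definedness of $N_{T,1}$ and then prove the contraction estimate by splitting $f^i(\gamma^i)-f^i(\widetilde{\gamma}^i)$ into a quasilinear highest-order piece and a genuinely lower-order remainder, and extracting a positive power of $T$ from each by exploiting that the two competing parametrisations share the initial value $\varphi$.

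Concretely, for $\gamma,\widetilde{\gamma}\in\mathbb{E}^\varphi_T\cap\overline{B_M}$, I would decompose
\begin{align*}
f^i(\gamma^i)-f^i(\widetilde{\gamma}^i)
&=\Big(\tfrac{2}{|\varphi^i_x|^4}-\tfrac{2}{|\gamma^i_x|^4}\Big)(\gamma^i_{xxxx}-\widetilde{\gamma}^i_{xxxx})
+\Big(\tfrac{2}{|\widetilde{\gamma}^i_x|^4}-\tfrac{2}{|\gamma^i_x|^4}\Big)\widetilde{\gamma}^i_{xxxx}\\
&\quad +\widetilde{f}^i(\gamma^i_{xxx},\gamma^i_{xx},\gamma^i_x)-\widetilde{f}^i(\widetilde{\gamma}^i_{xxx},\widetilde{\gamma}^i_{xx},\widetilde{\gamma}^i_x).
\end{align*}
Lemma~\ref{curveregular} ensures that $|\gamma^i_x|,|\widetilde{\gamma}^i_x|\geq \boldsymbol{c}$ on $[0,T]\times[0,1]$, so the rational function $y\mapsto 2/y^4$ is globally Lipschitz on the relevant range with constant depending only on $\boldsymbol{c}$. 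Combined with the elementary inequality $\big||\gamma^i_x|-|\varphi^i_x|\big|\leq |\gamma^i_x-\varphi^i_x|$ and the vanishing $\gamma^i(0)-\varphi^i\equiv 0$, Lemma~\ref{uniformcalphac1} applied with some fixed $\theta\in\big(\tfrac{1+1/p}{4-4/p},1\big)$ and $\alpha:=(1-\theta)(1-1/p)>0$ yields
\begin{equation*}
\Big\|\tfrac{2}{|\varphi^i_x|^4}-\tfrac{2}{|\gamma^i_x|^4}\Big\|_{L_\infty((0,T)\times(0,1))}\leq C(\boldsymbol{c})T^\alpha\,\vertiii{\gamma}_{\mathbb{E}_T}\leq C(\boldsymbol{c},M)T^\alpha,
\end{equation*}
and the analogous bound for the second coefficient, now with $C(\boldsymbol{c})T^\alpha\vertiii{\gamma-\widetilde{\gamma}}_{\mathbb{E}_T}$ since $\gamma(0)=\widetilde{\gamma}(0)=\varphi$. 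Taking the $L_p((0,T);L_p((0,1)))$ norm and using $\|\gamma^i_{xxxx}-\widetilde{\gamma}^i_{xxxx}\|_{L_p}\leq \vertiii{\gamma-\widetilde{\gamma}}_{\mathbb{E}_T}$ and $\|\widetilde{\gamma}^i_{xxxx}\|_{L_p}\leq M$ controls both highest-order contributions by $C(\boldsymbol{c},M)T^\alpha\vertiii{\gamma-\widetilde{\gamma}}_{\mathbb{E}_T}$.

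For the lower-order difference, $\widetilde{f}^i$ is polynomial in its spatial-derivative arguments and in $1/|\gamma^i_x|$, hence Lipschitz on bounded sets staying away from the degeneracy $|\gamma^i_x|=0$. Lemma~\ref{embeddingBUCimeindependent} gives uniform $BUC([0,T];C^3([0,1]))$ bounds for both $\gamma$ and $\widetilde{\gamma}$ in terms of $M$, so on the relevant sub-level set one has
\begin{equation*}
\big|\widetilde{f}^i(\gamma^i)-\widetilde{f}^i(\widetilde{\gamma}^i)\big|
\leq C(\boldsymbol{c},M)\sum_{k=1}^{3}\big|\partial_x^k\gamma^i-\partial_x^k\widetilde{\gamma}^i\big|
\end{equation*}
pointwise, and the $L_\infty$-norm of the right-hand side is controlled again by Lemma~\ref{embeddingBUCimeindependent} by $C(\boldsymbol{c},M)\vertiii{\gamma-\widetilde{\gamma}}_{\mathbb{E}_T}$. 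Multiplying by the volume of $(0,T)\times(0,1)$ gives the $L_p$-estimate with an extra factor $T^{1/p}$, so the full lower-order contribution is bounded by $C(\boldsymbol{c},M)T^{1/p}\vertiii{\gamma-\widetilde{\gamma}}_{\mathbb{E}_T}$. Choosing $\sigma:=\min\{\alpha,1/p\}>0$ and summing over $i$ yields the claimed contraction estimate; well-definedness of $N_{T,1}$ follows from essentially the same bounds applied to $f^i(\gamma^i)=f^i(\gamma^i)-f^i(\varphi)+f^i(\varphi)$ together with the fact that $f^i(\varphi)\in L_p$ for the time-independent admissible initial datum.

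The main technical obstacle is the highest-order piece: the factor $\gamma^i_{xxxx}$ only lies in $L_p$ in space-time, so one cannot afford to put any derivative loss on it. The decomposition above is designed precisely so that each piece pairs a genuinely small coefficient (controlled via Hölder continuity in time plus the fact that $\gamma(0)=\widetilde{\gamma}(0)=\varphi$) with a single $\gamma_{xxxx}$-factor bounded in $L_p$; the quantitative smallness in $T$ then comes solely from Lemma~\ref{uniformcalphac1}, which is why the assumption $p>5$ (ensuring $\alpha>0$ for admissible $\theta$) is used.
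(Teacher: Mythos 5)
Your argument is correct and follows essentially the same route as the paper's proof: the same splitting of $f^i(\gamma^i)-f^i(\widetilde{\gamma}^i)$ into the two quasilinear highest-order pieces plus a polynomial remainder (the paper writes the coefficient differences as $\left(\left\lvert\varphi^i_x\right\rvert-\left\lvert\gamma^i_x\right\rvert\right)\mathfrak{p}\left(\lvert\varphi^i_x\rvert^{-1},\lvert\gamma^i_x\rvert^{-1}\right)$ rather than invoking Lipschitz continuity of $y\mapsto 2y^{-4}$, which amounts to the same estimate), with Lemma~\ref{curveregular} supplying the lower bound on $\lvert\gamma^i_x\rvert$, Lemma~\ref{uniformcalphac1} supplying the factor $T^\alpha$, and Lemma~\ref{embeddingBUCimeindependent} together with the volume factor $T^{\nicefrac{1}{p}}$ handling the lower-order terms. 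The only caveat is your well-definedness remark: $f^i(\varphi)$ must be read as $\tilde{f}^i(\varphi)$, since the coefficient of $\varphi^i_{xxxx}$ vanishes identically while $\varphi^i_{xxxx}$ itself need not exist in $L_p$ for $\varphi^i\in W_p^{4-\nicefrac{4}{p}}$ — it is cleaner to estimate $f^i(\gamma^i)$ directly, as the paper does, pairing the bounded coefficient with $\gamma^i_{xxxx}\in L_p$.
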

\begin{proof}
Given $\gamma\in\mathbb{E}_T^\varphi\cap \overline{B_M}$ every component $f^i\left(\gamma^i\right)$ is of the form
\begin{equation*}
f^i\left(\gamma^i\right)=\left(\frac{2}{\vert\varphi^i_x\vert^4}-\frac{2}{\vert\gamma^i_x\vert^4}\right)\gamma^i_{xxxx}+\mathfrak{p}\left(\frac{1}{\vert\gamma^i_x\vert},\gamma^i_x,\gamma^i_{xx},\gamma^i_{xxx}\right)\,,
\end{equation*}
where $\mathfrak{p}$ is a polynomial. The precise formula is given in~\cite[Lemma 3.23]{garmenzplu}.
Denoting by $\mathfrak{p}$ any polynomial that may change from line to line the Lemmata~\ref{embeddingBUCimeindependent} (with $T_0\equiv1$) and~\ref{curveregular} imply for $\gamma\in\mathbb{E}_T^\varphi\cap\overline{B_M}$ with constants $C>0$ depending on $\boldsymbol{c}$ and some $k>0$,
\begin{align*}
&\left\lVert \mathfrak{p}\left(\frac{1}{\vert\gamma^i_x\vert},\gamma^i_x,\gamma^i_{xx},\gamma^i_{xxx}\right)\right\rVert_{L_p\left((0,T);L_p\left((0,1)\right)\right)}^p
=\int_0^T\int_0^1\left\lvert\mathfrak{p}\left(\frac{1}{\vert\gamma^i_x\vert},\gamma^i_x,\gamma^i_{xx},\gamma^i_{xxx}\right)(t,x)\right\rvert^p\,\mathrm{d}x\,\mathrm{d}t\\
&\leq T C(\boldsymbol{c})\left(\left\lVert\gamma^i\right\rVert_{BUC\left([0,T];C^3\left([0,1];\mathbb{R}^2\right)\right)}^k+1\right)
\leq TC\left(\boldsymbol{c}\right)\left(\vertiii{\gamma^i}_{W_p^{1,4}\left((0,T)\times(0,1);\mathbb{R}^2\right)}^k+1\right)\\
&\leq TC\left(\boldsymbol{c}\right)\left(\vertiii{\gamma}_{\mathbb{E}_T}^k+1\right)\leq TC\left(\boldsymbol{c}\right)\left(M^k+1\right)\,.
\end{align*}
And similarly,
\begin{align*}
&\left\lVert\left(\frac{2}{\vert\varphi^i_x\vert^4}-\frac{2}{\vert\gamma^i_x\vert^4}\right)\gamma^i_{xxxx}\right\rVert_{L_p\left((0,T);L_p\left((0,1);\mathbb{R}^2\right)\right)}^p
=\int_0^T\int_0^1 \left\lvert \frac{2}{\vert\varphi^i_x\vert^4}-\frac{2}{\vert\gamma^i_x\vert^4}\right\rvert^p\left\lvert \gamma^i_{xxxx}\right\rvert^p\,\mathrm{d}x\,\mathrm{d}t\\
&\leq C\left(\sup _{x\in[0,1]}\frac{1}{\left\lvert \varphi^i_x\right\rvert^{4p}}+\sup_{t\in[0,T],x\in[0,1]}\frac{1}{\left\lvert\gamma^i_x\right\rvert^{4p}}\right)\int_0^T\int_0^1\left\vert\gamma^i_{xxxx}\right\rvert^p\,\mathrm{d}x\,\mathrm{d}t\\
&\leq C\left(\boldsymbol{c}\right)\left\lVert \gamma^i_{xxxx}\right\rVert_{L_p\left((0,T);L_p\left((0,1);\mathbb{R}^2\right)\right)}^p\leq C\left(\boldsymbol{c}\right)\vertiiii{\gamma^i}_{W_p^{1,4}\left((0,T)\times(0,1);\mathbb{R}^2\right)}^p\leq C\left(\boldsymbol{c}\right)M^p\,.
\end{align*}
This shows that $N_{T,1}$ is well defined.

To prove that $N_{T,1}$ is Lipschitz continuous we let $\gamma$ and $\widetilde{\gamma}$ in $\mathbb{E}_T^\varphi\cap\overline{B_M}$ be fixed. All detailed formulas are given in~\cite[Proposition 3.28]{garmenzplu}. The highest order term is of the form
\begin{equation*}
\left(\left\lvert\varphi^i_x\right\rvert-\left\lvert\gamma^i_x\right\rvert\right)\left(\gamma^i_{xxxx}-\widetilde{\gamma}^i_{xxxx}\right) 
\mathfrak{p}\left(\frac{1}{\lvert\varphi^i_x\rvert},\frac{1}{\lvert\gamma^i_x\rvert}\right)
+\left(\left\lvert\widetilde{\gamma}^i_x\right\rvert-\left\lvert\gamma^i_x\right\rvert\right)\mathfrak{p}\left(\frac{1}{\lvert\widetilde{\gamma}^i_x\rvert},\frac{1}{\lvert\gamma^i_x\rvert}\right)\widetilde{\gamma}^i_{xxxx}
\end{equation*}
and can be estimated as follows using Lemma~\ref{curveregular} and Lemma~\ref{uniformcalphac1} with some fixed coefficient $\theta\in\left(\frac{1+\nicefrac{1}{p}}{4-\nicefrac{4}{p}},1\right)$ and $\alpha:=\left(1-\theta\right)\left(1-\nicefrac{1}{p}\right)$:
\begin{align*}
&\left\lVert\left(\left\lvert\varphi^i_x\right\rvert-\left\lvert\gamma^i_x\right\rvert\right)\left(\gamma^i_{xxxx}-\widetilde{\gamma}^i_{xxxx}\right) 
\mathfrak{p}\left(\frac{1}{\lvert\varphi^i_x\rvert},\frac{1}{\lvert\gamma^i_x\rvert}\right)\right\rVert_{L_p\left((0,T);L_p\left((0,1);\mathbb{R}^2\right)\right)}\\
&\leq C(\boldsymbol{c}) \sup_{t\in[0,T],x\in[0,1]}\left\lvert\varphi^i_x(x)-\gamma^i_x(t,x)\right\rvert \left\lVert\gamma^i_{xxxx}-\widetilde{\gamma}^i_{xxxx}\right\rVert_{L_p\left((0,T);L_p\left((0,1);\mathbb{R}^2\right)\right)}\\
&\leq C(\boldsymbol{c}) T^\alpha \sup_{t\in[0,T],x\in[0,1]}t^{-\alpha}\left\lvert\gamma^i_x(t,x)-\gamma^i_x(0,x)\right\rvert\vertiii{\gamma^i-\widetilde{\gamma}^i}_{W_p^{1,4}\left((0,T)\times(0,1);\mathbb{R}^2\right)}\\
&\leq C(\boldsymbol{c})T^\alpha \sup_{t\in[0,T]}t^{-\alpha}\left\lVert\gamma^i(t)-\gamma^i(0)\right\rVert_{C^1([0,1];\mathbb{R}^2)}\vertiii{\gamma-\widetilde{\gamma}}_{\mathbb{E}_T}\\
&\leq C(\boldsymbol{c})T^{\alpha}\left\lVert \gamma^i\right\rVert_{C^{\alpha}\left([0,T];C^1\left([0,1];\mathbb{R}^2\right)\right)}\vertiii{\gamma-\widetilde{\gamma}}_{\mathbb{E}_T}
\leq C(\boldsymbol{c})T^\alpha \vertiii{\gamma^i}_{W_p^{1,4}\left((0,T)\times(0,1);\mathbb{R}^2\right)}\vertiii{ \gamma-\widetilde{\gamma}}_{\mathbb{E}_T}\\
&\leq C(\boldsymbol{c})MT^\alpha\vertiii{\gamma-\widetilde{\gamma}}_{\mathbb{E}_T}\,.
\end{align*}
Similarly,
\begin{align*}
&\left\lVert\left(\left\lvert\widetilde{\gamma}^i_x\right\rvert-\left\lvert\gamma^i_x\right\rvert\right)
\mathfrak{p}\left(\frac{1}{\lvert\widetilde{\gamma}^i_x\rvert},\frac{1}{\lvert\gamma^i_x\rvert}\right)\widetilde{\gamma}^i_{xxxx}\right\rVert_{L_p((0,T);L_p((0,1);\mathbb{R}^2))}\leq C\left(\boldsymbol{c}\right)\sup_{t\in[0,T],x\in[0,1]}\left\lvert\gamma^i_x-\widetilde{\gamma}^i_x\right\rvert\vertiii{\widetilde{\gamma}}_{\mathbb{E}_T}\\
&\leq C\left(\boldsymbol{c}\right)M\sup_{t\in[0,T],x\in[0,1]}\left\lvert\left(\gamma^i_x(t,x)-\widetilde{\gamma}^i_x(t,x)\right)-\left(\gamma^i_x(0,x)-\widetilde{\gamma}^i_x(0,x)\right)\right\rvert\\
&\leq C\left(\boldsymbol{c}\right)MT^\alpha\sup_{t\in[0,T]}t^{-\alpha}\left\lVert\left(\gamma^i(t)-\widetilde{\gamma}^i(t)\right)-\left(\gamma^i(0)-\widetilde{\gamma}^i(0)\right)\right\rVert_{C^1\left([0,1];\mathbb{R}^2\right)}\\
&\leq C\left(\boldsymbol{c}\right)MT^\alpha\left\lVert\gamma^i-\widetilde{\gamma}^i\right\rVert_{C^\alpha\left([0,T];C^1\left([0,1];\mathbb{R}^2\right)\right)}\leq C\left(\boldsymbol{c}\right)MT^\alpha\vertiii{\gamma-\widetilde{\gamma}}_{\mathbb{E}_T}\,.
\end{align*}

By the proof of~\cite[Proposition 3.28]{garmenzplu} all the other terms of $f^i\left(\gamma^i\right)-f^i\left(\widetilde{\gamma}^i\right)$ are of the form
\begin{equation*}
\sum_{j=1}^2\mathfrak{p}_j\left(\gamma^i_x,\gamma^i_{xx},\gamma^i_{xxx},\left\lvert \gamma^i_x\right\rvert^{-1},\widetilde{\gamma}^i_x,\widetilde{\gamma}^i_{xx},\widetilde{\gamma}^i_{xxx},\left\lvert \widetilde{\gamma}^i_x\right\rvert^{-1}\right)\left(a-\tilde{a}\right)^j
\end{equation*}
where $\mathfrak{p}_j$ is a polynomial and $\left(a-\tilde{a}\right)^j$ is either equal to the $j$--th component of $\partial^k_x\gamma^i-\partial^k_x\widetilde{\gamma}^i$ for $k\in\{1,2,3\}$ or equal to $\left\lvert\gamma^i_x\right\rvert^{-l}-\left\lvert\widetilde{\gamma}^i_x\right\rvert^{-l}$ with a natural number $l\geq 1$. Using~\ref{embeddingBUCimeindependent} with $T\equiv 1$ the polynomial can be estimated in supremum norm with respect to time and space by
\begin{equation*}
C\left(\boldsymbol{c}\right)\left\lVert\gamma^i\right\rVert_{BUC\left([0,T];C^3\left([0,1];\mathbb{R}^2\right)\right)}^m\left\lVert\widetilde{\gamma}^i\right\rVert_{BUC\left([0,T];C^3\left([0,1];\mathbb{R}^2\right)\right)}^{\tilde{m}}\leq C\left(\boldsymbol{c}\right)M^{m+\tilde{m}}
\end{equation*}
for some natural numbers $m$ and $\tilde{m}\in\mathbb{N}$. Moreover for $k\in\{1,2,3\}$,
\begin{equation*}
\sup_{t\in[0,T], x\in[0,1]}\left\lvert \partial_x^k\gamma^i(t,x)-\partial_x^{k}\widetilde{\gamma}^i(t,x)\right\rvert\leq \left\lVert\gamma^i-\widetilde{\gamma}^i\right\rVert_{BUC\left([0,T];C^3\left([0,1];\mathbb{R}^2\right)\right)}\leq C\vertiii{\gamma-\widetilde{\gamma}}_{\mathbb{E}_T}
\end{equation*}
and by an identity given in the proof of~\cite[Proposition 3.28]{garmenzplu},
\begin{align*}
&\sup_{t\in[0,T],x\in[0,1]}\left(\left\lvert\gamma^i_x\right\rvert^{-l}-\left\lvert\widetilde{\gamma}^i_x\right\rvert^{-l}\right)=\sup_{t\in[0,T],x\in[0,1]}\left(\left\lvert\gamma^i_x\right\rvert-\left\lvert\tilde{\gamma}^i_x\right\rvert\right)\mathfrak{p}\left(\left\lvert\gamma^i_x\right\rvert^{-1},\left\lvert\widetilde{\gamma}^i_x\right\rvert^{-1}\right)\\
\leq &\sup_{t\in[0,T],x\in[0,1]}\left\lvert\gamma^i_x-\widetilde{\gamma}^i_x\right\rvert\mathfrak{p}\left(\left\lvert\gamma^i_x\right\rvert^{-1},\left\lvert\widetilde{\gamma}^i_x\right\rvert^{-1}\right)\leq C\left(\boldsymbol{c}\right)\vertiii{\gamma-\widetilde{\gamma}}_{\mathbb{E}_T}\,.
\end{align*}
\end{proof}
This shows the estimate
\begin{equation*}
\left\lVert \mathfrak{p}_j\left(\gamma^i_x,\gamma^i_{xx},\gamma^i_{xxx},\left\lvert \gamma^i_x\right\rvert^{-1},\widetilde{\gamma}^i_x,\widetilde{\gamma}^i_{xx},\widetilde{\gamma}^i_{xxx},\left\lvert \widetilde{\gamma}^i_x\right\rvert^{-1}\right)\left(a-\tilde{a}\right)^j\right\rVert_{L_p\left((0,T);L_p\left(0,1\right)\right)}\leq TC\left(\boldsymbol{c},M\right)\vertiii{\gamma-\widetilde{\gamma}}_{\mathbb{E}_T}\,.
\end{equation*}
To prove the analogous result for the boundary term we make use of the following Lemma.
\begin{lemma}\label{embeddingsobolevhölder}
Let $p\in(1,\infty)$, $d\geq 1$, $T$ be positive and $0<\alpha<\beta<1$. Then 
\begin{equation*}
C^\beta\left([0,T];\mathbb{R}^d\right)\hookrightarrow W_p^{\alpha}\left((0,T);\mathbb{R}^d\right)
\end{equation*} 
and there exist positive constants $\sigma=\sigma(\alpha,\beta)$, $C\left(p,\alpha,\beta\right)$ such that for all $f\in C^\beta\left([0,T];\mathbb{R}^d\right)$,
\begin{equation*}
\left\lVert f \right\rVert_{W_p^{\alpha}\left((0,T);\mathbb{R}^d\right)}\leq C\left(p,\alpha,\beta\right)T^{\sigma(\alpha,\beta)}\left\lVert f\right\rVert_{C^\beta\left([0,T];\mathbb{R}^d\right)}\,.
\end{equation*}
\end{lemma}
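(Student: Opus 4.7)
The approach is a direct computation, splitting the $W_p^\alpha$ norm into its $L_p$ part and its Slobodeckij seminorm and estimating each by a positive power of $T$ times the Hölder norm. The plan is to exploit the Hölder condition $|f(x)-f(y)| \leq [f]_{C^\beta}|x-y|^\beta$ to absorb the singularity in the Slobodeckij kernel, which is integrable precisely because $\beta > \alpha$.

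First I would bound the $L_p$ part trivially: since $|f(x)| \leq \|f\|_{C^\beta}$, one gets
\begin{equation*}
\|f\|_{L_p((0,T);\mathbb{R}^d)} \leq T^{1/p}\|f\|_{C^\beta([0,T];\mathbb{R}^d)}.
\end{equation*}
Next, for the seminorm, I would substitute the Hölder bound into the definition and compute
\begin{equation*}
[f]_{\alpha,p}^{p} = \int_0^T\!\!\int_0^T \frac{|f(x)-f(y)|^p}{|x-y|^{\alpha p + 1}}\,dx\,dy \;\leq\; [f]_{C^\beta}^{p}\int_0^T\!\!\int_0^T |x-y|^{(\beta-\alpha)p - 1}\,dx\,dy.
\end{equation*}
The inner double integral is elementary: symmetrising and substituting $u = x-y$ gives it the value $\dfrac{2\,T^{(\beta-\alpha)p + 1}}{(\beta-\alpha)p\bigl((\beta-\alpha)p+1\bigr)}$, so
\begin{equation*}
[f]_{\alpha,p} \leq C(p,\alpha,\beta)\,T^{(\beta-\alpha) + 1/p}\,[f]_{C^\beta([0,T];\mathbb{R}^d)}.
\end{equation*}

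Combining these two estimates yields $\|f\|_{W_p^\alpha} \leq C(p,\alpha,\beta)\,T^{\sigma(\alpha,\beta)}\|f\|_{C^\beta}$ with, say, $\sigma(\alpha,\beta) := \min\{1/p,\,\beta-\alpha + 1/p\} = 1/p$ (or one can take any $\sigma \in (0,\beta-\alpha)$ after absorbing a factor $T_0^{1/p}$ for bounded $T$, depending on preference). No step is genuinely hard here; the only point requiring care is that the exponent $(\beta-\alpha)p - 1$ appearing in the kernel integral is strictly greater than $-1$ because $\beta > \alpha$, which is exactly what makes the double integral finite and produces a positive power of $T$. The continuity of the embedding is an immediate by-product.
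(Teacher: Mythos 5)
Your proof is correct and is exactly the direct estimation the paper has in mind; the paper's own proof consists of the single line ``The assertion follows directly by estimating the respective expression,'' i.e.\ precisely your splitting into the $L_p$ part (bounded by $T^{1/p}\lVert f\rVert_{C^\beta}$) and the Slobodeckij seminorm (where $(\beta-\alpha)p-1>-1$ makes the kernel integrable and yields $T^{\beta-\alpha+1/p}$). The only cosmetic point is that collapsing the two powers $T^{1/p}$ and $T^{\beta-\alpha+1/p}$ into a single $T^{\sigma}$ with a $T$--independent constant tacitly uses a bounded range of $T$ (as in the paper's application, where $T\le 1$), which you already acknowledge.
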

\begin{proof}
	The assertion follows directly by estimating the respective expression.
\end{proof}
\begin{lemma}\label{compositionhölder}
	Let $d\geq 1$, $T$ be positive and $\beta\in(0,1)$. Given $F\in C^2\left(\mathbb{R}^d;\mathbb{R}\right)$ and $f\in C^\beta\left([0,T];\mathbb{R}^d\right)$ with $f\left([0,T]\right)\subset K$ for some compact convex subset $K\subset\mathbb{R}^d$, the composition $F\circ f$ lies in $C^\beta\left([0,T];\mathbb{R}\right)$ and satisfies the estimate
	\begin{equation*}
	\left[F\circ f\right]_{C^\beta\left([0,T];\mathbb{R}\right)}\leq \left\lVert F\right\rVert_{C^1\left(K;\mathbb{R}\right)}\left\lVert f\right\rVert_{C^\beta\left([0,T];\mathbb{R}^d\right)}\,.
	\end{equation*}
	If $g$ is another function in $C^\beta\left([0,T];\mathbb{R}^d\right)$ with $g\left([0,T]\right)\subset K$, it holds
	\begin{align*}
	&\left\lVert F\circ f-F\circ g\right\rVert_{C^\beta\left([0,T];\mathbb{R}\right)}\\
	&\leq \max\left\{1,\left[f\right]_{C^\beta\left([0,T];\mathbb{R}^d\right)}+\left[g\right]_{C^\beta\left([0,T];\mathbb{R}^d\right)}\right\} \left\lVert F\right\rVert_{C^2\left(K;\mathbb{R}\right)}\left\lVert f-g\right\rVert_{C^\beta\left([0,T];\mathbb{R}^d\right)}\,.
	\end{align*}
\end{lemma}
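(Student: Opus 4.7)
The first inequality is an immediate consequence of the mean value theorem in integral form: since $K$ is convex, one may write
\begin{equation*}
F(f(t))-F(f(s)) = \int_0^1 DF\bigl(f(s)+\theta(f(t)-f(s))\bigr)\cdot (f(t)-f(s))\,d\theta,
\end{equation*}
so $|F(f(t))-F(f(s))| \leq \|DF\|_{L^\infty(K)}\,[f]_{C^\beta}\,|t-s|^\beta$ for every $s,t\in[0,T]$, which yields the claim after dividing by $|t-s|^\beta$ and taking the supremum (and bounding $[f]_{C^\beta}\leq\|f\|_{C^\beta}$).

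For the second inequality the plan is to apply the same representation twice, along the segment joining $g(\cdot)$ to $f(\cdot)$. Setting $\xi_\theta(t):=g(t)+\theta(f(t)-g(t))\in K$, one has $F(f(t))-F(g(t))=\int_0^1 DF(\xi_\theta(t))\cdot(f(t)-g(t))\,d\theta$ and the analogous identity at $s$. Subtracting and inserting $\pm DF(\xi_\theta(s))\cdot(f(t)-g(t))$ decomposes the integrand as
\begin{equation*}
\bigl[DF(\xi_\theta(t))-DF(\xi_\theta(s))\bigr](f(t)-g(t)) \;+\; DF(\xi_\theta(s))\bigl[(f-g)(t)-(f-g)(s)\bigr].
\end{equation*}
The second summand contributes $\|F\|_{C^1(K)}\,[f-g]_{C^\beta}|t-s|^\beta$ to the Hölder difference quotient of $F\circ f-F\circ g$. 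For the first summand, since $F\in C^2$ and $K$ is compact and convex, $DF$ is Lipschitz on $K$ with constant $\leq\|F\|_{C^2(K)}$; combined with the identity $\xi_\theta(t)-\xi_\theta(s)=(1-\theta)(g(t)-g(s))+\theta(f(t)-f(s))$ this gives $|DF(\xi_\theta(t))-DF(\xi_\theta(s))|\leq \|F\|_{C^2(K)}\bigl([f]_{C^\beta}+[g]_{C^\beta}\bigr)|t-s|^\beta$, while $|f(t)-g(t)|\leq\|f-g\|_\infty$. Summing these two contributions and adding the trivial bound $\|F\circ f-F\circ g\|_\infty\leq \|F\|_{C^1(K)}\|f-g\|_\infty$ produces the asserted estimate, with numerical constants absorbed into the prefactor $\max\{1,[f]_{C^\beta}+[g]_{C^\beta}\}$.

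No essential obstacle is expected: the argument is standard Hölder algebra, and both assertions come directly from the fundamental theorem of calculus on segments inside $K$. The only point deserving attention is to perform the telescoping symmetrically, so that the $C^2$--norm of $F$ only multiplies the $C^\beta$--seminorms of $f$ and $g$ (which appear inside the $\max\{1,\cdot\}$), whereas the small factor $\|f-g\|_{C^\beta}$ enters linearly. This makes transparent why the hypotheses $F\in C^2$ and $f([0,T]),g([0,T])\subset K$ compact convex are needed: they are exactly what is required to estimate the Lipschitz constant of $DF$ along the homotopy $\xi_\theta$.
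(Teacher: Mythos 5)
Your proposal is correct and follows essentially the same route as the paper: the paper introduces the averaged gradient $H(t)=\int_0^1(\nabla F)(\tau f(t)+(1-\tau)g(t))\,\mathrm{d}\tau$, writes $F\circ f-F\circ g=H\cdot(f-g)$, and applies the H\"older product rule, which is exactly your telescoping decomposition along $\xi_\theta$ in disguise. The only (shared, harmless) looseness is in the bookkeeping of numerical constants when assembling the $C^0$ part and the seminorm part into the stated prefactor, which both you and the paper absorb into $\max\{1,[f]_{C^\beta}+[g]_{C^\beta}\}$.
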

\begin{proof}
	The first assertion follows directly using the fundamental theorem of calculus and convexity of the compact set $K$.
	Given $t\in[0,T]$ we define the function $H\in C^0\left([0,T];\mathbb{R}^d\right)$ by
	\begin{equation*}
	H(t):=\int_0^1 \left(\nabla F\right)\left(\tau f(t)+(1-\tau)g(t)\right)\,\mathrm{d}\tau
	\end{equation*}
and observe that $\left\lVert H\right\rVert_{C^0\left([0,T];\mathbb{R}^d\right)}\leq \left\lVert F\right\rVert_{C^1\left(K;\mathbb{R}\right)}$. As the function $\nabla F\in C^1\left(\mathbb{R}^d;\mathbb{R}^d\right)$ is Lipschitz continuous on the compact set $K$ with Lipschitz constant $0\leq L\leq \left\lVert F\right\rVert_{C^2\left(K;\mathbb{R}\right)}$, we obtain $H\in C^\beta\left([0,T];\mathbb{R}^d\right)$ and
\begin{align*}
&\left[H\right]_{C^\beta\left([0,T];\mathbb{R}^d\right)}=\sup_{s,t\in[0,T]}\left\lvert t-s\right\rvert^{-\beta}\left\lvert H(t)-H(s)\right\rvert\\
\leq&\sup_{s,t\in[0,T]}\left\lvert t-s\right\rvert^{-\beta}\int_0^1\left\lvert\left(\nabla F\right)\left(\tau f(t)+(1-\tau)g(t)\right)-\left(\nabla F\right)\left(\tau f(s)+(1-\tau)g(s)\right)\right\rvert\mathrm{d}\tau\\
\leq &\sup_{s,t\in[0,T]}\left\lvert t-s\right\rvert^{-\beta}\left\lVert F\right\rVert_{C^2\left(K;\mathbb{R}\right)}\int_0^1\tau\left\lvert f(t)-f(s)\right\rvert+(1-\tau)\left\lvert g(t)-g(s)\right\rvert\mathrm{d}\tau\\
\leq &\left\lVert F\right\rVert_{C^2\left(K;\mathbb{R}\right)}\left[f\right]_{C^\beta\left([0,T];\mathbb{R}^d\right)}\left[g\right]_{C^\beta\left([0,T];\mathbb{R}^d\right)}\,.
\end{align*}
In particular using $\left(F\circ f\right)(t)- \left(F\circ g\right)(t)=H(t)\cdot \left(f(t)-g(t)\right)$ we conclude that
\begin{align*}
\left\lVert F\circ f-F\circ g\right\rVert_{C^0\left([0,T];\mathbb{R}\right)}&\leq \left\lVert H\right\rVert_{C^0\left([0,T];\mathbb{R}^d\right)}\left\lVert f-g\right\rVert_{C^0\left([0,T];\mathbb{R}^d\right)}\\
&\leq  \left\lVert F\right\rVert_{C^2\left(K;\mathbb{R}\right)}\left\lVert f-g\right\rVert_{C^\beta\left([0,T];\mathbb{R}^d\right)}
\end{align*}
and
\begin{align*}
&\left[F\circ f-F\circ g\right]_{C^\beta\left([0,T];\mathbb{R}^d\right)}=\sup_{s,t\in[0,T]}\left\lvert t-s\right\rvert^{-\beta}\left\lvert H(t)\cdot\left(f(t)-g(t)\right)-H(s)\cdot\left(f(s)-g(s)\right)\right\rvert\\
\leq &\left[H\right]_{C^\beta\left([0,T];\mathbb{R}^d\right)}\left\lVert f-g\right\rVert_{C^0\left([0,T];\mathbb{R}^d\right)}+\left\lVert H\right\rVert_{C^0\left([0,T]\right)}\left[ f-g\right]_{C^\beta\left([0,T];\mathbb{R}^d\right)}\,.
\end{align*}
\end{proof}

\begin{lemma}\label{derivativesol}
	Let $T$ be positive and $p\in (1,\infty)$. Then the operator
	\begin{equation*}
	W_p^{1,4}\left((0,T)\times(0,1);\mathbb{R}\right)\to W_p^{\nicefrac{1}{4}-\nicefrac{1}{4p}}\left((0,T);\mathbb{R}\right)\,,\quad \gamma\mapsto \left(\gamma_{xxx}\right)_{|x=0}
	\end{equation*}
	is continuous. 
\end{lemma}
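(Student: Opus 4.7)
The plan is to decompose the operator $\gamma\mapsto (\gamma_{xxx})_{|x=0}$ into two continuous steps: first, taking three spatial derivatives, which loses three orders of regularity in $x$ and, by the mixed--derivative theorem for anisotropic parabolic Sobolev spaces, a corresponding amount of regularity in $t$; second, taking the spatial trace at $x=0$, which costs a further $1/p$ in spatial regularity, equivalently $1/(4p)$ in time when measured on the parabolic scale.

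More precisely, if $\gamma\in W_p^{1,4}\left((0,T)\times(0,1)\right)$, then by the very definition of the anisotropic space one has $\partial_x^3\gamma\in L_p\left((0,T);W_p^1(0,1)\right)$. Combining this with the fact that the mixed--derivative embedding (as a consequence of~\cite[Chapter III, Theorem 4.10.2]{Amann}) yields
\begin{equation*}
W_p^{1,4}\left((0,T)\times(0,1)\right)\hookrightarrow W_p^{\nicefrac{1}{4}}\left((0,T);W_p^{3}(0,1)\right),
\end{equation*}
we deduce that $\partial_x^3\gamma$ lies in the anisotropic parabolic space
\begin{equation*}
W_p^{\nicefrac{1}{4},1}\left((0,T)\times(0,1)\right):=W_p^{\nicefrac{1}{4}}\left((0,T);L_p(0,1)\right)\cap L_p\left((0,T);W_p^1(0,1)\right)
\end{equation*}
with continuous dependence on $\gamma$.

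Next I would invoke the standard trace theorem for such anisotropic spaces (see for instance the trace characterisations in~\cite{solonnikov2} or~\cite[Chapter III]{Amann}), which asserts that the spatial trace at $x=0$ maps
\begin{equation*}
W_p^{\nicefrac{1}{4},1}\left((0,T)\times(0,1)\right)\longrightarrow W_p^{\nicefrac{1}{4}-\nicefrac{1}{4p}}\left((0,T);\mathbb{R}\right)
\end{equation*}
continuously, the loss $1/p$ in spatial regularity translating into a loss $1/(4p)$ on the parabolic time scale. Composing these two continuous maps yields the claim.

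The only potential obstacle is to quote precisely the right version of the trace theorem in the anisotropic setting with the borderline value $1/4-1/(4p)$ and to make sure the mixed--derivative embedding applies at this level of regularity; once these are in place, no further estimate is required since continuity is inherited from each step.
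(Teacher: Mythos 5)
Your proposal is correct in substance and its numerology checks out: $\partial_x^3$ maps $W_p^{1,4}$ into the anisotropic space with time exponent $\nicefrac{1}{4}$ and space exponent $1$, and the spatial point trace from that space lands in $W_p^{(\nicefrac{1}{4})(1-\nicefrac{1}{p})}=W_p^{\nicefrac{1}{4}-\nicefrac{1}{4p}}$ in time. The paper, however, does not factor the operator at all: its proof is a one--line citation of the anisotropic trace theorem \cite[Theorem 5.1]{solonnikov2}, which directly asserts that for $u\in W_p^{1,4}$ the boundary trace $\partial_x^{3}u_{|x=0}$ has exactly the regularity $W_p^{\nicefrac{4-3-\nicefrac{1}{p}}{4}}=W_p^{\nicefrac{1}{4}-\nicefrac{1}{4p}}$ in time, combined with Remark~\ref{identification} to identify the $L_p(\{0\})$--valued trace spaces with $\mathbb{R}^2$. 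Your route is more transparent about where each loss of regularity occurs, but it does not actually avoid quoting a trace theorem at the end, so it buys little over the direct citation. Two small caveats if you were to write it out: (a) \cite[Chapter III, Theorem 4.10.2]{Amann} is the \emph{temporal} trace result (it underlies Theorem~\ref{embeddingBUC}); the mixed--derivative embedding you need is a different theorem (see e.g.\ Pr\"uss--Simonett), and it naturally produces the Bessel potential space $H_p^{\nicefrac{1}{4}}\left((0,T);W_p^3\right)$ rather than $W_p^{\nicefrac{1}{4}}$ — harmless for $p\geq 2$ since $H_p^s\hookrightarrow W_p^s$ there, but worth noting because the lemma is stated for all $p\in(1,\infty)$; (b) the final trace lands in a Besov space $B_{pp}^{\nicefrac{1}{4}-\nicefrac{1}{4p}}=W_p^{\nicefrac{1}{4}-\nicefrac{1}{4p}}$ regardless, so the conclusion is unaffected.
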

\begin{proof}
	This follows from~\cite[Theorem 5.1]{solonnikov2} and Remark~\ref{identification}.
\end{proof}
\begin{prop}\label{N_2 contractive}
Let $p\in (5,\infty)$ and $M$ be positive and $\mathfrak{c}:=\vert\varphi_x(0)\vert$. 
For every $T\in (0,\widetilde{T}\left(\boldsymbol{c},M\right)]$ the map $N_{T,2}$ is well defined and there exist constants $\sigma\in(0,1)$ and $C\left(\boldsymbol{c},\mathfrak{c},M\right)>0$ such that for all $\gamma$, $\widetilde{\gamma}\in\mathbb{E}_T^\varphi\cap\overline{B_M}$,
\begin{equation*}
\vertiii{ N_{T,2}(\gamma)-N_{T,2}\left(\widetilde{\gamma}\right)}_{W_p^{\nicefrac{1}{4}-\nicefrac{1}{4p}}\left((0,T);\mathbb{R}^2\right)}\leq C\left(\boldsymbol{c},\mathfrak{c},M\right)T^\sigma \vertiii{\gamma-\widetilde{\gamma}}_{\mathbb{E}_T}\,.
\end{equation*}
\end{prop}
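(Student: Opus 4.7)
Our plan is to mirror Proposition \ref{N_1 contractive} by decomposing $b(\gamma)-b(\widetilde{\gamma})$ into products where one factor is a boundary trace of a third spatial derivative controlled in $W_p^{\nicefrac{1}{4}-\nicefrac{1}{4p}}((0,T);\mathbb{R}^2)$ via Lemma \ref{derivativesol}, while the remaining factor is Hölder continuous in time with exponent strictly larger than $\nicefrac{1}{4}-\nicefrac{1}{4p}$ and vanishes at $t=0$. The vanishing, available because $\gamma(0)=\widetilde{\gamma}(0)=\varphi$ and because $b$ is, by construction, the nonlinear remainder after linearising around $\varphi$, will be the crucial source of the contraction factor $T^\sigma$.

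\textbf{Step 1 (algebraic decomposition).} Using the formula in \S\ref{linearisation}, one can write
\[
b(\gamma)(t)=\sum_{i=1}^{3}\bigl(F^i(\gamma^i_x(t,0))-F^i(\varphi^i_x(0))\bigr)\gamma^i_{xxx}(t,0)+\sum_{i=1}^{3}h^i(\gamma^i_x(t,0)),
\]
where $F^i(v):=|v|^{-3}\langle\,\cdot\,,\nu(v)\rangle\nu(v)$ is smooth on $\{|v|\geq\boldsymbol{c}\}$. A standard add-and-subtract manipulation then yields
\[
b(\gamma)-b(\widetilde{\gamma})=\sum_i\mathrm{I}_i+\sum_i\mathrm{II}_i+\sum_i\mathrm{III}_i,
\]
where $\mathrm{I}_i:=(F^i(\gamma^i_x(\cdot,0))-F^i(\varphi^i_x(0)))(\gamma^i_{xxx}(\cdot,0)-\widetilde{\gamma}^i_{xxx}(\cdot,0))$, $\mathrm{II}_i:=(F^i(\gamma^i_x(\cdot,0))-F^i(\widetilde{\gamma}^i_x(\cdot,0)))\widetilde{\gamma}^i_{xxx}(\cdot,0)$ and $\mathrm{III}_i:=h^i(\gamma^i_x(\cdot,0))-h^i(\widetilde{\gamma}^i_x(\cdot,0))$. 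Every Hölder prefactor, and $\mathrm{III}_i$ itself, vanishes at $t=0$.

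\textbf{Step 2 (Hölder and Sobolev--Slobodeckij bounds).} Fix $\theta\in\bigl(\tfrac{1+\nicefrac{1}{p}}{4-\nicefrac{4}{p}},1\bigr)$, set $\alpha:=(1-\theta)(1-\nicefrac{1}{p})$ and choose $\beta\in(\nicefrac{1}{4}-\nicefrac{1}{4p},\alpha)$. By Lemmata \ref{curveregular} and \ref{uniformcalphac1} the traces $\gamma^i_x(\cdot,0)$, $\widetilde{\gamma}^i_x(\cdot,0)$ take values in a convex compact subset $K\subset\mathbb{R}^2\setminus\{0\}$ determined by $\boldsymbol{c},\mathfrak{c},M$ and are bounded in $C^\alpha([0,T];\mathbb{R}^2)$ by $C(\boldsymbol{c},M)$. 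The composition Lemma \ref{compositionhölder} applied to $F^i,h^i\in C^2(K)$, together with the elementary identity $\|f\|_{C^\beta([0,T])}\leq CT^{\alpha-\beta}\|f\|_{C^\alpha([0,T])}$ valid for $f\in C^\alpha$ with $f(0)=0$, gives
\begin{align*}
\|F^i(\gamma^i_x(\cdot,0))-F^i(\varphi^i_x(0))\|_{C^\beta([0,T])}&\leq C(\boldsymbol{c},\mathfrak{c},M)\,T^{\alpha-\beta},\\
\|F^i(\gamma^i_x(\cdot,0))-F^i(\widetilde{\gamma}^i_x(\cdot,0))\|_{C^\beta([0,T])}+\|\mathrm{III}_i\|_{C^\beta([0,T])}&\leq C(\boldsymbol{c},\mathfrak{c},M)\,T^{\alpha-\beta}\vertiii{\gamma-\widetilde{\gamma}}_{\mathbb{E}_T}.
\end{align*}
From Lemma \ref{derivativesol} combined with Lemma \ref{extensionboundarydata} we have, uniformly in $T\in(0,1]$, $\|\gamma^i_{xxx}(\cdot,0)-\widetilde{\gamma}^i_{xxx}(\cdot,0)\|_{W_p^{\nicefrac{1}{4}-\nicefrac{1}{4p}}}\leq C\vertiii{\gamma-\widetilde{\gamma}}_{\mathbb{E}_T}$ and $\|\widetilde{\gamma}^i_{xxx}(\cdot,0)\|_{W_p^{\nicefrac{1}{4}-\nicefrac{1}{4p}}}\leq CM$.

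\textbf{Step 3 (product estimate, conclusion, obstacle).} Since $p>5$, we have $W_p^{\nicefrac{1}{4}-\nicefrac{1}{4p}}((0,T))\hookrightarrow L^\infty((0,T))$ and direct estimation of the Slobodeckij seminorm yields the multiplication inequality $\|fg\|_{W_p^{\nicefrac{1}{4}-\nicefrac{1}{4p}}}\leq C\|f\|_{C^\beta}\|g\|_{W_p^{\nicefrac{1}{4}-\nicefrac{1}{4p}}}$. Combined with Step 2 this controls each $\mathrm{I}_i$ and $\mathrm{II}_i$ by $C(\boldsymbol{c},\mathfrak{c},M)\,T^{\alpha-\beta}\vertiii{\gamma-\widetilde{\gamma}}_{\mathbb{E}_T}$. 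The scalar term $\mathrm{III}_i$ is transported from $C^\beta$ to $W_p^{\nicefrac{1}{4}-\nicefrac{1}{4p}}$ by Lemma \ref{embeddingsobolevhölder} with an additional factor $T^{\sigma_1}$. Since $(b(\gamma)-b(\widetilde{\gamma}))(0)=0$ (all prefactors vanish at $t=0$), the triple norm and the ordinary norm agree on the difference and the stated contraction follows with any $\sigma\in(0,\alpha-\beta]$. Well--definedness of $N_{T,2}$ is obtained by an entirely analogous, but simpler, term-by-term estimate applied directly to $b(\gamma)$. The main delicacy is the bookkeeping in Step 1: one must use the specific structure of the linearisation to arrange that every prefactor of a $W_p^{\nicefrac{1}{4}-\nicefrac{1}{4p}}$--factor vanishes at $t=0$, as otherwise the product estimate would produce only a $T$--independent constant and the fixed point argument would collapse.
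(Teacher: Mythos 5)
Your proposal is correct and follows essentially the same route as the paper's proof: the same add--and--subtract decomposition of $b(\gamma)-b(\widetilde{\gamma})$ into products of a boundary trace of a third spatial derivative with H\"older--in--time prefactors, the same key Lemmata~\ref{curveregular}, \ref{uniformcalphac1}, \ref{compositionhölder} and \ref{derivativesol}, and the same observation that $\gamma_{|t=0}=\widetilde{\gamma}_{|t=0}=\varphi$ forces the difference to vanish at $t=0$ so that the triple norm reduces to the ordinary one (the only slip is that the $T$--uniform trace bound should be obtained by first extending $\gamma^i-\widetilde{\gamma}^i$ with the operator $\boldsymbol{E}$ of Lemma~\ref{extensionE_T} and then applying Lemma~\ref{derivativesol} on the fixed interval, not via Lemma~\ref{extensionboundarydata}). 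The one point where your bookkeeping differs is how $T^\sigma$ is harvested: the paper takes it from the embedding $C^\beta\hookrightarrow W_p^{\nicefrac{1}{4}-\nicefrac{1}{4p}}$ of Lemma~\ref{embeddingsobolevhölder} applied to each H\"older factor before multiplying in the Banach algebra, whereas you take it from the interpolation $\left\lVert f\right\rVert_{C^\beta}\leq CT^{\alpha-\beta}\left\lVert f\right\rVert_{C^\alpha}$ for prefactors vanishing at $t=0$ combined with a direct $C^\beta\times W_p^{\nicefrac{1}{4}-\nicefrac{1}{4p}}$ product estimate --- both mechanisms are valid and yield the stated contraction.
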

\begin{proof}
	Given $T\in(0,\widetilde{T}\left(\boldsymbol{c},M\right)]$ and $\gamma\in\mathbb{E}_T^\varphi\cap\overline{B_M}$ the expression $b(\gamma)=N_{T,2}\left(\gamma\right)$ is given by
	\begin{align*}
	b(\gamma)=-\sum_{i=1}^3\frac{1}{\vert \varphi^i_x\vert^3}
	\left\langle \gamma^i_{xxx},\nu_0^i\right\rangle \nu_0^i
	+\sum_{i=1}^3\frac{1}{\vert \gamma^i_x\vert^3}
	\left\langle \gamma^i_{xxx},\nu^i\right\rangle \nu^i -\frac{\mu}{2}\sum_{i=1}^{3}\frac{\gamma^i_x}{\vert\gamma^i_x\vert}\,,
	\end{align*}
	where all functions on the right hand side are evaluated in $x=0$.
	Lemma~\ref{uniformcalphac1} with $T_0\equiv 1$ and some fixed $\theta\in\left(\frac{1+\nicefrac{1}{p}}{4-\nicefrac{4}{p}},1\right)$ imply $\gamma^i\in C^{\beta}\left([0,T];C^1\left([0,1];\mathbb{R}^2\right)\right)$	with $\beta:=\left(1-\theta\right)\left(1-\nicefrac{1}{p}\right) >\nicefrac{1}{4}-\nicefrac{1}{4p}=:\alpha$ and
	\begin{equation*}
	\left\lVert\gamma^i\right\rVert_{C^\beta\left([0,T];C^1\left([0,1];\mathbb{R}^2\right)\right)}\leq C\vertiii{ \gamma^i}_{W_p^{1,4}\left((0,T)\times(0,1);\mathbb{R}^2\right)}\leq C\vertiii{\gamma}_{\mathbb{E}_T}\leq C(M)\,.
	\end{equation*}
	This implies in particular $\gamma^i_x(0)\in C^\beta\left([0,T];\mathbb{R}^2\right)$ and with Lemma~\ref{curveregular} we conclude $\gamma^i_x(t,0)\in \overline{B_{\boldsymbol{c}}\left(\varphi^i_x(0)\right)}=:K$  for all $t\in[0,T]$. For $j\in\{1,3\}$ the function $x\mapsto \left\lvert x\right\rvert^{-j}=\left(x_1^2+x_2^2\right)^{-\nicefrac{j}{2}}$ is smooth on $\mathbb{R}^2\setminus\left\{0\right\}$ and can be extended to a function $F^j\in C^2\left(\mathbb{R}^2;\mathbb{R}\right)$ such that $F^j_{|K}=\left\lvert \cdot\right\rvert^{-j}$.
	Lemma~\ref{compositionhölder} implies $t\mapsto \vert\gamma^i_x(t,0)\vert ^{-j}\in C^\beta\left([0,T];\mathbb{R}\right)$ for $j\in\{1,3\}$. Since $\nu^i=R\left(\frac{\gamma^i_x}{\vert\gamma^i_x\vert}\right)$ with $R$ the rotation matrix to the angle $\frac{\pi}{2}$, we may conclude $t\mapsto \nu^i(t,0)\in C^\beta\left([0,T];\mathbb{R}^2\right)$ as H\"{o}lder spaces are stable under products.
	As $W_p^{\nicefrac{1}{4}-\nicefrac{1}{4p}}\left((0,T);\mathbb{R}^2\right)$ is a Banach algebra the preceding arguments combined with Lemma~\ref{derivativesol} imply that $N_{T,2}$ is well defined. 
	To derive the estimate we let $\gamma$ and $\widetilde{\gamma}$ in $\mathbb{E}_T^\varphi\cap\overline{B_M}$ be fixed and note that $\gamma_{|t=0}=\widetilde{\gamma}_{|t=0}=\varphi$ implies $N_{T,2}\left(\gamma\right)_{|t=0}-N_{T,2}\left(\widetilde{\gamma}\right)_{|t=0}= 0$. Thus the terms need to be estimated in the usual sub multiplicative norm on $W_p^{\nicefrac{1}{4}-\nicefrac{1}{4p}}\left((0,T);\mathbb{R}^2\right)$.
In~\cite[Proposition~3.28]{garmenzplu}  it is shown that  $b\left(\gamma\right)-b\left(\widetilde{\gamma}\right)$ can be written as
	\begin{align}
	&\sum_{i=1}^3\frac{1}{\vert \gamma^i_x\vert^3}
	\left\langle \tilde{\gamma}^i_{xxx},\nu^i-\tilde{\nu}^i\right\rangle \nu^i
	+\sum_{i=1}^3\frac{1}{\vert \gamma^i_x\vert^3}
	\left\langle \tilde{\gamma}^i_{xxx},\tilde{\nu}^i\right\rangle\left(\nu^i- \tilde{\nu}^i \right)\label{lineone}\\
	+&\sum_{i=1}^3\frac{1}{\vert \varphi^i_x\vert^3}
	\left\langle (\tilde{\gamma}^i_{xxx}-\gamma^i_{xxx}),\nu_0^i-\nu^i\right\rangle \nu_0^i+\sum_{i=1}^3\frac{1}{\vert \varphi^i_x\vert^3}
	\left\langle (\tilde{\gamma}^i_{xxx}-\gamma^i_{xxx}),\nu^i\right\rangle \left(\nu_0^i-\nu^i\right)\label{linetwo}\\
	+&\sum_{i=1}^3
	\left(\frac{1}{\vert \gamma^i_x\vert^3}-\frac{1}{\vert \tilde{\gamma}^i_x\vert^3}\right)
	\left\langle \tilde{\gamma}^i_{xxx},\tilde{\nu}^i\right\rangle \tilde{\nu}^i 
	+\sum_{i=1}^3\left(\frac{1}{\vert \varphi^i_x\vert^3}-\frac{1}{\vert \gamma^i_x\vert^3}\right)
	\left\langle \tilde{\gamma}^i_{xxx}-\gamma^i_{xxx},\nu^i\right\rangle \nu^i 
	\,\label{linethree}
	\end{align}
	evaluated at $x=0$. Observe that by Lemma~\ref{extensionE_T} and Lemma~\ref{derivativesol} with $T_0\equiv 1$,
	\begin{align*}
	&\left\lVert\gamma^i_{xxx}(0)-\widetilde{\gamma}^i_{xxx}(0)\right\rVert_{W_p^{\alpha}\left((0,T);\mathbb{R}^2\right)}=\left\lVert\left(\boldsymbol{E}\gamma^i\right)_{xxx}(0)-\left(\boldsymbol{E}\widetilde{\gamma}^i\right)_{xxx}(0)\right\rVert_{W_p^{\alpha}\left((0,T);\mathbb{R}^2\right)}\\
	\leq &\left\lVert\left(\boldsymbol{E}\left(\gamma^i-\widetilde{\gamma}^i\right)\right)_{xxx}(0)\right\rVert_{W_p^{\alpha}\left((0,T_0);\mathbb{R}^2\right)}\leq C(T_0)\left\lVert \boldsymbol{E}\left(\gamma^i-\widetilde{\gamma}^i\right)\right\rVert_{W_p^{1,4}\left((0,T_0)\times(0,1);\mathbb{R}^2\right)}\\
	\leq &\,C\left\lVert \boldsymbol{E}\left(\gamma^i-\widetilde{\gamma}^i\right)\right\rVert_{W_p^{1,4}\left((0,\infty)\times(0,1);\mathbb{R}^2\right)}\leq C(p)\vertiii{\gamma^i-\widetilde{\gamma}^i}_{W_p^{1,4}\left((0,T)\times(0,1);\mathbb{R}^2\right)}\leq C(p)\vertiii{\gamma-\widetilde{\gamma}}_{\mathbb{E}_T}\,.
	\end{align*}
	The same estimate shows $\left\lVert \gamma^i_{xxx}(0)\right\rVert_{W_p^\alpha\left((0,T);\mathbb{R}^2\right)}\leq C(p,M)$. Moreover, Lemma~\ref{compositionhölder} implies
	\begin{align*}
	&\left\lVert\left\lvert\gamma^i_x(0)\right\rvert^{-j}-\left\lvert\widetilde{\gamma}^i_x(0)\right\rvert^{-j} \right\rVert_{C^\beta\left([0,T];\mathbb{R}\right)}=\left\lVert F^j\left(\gamma^i_x(0)\right)-F^j\left(\widetilde{\gamma}^i_x(0)\right)\right\rVert_{C^\beta\left([0,T];\mathbb{R}\right)}\\
	\leq&\,\max\{1,\left\lVert\gamma^i_x(0)\right\rVert_{C^\beta\left([0,T];\mathbb{R}^2\right)}+\left\lVert\widetilde{\gamma}^i_x(0)\right\rVert_{C^\beta\left([0,T];\mathbb{R}^2\right)}\}\left\lVert F^j\right\rVert_{C^2\left(K;\mathbb{R}\right)}\left\lVert\gamma^i_x(0)-\widetilde{\gamma}^i_x(0)\right\rVert_{C^\beta\left([0,T];\mathbb{R}^2\right)}\\
	\leq & \,C(M)\left\lVert F^j\right\rVert_{C^2\left(K;\mathbb{R}\right)}\vertiii{\gamma-\widetilde{\gamma}}_{\mathbb{E}_T}\leq C\left(\boldsymbol{c},\mathfrak{c},M\right) \vertiii{\gamma-\widetilde{\gamma}}_{\mathbb{E}_T}
	\end{align*}
	for $j\in\{1,3\}$, and similarly,
	\begin{equation*}
	\left\lVert\left\lvert\gamma^i_x(0)\right\rvert^{-j}\right\rVert_{C^\beta\left([0,T];\mathbb{R}\right)}\leq C\left(\boldsymbol{c},\mathfrak{c},M\right)\,.
	\end{equation*}
	As the H\"{o}lder norm is sub multiplicative, we obtain in particular
	\begin{align*}
	\left\lVert\nu^i(0)-\widetilde{\nu}^i(0)\right\rVert_{C^\beta}&\leq \left\lVert\left\lvert\gamma^i_x(0)\right\rvert^{-1}-\left\lvert\widetilde{\gamma}^i_x(0)\right\rvert^{-1}\right\rVert_{C^\beta}\left\lVert\gamma^i_x(0)\right\rVert_{C^\beta}+\left\lVert\left\lvert\widetilde{\gamma}^i_x(0)\right\rVert_{C^\beta}\right\rVert\left\lVert\gamma^i_x(0)-\widetilde{\gamma}^i_x(0)\right\rVert_{C^\beta}\\
	&\leq C\left(\boldsymbol{c},\mathfrak{c},M\right) \vertiii{\gamma-\widetilde{\gamma}}_{\mathbb{E}_T}
	\end{align*}
	and similarly $\left\lVert\nu^i(0)\right\rVert_{C^\beta\left([0,T];\mathbb{R}^2\right)}\leq C\left(\boldsymbol{c},\mathfrak{c},M\right)$. Combining these estimates with Lemma~\ref{embeddingsobolevhölder} we conclude that there exists $\sigma\in(0,1)$ such that each summand of the expression $b\left(\gamma\right)-b\left(\widetilde{\gamma}\right)$ is bounded by 
	\begin{equation*}
	 C\left(\boldsymbol{c},\mathfrak{c},M\right) T^\sigma \vertiii{\gamma-\widetilde{\gamma}}_{\mathbb{E}_T}\,.
	\end{equation*}
\end{proof}
\begin{cor}\label{K_T contractive}
	Let $p\in(5,\infty)$ and $M$ be positive. There exists $T\left(\boldsymbol{c},\mathfrak{c},M\right)\in(0,\widetilde{T}\left(\boldsymbol{c},M\right)]$ such that for every $T\in \left(0,T\left(\boldsymbol{c},\mathfrak{c},M\right)\right]$ the map 
	\begin{equation*}
	K_T:\left(\mathbb{E}_T^\varphi\cap\overline{B_M},\vertiii{\cdot}_{\mathbb{E}_T}\right)\to\left(\mathbb{E}_T^\varphi,\vertiii{\cdot}_{\mathbb{E}_T}\right)\,, \quad\gamma\mapsto K_T(\gamma):=L_T^{-1}\left(N_{T,1}\left(\gamma\right),N_{T,2}\left(\gamma\right),\varphi\right)
	\end{equation*}
	is a contraction.
\end{cor}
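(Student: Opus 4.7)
The plan is to use the decomposition $K_T = L_T^{-1} \circ (N_{T,1}, N_{T,2}, \varphi)$ together with the uniform boundedness of $L_T^{-1}$ from Lemma~\ref{Luniformlybounded} and the small Lipschitz constants of $N_{T,1}$ and $N_{T,2}$ from Propositions~\ref{N_1 contractive} and~\ref{N_2 contractive}. Since the third component is the constant map $\varphi$ and $L_T^{-1}$ is linear, for $\gamma,\widetilde{\gamma}\in \mathbb{E}_T^\varphi\cap\overline{B_M}$ one has
\begin{equation*}
K_T(\gamma)-K_T(\widetilde{\gamma}) = L_T^{-1}\bigl(N_{T,1}(\gamma)-N_{T,1}(\widetilde{\gamma}),\,N_{T,2}(\gamma)-N_{T,2}(\widetilde{\gamma}),\,0\bigr),
\end{equation*}
so the problem is reduced to estimating the $\mathbb{F}_T$-norm of this triple and multiplying by $\vertiii{L_T^{-1}}_{\mathcal{L}(\mathbb{F}_T,\mathbb{E}_T)}$.

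First I would verify that the above triple actually belongs to $\mathbb{F}_T$, i.e.\ satisfies the linear compatibility conditions of Definition~\ref{lincompcond} relative to $\psi=0$. The only nontrivial condition concerns the third-order boundary value, which with $\psi\equiv 0$ reads $b(0,0)=0$. Since $\gamma_{|t=0}=\widetilde\gamma_{|t=0}=\varphi$, we have $N_{T,2}(\gamma)_{|t=0}=N_{T,2}(\widetilde\gamma)_{|t=0}=b(\varphi)$, whence their difference vanishes at $t=0$, as required. The $L_p$-regularity of the first component and the $W_p^{\nicefrac{1}{4}-\nicefrac{1}{4p}}$-regularity of the second are immediate from Propositions~\ref{N_1 contractive} and~\ref{N_2 contractive}.

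Next I would apply Lemma~\ref{Luniformlybounded} with $T_0=1$ to obtain a constant $c(1,p)$ such that $\vertiii{L_T^{-1}}_{\mathcal{L}(\mathbb{F}_T,\mathbb{E}_T)}\leq c(1,p)$ uniformly for $T\in(0,\widetilde T(\boldsymbol c,M)]$. Combining this with the contraction estimates of Propositions~\ref{N_1 contractive} and~\ref{N_2 contractive} yields
\begin{equation*}
\vertiii{K_T(\gamma)-K_T(\widetilde\gamma)}_{\mathbb{E}_T}
\leq c(1,p)\bigl(C(\boldsymbol c,M)\,T^{\sigma_1}+C(\boldsymbol c,\mathfrak c,M)\,T^{\sigma_2}\bigr)\,\vertiii{\gamma-\widetilde\gamma}_{\mathbb{E}_T}
\end{equation*}
for some exponents $\sigma_1,\sigma_2\in(0,1)$. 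Finally I would choose $T(\boldsymbol c,\mathfrak c,M)\in(0,\widetilde T(\boldsymbol c,M)]$ so small that the prefactor on the right-hand side is strictly less than $1$, which delivers the contraction property for every $T\in(0,T(\boldsymbol c,\mathfrak c,M)]$.

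There is no real obstacle here, as all hard work has been carried out in the preceding propositions; the only point requiring care is the compatibility check ensuring that the triple $(N_{T,1}(\gamma)-N_{T,1}(\widetilde\gamma),N_{T,2}(\gamma)-N_{T,2}(\widetilde\gamma),0)$ legitimately lives in the space $\mathbb{F}_T$ to which $L_T^{-1}$ applies, and the explicit use of $T_0=1$ in Lemma~\ref{Luniformlybounded} to absorb the constant into the small $T^\sigma$ factor.
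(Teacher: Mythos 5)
Your argument is correct and follows essentially the same route as the paper: uniform boundedness of $L_T^{-1}$ from Lemma~\ref{Luniformlybounded} with $T_0\equiv 1$, combined with the $T^\sigma$-Lipschitz estimates of Propositions~\ref{N_1 contractive} and~\ref{N_2 contractive}, and a check that the relevant triple satisfies the linear compatibility conditions so that it lies in $\mathbb{F}_T$. The only cosmetic difference is that the paper verifies compatibility for $\left(N_{T,1}(\gamma),N_{T,2}(\gamma),\varphi\right)$ to establish well-definedness of $K_T$ and then uses linearity, whereas you verify it directly for the difference triple with $\psi=0$; both are fine.
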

\begin{proof}
	Given $T\in(0,\widetilde{T}\left(\boldsymbol{c},M\right)]$ and $\gamma\in\mathbb{E}_T^\varphi\cap\overline{B_M}$ a direct calculation shows that the admissible initial parametrisation $\varphi$ satisfies the linear compatibility conditions with respect to $N_{T,2}(\gamma)$. In particular, $\left(N_{T,1}\left(\gamma\right),N_{T,2}\left(\gamma\right),\varphi\right)\in\mathbb{F}_T$ and the map $K_T$ is well defined. Moreover, Lemma~\ref{Luniformlybounded} with $T_0\equiv 1$ and Proposition~\ref{N_1 contractive} and~\ref{N_2 contractive} imply for all $\gamma$, $\widetilde{\gamma}\in\mathbb{E}_T^\varphi\cap\overline{B}_M$,
	\begin{align*}
	&\vertiii{ K_T\left(\gamma\right)-K_T\left(\widetilde{\gamma}\right)}_{\mathbb{E}_T}\leq \sup_{T\in(0,1]}\vertiii{L_T^{-1}}_{\mathcal{L}\left(\mathbb{F}_T,\mathbb{E}_T\right)}\vertiii{\left(N_{T,1}(\gamma)-N_{T,1}(\widetilde{\gamma}),N_{T,2}(\gamma)-N_{T,2}(\widetilde{\gamma}),0\right)}_{\mathbb{F}_T}\\
	\leq & C(p)\left\lVert N_{T,1}(\gamma)-N_{T,1}(\widetilde{\gamma})\right\rVert_{L_p\left((0,T);L_p\left((0,1);(\mathbb{R}^2)^3\right)\right)}+ \vertiii{N_{T,2}(\gamma)-N_{T,2}(\widetilde{\gamma})}_{W_p^{\nicefrac{1}{4}-\nicefrac{1}{4p}}\left((0,T);\mathbb{R}^2\right)}\\
	\leq & C\left(\boldsymbol{c},\mathfrak{c},M\right)T^\sigma\vertiii{\gamma-\widetilde{\gamma}}_{\mathbb{E}_T}\,.
	\end{align*}
\end{proof}
\begin{lemma}
	Let $p\in(5,\infty)$ and $T_0$ be positive. There exists a continuous linear operator
	\begin{equation*}
	\mathcal{E}:W_p^{4-\nicefrac{4}{p}}\left((0,1);\mathbb{R}\right)\to W_p^{1,4}\left((0,T_0)\times(0,1);\mathbb{R}\right)
	\end{equation*}
	such that $\left\lVert\mathcal{E}\varphi\right\rVert_{W_p^{1,4}}\leq c(T_0)\left\lVert\varphi \right\rVert_{W_p^{4-\nicefrac{4}{p}}}$ and $\left(\mathcal{E}\varphi\right)_{|t=0} =\varphi$.
\end{lemma}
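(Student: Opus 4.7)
The plan is to reduce the claim to a standard maximal regularity statement for a fourth--order parabolic Cauchy problem on $\mathbb{R}$, combined with an extension in the space variable. First, since $(0,1)$ is a Lipschitz domain, I would invoke the existence of a bounded linear extension operator
\begin{equation*}
E: W_p^{4-\nicefrac{4}{p}}\left((0,1);\mathbb{R}\right) \to W_p^{4-\nicefrac{4}{p}}\left(\mathbb{R};\mathbb{R}\right),
\end{equation*}
with $(E\varphi)_{|(0,1)} = \varphi$ and a continuity constant independent of $\varphi$; this is a classical result on Sobolev--Slobodeckij spaces on intervals, see for example~\cite[Chapter~5]{AdamsFournier}.

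Next, given $\varphi \in W_p^{4-\nicefrac{4}{p}}\left((0,1);\mathbb{R}\right)$, I would let $u$ denote the solution of the spatially homogeneous initial value problem
\begin{equation*}
u_t + 2u_{xxxx} = 0 \quad \text{on } (0,\infty)\times\mathbb{R}, \qquad u(0,\cdot) = E\varphi.
\end{equation*}
By the very same maximal regularity theorem that was already invoked in the proof of Lemma~\ref{extensionE_T} (namely~\cite[Proposition~3.4.3]{Prusssimonett}), this problem admits a unique solution with
\begin{equation*}
\|u\|_{W_p^{1,4}\left((0,T_0)\times\mathbb{R}\right)} \leq C(T_0)\,\|E\varphi\|_{W_p^{4-\nicefrac{4}{p}}(\mathbb{R})} \leq C(T_0)\,\|\varphi\|_{W_p^{4-\nicefrac{4}{p}}((0,1))}.
\end{equation*}

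I would then define $\mathcal{E}\varphi := u_{|(0,T_0)\times(0,1)}$. Since the restriction from $W_p^{1,4}\left((0,T_0)\times\mathbb{R}\right)$ to $W_p^{1,4}\left((0,T_0)\times(0,1)\right)$ is bounded with norm at most one, the desired estimate follows, and by construction $(\mathcal{E}\varphi)_{|t=0} = (E\varphi)_{|(0,1)} = \varphi$. Linearity of $\mathcal{E}$ is inherited from the linearity of $E$, of the solution operator to the parabolic Cauchy problem, and of restriction.

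The only non--trivial input here is the maximal regularity of the Cauchy problem for the fourth--order heat equation on the full line, which is already a standing tool in Section~\ref{shorttimeexistence}; there is therefore no genuinely new obstacle and the proof is essentially a bookkeeping combination of existing results. Alternatively, one could avoid the spatial extension entirely by choosing a realization $A$ of $2\partial_x^4$ on $L_p((0,1))$ (subject to, say, periodic or clamped boundary conditions) that generates an analytic semigroup with domain $W_p^4$, and setting $\mathcal{E}\varphi(t) := e^{-tA}\varphi$; the real interpolation characterization of the trace space then yields the bound directly, but this is essentially the same argument in abstract form.
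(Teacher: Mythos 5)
Your argument is correct and coincides with the paper's own proof: the paper likewise constructs a spatial extension operator $E:W_p^{4-\nicefrac{4}{p}}((0,1))\to W_p^{4-\nicefrac{4}{p}}(\mathbb{R})$ by reflection and cut-off, solves the fourth-order Cauchy problem $\partial_t u+\Delta^2 u=0$ on $\mathbb{R}$ with datum $E\varphi$ via maximal $L_p$-regularity from Pr\"uss--Simonett, and defines $\mathcal{E}$ as the composition with the restriction operator. The only cosmetic differences are the precise reference within that monograph and the harmless factor $2$ in your equation.
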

	\begin{proof}
	Using reflection and a cut-off function we may construct a linear and continuous extension operator 
	\begin{equation*}
	E:W_p^{4-\nicefrac{4}{p}}\left((0,1);\mathbb{R}\right)\to W_p^{4-\nicefrac{4}{p}}\left(\mathbb{R};\mathbb{R}\right)
	\end{equation*}
By~\cite[Corollary~6.1.12]{Prusssimonett} the initial value problem
	\begin{align*}
	\partial_t u(t,x)+ \Delta^2u(t,x)&= 0\,,\qquad\qquad t\in(0,T_0)\,,x\in\mathbb{R}\,,\\
	u(0)&=\psi\,,
	\end{align*}
	admits a unique solution $u:=\mathcal{L}^{-1}\psi\in W_p^{1,4}\left((0,T_0)\times\mathbb{R};\mathbb{R}\right)$ depending linearly and continuously on the initial value $\psi\in W_p^{4-\nicefrac{4}{p}}\left(\mathbb{R};\mathbb{R}\right)$. The result follows setting $\mathcal{E}:=R\circ L^{-1}\circ E$ where $R: W_p^{1,4}\left((0,T_0)\times\mathbb{R};\mathbb{R}\right)\to  W_p^{1,4}\left((0,T_0)\times(0,1);\mathbb{R}\right)$ is the restriction operator,.
	\end{proof}
\begin{prop}\label{selfmapping}
	There exists a positive radius $M$ depending on the norm of $\varphi$ in $W_p^{4-\nicefrac{4}{p}}\left((0,1)\right)$ and a positive time $\widetilde{T}(\boldsymbol{c},\mathfrak{c},M)$ such that for all $T\in (0,\widetilde{T}(\boldsymbol{c},\mathfrak{c},M)]$ we have a well defined map 
	\begin{equation*}
K_T:\mathbb{E}_T^\varphi\cap\overline{B_M}\to\mathbb{E}_T^\varphi\cap\overline{B_M}\,.
\end{equation*}
\end{prop}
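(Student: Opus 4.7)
The plan is to use the contractive structure already exploited in Corollary~\ref{K_T contractive}: construct a \emph{fixed} reference element $\gamma_{*}\in\mathbb{E}_{T}^{\varphi}$ whose $\mathbb{E}_{T}$-norm is bounded solely in terms of $\|\varphi\|_{W_{p}^{4-\nicefrac{4}{p}}}$, and then show that $K_{T}(\gamma)$ is a small perturbation of $\gamma_{*}$ for $T$ small.

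To define $\gamma_{*}$, I first observe that for any $\gamma\in\mathbb{E}_{T}^{\varphi}$ the value $b(\gamma)(0)$ depends only on $\varphi$: since $\gamma^{i}(0)=\varphi^{i}$ and hence $\nu^{i}(0)=\nu_{0}^{i}$, the two inner products of the form $|\cdot|^{-3}\langle\varphi^{i}_{xxx},\nu_{0}^{i}\rangle\nu_{0}^{i}$ in the definition of $b$ cancel, leaving
\begin{equation*}
\bar{b}:=-\tfrac{\mu}{2}\sum_{i=1}^{3}\tau_{0}^{i}(0)\in\mathbb{R}^{2}.
\end{equation*}
A direct computation using $\varphi^{i}_{xx}(0)=0$ gives $k^{i}_{0,s}(0)=\langle\varphi^{i}_{xxx}(0),\nu_{0}^{i}(0)\rangle/|\varphi^{i}_{x}(0)|^{3}$, so the admissibility identity $\sum_{i}(2k^{i}_{0,s}\nu_{0}^{i}-\mu\tau_{0}^{i})=0$ is exactly the linear compatibility condition of Definition~\ref{lincompcond} for the triple $(0,\bar{b},\varphi)\in\mathbb{F}_{T}$. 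Setting
\begin{equation*}
\gamma_{*}:=L_{T}^{-1}(0,\bar{b},\varphi)\in\mathbb{E}_{T}^{\varphi},
\end{equation*}
Lemma~\ref{Luniformlybounded} with $T_{0}=1$ yields $\vertiii{\gamma_{*}}_{\mathbb{E}_{T}}\le c(p)(|\bar{b}|+\|\varphi\|_{W_{p}^{4-\nicefrac{4}{p}}})=:M_{0}$, a bound depending only on $\varphi$. I then fix $M:=M_{0}+1$, so that in particular $\gamma_{*}\in\overline{B_{M}}$.

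For any $\gamma\in\mathbb{E}_{T}^{\varphi}\cap\overline{B_{M}}$, linearity of $L_{T}^{-1}$ and the triangle inequality give
\begin{equation*}
\vertiii{K_{T}(\gamma)}_{\mathbb{E}_{T}}\le\vertiii{\gamma_{*}}_{\mathbb{E}_{T}}+c(p)\Bigl(\|N_{T,1}(\gamma)\|_{L_{p}((0,T);L_{p}((0,1);(\mathbb{R}^{2})^{3}))}+\vertiii{N_{T,2}(\gamma)-\bar{b}}_{W_{p}^{\nicefrac{1}{4}-\nicefrac{1}{4p}}((0,T);\mathbb{R}^{2})}\Bigr).
\end{equation*}
The remaining task is to bound the two nonlinear terms by $T^{\sigma}C(\boldsymbol{c},\mathfrak{c},M)$ for some $\sigma>0$, which is essentially a rerun of Propositions~\ref{N_1 contractive} and~\ref{N_2 contractive} with $\widetilde{\gamma}$ replaced by the formal constant-in-time extension of $\varphi$. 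The decisive gain comes from $\gamma(0)=\varphi$: Lemma~\ref{uniformcalphac1} yields $|\gamma^{i}_{x}(t,x)-\varphi^{i}_{x}(x)|\le t^{\alpha}C(M)$ pointwise, which controls the highest-order summand $(|\varphi^{i}_{x}|^{-4}-|\gamma^{i}_{x}|^{-4})\gamma^{i}_{xxxx}$ in $f^{i}(\gamma^{i})$, while the remaining polynomial in $\gamma^{i}_{x},\gamma^{i}_{xx},\gamma^{i}_{xxx},|\gamma^{i}_{x}|^{-1}$ already gains a factor $T^{1/p}$ by direct integration in time. For $N_{T,2}(\gamma)-\bar{b}$ the pointwise summand in the equivalent norm of Lemma~\ref{extensionboundarydata} vanishes by construction of $\bar{b}$, and the Slobodeckij seminorm is estimated via a uniform $C^{\beta}([0,T];\mathbb{R}^{2})$-bound on $b(\gamma)-\bar{b}$ obtained from Lemma~\ref{compositionhölder} exactly as in Proposition~\ref{N_2 contractive}, then converted into a $W_{p}^{\nicefrac{1}{4}-\nicefrac{1}{4p}}$-bound of order $T^{\sigma}$ via Lemma~\ref{embeddingsobolevhölder}.

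Assembling the estimates, $\vertiii{K_{T}(\gamma)}_{\mathbb{E}_{T}}\le M_{0}+T^{\sigma}C(\boldsymbol{c},\mathfrak{c},M)$, so shrinking $\widetilde{T}(\boldsymbol{c},\mathfrak{c},M)\in(0,\widetilde{T}(\boldsymbol{c},M)]$ until $\widetilde{T}^{\sigma}C(\boldsymbol{c},\mathfrak{c},M)\le 1$ gives $\vertiii{K_{T}(\gamma)}_{\mathbb{E}_{T}}\le M_{0}+1=M$ and establishes the self-mapping property. The principal obstacle is the Slobodeckij bound on $N_{T,2}(\gamma)-\bar{b}$: unlike in Proposition~\ref{N_2 contractive} this is not a difference of $b$ at two different curves but the deviation of $b(\gamma)$ from its prescribed initial value, so one has to verify that the $t=0$ cancellation encoded in the choice of $\bar{b}$ propagates through each summand in the definition of $b$, leaving enough H\"older-in-time regularity to apply Lemma~\ref{embeddingsobolevhölder} with the same exponent $\sigma$ as for the genuine contractivity estimate.
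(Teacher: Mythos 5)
Your argument is correct, but it takes a genuinely different route from the paper. The paper's proof picks as reference point the parabolic extension $\mathcal{E}\varphi$ of the initial datum, \emph{defines} $M$ so large that $\vertiii{K_T(\mathcal{E}\varphi)}_{\mathbb{E}_T}\le \nicefrac{M}{2}$ holds automatically (namely $M$ is twice the larger of $\vertiii{\mathcal{E}\varphi}_{\mathbb{E}_1}$ and the $\mathbb{F}_1$--norm of $(N_{1,1}(\mathcal{E}\varphi),N_{1,2}(\mathcal{E}\varphi),\varphi)$, times $\sup_T\vertiii{L_T^{-1}}$), and then concludes by the triangle inequality $\vertiii{K_T(\gamma)}\le\vertiii{K_T(\gamma)-K_T(\mathcal{E}\varphi)}+\vertiii{K_T(\mathcal{E}\varphi)}\le C T^\sigma 2M+\nicefrac{M}{2}\le M$, using Corollary~\ref{K_T contractive} entirely as a black box; no new nonlinear estimate is needed. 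You instead take $\gamma_*=L_T^{-1}(0,\bar b,\varphi)$ as reference -- your verification that $(0,\bar b,\varphi)$ satisfies the compatibility conditions of Definition~\ref{lincompcond}, via $k^i_{0,s}(0)=\langle\varphi^i_{xxx}(0),\nu_0^i(0)\rangle/\lvert\varphi^i_x(0)\rvert^3$ and the admissibility of $\varphi$, is correct and is the same computation the paper performs inside Corollary~\ref{K_T contractive} -- and you must then prove the \emph{absolute} smallness bounds $\lVert N_{T,1}(\gamma)\rVert\le CT^\sigma$ and $\vertiii{N_{T,2}(\gamma)-\bar b}\le CT^\sigma$, which are not literally contained in Propositions~\ref{N_1 contractive} and~\ref{N_2 contractive}: the well--definedness part of Proposition~\ref{N_1 contractive} bounds the highest--order summand of $f^i(\gamma^i)$ only by $C(\boldsymbol{c})M^p$ with no power of $T$. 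Your fixes do close this: Lemma~\ref{uniformcalphac1} gives $\sup_{t,x}\lvert\gamma^i_x(t,x)-\varphi^i_x(x)\rvert\le T^\alpha C(M)$ for the coefficient of $\gamma^i_{xxxx}$, and every summand of $b(\gamma)-\bar b$ carries a factor ($\nu^i-\nu_0^i$, $\lvert\gamma^i_x\rvert^{-3}-\lvert\varphi^i_x(0)\rvert^{-3}$ or $\tau^i-\tau_0^i$) that is controlled in $C^\beta([0,T])$ with $\beta>\nicefrac{1}{4}-\nicefrac{1}{4p}$, so the H\"older--into--Slobodeckij embedding with gain $T^\sigma$ and the Banach algebra property of $W_p^{\nicefrac{1}{4}-\nicefrac{1}{4p}}$ give the claim exactly as in Proposition~\ref{N_2 contractive}. (The triple $(N_{T,1}(\gamma),N_{T,2}(\gamma)-\bar b,0)$ violates the affine condition $\psi^i(1)=\varphi^i(1)$ of $\mathbb{F}_T$, but applying the operator norm of $L_T^{-1}$ to such differences is the same harmless abuse the paper itself commits in Corollary~\ref{K_T contractive}.) The trade--off: the paper's proof is shorter and requires no new estimates, at the price of an $M$ defined implicitly through the nonlinearities evaluated at $\mathcal{E}\varphi$ and hence depending also on $\boldsymbol{c}$; yours produces an $M$ controlled by $\lVert\varphi\rVert_{W_p^{4-\nicefrac{4}{p}}}$ alone (since $\lvert\bar b\rvert\le\nicefrac{3\mu}{2}$), matching the wording of the proposition more literally, at the price of reworking the contraction estimates in absolute rather than difference form.
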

\begin{proof}
We let $T_0\equiv 1$ and define
\begin{equation*}
M:=2\max\left\{\sup_{T\in(0,1]}\vertiii{L_T^{-1}}_{\mathcal{L}\left(\mathbb{F}_T,\mathbb{E}_T\right)},1\right\}\max\left\{\vertiii{\mathcal{E}\varphi}_{\mathbb{E}_1},\vertiii{\left(N_{1,1}(\mathcal{E}\varphi),N_{1,2}(\mathcal{E}\varphi),\varphi\right)}_{\mathbb{F}_1}\right\}\,.
\end{equation*}	
In particular, $\mathcal{E}\varphi$ lies in $\mathbb{E}_T\cap\overline{B_M}$ for all $T\in (0,1]$. Moreover, for all $T\in (0,1]$,
\begin{equation*}
\vertiii{K_T\left(\mathcal{E}\varphi\right)}_{\mathbb{E}_T}\leq \sup_{T\in(0,1]}\vertiii{L_T^{-1}}_{\mathcal{L}\left(\mathbb{F}_T,\mathbb{E}_T\right)}\vertiii{\left(N_{T,1}\left(\mathcal{E}\varphi\right),N_{T,2}\left(\mathcal{E}\varphi\right),\varphi\right)}_{\mathbb{F}_T}\leq \nicefrac{M}{2}\,.
\end{equation*}
Let $T\left(\boldsymbol{c},\mathfrak{c},M\right)$ be the corresponding time as in Corollary~\ref{K_T contractive}. Given $T\in (0,T\left(\boldsymbol{c},\mathfrak{c},M\right)]$ and $\gamma\in \mathbb{E}_T^\varphi\cap\overline{B_M}$ we observe that for some $\sigma\in(0,1)$,
\begin{equation*}
\vertiii{ K_T\left(\gamma\right)-K_T\left(\mathcal{E}\varphi\right)}_{\mathbb{E}_T}\leq C\left(\boldsymbol{c},\mathfrak{c},M\right)T^\sigma\vertiii{\gamma-\mathcal{E}\varphi}_{\mathbb{E}_T}\leq C\left(\boldsymbol{c},\mathfrak{c},M\right)T^\sigma 2M\,.
\end{equation*}
We choose $\widetilde{T}(\boldsymbol{c},\mathfrak{c},M)\in (0,T\left(\boldsymbol{c},\mathfrak{c},M\right)]$ so small that $C\left(\boldsymbol{c},\mathfrak{c},M\right)T^\sigma 2M\leq \nicefrac{M}{2}$ for all $T\in (0,\widetilde{T}(\boldsymbol{c},\mathfrak{c},M)]$. Finally, we conclude for all $T\in (0,\widetilde{T}(\boldsymbol{c},\mathfrak{c},M)]$ and $\gamma\in \mathbb{E}_T^\varphi\cap\overline{B_M}$,
\begin{equation*}
\vertiii{K_T(\gamma)}_{\mathbb{E}_T}\leq \vertiii{K_T(\gamma)-K_T(\mathcal{E}\varphi)}_{\mathbb{E}_T}+\vertiii{K_T(\mathcal{E}\varphi)}_{\mathbb{E}_T}\leq\nicefrac{M}{2}+\nicefrac{M}{2}= M\,.
\end{equation*}
\end{proof}
\begin{teo}\label{short time existence}
	Let $p\in(5,\infty)$ and $\varphi$ be an admissible initial parametrisation. There exists a positive time $\widetilde{T}\left(\varphi\right)$ depending on $\min_{i\in\{1,2,3\},x\in[0,1]}\vert\varphi^i_x(x)\vert$, $\vert\varphi_x(0)\vert$ and $\left\lVert\varphi\right\rVert_{W_p^{4-\nicefrac{4}{p}}\left((0,1)\right)}$ such that for all $\boldsymbol{T}\in (0,\widetilde{T}(\varphi)]$ the system~\eqref{TriodC^0} has a solution in 
	\begin{equation*}
		\boldsymbol{E}_{\boldsymbol{T}}=W_p^{1}\left((0,\boldsymbol{T});L_p\left((0,1);(\mathbb{R}^2)^3\right)\right)\cap L_p\left((0,\boldsymbol{T});W_p^4\left((0,1);(\mathbb{R}^2)^3\right)\right)
	\end{equation*}
	which is unique in $\boldsymbol{E}_{\boldsymbol{T}}\cap\overline{B_M}$ where
	\begin{equation*}
M:=2\max\left\{\sup_{T\in(0,1]}\vertiii{L_T^{-1}}_{\mathcal{L}\left(\mathbb{F}_T,\mathbb{E}_T\right)},1\right\}\max\left\{\vertiii{\mathcal{E}\varphi}_{\mathbb{E}_1},\vertiii{\left(N_{1,1}(\mathcal{E}\varphi),N_{1,2}(\mathcal{E}\varphi),\varphi\right)}_{\mathbb{F}_1}\right\}\,.
	\end{equation*}
\end{teo}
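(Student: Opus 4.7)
The plan is to assemble the theorem as an application of Banach's fixed point theorem to the map $K_T$ introduced in Corollary~\ref{K_T contractive}. Concretely, a function $\gamma \in \mathbb{E}_T^\varphi \cap \overline{B_M}$ is a solution to~\eqref{TriodC^0} if and only if it is a fixed point of $K_T = L_T^{-1}\circ(N_{T,1},N_{T,2},\varphi)$: by the definition of $L_T$ (Lemma~\ref{Lisomorphism}), the identity $\gamma = K_T(\gamma)$ is equivalent to $L_T\gamma = (N_{T,1}(\gamma),N_{T,2}(\gamma),\varphi)$, which unravels exactly to the linearised motion equation with right-hand side $f(\gamma)$ and boundary term $b(\gamma)$; since the linear parts of these expressions cancel the corresponding contributions of $L_T$, one recovers the nonlinear system~\eqref{TriodC^0} together with the boundary, initial and concurrency conditions built into $\mathbb{E}_T^\varphi$.

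First, I would fix $M$ exactly as in the statement, so that Proposition~\ref{selfmapping} applies and provides a time $\widetilde{T}(\boldsymbol{c},\mathfrak{c},M) \in (0,1]$ for which $K_T$ maps $\mathbb{E}_T^\varphi \cap \overline{B_M}$ into itself whenever $T \in (0,\widetilde{T}(\boldsymbol{c},\mathfrak{c},M)]$. Combining this with Corollary~\ref{K_T contractive}, which after possibly shrinking $\widetilde{T}$ further yields a contraction constant $C(\boldsymbol{c},\mathfrak{c},M)T^\sigma < 1$, I obtain a positive time $\widetilde{T}(\varphi)$ depending only on the quantities listed in the statement, and for every $\boldsymbol{T} \in (0,\widetilde{T}(\varphi)]$ a self-contraction
\begin{equation*}
K_{\boldsymbol{T}}\colon \bigl(\mathbb{E}_{\boldsymbol{T}}^\varphi \cap \overline{B_M},\;\vertiii{\cdot}_{\mathbb{E}_{\boldsymbol{T}}}\bigr) \to \bigl(\mathbb{E}_{\boldsymbol{T}}^\varphi \cap \overline{B_M},\;\vertiii{\cdot}_{\mathbb{E}_{\boldsymbol{T}}}\bigr)
\end{equation*}
on a complete metric space (closedness of the ball follows from Corollary~\ref{equivalent norm $E_T$} and the continuity of the trace into the initial datum). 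The Banach fixed point theorem then delivers a unique fixed point $\gamma \in \mathbb{E}_{\boldsymbol{T}}^\varphi \cap \overline{B_M}$ of $K_{\boldsymbol{T}}$, which by the equivalence above is a solution of~\eqref{TriodC^0} in $\boldsymbol{E}_{\boldsymbol{T}}$.

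For the uniqueness statement in $\boldsymbol{E}_{\boldsymbol{T}} \cap \overline{B_M}$ (rather than in $\mathbb{E}_{\boldsymbol{T}}^\varphi\cap\overline{B_M}$), I would observe that any $\gamma \in \boldsymbol{E}_{\boldsymbol{T}}$ solving~\eqref{TriodC^0} automatically belongs to $\mathbb{E}_{\boldsymbol{T}}^\varphi$: the concurrency, fixed endpoints, and second order conditions, together with $\gamma_{|t=0}=\varphi$, are part of the system itself, so the traces required in the definition of $\mathbb{E}_{\boldsymbol{T}}^\varphi$ hold. Hence the two notions of uniqueness coincide on $\overline{B_M}$, and the Banach fixed point theorem delivers what is claimed.

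The one genuinely delicate point is the norm equivalence and extension issue: the Banach fixed point argument is carried out in the time-dependent norm $\vertiii{\cdot}_{\mathbb{E}_T}$, because this is the norm in which Lemma~\ref{Luniformlybounded} furnishes a bound on $L_T^{-1}$ that is uniform in $T \in (0,1]$, and in which the small-$T$ gain $T^\sigma$ in Propositions~\ref{N_1 contractive} and~\ref{N_2 contractive} is effective. I therefore expect the most care to go into verifying that all small-time smallness can genuinely be read off from the constants produced above — i.e., that the thresholds in Corollary~\ref{K_T contractive} and Proposition~\ref{selfmapping} depend only on $\boldsymbol{c}$, $\mathfrak{c}$, $M$ (hence ultimately only on $\|\varphi\|_{W_p^{4-4/p}}$ and the two geometric quantities listed in the statement), and not on any auxiliary function specific to a single $T$. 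Once this bookkeeping is made explicit, the existence and uniqueness assertion is the immediate conclusion of the fixed point theorem.
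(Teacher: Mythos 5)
Your argument is exactly the paper's: fix $M$ as in the statement, invoke Proposition~\ref{selfmapping} for the self-mapping property and Corollary~\ref{K_T contractive} for the contraction, identify solutions of~\eqref{TriodC^0} in $\boldsymbol{E}_{\boldsymbol{T}}\cap\overline{B_M}$ with fixed points of $K_{\boldsymbol{T}}$ in $\mathbb{E}_{\boldsymbol{T}}^\varphi\cap\overline{B_M}$, and conclude by the Contraction Mapping Principle. The additional remarks on completeness, the equivalence of the two uniqueness classes, and the uniformity of the constants in $T$ are correct elaborations of points the paper leaves implicit.
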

\begin{proof}
	Let $M$ and $\widetilde{T}\left(\boldsymbol{c},\mathfrak{c},M\right)$ be the radius and time as in Proposition~\ref{selfmapping} and let $\boldsymbol{T}\in(0,\widetilde{T}\left(\boldsymbol{c},\mathfrak{c},M\right)]$.
	The solutions of the system~\eqref{TriodC^0} in the space $\boldsymbol{E}_{\boldsymbol{T}}\cap\overline{B_M}$
	 are precisely the fixed points of the mapping $K_{\boldsymbol{T}}$ in $\mathbb{E}_{\boldsymbol{T}}^\varphi\cap\overline{B_M}$. As $K_{\boldsymbol{T}}$ is a contraction of the complete metric space $\mathbb{E}_{\boldsymbol{T}}^\varphi\cap\overline{B_M}$, existence and uniqueness of a solution follow from the Contraction Mapping Principle.
\end{proof}

\section{Parabolic regularisation for the Analytic Problem}

In this section we show that every solution to the Analytic Problem~\eqref{TriodC^0} is smooth for positive times. To this end we use the classical theory in~\cite{solonnikov2} on solutions to linear parabolic equations in parabolic H\"{o}lder spaces. The definition and properties of these spaces are given in~\cite[\S11, \S13]{solonnikov2}.
\begin{lemma}\label{regularity}
	Let $p\in(5,\infty)$ and $T$ be positive. There exists $\alpha\in (0,1)$ such that for all $\gamma\in \boldsymbol{E}_T$,
	\begin{equation*}
	\gamma\,,\,\gamma_x\,,\,\gamma_{xx}\in C^{\frac{\alpha}{4},\alpha}\left([0,T]\times[0,1];(\mathbb{R}^2)^3\right)
	\end{equation*}
	and
	\begin{equation*}
	t\mapsto\gamma_x(t,0)\in C^{\frac{1+\alpha}{4}}\left([0,T];(\mathbb{R}^2)^3\right)\,.
	\end{equation*}
\end{lemma}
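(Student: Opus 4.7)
The plan is to obtain the asserted mixed parabolic Hölder regularity as a consequence of anisotropic Sobolev embeddings, exactly in the spirit of the proof of Proposition~\ref{hölderspacetime}. Starting from the embedding
\begin{equation*}
\boldsymbol{E}_T\hookrightarrow C^{(1-\theta)(1-\nicefrac{1}{p})}\bigl([0,T];W_p^{\theta(4-\nicefrac{4}{p})}\left((0,1)\right)\bigr)
\end{equation*}
(valid for every $\theta\in(0,1)$, as established via the real interpolation argument of Proposition~\ref{hölderspacetime}), the idea is to choose $\theta$ sufficiently close to $1$ so that spatial Sobolev embedding yields H\"{o}lder regularity up to the second spatial derivative, and simultaneously small enough so that the temporal H\"{o}lder exponent dominates one quarter of the spatial one.

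More concretely, since $p>5$ one checks that the interval of admissible $\theta$ defined by the conditions $\theta(4-\nicefrac{4}{p})>2+\nicefrac{1}{p}$ (required for $W_p^{\theta(4-\nicefrac{4}{p})-2}\hookrightarrow C^{0}$) and
\begin{equation*}
(1-\theta)\Bigl(1-\tfrac{1}{p}\Bigr)\geq\tfrac{1}{4}\Bigl(\theta\bigl(4-\tfrac{4}{p}\bigr)-2-\tfrac{1}{p}\Bigr)
\end{equation*}
is non-empty. Fixing such a $\theta$ and setting $\alpha:=\theta(4-\nicefrac{4}{p})-2-\nicefrac{1}{p}\in(0,1)$, Sobolev embedding then gives simultaneously
\begin{equation*}
\gamma,\gamma_x,\gamma_{xx}\in BUC\bigl([0,T];C^{\alpha}([0,1])\bigr)\cap C^{\nicefrac{\alpha}{4}}\bigl([0,T];C^{0}([0,1])\bigr)\,,
\end{equation*}
which, by the very definition of the parabolic H\"older norm, is the claimed membership in $C^{\nicefrac{\alpha}{4},\alpha}([0,T]\times[0,1])$.

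For the final assertion on the trace $t\mapsto\gamma_x(t,0)$, the strategy is to use the temporal trace theorem from~\cite[Theorem~5.1]{solonnikov2}, which, via the same identification as in Remark~\ref{identification} and exactly as employed in Lemma~\ref{derivativesol} for $\gamma_{xxx}$, yields
\begin{equation*}
\gamma_x(\cdot,0)\in W_p^{\nicefrac{3}{4}-\nicefrac{1}{4p}}\bigl((0,T);(\mathbb{R}^2)^3\bigr)\,.
\end{equation*}
Applying the one--dimensional Sobolev embedding in time then gives $\gamma_x(\cdot,0)\in C^{\nicefrac{3}{4}-\nicefrac{5}{4p}}([0,T])$; since $p>5$ the exponent $\nicefrac{3}{4}-\nicefrac{5}{4p}$ exceeds $\nicefrac{1}{2}>\nicefrac{(1+\alpha)}{4}$ after possibly shrinking $\alpha$, and the second claim follows.

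The main obstacle is purely book-keeping: verifying that for every $p\in(5,\infty)$ one can indeed select a single $\theta$ making both the spatial Sobolev embedding $W_p^{\theta(4-\nicefrac{4}{p})-2}\hookrightarrow C^{\alpha}$ and the matching parabolic scaling $(1-\theta)(1-\nicefrac{1}{p})\geq\alpha/4$ compatible with $\alpha>0$. Once this threshold calculation is carried out, all other ingredients reduce to standard embedding and trace theorems already invoked elsewhere in the paper.
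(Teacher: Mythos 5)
Your argument is correct and follows essentially the same route as the paper's proof: both rest on the interpolation embedding $\boldsymbol{E}_T\hookrightarrow C^{(1-\theta)(1-\nicefrac{1}{p})}\left([0,T];W_p^{\theta(4-\nicefrac{4}{p})}\left((0,1)\right)\right)$ established in Proposition~\ref{hölderspacetime} combined with the spatial Sobolev embedding, and your threshold computation does check out (the admissible interval of $\theta$ is nonempty because at $\theta=\frac{2+\nicefrac{1}{p}}{4-\nicefrac{4}{p}}$ the temporal exponent is still $\frac{2p-5}{4p}>0$ for $p>5$). The only deviation is in the last assertion, where the paper reads off $\gamma_x(\cdot,0)\in C^{\frac{1+\alpha}{4}}$ directly from the embedding $\boldsymbol{E}_T\hookrightarrow C^{(1-\theta)(1-\nicefrac{1}{p})}\left([0,T];C^1\left([0,1]\right)\right)$ with $\theta$ close to $\frac{1+\nicefrac{1}{p}}{4-\nicefrac{4}{p}}$ (temporal exponent tending to $\frac{3}{4}-\frac{5}{4p}>\frac{1}{2}$), whereas you go through Solonnikov's temporal trace theorem and a one--dimensional Sobolev embedding in time; both yield the same exponent $\frac{3}{4}-\frac{5}{4p}$, so this is a cosmetic variation rather than a different method.
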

\begin{proof}
	In the proof of Proposition~\ref{hölderspacetime} it is shown that for all $\theta\in(0,1)$,
	\begin{equation*}
	\boldsymbol{E}_T\hookrightarrow C^{(1-\theta)(1-\nicefrac{1}{p})}\left([0,T];W_p^{\theta(4-\nicefrac{4}{p})}\left((0,1);(\mathbb{R}^2)^3\right)\right)\,.
	\end{equation*}
In particular, we obtain for all $\theta\in\left(\frac{1+\nicefrac{1}{p}}{4-\nicefrac{4}{p}},1\right)$ the continuous embedding
\begin{equation*}
\boldsymbol{E}_T\hookrightarrow C^{(1-\theta)(1-\nicefrac{1}{p})}\left([0,T];C^1\left([0,1];(\mathbb{R}^2)^3\right)\right)\,.
\end{equation*}
It is straightforward to verify that this implies for all $\gamma\in\boldsymbol{E}_T$ and all $\alpha\in(0,1)$,
\begin{equation*}
\gamma_x\in C^{\frac{1+\alpha}{4}}\left([0,T];C^0\left([0,1];(\mathbb{R}^2)\right)\right)\,.
\end{equation*}
The Sobolev Embedding Theorem yields further for all $\theta\in\left(\frac{2+\nicefrac{1}{p}}{4-\nicefrac{4}{p}},1\right)$,
\begin{equation*}
\boldsymbol{E}_T\hookrightarrow C^{(1-\theta)(1-\nicefrac{1}{p})}\left([0,T];C^2\left([0,1];(\mathbb{R}^2)^3\right)\right)
\end{equation*}
which implies $\gamma$, $\gamma_{x}$, $\gamma_{xx}\in C^{\frac{\alpha}{4}}\left([0,T];C^0\left([0,1];(\mathbb{R}^2)^3\right)\right)$ for all $\gamma\in\boldsymbol{E}_T$ and all $\alpha\in(0,1)$. Finally, Theorem~\ref{embeddingBUC} gives for all $\alpha\in(0,1)$,
\begin{equation*}
\gamma\,,\gamma_x\,,\gamma_{xx}\in C^0\left([0,T];C^1\left([0,1];(\mathbb{R}^2)^3\right)\right)\hookrightarrow C^0\left([0,T];C^\alpha\left([0,1];(\mathbb{R}^2)^3\right)\right)\,.
\end{equation*}
\end{proof}

\begin{prop}\label{Hölderreguk=1}
	Let $p\in(5,\infty)$, $T$ be positive and $\varphi$ be an admissible initial parametrisation. Suppose that $\gamma\in \boldsymbol{E}_T$ is a solution to the Analytic Problem~\eqref{TriodC^0} in the time interval $[0,T]$ with initial datum $\varphi$ in the sense of Definition~\ref{solutionanalytic}. Then there exists $\alpha\in(0,1)$ such that for all $\varepsilon\in(0,T)$, 
	\begin{equation*}
	\gamma\in
	C^{\frac{4+\alpha}{4}, 4+\alpha}\left([\varepsilon,T]\times[0,1];(\mathbb{R}^2)^3\right)\,.
	\end{equation*}
\end{prop}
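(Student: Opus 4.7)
My plan is to reduce the proposition to the classical Schauder theory of Solonnikov in parabolic H\"older spaces via a time cut-off argument. Pick $\eta\in C^\infty([0,T];[0,1])$ with $\eta\equiv 0$ on $[0,\varepsilon/2]$ and $\eta\equiv 1$ on $[\varepsilon,T]$ and set $\widetilde{\gamma}:=\eta\gamma$; since $\widetilde{\gamma}=\gamma$ on $[\varepsilon,T]$, it is enough to show $\widetilde{\gamma}\in C^{(4+\alpha)/4,\,4+\alpha}([0,T]\times[0,1];(\mathbb{R}^2)^3)$. The key structural observation is that both $A^i$ and $T^i$ are \emph{linear} in $\gamma^i_{xxxx}$ and in $\gamma^i_{xxx}$, with coefficients depending only on $\gamma^i_x$ and $\gamma^i_{xx}$: this is explicit for $T^i$ in~\eqref{Tang}, and follows for $A^i$ from the expansion of $k^i_{ss}$ together with the fact that $(k^i)^3$ and $\mu k^i$ depend only on $\gamma^i_x,\gamma^i_{xx}$. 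Hence the motion equation takes the quasi-linear form
\begin{equation*}
\gamma^i_t+\frac{2}{|\gamma^i_x|^4}\gamma^i_{xxxx}+B^i(\gamma^i_x,\gamma^i_{xx})\,\gamma^i_{xxx}=G^i(\gamma^i_x,\gamma^i_{xx}),
\end{equation*}
with $B^i$ and $G^i$ smooth in their arguments. Multiplying by $\eta$ produces a linear parabolic equation for $\widetilde{\gamma}^i$ with vanishing initial datum and inhomogeneity $\eta G^i+\eta'\gamma^i$.

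By Lemma~\ref{regularity}, $\gamma^i,\gamma^i_x,\gamma^i_{xx}\in C^{\alpha/4,\,\alpha}([0,T]\times[0,1])$ for some $\alpha\in(0,1)$, so that the leading coefficient $2/|\gamma^i_x|^4$, the lower-order coefficient $B^i(\gamma^i_x,\gamma^i_{xx})$, and the forcing all lie in $C^{\alpha/4,\,\alpha}$. The boundary conditions linearise in the same spirit: concurrency and the second-order conditions at $x\in\{0,1\}$ are linear and homogeneous in $\widetilde{\gamma}$; the fixed-endpoint condition becomes $\widetilde{\gamma}^i(t,1)=\eta(t)\varphi^i(1)$, which is smooth in time; and using $\gamma^i_{xx}(t,0)=0$, the third-order junction condition simplifies to
\begin{equation*}
\sum_{i=1}^{3}\frac{2}{|\gamma^i_x|^3}\langle\widetilde{\gamma}^i_{xxx},\nu^i\rangle\nu^i\Big|_{x=0}=\eta(t)\mu\sum_{i=1}^{3}\tau^i(t,0),
\end{equation*}
which is linear in $\widetilde{\gamma}^i_{xxx}$ with coefficients and right-hand side built as smooth functions of $\gamma^i_x(t,0)$. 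The temporal H\"older bound $t\mapsto\gamma^i_x(t,0)\in C^{(1+\alpha)/4}$ provided by Lemma~\ref{regularity} then places both in $C^{(1+\alpha)/4}$, the trace space required for a third-order boundary condition in a fourth-order parabolic problem.

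Parabolicity and the Lopatinskii--Shapiro condition for this linearised system were verified in~\cite[Section~3.3.2]{garmenzplu}, so the Schauder estimates of~\cite{solonnikov2} apply and yield $\widetilde{\gamma}\in C^{(4+\alpha)/4,\,4+\alpha}([0,T]\times[0,1];(\mathbb{R}^2)^3)$; restricting to $[\varepsilon,T]$ completes the argument. The main obstacle is the detailed book-keeping to confirm that every linearised coefficient and every boundary datum lies in the correct fractional parabolic H\"older class. The vanishing $\gamma^i_{xx}(t,0)=0$ is crucial here: without it, the coefficient of $\widetilde{\gamma}^i_{xxx}$ in the third-order boundary operator would also depend on $\gamma^i_{xx}(t,0)$, which Lemma~\ref{regularity} places only in $C^{\alpha/4}$ in time, strictly less than the required $C^{(1+\alpha)/4}$.
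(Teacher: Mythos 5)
Your argument follows the paper's proof almost step for step: cut off in time, observe that the resulting system for $\eta\gamma$ is \emph{linear} in the cut-off unknown with coefficients depending only on $\gamma^i_x,\gamma^i_{xx}$ (hence H\"older continuous by Lemma~\ref{regularity}), check parabolicity, the Lopatinskii--Shapiro condition and the trace regularity of the boundary data, and invoke Solonnikov. The structural points you emphasise --- linearity of $A^i$ and $T^i$ in the third and fourth derivatives, and the role of $\gamma^i_{xx}(t,0)=0$ in keeping the coefficients of the third-order boundary operator in $C^{(1+\alpha)/4}$ in time --- are exactly the ones the paper uses.

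The one step you gloss over is the final appeal to \cite{solonnikov2}. The result in question, \cite[Theorem 4.9]{solonnikov2}, is an existence-and-uniqueness theorem \emph{in the parabolic H\"older class}; it is not an a priori estimate that can be applied directly to $\widetilde{\gamma}=\eta\gamma$, which at this stage is only known to lie in $W_p^{1,4}$. The theorem produces some solution $\widetilde{g}\in C^{\frac{4+\alpha}{4},4+\alpha}$ of the cut-off boundary value problem, and one must still argue that $\widetilde{g}=\eta\gamma$. The paper closes this by observing that $\widetilde{g}$ is also a $W_p^{1,4}$-solution of the same linear system with the same data and then invoking the uniqueness assertion of \cite[Theorem 5.4]{solonnikov2} in the $W_p^{1,4}$ class; without this identification the regularity of $\eta\gamma$ does not follow. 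A second, smaller omission: to apply the H\"older existence theorem you must verify the fourth-order compatibility conditions at $t=0$ for the zero initial datum; these hold because your $\eta$ vanishes identically near $t=0$ (so $\eta(0)=\eta'(0)=0$), but this should be stated.
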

\begin{proof}
	Let $\varepsilon\in(0,T)$ and $\eta\in C_0^\infty\left((0,\infty);\mathbb{R}\right)$ be a cut--off function with $\eta\equiv 1$ on $[\varepsilon,T]$. It is straightforward to check that the function $g=\left(g^1,g^2,g^3\right)$ defined by
	\begin{equation*}
	(t,x)\mapsto g^i(t,x):=\eta(t)\gamma^i(t,x)
	\end{equation*}
	lies in $W^{1,4}_p\left((0,T)\times(0,1);(\mathbb{R}^2)^3\right)$ and satisfies a parabolic boundary value problem of the following form: For all $t\in (0,T)$, $x\in(0,1)$, $y\in\{0,1\}$ and $i\in\{1,2,3\}$:
	\begin{equation}\label{systemcutoff}
	\begin{cases}
	\begin{array}{ll}
	g^i_t(t,x)+\frac{2}{\vert\gamma^i_x(t,x)\vert^4}g^i_{xxxx}(t,x)+f\left(\gamma^i_x,\gamma^i_{xx},g^i_x,g^i_{xx},g^i_{xxx}\right)(t,x)&=\eta'(t)\gamma^i(t,x)\,,
	\\
	g^{1}(t,0)-g^{2}(t,0)&=0 \,,\\
	g^{1}(t,0)-g^{3}(t,0)&=0 \,,\\
	g^{i}(t,1)&=\eta(t)\varphi^i(1) \,,\\
	g^i_{xx}(t,y)&=0 \,,\\
	-\sum_{i=1}^3\left(\frac{1}{\vert \gamma^i_x\vert^3}
	\left\langle g^i_{xxx},\nu^i\right\rangle \nu^i\right)(t,0)&=\eta(t)\sum_{i=1}^3h\left(\gamma^i_x(t,0)\right)\,,
\\
	g^{i}(0,x)&=0\,,\\
	\end{array}
	\end{cases}
	\end{equation}
	where $h:\left(\mathbb{R}^2\setminus\{0\}\right)\to\mathbb{R}^2$ is smooth and $\nu^i(t,0):=R\left(\frac{\gamma^i_x(t,0)}{\vert\gamma^i_x(t,0)\vert}\right)$ with $R$ the counter clockwise rotation by $\frac{\pi}{2}$. Moreover, the lower order terms in the motion equation are given by
	\begin{align*}
	f\left(\gamma^i_x,\gamma^i_{xx},g^i_x,g^i_{xx},g^i_{xxx}\right)(t,x)=&-12 \frac{\left\langle\gamma^i_{xx},\gamma^i_x\right\rangle}{\left\lvert\gamma^i_x\right\rvert^6}g^i_{xxx}-8\frac{\gamma^i_{xx}}{\vert\gamma^i_x\vert ^6}\left\langle g^i_{xxx},\gamma^i_x\right\rangle\\	&-\left(5\frac{\lvert\gamma^i_{xx}\rvert ^2}{\lvert\gamma^i_x\rvert^6}-35\frac{\left\langle\gamma^i_{xx},\gamma^i_x\right\rangle^2}{\vert\gamma^i_x\vert^8}+\mu\frac{1}{\vert\gamma^i_x\vert ^2}\right)g^i_{xx}\,.
	\end{align*} 
	The boundary value problem is linear in the components of $g$ and in the highest order term of exactly the same structure as the linear system~\eqref{lyntriod} with time dependent coefficients in the motion equation and the third order condition.
	In particular, the same arguments as in~\cite[\S 3.3.3, \S 4.1.2]{garmenzplu} show that the Lopatinski Shapiro condition is satisfied. Exactly as in~\cite[\S 3.3.2]{garmenzplu} we write the unknown $g$ as $g=(g^1,g^2,g^3)=\left(u^1,v^1,u^2,v^2,u^3,v^,3\right)$ and observe that with the choices $t_j=0$, $s_k=4$ for $j,k\in\{1,\dots,6\}$ the system is parabolic in the sense of Solonnikov (see~\cite[page 18]{solonnikov2}). The complementing condition for the initial value is trivially fulfilled.
	In order to apply the existence result in H\"{o}lder spaces~\cite[Theorem 4.9]{solonnikov2} with $l=4+\alpha$, $\alpha\in(0,1)$, it remains to verify the regularity requirements on the coefficients and the right hand side. We observe that there exists $\boldsymbol{r}>0$ such that for all $i\in\{1,2,3\}$,
	\begin{equation*}
	\inf_{t\in[0,T],x\in[0,1]}\vert\gamma^i_x(t,x)\vert \geq \boldsymbol{r}\,.
	\end{equation*}
	As H\"{o}lder spaces are stable under products and composition with smooth functions, the regularity requirements follow from Lemma~\ref{regularity}. 
	As $\eta(0)=\eta'(0)=0$, the initial datum $0$ satisfies the linear compatibility conditions of order $4$ with respect to the given right hand side.
	By~\cite[Theorem 4.9]{solonnikov2} the problem~\eqref{systemcutoff} has a unique solution $\widetilde{g}\in C^{\frac{4+\alpha}{4}, 4+\alpha}\left([0,T]\times[0,1];(\mathbb{R}^2)^3\right)$.
The function $\widetilde{g}$ solves the system~\eqref{systemcutoff} also in the space $W_p^{1,4}\left((0,T)\times(0,1);(\mathbb{R}^2)^3\right)$. The uniqueness assertion in~\cite[Theorem 5.4]{solonnikov2} implies that $g=\tilde{g}$. This shows the claim as $g$ is equal to $\gamma$ on $[\varepsilon,T]\times[0,1]$.
\end{proof}
\begin{teo}\label{analyticsmoothsol}
	Let $p\in(5,\infty)$, $T$ be positive and $\varphi$ be an admissible initial parametrisation. Suppose that $\gamma\in \boldsymbol{E}_T$ is a solution to the Analytic Problem~\eqref{TriodC^0} in the time interval $[0,T]$ with initial datum $\varphi$ in the sense of Definition~\ref{solutionanalytic}. Then the solution $\gamma$ is smooth for positive times in the sense that
$
\gamma\in C^\infty\left([\varepsilon,T]\times[0,1];(\mathbb{R}^2)^3\right)
$
 for every $\varepsilon\in(0,T)$.
\end{teo}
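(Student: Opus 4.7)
The plan is to bootstrap the regularity obtained in Proposition~\ref{Hölderreguk=1} by iteratively applying Solonnikov's existence theorem in parabolic Hölder spaces to the cut-off system~\eqref{systemcutoff}, increasing the regularity index one step at a time.

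Fix $\varepsilon\in(0,T)$ and choose a nested sequence $\varepsilon>\varepsilon_1>\varepsilon_2>\cdots>0$ together with smooth cut-off functions $\eta_k\in C_0^\infty((0,\infty);\mathbb{R})$ such that $\eta_k\equiv 1$ on $[\varepsilon_{k-1},T]$ and $\operatorname{supp}\eta_k\subset[\varepsilon_k,T]$, with the convention $\varepsilon_0:=\varepsilon$. Set $g_k:=\eta_k\gamma$. Exactly as in the proof of Proposition~\ref{Hölderreguk=1}, each $g_k$ solves a linear parabolic boundary value problem of the same structure as~\eqref{systemcutoff}, where the coefficients are smooth functions of $\gamma^i_x$, $\gamma^i_{xx}$ and $\vert\gamma^i_x\vert^{-1}$, and the right hand sides are of the form $\eta_k'\gamma^i$ and $\eta_k\cdot h(\gamma^i_x(\cdot,0))$. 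Crucially the initial datum is $0$, and because $\eta_k$ vanishes of infinite order at $t=0$, the compatibility conditions of any order with respect to the right hand side are trivially fulfilled.

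The bootstrap proceeds by induction on $k\ge 1$. At step $k=1$, Proposition~\ref{Hölderreguk=1} gives $\gamma\in C^{\frac{4+\alpha}{4},4+\alpha}([\varepsilon_1,T]\times[0,1])$, and since on $\operatorname{supp}\eta_2\subset[\varepsilon_1,T]$ the coefficients depending on $\gamma^i_x$, $\gamma^i_{xx}$ as well as the right hand side lie in the parabolic Hölder space of order $4+\alpha$ (using that these spaces are stable under products, composition with smooth functions, and using the uniform lower bound on $\vert\gamma^i_x\vert$ inherited from Lemma~\ref{curveregular}), Solonnikov's theorem~\cite[Theorem 4.9]{solonnikov2} applied to $g_2$ with $l=8+\alpha$ gives a solution $\tilde g_2\in C^{\frac{8+\alpha}{4},8+\alpha}$. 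The uniqueness statement of~\cite[Theorem 5.4]{solonnikov2}, invoked just as in the proof of Proposition~\ref{Hölderreguk=1}, shows $\tilde g_2=g_2$, hence $\gamma\in C^{\frac{8+\alpha}{4},8+\alpha}([\varepsilon_2,T]\times[0,1])$. Iterating this argument with $g_k$ and $l=4k+\alpha$ yields $\gamma\in C^{\frac{4k+\alpha}{4},4k+\alpha}([\varepsilon_k,T]\times[0,1])$ for every $k\ge 1$. Since $\varepsilon_k\le\varepsilon$ and $k$ is arbitrary, $\gamma\in C^\infty([\varepsilon,T]\times[0,1])$.

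The only subtle point—and the step I expect to require the most care—is verifying at each induction step that the \emph{time-dependent} coefficients in front of $g^i_{xxxx}$ and in the linearised third-order boundary condition have the parabolic Hölder regularity demanded by~\cite[Theorem 4.9]{solonnikov2}. This reduces to checking that functions of the form $\mathfrak{p}(\gamma^i_x,\gamma^i_{xx},\vert\gamma^i_x\vert^{-1})$ inherit the appropriate space-time Hölder regularity from $\gamma$; this follows from stability of parabolic Hölder spaces under multiplication and composition with smooth functions together with the uniform positive lower bound on $\vert\gamma^i_x\vert$ valid throughout $[0,T]\times[0,1]$. Once this is in place, the inductive mechanism runs without further obstruction and the smoothness conclusion follows.
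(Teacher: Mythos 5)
Your overall strategy --- cut off in time, regard $\eta\gamma$ as a solution of the linear system~\eqref{systemcutoff} with coefficients depending on $\gamma$, apply Solonnikov's H\"older theory and bootstrap --- is exactly the one used in the paper. There is, however, a genuine quantitative error in your induction: the regularity cannot be raised by four orders per step, only by two. The coefficients of~\eqref{systemcutoff} are not smooth functions of $t$ and $x$; the lower order terms carry coefficients such as $\langle\gamma^i_{xx},\gamma^i_x\rangle\,\vert\gamma^i_x\vert^{-6}$ in front of $g^i_{xxx}$, whose parabolic H\"older regularity is that of $\gamma^i_{xx}$. If $\gamma\in C^{\frac{4+\alpha}{4},4+\alpha}$ (the output of Proposition~\ref{Hölderreguk=1}), then $\gamma^i_{xx}$ lies only in $C^{\frac{2+\alpha}{4},2+\alpha}$, whereas~\cite[Theorem 4.9]{solonnikov2} with $l=8+\alpha$ requires the coefficients (and the right hand side of the motion equation) to lie in $C^{\frac{l-4}{4},l-4}=C^{\frac{4+\alpha}{4},4+\alpha}$; likewise the coefficient $\vert\gamma^i_x\vert^{-3}\left\langle\,\cdot\,,\nu^i\right\rangle\nu^i$ of the third order boundary operator and the boundary datum $h\left(\gamma^i_x(\cdot,0)\right)$ have only the regularity of $\gamma^i_x$. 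Stability of H\"older classes under products and composition with smooth functions, which you invoke to dispose of precisely this point, cannot manufacture regularity that $\gamma_{xx}$ does not possess, so your first inductive step ($l=8+\alpha$) is not justified as written.

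The gap is easily repaired and does not affect the conclusion: when $\gamma\in C^{\frac{m+\alpha}{4},m+\alpha}$ the coefficients lie in $C^{\frac{m-2+\alpha}{4},m-2+\alpha}$, so Solonnikov's theorem applies with $l=m+2+\alpha$, and this is why the paper's induction proceeds in increments of two, establishing $\gamma\in C^{\frac{2k+2+\alpha}{4},2k+2+\alpha}\left([\varepsilon,T]\times[0,1]\right)$ for every $k\in\mathbb{N}$ and every $\varepsilon\in(0,T)$. Since the increment is still positive, iterating exhausts all orders and yields smoothness. Apart from the step size (and a minor bookkeeping slip in your nested cut-offs, where $\mathrm{supp}\,\eta_k$ must be contained in the region on which the previous step's regularity is already known), your argument coincides with the paper's.
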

\begin{proof}
	We show inductively that there exists $\alpha\in(0,1)$ such that for all $k\in\mathbb{N}$ and $\varepsilon\in(0,T)$,
	\begin{equation*}
	\gamma\in C^{\frac{2k+2+\alpha}{4},2k+2+\alpha}\left([\varepsilon,T]\times[0,1];(\mathbb{R}^2)^3\right)\,.
	\end{equation*}
	The case $k=1$ is precisely the statement of Proposition~\ref{Hölderreguk=1}. Now assume that the assertion holds true for some $k\in\mathbb{N}$ and consider any $\varepsilon\in(0,T)$. Let $\eta\in C_0^\infty\left((\nicefrac{\varepsilon}{2},\infty);\mathbb{R}\right)$ be a cut--off function with $\eta\equiv 1$ on $[\varepsilon,T]$. By assumption, $\gamma\in C^{\frac{2k+2+\alpha}{4},2k+2+\alpha}\left(\left[\nicefrac{\varepsilon}{2},T\right]\times[0,1];(\mathbb{R}^2)^3\right)$ and thus 
	\begin{equation*}
	(t,x)\mapsto g(t,x):=\eta(t)\gamma(t,x)\in C^{\frac{2k+2+\alpha}{4},2k+2+\alpha}\left(\left[0,T\right]\times[0,1];(\mathbb{R}^2)^3\right)
	\end{equation*}
	is a solution to system~\eqref{systemcutoff} in $[0,T]\times[0,1]$. The coefficients and the right hand side fulfil the regularity requirements of~\cite[Theorem 4.9]{solonnikov2} in the case $l=2(k+1)+2+\alpha$. As $\eta^{(j)}(0)=0$ for all $j\in\mathbb{N}$, the initial value $0$ satisfies the compatibility conditions of order $2(k+1)$ with respect to the given right hand side. Thus ~\cite[Theorem 4.9]{solonnikov2} yields
	\begin{equation*}
	g\in C^{\frac{2(k+1)+2+\alpha}{4}2(k+1)+2+\alpha}\left([0,T]\times[0,1];(\mathbb{R}^2)^3\right)\,.
	\end{equation*}
	This completes the induction as $g=\gamma$ on $[\varepsilon,T]$. By definition of the parabolic H\"{o}lder spaces (see also~\cite[\S 2.1.2]{garmenzplu}) 
	\begin{equation*}
	\bigcap_{k\in\mathbb{N}}C^{\frac{2k+2+\alpha}{4},2k+2+\alpha}\left([\varepsilon,T]\times[0,1];(\mathbb{R}^2)^3\right)=C^\infty\left([\varepsilon,T]\times[0,1]\right)\,.
	\end{equation*}
\end{proof}
\section{Geometric existence and uniqueness}
\subsection{Geometric existence and uniqueness in Sobolev spaces}
In Theorem~\ref{short time existence} we have shown that given an admissible initial parametrisation there exists a unique solution to the Analytic Problem~\eqref{TriodC^0} in some short time interval. This section is devoted to prove that the \textit{Geometric Flow}, namely the merely geometrical problem~\eqref{Triod0}, possesses a unique solution in the sense of Definition~\ref{geometricsolution1} provided that the initial triod is geometrically admissible.
\begin{teo}[Geometric Existence]\label{geometricexistence}
	Let $p\in(5,\infty)$. Suppose that $\mathbb{T}_0$ is a geometrically admissible initial triod as defined in Definition~\ref{admg1}. Then there exists a positive time $\boldsymbol{T}$ and a family $\left(\mathbb{T}(t)\right)_{t\in[0,\boldsymbol{T}]}$ of triods solving the elastic flow~\eqref{Triod0} in $[0,\boldsymbol{T}]$ with initial datum $\mathbb{T}_0$ in the sense of Definition~\ref{geometricsolution1}. The family of networks $\left(\mathbb{T}_t\right)_{t\in[0,\boldsymbol{T}]}$ is parametrised by one function $\gamma\in\boldsymbol{E}_{\boldsymbol{T}}$.
\end{teo}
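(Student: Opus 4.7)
The plan is to bridge the geometric problem \eqref{Triod0} and the analytic problem \eqref{TriodC^0} via a careful choice of parametrisation, exploiting the fact that the motion equation in \eqref{Triod0} prescribes only the normal velocity. Concretely, given the geometrically admissible $\mathbb{T}_0$, I will first construct an admissible initial parametrisation $\varphi$ in the sense of Definition~\ref{adm}, then invoke Theorem~\ref{short time existence} to produce a solution $\gamma \in \boldsymbol{E}_{\boldsymbol{T}}$ of the analytic problem with that initial datum, and finally check that the family of sets $\mathbb{T}(t):=\gamma(t,[0,1])$ satisfies all conditions of \eqref{Triod0}.

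The main obstacle lies in the first step: passing from a merely geometric datum to a parametrisation that satisfies the \emph{pointwise} second order condition $\varphi^i_{xx}(y)=0$ for $y\in\{0,1\}$, required by Definition~\ref{adm}. From $\mathbb{T}_0$ choose any regular parametrisation $\sigma=(\sigma^1,\sigma^2,\sigma^3)$ of class $W_p^{4-\nicefrac{4}{p}}$ ensured by Definition~\ref{admg1}. The curvature conditions $k^i_0(y)=0$ at $y\in\{0,1\}$ translate into $\sigma^i_{yy}(y)$ being parallel to $\sigma^i_y(y)$ at the boundary, i.e.\ $\sigma^i_{yy}(y)=\lambda^i_y\,\sigma^i_y(y)$ for scalars $\lambda^i_0,\lambda^i_1\in\mathbb{R}$. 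I then reparametrise by setting $\varphi^i(x):=\sigma^i(\theta^i(x))$ for a smooth, strictly increasing $\theta^i:[0,1]\to[0,1]$ with $\theta^i(0)=0$ and $\theta^i(1)=1$. Differentiating yields
\begin{equation*}
\varphi^i_{xx}(y)=\bigl(\theta^i_{xx}(y)+(\theta^i_x(y))^2\lambda^i_y\bigr)\sigma^i_y(\theta^i(y))\qquad\text{for }y\in\{0,1\},
\end{equation*}
so imposing $\theta^i_{xx}(y)=-(\theta^i_x(y))^2\lambda^i_y$ at the two endpoints makes the desired second order condition hold pointwise. Such a $\theta^i$ is easily realised by a cubic polynomial with prescribed boundary jet, and since $\sigma^i\in W_p^{4-\nicefrac{4}{p}}$ and $\theta^i$ is smooth, the composition $\varphi^i$ remains in $W_p^{4-\nicefrac{4}{p}}$. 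The concurrency and third order conditions are geometric in nature, hence invariant under reparametrisation, and non-degeneracy in the sense $\mathrm{span}\{\nu_0^1,\nu_0^2,\nu_0^3\}=\mathbb{R}^2$ is equivalent to ``at least two tangents span $\mathbb{R}^2$'' (via $90^\circ$ rotation), which is ensured by Definition~\ref{admg1}. Thus $\varphi$ is an admissible initial parametrisation.

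Second, I apply Theorem~\ref{short time existence} to $\varphi$: this produces a positive $\boldsymbol{T}=\widetilde{T}(\varphi)$ and a solution $\gamma\in\boldsymbol{E}_{\boldsymbol{T}}$ of the analytic system \eqref{TriodC^0} with initial value $\varphi$ such that $\gamma^i(t)$ is regular for all $t\in[0,\boldsymbol{T}]$. Define $\mathbb{T}(t):=\gamma(t,[0,1])$ for $t\in[0,\boldsymbol{T}]$; this gives a time dependent family of triods, covered by a single interval $I_0=(0,\boldsymbol{T})$ in the sense of Definition~\ref{geometricsolution1}, with $\mathbb{T}(0)=\mathbb{T}_0$.

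Third, I verify the boundary value problem \eqref{Triod0}. Taking the normal projection of the analytic motion equation $\gamma^i_t=-A^i\nu^i-T^i\tau^i$ immediately gives $\langle\gamma^i_t,\nu^i\rangle\nu^i=-A^i\nu^i$, which is the geometric motion. Concurrency, fixed endpoints, and the third order condition coincide with the corresponding conditions in \eqref{TriodC^0} and thus hold automatically. Finally, for $y\in\{0,1\}$ the analytic condition $\gamma^i_{xx}(t,y)=0$ implies
\begin{equation*}
k^i(t,y)=\frac{\langle\gamma^i_{xx}(t,y),\nu^i(t,y)\rangle}{|\gamma^i_x(t,y)|^2}=0,
\end{equation*}
so the curvature conditions at both the triple junction and the fixed endpoints are satisfied. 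Since $\gamma\in\boldsymbol{E}_{\boldsymbol{T}}=W^1_p((0,\boldsymbol{T});L_p)\cap L_p((0,\boldsymbol{T});W^4_p)$, the regularity requirement of Definition~\ref{geometricsolution1} is met with $N=0$, concluding the proof.
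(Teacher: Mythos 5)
Your overall strategy is exactly the paper's: construct an admissible initial parametrisation $\varphi$ of $\mathbb{T}_0$ by reparametrising a given $W_p^{4-\nicefrac{4}{p}}$ parametrisation $\sigma$ so that the second order condition holds pointwise, apply Theorem~\ref{short time existence}, and then read off that $\mathbb{T}(t):=\gamma(t,[0,1])$ solves~\eqref{Triod0}. The verification steps (normal projection of the motion equation, curvature condition from $\gamma^i_{xx}(t,y)=0$, invariance of the concurrency and third order conditions, equivalence of the two non-degeneracy formulations) are all correct and match the paper.

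The one genuine gap is the construction of the reparametrisation $\theta^i$. You need a map $\theta^i:[0,1]\to[0,1]$ that is simultaneously (a) a diffeomorphism of $[0,1]$ onto itself, i.e.\ $\theta^i_x>0$ everywhere with $\theta^i(0)=0$, $\theta^i(1)=1$, and (b) has prescribed second derivatives at \emph{both} endpoints (coupled to $\theta^i_x$ there). Your claim that this is ``easily realised by a cubic polynomial with prescribed boundary jet'' does not work: prescribing value, first and second derivative at each endpoint is six conditions while a cubic has four coefficients, and even if one leaves $\theta^i_x(y)$ free the resulting low-degree polynomial has no reason to satisfy $\theta^i_x>0$ on all of $[0,1]$ when the scalars $\lambda^i_y$ are large. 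This is precisely why the paper devotes Lemma~\ref{constructreparametrisation} to a careful gluing-and-mollification construction that produces a smooth $\theta^i$ with the required boundary jet \emph{and} the uniform lower bound $\theta^i_x\geq\nicefrac{1}{4}$. The gap is repairable (replace the cubic by the construction of Lemma~\ref{constructreparametrisation}, or at least by an explicit argument that monotonicity can be enforced), but as written this step would fail for a general admissible $\mathbb{T}_0$.
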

\begin{proof}
	Let $P^1$, $P^2$ and $P^3$ denote the fixed endpoints of the triod $\mathbb{T}_0$. Suppose that there exists an admissible initial parametrisation $\varphi=\left(\varphi^1,\varphi^2,\varphi^3\right)$ for system~\eqref{TriodC^0} such that $\varphi$ parametrises $\mathbb{T}_0$ with $\varphi^1(0)=\varphi^2(0)=\varphi^3(0)$ and $\varphi^i(1)=P^i$ for $i\in\{1,2,3\}$. Then by Theorem~\ref{short time existence} there exists a positive time $\boldsymbol{T}$ and a function $\gamma=\left(\gamma^1,\gamma^2,\gamma^3\right)\in\boldsymbol{E}_{\boldsymbol{T}}$ solving system~\eqref{TriodC^0} with initial value $\varphi$. It is straightforward to check that $\mathbb{T}(t):=\gamma\left(t,[0,1]\right)$ with $t\in[0,{\boldsymbol{T}})$ is a family of triods solving problem~\eqref{Triod0} in $[0,\boldsymbol{T})$ with initial network $\mathbb{T}_0$ in the sense of Definition~\ref{geometricsolution1} as each curve $\gamma^i(t)$ is regular according to Lemma~\ref{curveregular}. 
	Thus it is enough to prove that $\mathbb{T}_0$ admits a parametrisation $\varphi=\left(\varphi^1,\varphi^2,\varphi^3\right)$ that is an admissible initial value for system~\eqref{TriodC^0}. We proceed analogously as in the proof of~\cite[Lemma 3.31]{garmenzplu}. Let $\sigma=\left(\sigma^1,\sigma^2,\sigma^3\right)$ be a parametrisation for $\mathbb{T}_0$ such that every $\sigma^i$ is regular and $\sigma^i\in W_p^{4-\nicefrac{4}{p}}\left((0,1);\mathbb{R}^2\right)$ for $i\in\{1,2,3\}$. We may assume that $\sigma^1(0)=\sigma^2(0)=\sigma^3(0)$ and $\sigma^i(1)=P^i$ for $i\in\{1,2,3\}$. 
	Lemma~\ref{constructreparametrisation} implies for every $i\in\{1,2,3\}$ the existence of a smooth function $\theta^i:[0,1]\to[0,1]$ with $\theta^i(0)=0$, $\theta^i(1)=1$, $\theta^i_x(0)=\theta^i_x(1)=1$, $\theta^i_{xx}(0)=-\left\langle\sigma^i_{xx}(0),\sigma^i_x(0)\right\rangle$, $\theta^i_{xx}(1)=-\left\langle\sigma^i_{xx}(1),\sigma^i_x(1)\right\rangle$ and $\theta^i_x\geq \frac{1}{4}$ on $[0,1]$.
	As $\theta^i$ is a smooth diffeomorphism of the interval $[0,1]$, the chain rule for Sobolev functions implies $\varphi^i:=\sigma^i\circ\theta^i\in W_p^{3}\left((0,1);\mathbb{R}^2\right)$ and by direct estimates one obtains also $\varphi^i\in W_p^{4-\nicefrac{4}{p}}\left((0,1);\mathbb{R}^2\right)$ for $i\in\{1,2,3\}$. Moreover, $\varphi^i$ satisfies the concurrency, the third order and the non--degeneracy conditions at $y=0$ and  $\varphi^i_x(x)\neq 0$ for all $x\in[0,1]$.
	Finally, we obtain for $y\in\{0,1\}$,
	\begin{equation*}
	\varphi^i_{xx}(y)=\sigma^i_{xx}(y)+\sigma^i_x(y)\theta^i_{xx}(y)=\left\langle\sigma^i_{xx}(y),\sigma^i_x(y)\right\rangle\sigma^i_x(y)+\sigma^i_x(y)\theta^i_{xx}(y)=0\,.
	\end{equation*}
	This shows that $\varphi=\left(\varphi^1,\varphi^2,\varphi^3\right)$ is an admissible initial parametrisation for~\eqref{TriodC^0}.
\end{proof}
\begin{lemma}\label{constructreparametrisation}
	Let $a$, $b\in\mathbb{R}$. There exists a smooth function $\theta:[0,1]\to[0,1]$ such that $\theta(0)=0$, $\theta(1)=1$, $\theta_x(0)=\theta_x(1)=1$, $\theta_{xx}(0)=a$, $\theta_{xx}(1)=b$ and $\theta_x(x)\geq \frac{1}{4}$ for all $x\in[0,1]$. 
\end{lemma}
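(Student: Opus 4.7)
The plan is to write $\theta(x)=x+\psi_0(x)+\psi_1(x)$, where $\psi_0$ and $\psi_1$ are smooth corrections supported near $0$ and $1$, respectively, designed so that $\psi_0$ absorbs the prescribed second derivative at $0$ and $\psi_1$ absorbs the one at $1$, while keeping $\theta_x$ close to $1$.

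First I would fix a smooth cut-off function $\eta\in C^\infty_c(\mathbb{R})$ with $\eta\equiv 1$ on $[-\tfrac12,\tfrac12]$ and $\spt\eta\subset(-1,1)$. For $\delta\in(0,\tfrac12)$ to be chosen I define
\begin{equation*}
\psi_0(x):=\tfrac{a}{2}\,x^2\,\eta(x/\delta),\qquad \psi_1(x):=\tfrac{b}{2}\,(x-1)^2\,\eta((x-1)/\delta).
\end{equation*}
A direct computation gives $\psi_0(0)=0$, $\psi_0'(0)=0$, $\psi_0''(0)=a$, and by the choice of $\eta$ the function $\psi_0$ vanishes identically on $[\delta,1]$; analogously $\psi_1(1)=0$, $\psi_1'(1)=0$, $\psi_1''(1)=b$, and $\psi_1\equiv 0$ on $[0,1-\delta]$. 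Hence with $\theta(x):=x+\psi_0(x)+\psi_1(x)$ we automatically have $\theta(0)=0$, $\theta(1)=1$, $\theta_x(0)=\theta_x(1)=1$, $\theta_{xx}(0)=a$, $\theta_{xx}(1)=b$ and $\theta\in C^\infty([0,1])$.

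It remains to control $\theta_x$ from below. Differentiating $\psi_0$ one gets
\begin{equation*}
\psi_0'(x)=a\,x\,\eta(x/\delta)+\tfrac{a}{2\delta}\,x^2\,\eta'(x/\delta),
\end{equation*}
whose support is contained in $[0,\delta]$, so that $|\psi_0'(x)|\le |a|\delta\|\eta\|_\infty+\tfrac{|a|\delta}{2}\|\eta'\|_\infty$ on $[0,1]$; the same bound, with $b$ in place of $a$, holds for $\psi_1'$. Since the supports of $\psi_0$ and $\psi_1$ are disjoint for $\delta<\tfrac12$, choosing
\begin{equation*}
\delta:=\min\!\left\{\tfrac14,\ \frac{3}{4(|a|+|b|+1)(\|\eta\|_\infty+\tfrac12\|\eta'\|_\infty)}\right\}
\end{equation*}
gives $|\psi_0'(x)+\psi_1'(x)|\le \tfrac34$ on $[0,1]$, hence $\theta_x(x)\ge \tfrac14$. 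In particular $\theta$ is strictly increasing, so $\theta([0,1])=[0,1]$, finishing the construction.

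There is no genuine obstacle here; the only point that requires some care is decoupling the two endpoint conditions and making the $C^1$-size of the correction small while its $C^2$-value at the endpoint is prescribed. This is exactly what the scaling $\psi_j\sim\delta$ in $C^1$ versus $\psi_j''(\cdot)\sim 1$ at the endpoint accomplishes, and it is the reason for introducing the parameter $\delta$ rather than attempting a single polynomial interpolation.
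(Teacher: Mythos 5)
Your construction is correct, and it takes a genuinely different route from the paper. The paper builds $\theta$ by taking the second order Taylor polynomials $p$, $q$ matching the data at $0$ and $1$, joining them by a linear piece on the middle interval to obtain a Lipschitz function $f$, and then smoothing the corners by convolving with a mollifier and blending $f$ with $f*\psi^\varepsilon$ via a cut-off; the lower bound $\theta_x\geq\frac14$ then requires tracking the mollification error $\lVert f*\psi^\varepsilon-f\rVert_{W^{1,\infty}}$ and the contribution of the cut-off derivative. You instead perturb the identity by two compactly supported corrections $\psi_0,\psi_1$ localized at the endpoints, each of which carries the prescribed second derivative while its $C^1$-norm scales like $\delta$; this makes $\theta$ manifestly smooth from the outset, decouples the two endpoint conditions by disjointness of supports, and reduces the derivative bound to the elementary estimate $\lvert\psi_0'+\psi_1'\rvert\leq(\lvert a\rvert+\lvert b\rvert)\,\delta\,(\lVert\eta\rVert_\infty+\tfrac12\lVert\eta'\rVert_\infty)$. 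Your argument is shorter and avoids the mollification bookkeeping entirely; what the paper's gluing approach buys is essentially nothing extra here, so your version is a clean simplification. All the endpoint evaluations check out (every term of $\psi_0'$ and of $\psi_0''$ other than $a\,\eta(x/\delta)$ carries a factor of $x$, so $\psi_0'(0)=0$ and $\psi_0''(0)=a\,\eta(0)=a$), and the choice $\delta\leq\frac14$ guarantees the supports are disjoint, so the final bound $\theta_x\geq 1-\tfrac34=\tfrac14$ and the conclusion $\theta([0,1])=[0,1]$ both follow.
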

\begin{proof}
	 We let $p$ and $q$ be the second order Taylor polynomials on $\mathbb{R}$ determined by the constraints $p(0)=0$, $p_x(0)=1$, $p_{xx}(0)=a$ and $q(1)=1$, $q_x(1)=1$, $q_{xx}(1)=b$ , respectively. Let $\delta\in\left(0,\nicefrac{1}{3}\right)$ be such that for all $x\in[0,3\delta]$, $p(x)\leq \frac{1}{4}$, $p_x(x)\geq \frac{3}{4}$ and for all $x\in[1-3\delta,1]$, $q(x)\geq\frac{3}{4}$, and $q_x(x)\geq \frac{3}{4}$. We define a function $f:[0,1]\to[0,1]$ in $W^1_\infty\left((0,1);\mathbb{R}\right)=C^{0,1}\left([0,1];\mathbb{R}\right)$ by
	 \begin{align*}
	 \begin{array}{ll}
	 f(x):=\begin{cases}
	 p(x) &x\in[0,3\delta]\,,\\
	 p(3\delta)+x\left(q(1-3\delta)-p(3\delta)\right) &x\in[3\delta,1-3\delta]\,,\\
	 q(x)&x\in[1-3\delta,1]\,.
	 \end{cases}
	 \end{array}
	 \end{align*}
	 Let $\left(\psi^{\varepsilon}\right)_{\varepsilon>0}$ be the Standard--Dirac sequence on $\mathbb{R}$. Then for every $\varepsilon>0$ the convolution $f*\psi^{\varepsilon}$ lies in $C^\infty\left(\left(\varepsilon,1-\varepsilon\right);\mathbb{R}\right)$ and $f*\psi^\varepsilon$ converges to $f$ in $W_{loc}^{1,\infty}\left((0,1);\mathbb{R}\right)$ as $\varepsilon\searrow 0$. Let $\varepsilon$ be so small that $\left\lVert f*\psi^\varepsilon-f\right\rVert_{W^{1,\infty}\left((\delta,1-\delta)\right)}<\frac{\delta}{10}$ and let $\eta\in C_0^\infty\left((0,1)\right)$ be a cut-off function with $0\leq\eta\leq 1$ on $[0,1]$, $\eta\equiv 1$ on $[2\delta,1-2\delta]$, $\mathrm{supp}\eta\subset(\delta,1-\delta)$ and $\lvert\partial_x^k\eta\lvert\leq C\delta^{-k}$ for all $k\in\mathbb{N}$. Extending $f*\psi^\varepsilon$ by $0$ to the whole interval $[0,1]$ we define $\theta:[0,1]\to\mathbb{R}$ by
	 \begin{equation*}
	 \theta(x):=\left(1-\eta(x)\right)f(x)+\eta(x)\left(f*\psi^\varepsilon\right)(x)\,.
	 \end{equation*}
	 It follows from the construction that $\theta$ is smooth on $[0,1]$ and satisfies the constraints at the boundary points. For $x\in[0,\delta]\cup[1-\delta,1]$ it holds that $\theta_x(x)=f_x(x)\geq \frac{3}{4}$. By the choice of $\varepsilon$ we have for almost every $x\in(\delta,2\delta)\cup(1-2\delta,1-\delta)$,
	 \begin{equation*}
	 \theta_x(x)=f_x(x)+\eta(x)\left(\left(f*\psi^\varepsilon\right)_x(x)-f_x(x)\right)+\eta_x(x)\left(f*\psi^\varepsilon(x)-f(x)\right)\geq \frac{3}{4}-\frac{\delta}{10}-\frac{1}{10}\geq \frac{1}{4}\,.
	 \end{equation*}
	 Moreover, observe for almost every $x\in[2\delta,1-2\delta]$,
	 \begin{equation*}
	 \theta_x(x)=\left(f*\psi^\varepsilon\right)_x(x)\geq f_x(x)-\left\lvert\left(f*\psi^\varepsilon\right)_x(x)-f_x(x)\right\rvert\geq \frac{1}{2}-\frac{1}{10}\geq\frac{1}{4}\,.
	 \end{equation*}
	 By continuity of $\theta_x$ the estimates hold point wise in the respective sets.
\end{proof}
\begin{lemma}\label{compositionstable}
	Let $T$ be positive, $p\in(5,\infty)$ and $f$, $g\in L_p\left(0,T;W_p^4\left((0,1)\right)\right)\cap W_p^1\left(0,T;L_p\left(0,1\right)\right)$ such that for every $t\in[0,T]$ the function $g_t:[0,1]\to[0,1]$ is a $C^1$--diffeomorphism. Then the map $h(t,x):=f\left(t,g(t,x)\right)$ lies in $L_p\left(0,T;W_p^4\left((0,1)\right)\right)\cap W_p^1\left(0,T;L_p\left(0,1\right)\right)$ and all derivatives can be calculated by the chain rule.
\end{lemma}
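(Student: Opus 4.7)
First I will exploit the regularity built into the space $\boldsymbol{E}_T=W_p^{1,4}((0,T)\times(0,1))$. By Theorem~\ref{embeddingBUC} we have
\[
\boldsymbol{E}_T\hookrightarrow BUC([0,T];W_p^{4-\nicefrac{4}{p}}((0,1)))\hookrightarrow BUC([0,T];C^{3+\alpha}([0,1]))
\]
for some $\alpha>0$, so $f$, $g$ and all their spatial derivatives up to order three are continuous and bounded on $[0,T]\times[0,1]$. Since for every $t$ the map $g(t,\cdot):[0,1]\to[0,1]$ is a $C^1$--diffeomorphism, $g_x(t,\cdot)$ never vanishes; by continuity of $g_x$ on the compact set $[0,T]\times[0,1]$ there exists $c_0>0$ with $|g_x(t,x)|\geq c_0$ uniformly. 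Consequently the change of variables $y=g(t,x)$ is valid for each fixed $t$ and yields, for any nonnegative measurable $\phi$,
\[
\int_0^T\!\!\int_0^1\phi(t,g(t,x))\,\mathrm{d}x\,\mathrm{d}t\leq c_0^{-1}\int_0^T\!\!\int_0^1\phi(t,y)\,\mathrm{d}y\,\mathrm{d}t.
\]

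Next I will write down the chain rule formulas that one expects for smooth data, namely $h_t=f_t(t,g)+f_y(t,g)\,g_t$, $h_x=f_y(t,g)\,g_x$, and, up to order four via Fa\`a di Bruno, the top spatial contribution being $f_{yyyy}(t,g)\,g_x^4$ and the last one $f_y(t,g)\,g_{xxxx}$. Every such expression lies in $L_p((0,T)\times(0,1))$: the terms containing $f_y$, $f_{yy}$, $f_{yyy}$ are products of bounded continuous factors with either bounded factors (derivatives of $g$ up to order three) or $L_p$ factors ($g_{xxxx}$, $g_t$); the critical compositions $f_{yyyy}(t,g)$ and $f_t(t,g)$ are controlled in $L_p$ by the substitution inequality above, applied with $\phi=|f_{yyyy}|^p$ and $\phi=|f_t|^p$, using $f\in L_p((0,T);W_p^4)\cap W_p^1((0,T);L_p)$. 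Hence each candidate derivative is well-defined in $L_p((0,T)\times(0,1))$.

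Finally, to show that these formal expressions are indeed the weak derivatives of $h$, I will approximate $f$ and $g$ by smooth sequences $f_n,g_n\to f,g$ in $\boldsymbol{E}_T$, obtained by extension to a larger parabolic strip followed by mollification. For $n$ large the embedding into $BUC([0,T];C^1)$ transfers the lower bound as $|\partial_x g_n|\geq c_0/2$, so all the estimates of the previous paragraph apply uniformly. For smooth data classical calculus yields $h_n:=f_n(\cdot,g_n)\in C^\infty$ and the chain rule. Applying the $L_p$ estimates to the differences $f_n-f$ and $g_n-g$ shows that $h_n\to h$ and that each chain rule term built from $(f_n,g_n)$ converges in $L_p$ to the corresponding term built from $(f,g)$. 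Passing to the limit in the defining identity of the weak derivative identifies the latter with the chain rule expressions, proving $h\in\boldsymbol{E}_T$ with the asserted formulas.

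The main obstacle is the treatment of the two compositions $f_{yyyy}(t,g(t,x))$ and $f_t(t,g(t,x))$, where only an $L_p$ regularity of the outer function is available; even their measurability and pointwise meaning rely on the uniform lower bound $|g_x|\geq c_0$ and the change-of-variables argument. Once this is in place, the rest of the terms are routine products of bounded or $L_p$ factors, and the approximation step is standard.
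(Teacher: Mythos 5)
Your proof is sound but takes a genuinely different route from the paper's. The paper argues directly, with no smooth approximation: for the spatial part it writes out $\partial_x^3 h(t)$ explicitly, observes that for almost every $t$ both $\partial_x^3 f_t$ and $\partial_x^3 g_t$ lie in $W_p^1((0,1))$, and applies the chain rule for Sobolev functions composed with the $C^1$--diffeomorphism $g_t$ slice\-wise in $t$; for the time derivative it runs a hands-on difference-quotient argument, using that for a.e.\ fixed spatial point the difference quotients of $f$ converge weakly in $L_p$, combined with dominated convergence, Fubini and the transformation formula $y=g(t,x)$ to identify the weak limit for a.e.\ $x$. Your global strategy --- extend, mollify, apply the classical chain rule to $(f_n,g_n)$, pass to the limit in $L_p$ --- is conceptually cleaner and treats space and time derivatives uniformly; its price is that every composition with a merely $L_p$ outer function must be controlled, which you correctly reduce to the uniform lower bound $\lvert g_x\rvert\geq c_0$ and the substitution inequality (the weight $\lvert(\partial_x g_t)(g_t^{-1}(y))\rvert^{-1}$ appearing in the paper's Fubini step plays exactly the role of your $c_0^{-1}$).

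One step in your limit passage is stated too quickly. The critical differences split as
\[
f_{n,yyyy}(\cdot,g_n)-f_{yyyy}(\cdot,g)=\bigl[f_{n,yyyy}(\cdot,g_n)-f_{yyyy}(\cdot,g_n)\bigr]+\bigl[f_{yyyy}(\cdot,g_n)-f_{yyyy}(\cdot,g)\bigr],
\]
and only the first bracket is handled by ``applying the $L_p$ estimates to $f_n-f$''. In the second bracket the outer function is fixed and only in $L_p$, so it does not tend to zero merely because $g_n\to g$ uniformly; you need continuity of $g\mapsto\phi(\cdot,g)$ in $L_p$ for fixed $\phi\in L_p$, obtained by approximating $\phi$ by continuous functions, absorbing the two approximation errors with the substitution inequality and treating the middle term by uniform continuity and dominated convergence. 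The same remark applies to $f_{n,t}(\cdot,g_n)$. This is standard and fixable, so it is not a gap in the approach, but it must be written out; you should also note that after extension and mollification $g_n(t,\cdot)$ need not map onto $[0,1]$, so $f_n$ has to be the mollified extension on a slightly enlarged spatial interval for the composition to be defined.
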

\begin{proof}
	By Theorem~\ref{embeddingBUC} both $f$ and $g$ lie in $BUC\left([0,T];C^3\left([0,1]\right)\right)$ which implies
	\begin{equation*}
	h\in BUC\left([0,T];C^3\left([0,1]\right)\right)\hookrightarrow L_p\left((0,T);W_p^3\left((0,1)\right)\right)
	\end{equation*}
	using chain rule and directly estimating the terms. For every $t\in[0,T]$ and $x\in[0,1]$ it holds
	\begin{equation*}
	\partial_x^3\left(h(t)\right)(x)=\left(\partial_x^3f_t\right)\left(g_t(x)\right)\left(\partial_xg_t(x)\right)^3+3\left(\partial_x^2f_t\right)\left(g_t(x)\right)\partial_x^2g_t(x)\partial_xg_t(x)+\partial_x^3g_t(x)\left(\partial_xf_t\right)\left(g_t(x)\right)
	\end{equation*}
	where $f_t:=f(t,\cdot)$ and $g_t:=g(t,\cdot)$. There exists a set $N\subset(0,T)$ of measure $0$ such that for every $t\in(0,T)\setminus N$, the functions $x\mapsto \partial_x^3f_t(x)$ and $x\mapsto \partial_x^3g_t(x)$ lie in $W_p^1\left((0,1)\right)$. Given $t\in(0,T)\setminus N$ the map $g_t$ is a $C^1$--diffeomorphism of $(0,1)$ and thus the chain rule for Sobolev functions implies that also $x\mapsto \left(\partial_x^3f_t\right)\left(g_t(x)\right)$ lies in $W_p\left((0,1)\right)$ with derivative $\left(\partial_x^4f_t\circ g_t\right)\partial_xg_t$. As all remaining terms in the formula for $\partial_x^3h(t)$ lie in $C^1\left([0,1]\right)$, the product rule for Sobolev functions implies $\partial_x^3h(t)\in W_p^1\left((0,1)\right)$ and thus $h(t)\in W_p^4\left((0,1)\right)$ for every $t\in(0,T)\setminus N$. Directly estimating the norms one easily obtains that $t\mapsto \partial_x^3h(t)$ lies in $L_p\left(0,T;W_p^1\left((0,1)\right)\right)$ and hence $h\in L_p\left(0,T;W_p^4\left((0,1)\right)\right)$. 
	In the next step we show that $h$ lies in $W_p^1\left(0,T;L_p\left((0,1)\right)\right)$ with distributional derivative
	\begin{equation*}
	\tilde{h}(t)(x):=\tilde{h}(t,x):=\left(\partial_tf\right)\left(t,g(t,x)\right)+\left(\partial_xf\right)\left(t,g(t,x)\right)\partial_tg(t,x)\in L_p\left((0,T);L_p\left((0,1)\right)\right)\,.
	\end{equation*}
	To this end let $\psi\in C_0^\infty\left((0,T);\mathbb{R}\right)$ be a fixed test function. To conclude that
	\begin{equation*}
	\int_0^T h(t)\psi'(t)\,\mathrm{d}t=-\int_0^T\tilde{h}(t)\psi(t)\,\mathrm{d}t \qquad\qquad\text{ in }L_p\left((0,1)\right)
	\end{equation*}
	it is enough to show that the two integrals are equal for almost every $x\in(0,1)$. Suppose that $\mathrm{supp}\psi=[a,b]\subset(0,T)$. Then for every $x\in[0,1]$,
	\begin{align*}
	\int_a^b h(t,x)\psi'(t)\,\mathrm{d}t&=\int_0^b h(t,x)\lim\limits_{\varepsilon \to 0}\frac{1}{\varepsilon}\left(\psi(t+\varepsilon)-\psi(t)\right)\mathrm{d}t\\
	&=\lim\limits_{\varepsilon\to 0}\int_a^b\frac{1}{\varepsilon}\left(h(t-\varepsilon,x)-h(t,x)\right)\psi(t)\,\mathrm{d}t\,.
	\end{align*}
	Observe that for every $t\in[a,b]$, $\varepsilon\in (0,a)$ and $x\in[0,1]$, 
	\begin{align*}
	\frac{1}{\varepsilon}\left(h(t-\varepsilon,x)-h(t,x)\right)=&\frac{1}{\varepsilon}\left(f\left(t-\varepsilon,g(t-\varepsilon,x)\right)-f\left(t-\varepsilon,g(t,x)\right)\right)\\
	&+\frac{1}{\varepsilon}\left(f\left(t-\varepsilon,g(t,x)\right)-f(t,g(t,x))\right)\,.
	\end{align*}
	Using the fundamental theorem of calculus the first term can be written as
	\begin{equation*}
	\frac{1}{\varepsilon}\int_0^1 \left(\partial_xf\right)\left(t-\varepsilon,\tau g(t-\varepsilon,x)+(1-\tau)g(t,x)\right)\mathrm{d}\tau\, \left(g(t-\varepsilon,x)-g(t,x)\right)\,. 
	\end{equation*}
	There exists a subset $\mathcal{N}\subset(0,1)$ of measure $0$ such that for all $x\in (0,1)\setminus\mathcal{N}$ the functions $g\left(\cdot,x\right)$ and $f\left(\cdot,x\right)$ lie in $W_p^1\left((0,T)\right)$ with distributional derivative $\partial_tg\left(\cdot,x\right)$ and $\partial_tf\left(\cdot,x\right)$, respectively. The difference quotients $t\mapsto\frac{1}{\varepsilon}\left(g(t,x)-g(t-\varepsilon,x)\right)$ converge to $t\mapsto\partial_tg(t,x)$ weakly in $L_p\left((a,b)\right)$  for every $x\in(0,1)\setminus\mathcal{N}$. As $\partial_xf$ and $g$ are uniformly continuous on $[0,T]\times[0,1]$, it is straightforward to show that
	\begin{equation*}
	\lim\limits_{\varepsilon\to 0}\sup_{t\in[a,b],x\in[0,1]}\left\lvert\sup_{\tau\in[0,1]}\left(\partial_xf\right)\left(t-\varepsilon,\tau g(t-\varepsilon,x)+(1-\tau)g(t,x)\right)-\left(\partial_xf\right)(t,g(t,x))\right\rvert =0\,.
	\end{equation*}
	As $t\mapsto \left(\partial_xf\right)(t,g(t,x))\psi(t)$ lies in $L_p\left((a,b)\right)$ for every $x\in[0,1]$, we conclude for every $x\in[0,1]\setminus\mathcal{N}$ in the limit $\varepsilon\to 0$,
	\begin{equation*}
	\int_a^b \frac{1}{\varepsilon}\left(f\left(t-\varepsilon,g(t-\varepsilon,x)\right)-f\left(t-\varepsilon,g(t,x)\right)\right)\psi(t)\mathrm{d}t\to-\int_a^b \left(\partial_xf\right)(t,g(t,x))\partial_tg(t,x)\psi(t)\mathrm{d}t\,.
	\end{equation*}
	To estimate the second term we observe that for every $y\in(0,1)\setminus\mathcal{N}$, the difference quotients $t\mapsto\frac{1}{\varepsilon}\left(f\left(t,y\right)-f\left(t-\varepsilon,y\right)\right)$ converge to $t\mapsto \partial_tf(t,y)$  weakly in $L_p\left((a,b)\right)$. In particular, for every $y\in(0,1)\setminus\mathcal{N}$ it holds
	\begin{equation*}
	\lim\limits_{\varepsilon\to 0}\int_a^b\left(\frac{1}{\varepsilon}\left(f(t-\varepsilon,y)-f(t,y)\right)+\left(\partial_t f\right)(t,y)\right)\psi(t)\left\lvert\left(\partial_xg_t\right)\left(g^{-1}_t(y)\right)\right\rvert^{-1}\mathrm{d}t=0\,.
	\end{equation*}
	Using dominated convergence, Fubini's Theorem and the transformation formula we obtain
	\begin{align*}
	0&=\lim\limits_{\varepsilon\to 0}\int_0^1\int_a^b\left(\frac{1}{\varepsilon}\left(f(t-\varepsilon,y)-f(t,y)\right)+\left(\partial_t f\right)(t,y)\right)\psi(t)\left\lvert\left(\partial_xg_t\right)\left(g^{-1}_t(y)\right)\right\rvert^{-1}\mathrm{d}t\,\mathrm{d}y\\
	&=\lim\limits_{\varepsilon\to 0}\int_0^1\int_a^b\left(\frac{1}{\varepsilon}\left(f(t-\varepsilon,g(t,x))-f(t,g(t,x))\right)+\left(\partial_tf\right)(t,g(t,x))\right)\psi(t)\,\mathrm{d}t\,\mathrm{d}x
	\end{align*}
	and thus for almost every $x\in (0,1)$
	\begin{equation*}
	\lim\limits_{\varepsilon\to 0}\int_a^b\frac{1}{\varepsilon}\left(f(t-\varepsilon,g(t,x))-f(t,g(t,x))\right)\psi(t)\,\mathrm{d}t=-\int_a^b \left(\partial_tf\right)(t,g(t,x))\psi(t)\,\mathrm{d}t\,.
	\end{equation*}
\end{proof}
\begin{teo}[Local Geometric Uniqueness]\label{geometricuniqueness1}
	Let $p\in(5,\infty)$, $\mathbb{T}_0$ be a geometrically admissible initial triod and $T$ be positive.
	Suppose that both $\left(\mathbb{T}(t)\right)$ and $\left(\widetilde{\mathbb{T}}(t)\right)$ are solutions to problem~\eqref{Triod0} in the time interval $[0,T]$ with initial datum $\mathbb{T}_0$ in the sense of Definition~\ref{geometricsolution1}. Then there exists a time $\hat{T}\in (0,T]$ such that for all $t\in [0,\hat{T}]$ the networks $\mathbb{T}(t)$ and $\widetilde{\mathbb{T}}(t)$ coincide.
\end{teo}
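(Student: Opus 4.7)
The plan is to reduce the statement to the uniqueness part of Theorem~\ref{short time existence}: I would show that on some common interval $[0,\hat{T}]$ both geometric solutions admit time-dependent reparametrisations that solve the \emph{same} Analytic Problem~\eqref{TriodC^0} with the \emph{same} admissible initial datum. Denote by $\sigma$ and $\widetilde{\sigma}$ parametrisations of $(\mathbb{T}(t))$ and $(\widetilde{\mathbb{T}}(t))$ on an interval containing $t=0$, as provided by Definition~\ref{geometricsolution1}, and fix an admissible initial parametrisation $\varphi$ of $\mathbb{T}_0$ constructed as in the proof of Theorem~\ref{geometricexistence} via Lemma~\ref{constructreparametrisation}. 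I would then seek reparametrisations $\phi=(\phi^1,\phi^2,\phi^3)$ and $\widetilde{\phi}$ such that $\gamma^i(t,x):=\sigma^i(t,\phi^i(t,x))$ and $\widetilde{\gamma}^i(t,x):=\widetilde{\sigma}^i(t,\widetilde{\phi}^i(t,x))$ belong to $\boldsymbol{E}_{\hat{T}}$, $\phi^i(t,\cdot)$ and $\widetilde{\phi}^i(t,\cdot)$ remain $C^1$--diffeomorphisms of $[0,1]$, and such that both $\gamma$ and $\widetilde{\gamma}$ solve~\eqref{TriodC^0} with initial value $\varphi$. The uniqueness in Theorem~\ref{short time existence} then forces $\gamma=\widetilde{\gamma}$ on $[0,\hat{T}]$, whence $\mathbb{T}(t)=\gamma(t,[0,1])=\widetilde{\gamma}(t,[0,1])=\widetilde{\mathbb{T}}(t)$.

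To construct $\phi$ I would impose $\gamma=\sigma\circ\phi$ together with~\eqref{TriodC^0}. Since $A^i\nu^i$ is intrinsic and $\sigma$ is a geometric solution, the normal projection of the motion equation holds automatically; the tangential projection, together with~\eqref{Tang} and $\gamma^i_t=\sigma^i_t+\sigma^i_y\phi^i_t$, translates into the quasi-linear fourth-order parabolic equation
\[
\phi^i_t+\frac{2}{\lvert\sigma^i_y(t,\phi^i)\rvert^{4}(\phi^i_x)^{4}}\,\phi^i_{xxxx}=F^i(\sigma;\phi^i,\phi^i_x,\phi^i_{xx},\phi^i_{xxx})
\]
with $F^i$ a lower order nonlinearity. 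The boundary conditions in~\eqref{TriodC^0} become $\phi^i(t,0)=0$, $\phi^i(t,1)=1$ together with explicit relations for $\phi^i_{xx}(t,y)$ and $\phi^i_{xxx}(t,0)$ obtained from the vanishing of the curvature and from the third-order balance. The initial value $\phi^i(0,\cdot):=(\sigma^i(0,\cdot))^{-1}\circ\varphi^i$ lies in $W_p^{4-\nicefrac{4}{p}}((0,1))$ thanks to $p>5$ and the Sobolev embedding $W_p^{4-\nicefrac{4}{p}}\hookrightarrow C^{3+\alpha}$, which makes the inversion and composition formulas well defined in the trace space; differentiating the identity $\sigma^i(0,\phi^i(0,y))=\varphi^i(y)$ and evaluating at $y\in\{0,1\}$, one sees that the vanishing of $\varphi^i_{xx}$ and the third-order condition contained in Definition~\ref{adm} yield exactly the required linear compatibility conditions for the $\phi$-BVP.

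Short-time solvability of the $\phi$-BVP in $\boldsymbol{E}_{\hat{T}}$ then follows from the same linearisation-plus-Banach-fixed-point scheme of Section~\ref{shorttimeexistence}: the principal part has the same structure as~\eqref{lyntriod} (with an additional bounded coefficient $\lvert\sigma^i_y\rvert^{-4}$ uniformly bounded away from zero since $\sigma$ is regular), so parabolicity and the Lopatinski--Shapiro condition carry over from the verification in~\cite[\S 3.3.2]{garmenzplu}, and the nonlinear terms satisfy $T^\sigma$--contraction estimates entirely analogous to Propositions~\ref{N_1 contractive} and~\ref{N_2 contractive}. Shrinking $\hat{T}$ further so that $\phi^i_x$ stays positive, by an argument analogous to Lemma~\ref{curveregular}, the composition Lemma~\ref{compositionstable} applies and yields $\gamma=\sigma\circ\phi\in\boldsymbol{E}_{\hat{T}}$. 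By construction $\gamma$ solves~\eqref{TriodC^0} with initial datum $\varphi$, and the analogous construction for $\widetilde{\sigma}$ produces $\widetilde{\gamma}$ with the same initial datum, concluding the argument by uniqueness.

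The main obstacle is the adaptation of the Section~\ref{shorttimeexistence} fixed-point scheme to the $\phi$-equation, whose coefficients depend on the given parametrisations $\sigma$ and $\widetilde{\sigma}$ rather than on a fixed initial datum: one must track how the $\sigma$-dependent coefficients enter the linearised estimates and verify that the Lopatinski--Shapiro computation still goes through for the (nonlinear) third-order boundary condition on $\phi$. A subtler point is the $W_p^{4-\nicefrac{4}{p}}$--regularity of $\phi^i(0,\cdot)$ together with its compatibility with the boundary conditions at $x\in\{0,1\}$, for which the hypothesis $p>5$ is essential in allowing the inversion and composition of parametrisations to remain in the trace space.
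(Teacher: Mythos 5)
Your strategy is essentially the one the paper uses: reparametrise each geometric solution so that it solves the Analytic Problem~\eqref{TriodC^0} with one fixed admissible initial parametrisation $\varphi$ of $\mathbb{T}_0$, derive a quasilinear fourth--order parabolic boundary value problem for the reparametrisations, solve it by the fixed--point scheme of Section~\ref{shorttimeexistence}, pass the composition through Lemma~\ref{compositionstable}, and conclude by the uniqueness part of Theorem~\ref{short time existence}. (The paper compares each given evolution with the solution produced in Theorem~\ref{geometricexistence} rather than comparing the two reparametrised solutions directly, which is immaterial.) Two points in your write--up need repair. The more serious one is the last step: the uniqueness assertion of Theorem~\ref{short time existence} is \emph{not} uniqueness in all of $\boldsymbol{E}_{\boldsymbol{T}}$, but only in $\boldsymbol{E}_{\boldsymbol{T}}\cap\overline{B_M}$ for the specific radius $M$ determined by $\varphi$. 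Your conclusion therefore does not follow until you have shown that both compositions $\sigma\circ\phi$ and $\widetilde{\sigma}\circ\widetilde{\phi}$ lie in $\overline{B_M}$. The paper closes this gap by observing that $\vertiii{\gamma_0\circ\psi}_{W_p^{1,4}\left((0,\overline{T})\times(0,1)\right)}\to\left\lVert\varphi\right\rVert_{W_p^{4-\nicefrac{4}{p}}}$ as $\overline{T}\to 0$ and that $\left\lVert\varphi\right\rVert_{W_p^{4-\nicefrac{4}{p}}}\leq\left\lVert\mathcal{E}\varphi\right\rVert_{\mathbb{E}_1}\leq\nicefrac{M}{2}$, so that after a further shrinking of the time interval both compositions enter $\overline{B_M}$; some argument of this kind is indispensable.

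The second point concerns your boundary conditions for the $\phi$--problem. The curvature condition and the third--order condition in~\eqref{Triod0} are geometric, hence invariant under reparametrisation, and are automatically inherited by $\sigma\circ\phi$ from the fact that $\sigma$ is a geometric solution; they impose no constraint on $\phi^i_{xxx}$. The only non--geometric boundary condition of~\eqref{TriodC^0} is $\gamma^i_{xx}(t,y)=0$: its normal part is the (automatic) curvature condition, while its tangential part determines $\phi^i_{xx}(t,y)$ in terms of $\sigma$ and $\phi^i_x(t,y)$. Together with the Dirichlet conditions $\phi^i(t,0)=0$, $\phi^i(t,1)=1$ this gives exactly two scalar conditions per endpoint per unknown, as a fourth--order parabolic system of three scalar equations requires; imposing an additional third--order condition on $\phi^i_{xxx}(t,0)$, as you suggest, would over--determine the problem and ruin the Lopatinskii--Shapiro count.
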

\begin{proof}
By Theorem~\ref{geometricexistence} there exists a positive radius $M$, a time $\boldsymbol{T}$ and a function $\gamma\in\boldsymbol{E}_{\boldsymbol{T}}\cap\overline{B_M}$ such that $\gamma$ is a solution to~\eqref{TriodC^0} with $\gamma(0)=\varphi$ where $\varphi$ is an admissible initial value for~\eqref{TriodC^0} parametrising $\mathbb{T}_0$. Moreover, the family of triods $\left(\gamma(t,[0,1])\right)$ is a solution to~\eqref{Triod0} in $[0,T)$ with initial datum $\mathbb{T}_0$ in the sense of Definition~\ref{geometricsolution1}. We show that $\left(\mathbb{T}(t)\right)$ coincides with $\left(\gamma(t,[0,1])\right)$ on a small time interval. There exists $b_0\in (0,\min\{\boldsymbol{T},T\})$ and $\gamma_0\in W_p^{1,4}\left((0,b_0)\times[0,1];(\mathbb{R}^2)^3\right)$ such that for all $t\in(0,b_0)$, $\gamma_0(t)$ is a regular parametrisation of $\mathbb{T}(t)$ and $\gamma_0$ is solution to~\eqref{Triod0} with $\gamma_0(0,[0,1])=\mathbb{T}_0$. We aim to construct a family of re parametrisations $\psi^i:[0,T_0]\times[0,1]\to[0,1]$, $i\in\{1,2,3\}$, with some $T_0\in(0,b_0)$ such that
\begin{equation*}
(t,x)\mapsto\gamma_0^i\left(t,\psi^i(t,x)\right)
\end{equation*}
is a solution to~\eqref{TriodC^0} in $[0,T_0]\times(0,1)$ with initial datum $\varphi$.
As argued in the proof of~\cite[Theorem~3.32]{garmenzplu} 
(formal) differentiation and taking into account the specific tangential velocity in~\eqref{TriodC^0} and the additional boundary condition imply that 
 $\psi=\left(\psi^1,\psi^2,\psi^3\right)$ has to satisfy a boundary value problem of the following shape:
 For $t\in(0,T_0)$, $x\in(0,1)$, $y\in\{0,1\}$ and $i\in\{1,2,3\}$,
\begin{equation}
\begin{cases}
\psi^i_t(t,x)+\frac{2\psi^i_{xxxx}(t,x)}{\vert(\gamma^i_0)_x(t,\psi^i(t,x))\vert^4 \vert \psi^i_x(t,x)\vert^4}
+g(\psi^i_{xxx}, \psi^i_{xx}, \psi^i_x, \psi^i,\gamma^i_0)=0\,,\\
\psi^i(t,y)=y\,,\\
\psi^i_{xx}(t,y)=-\left\langle(\gamma^i_0)_{xx}(t,y),(\gamma^i_0)_x(t,y)\right\rangle\left(\psi_x^i(t,y)\right)^2\,,\\
\gamma^i_0(0,\psi^i(0,x))=\varphi^i(x)\,.
\end{cases}
\end{equation}
Notice that by the implicit function theorem, the initial value lies in $W_p^{4-\nicefrac{4}{p}}\left((0,1);(\mathbb{R}^2)^3\right)$.
As this system has a very similar structure as problem~\eqref{TriodC^0} we studied before, analogous arguments as in the proof of Theorem~\ref{short time existence} allow us to conclude that there exists a time $T_0\in(0,b_0)$ and a function $\psi\in W_p^{1,4}\left((0,T_0)\times(0,1);\mathbb{R}^3\right)$ solving the above system.
The time $T_0$ depends on $\left\lVert\varphi\right\rVert_{X_0}$, $\min_{i\in\{1,2,3\},x\in[0,1]}\left\lvert\varphi^i(x)\right\rvert$, $\left\lvert\varphi_x(0)\right\rvert$ and also on $\left\lVert\gamma_0(0)\right\rVert_{X_0}$, $\min_{i\in\{1,2,3\},x\in[0,1]}\left\lvert(\gamma_0^i)_x(0,x)\right\rvert$ and $\left\lvert(\gamma_0)_x(0,0)\right\rvert$ where $X_0=W_p^{4-\nicefrac{4}{p}}((0,1))$. For every $t\in[0,T_0]$ the continuous function $x\mapsto\psi^i(t,x)$ satisfies $\psi^i(t,0)=0$, $\psi^i(t,1)=1$ and $\psi^i_x(t,x)\neq 0 $ for all $x\in[0,1]$. Thus $x\mapsto\psi^i(t,x)$ is a $C^1$--diffeomorphism of the interval $[0,1]$. Lemma~\ref{compositionstable} implies that 
\begin{equation*}
(t,x)\mapsto\gamma_0^i\left(t,\psi^i(t,x)\right)\in W_p^{1,4}\left((0,T_0)\times(0,1);\mathbb{R}^2\right)
\end{equation*}
and by construction, $(t,x)\mapsto\gamma_0\left(t,\psi(t,x)\right)$ solves the Analytic Problem~\eqref{TriodC^0}. As for $\overline{T}\to 0$,
\begin{equation*}
\vertiii{(t,x)\mapsto\gamma_0\left(t,\psi(t,x)\right)}_{W_p^{1,4}\left((0,\overline{T})\times(0,1);(\mathbb{R}^2)^3\right)}\to \left\lVert\varphi\right\rVert_{W_p^{4-\nicefrac{4}{p}}\left((0,1);\left(\mathbb{R}^2\right)^3\right)}
\end{equation*} 
and $\left\lVert\varphi\right\rVert_{W_p^{4-\nicefrac{4}{p}}\left((0,1);(\mathbb{R}^2)^3\right)}\leq \left\lVert \mathcal{E}\varphi\right\rVert_{\mathbb{E}_1}\leq \nicefrac{M}{2}$,
we may choose $\overline{T}\in(0,T_0)$ small enough such that 
\begin{equation*}
(t,x)\mapsto\gamma_0\left(t,\psi(t,x)\right)\in W_p^{1,4}\left((0,\overline{T})\times(0,1);(\mathbb{R}^2)^3\right)\cap \overline{B_M}\,.
\end{equation*}
The uniqueness assertion in Theorem~\ref{short time existence} implies for all $t\in[0,\overline{T}]$, $x\in[0,1]$ and $i\in\{1,2,3\}$,
\begin{equation*}
\gamma^i(t,x)=\gamma_0^i\left(t,\psi^i(t,x)\right)\,.
\end{equation*}
This proves that $\gamma(t,[0,1])$ and $\mathbb{T}(t)$ coincide for every $t\in[0,T_0]$. The claim follows repeating the same argument for the family of networks $\left(\widetilde{\mathbb{T}}(t)\right)$.
\end{proof}

\begin{teo}[Geometric Uniqueness]\label{geometricuniqueness}
	Let $p\in(5,\infty)$, $\mathbb{T}_0$ be a geometrically admissible initial triod and $T$ be positive. Suppose that $\left(\mathbb{T}(t)\right)$ and $\left(\widetilde{\mathbb{T}}(t)\right)$ are two solutions to~\eqref{Triod0} in the time interval $[0,T]$ with initial network $\mathbb{T}_0$ in the sense of Definition~\ref{geometricsolution1}. Then the networks $\mathbb{T}(t)$ and $\widetilde{\mathbb{T}}(t)$ coincide for all $t\in[0,T]$.
\end{teo}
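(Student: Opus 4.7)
The proof would proceed by a standard connectedness/bootstrap argument, extending the local result Theorem~\ref{geometricuniqueness1} to the whole interval $[0,T]$. I would set
\begin{equation*}
S := \{t\in[0,T]:\mathbb{T}(s)=\widetilde{\mathbb{T}}(s)\text{ for all }s\in[0,t]\}\quad\text{and}\quad t^{*} := \sup S.
\end{equation*}
Clearly $0\in S$ and, by Theorem~\ref{geometricuniqueness1} applied to the initial datum $\mathbb{T}_0$, there exists $\hat{T}>0$ with $[0,\hat{T}]\subseteq S$, so $t^{*}>0$. The theorem will follow once I establish $t^{*}\in S$ (closedness) and $t^{*}=T$.

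For closedness, given $t_n\in S$ with $t_n\nearrow t^{*}$, I would pick a local parametrisation $\gamma_n$ of $\mathbb{T}$ on a sub--interval $I_n=(a_n,b_n)$ that contains a neighbourhood of $t^{*}$ (such an $I_n$ exists since $\bigcup_n I_n=(0,T)$). By Theorem~\ref{embeddingBUC} and the Sobolev embedding, $\gamma_n\in BUC([a_n,b_n];C^{3+\alpha})$; in particular $t\mapsto\mathbb{T}(t)$ is continuous, say in Hausdorff distance, on a neighbourhood of $t^{*}$, and similarly for $\widetilde{\mathbb{T}}$. Passing $\mathbb{T}(t_n)=\widetilde{\mathbb{T}}(t_n)$ to the limit then gives $\mathbb{T}(t^{*})=\widetilde{\mathbb{T}}(t^{*})$, hence $t^{*}\in S$.

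Assume now for contradiction that $t^{*}<T$ and set $\mathbb{T}^{*} := \mathbb{T}(t^{*})=\widetilde{\mathbb{T}}(t^{*})$. I would verify that $\mathbb{T}^{*}$ is a geometrically admissible initial triod in the sense of Definition~\ref{admg1}: the $W_p^{4-\nicefrac{4}{p}}$--regularity and the concurrency, zero--curvature and third--order conditions at $\mathbb{T}^{*}$ are inherited from a local parametrisation thanks to the continuity in time described above, while the non--degeneracy of the triple junction is an open condition on the tangent vectors that is satisfied at $t=0$ by admissibility of $\mathbb{T}_0$ and propagates along $[0,t^{*}]$, where the two solutions coincide and admit a common $C^1$--in--time parametrisation. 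Then the time--shifted families $(\mathbb{T}(t^{*}+\cdot))$ and $(\widetilde{\mathbb{T}}(t^{*}+\cdot))$ solve~\eqref{Triod0} on $[0,T-t^{*}]$ with common initial datum $\mathbb{T}^{*}$, and a second application of Theorem~\ref{geometricuniqueness1} produces $\delta>0$ such that $\mathbb{T}(t^{*}+t)=\widetilde{\mathbb{T}}(t^{*}+t)$ for all $t\in[0,\delta]$, contradicting the maximality of $t^{*}$.

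The step I expect to be the main obstacle is the verification that the non--degeneracy of the triple junction persists up to and including the critical time $t^{*}$: this is essential to re--invoke Theorem~\ref{geometricuniqueness1}, whose proof ultimately relies on Theorem~\ref{short time existence} for the Analytic Problem and therefore demands an \emph{admissible} initial parametrisation. If direct preservation along $[0,t^{*}]$ does not follow cleanly from the hypotheses, a safe workaround is to replace $t^{*}$ by the infimum of the first time at which either the two solutions disagree or the triple junction degenerates, and to rule out the latter alternative by combining the parabolic structure developed in Section~\ref{shorttimeexistence} with the assumed existence of both solutions on the whole interval $[0,T]$.
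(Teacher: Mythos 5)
Your proposal is correct and follows essentially the same route as the paper's: the paper sets $t^{*}=\inf\{t\in(0,T):\mathbb{T}(t)\neq\widetilde{\mathbb{T}}(t)\}$, notes this set is open so that $\mathbb{T}(t^{*})=\widetilde{\mathbb{T}}(t^{*})$, observes that $\mathbb{T}(t^{*})$ is again a geometrically admissible initial triod, and applies the local uniqueness Theorem~\ref{geometricuniqueness1} to the time--shifted solutions to contradict the definition of $t^{*}$ --- exactly your supremum-of-the-good-set argument in dual form. Your extra care about the persistence of the non--degeneracy condition at $t^{*}$ addresses a point the paper passes over silently (it simply asserts admissibility of $\mathbb{T}(t^{*})$ from the fact that $\mathbb{T}$ solves~\eqref{Triod0} on $[0,T]$), but this does not alter the structure of the argument.
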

\begin{proof}
	We prove this statement by contradiction. Suppose that the set \begin{equation*}
	\mathcal{C}:=\left\{t\in (0,T): \mathbb{T}(t)\neq\widetilde{\mathbb{T}}(t)\right\}
	\end{equation*}
	is non empty and let $t^*:=\inf \mathcal{C}$. Then $t^*\in[0,T)$ and as $\mathcal{C}$ is an open subset of $(0,T)$ we conclude that $\mathbb{T}(t^*)=\widetilde{\mathbb{T}}(t^*)$. As $\left(\mathbb{T}(t)\right)$ is a solution to~\eqref{Triod0} in the time interval $[0,T)$, the triod $\mathbb{T}\left(t^*\right)$ is a geometrically admissible initial network to system~\eqref{Triod0}. The two evolutions $\left(\mathbb{T}(t^*+t)\right)$ and $\left(\widetilde{\mathbb{T}}(t^*+t)\right)$ are solutions to~\eqref{Triod0} in the time interval $[0,T-t^*)$ in the sense of Definition~\ref{geometricsolution1} with the same initial network. Theorem~\ref{geometricuniqueness1} implies that there exists a time $T_0\in [0,T-t^*)$ such that for all $t\in[0,T_0]$,
	$\mathbb{T}(t^*+t)=\widetilde{\mathbb{T}}(t^*+t)$. This contradicts the fact that $t^*=\inf\left\{t\in (0,T): \mathbb{T}(t)\neq\widetilde{\mathbb{T}}(t)\right\}$.
\end{proof}

\subsection{Geometric existence and uniqueness of maximal solutions}

\begin{dfnz}[Maximal solution]\label{maximalsolution}
	Let $T\in(0,\infty)\cup\{\infty\}$, $p\in(5,\infty)$ and $\mathbb{T}_0$ be a geometrically admissible initial network.
	A time--dependent family of triods $\left(\mathbb{T}_t\right)_{t\in[0,T)}$
	is a maximal solution to
	the  elastic flow with initial datum $\mathbb{T}_0$ in $[0,T)$
	if it is a 
	smooth solution in the sense of Definition~\ref{smoothsol} in $(0,\hat{T}]$ for all $\hat{T}<T$ 
	and if there does not exist a smooth solution $\left(\widetilde{\mathbb{T}}(\tau)\right)$ in $(0,\widetilde{T}]$ with $\widetilde{T}\geq T$ and such that $\mathbb{T}=\widetilde{\mathbb{T}}$
		in $(0,T)$.
	In this case the time $T$ is called maximal time of existence and is denoted by $T_{max}$.
\end{dfnz}
\begin{rem}
	If $T=\infty$ in the above definition, $\widetilde{T}\geq T$ is supposed to mean $\widetilde{T}=\infty$.
\end{rem}
\begin{lemma}\label{geomuniqunesssmooth}
	Let $p\in(5,\infty)$, $\mathbb{T}_0$ be a geometrically admissible initial triod and $T$ be positive. Suppose that $\left(\mathbb{T}(t)\right)$ and $\left(\widetilde{\mathbb{T}}(t)\right)$ are smooth solutions in the sense of Definition~\ref{smoothsol} in $(0,T]$ for some positive $T$ with initial datum $\mathbb{T}_0$. Then the networks $\mathbb{T}(t)$ and $\widetilde{\mathbb{T}}(t)$ coincide for all $t\in[0,T]$.
\end{lemma}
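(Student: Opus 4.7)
The plan is to reduce the statement to Theorem~\ref{geometricuniqueness}, the already established geometric uniqueness for Sobolev-type solutions, by observing that every smooth solution in the sense of Definition~\ref{smoothsol} is in particular a solution in the sense of Definition~\ref{geometricsolution1} with the same initial datum $\mathbb{T}_0$.

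Concretely, I would first verify the matching of regularity classes. For the intervals $I_n$ with $n \geq 1$, the parametrisations $\gamma^i_n$ are of class $C^\infty(\overline{I_n} \times [0,1])$; since $\overline{I_n} \times [0,1]$ is compact, this immediately yields $\gamma^i_n \in W_p^1(I_n; L_p((0,1);\mathbb{R}^2)) \cap L_p(I_n; W_p^4((0,1);\mathbb{R}^2))$ for every $p \in (5,\infty)$. For $n = 0$ the required Sobolev regularity on $(a_0, b_0)$ is part of the very definition of a smooth solution. In both cases the parametrisations therefore satisfy the regularity requirements of Definition~\ref{geometricsolution1}. The pointwise requirements of system~\eqref{Triod0} and the identification $\gamma_n(t,[0,1]) = \mathbb{T}(t)$ for $t \in I_n$ are inherited directly from the smoothness hypothesis, and the initial condition $\gamma_0(0,[0,1]) = \mathbb{T}_0$ (whenever $a_0 = 0$) is assumed as part of the smooth solution data.

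Hence both $(\mathbb{T}(t))$ and $(\widetilde{\mathbb{T}}(t))$ qualify as solutions to the elastic flow with initial datum $\mathbb{T}_0$ in the sense of Definition~\ref{geometricsolution1} on the time interval $[0,T]$. Applying Theorem~\ref{geometricuniqueness} directly yields $\mathbb{T}(t) = \widetilde{\mathbb{T}}(t)$ for all $t \in [0,T]$.

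I do not anticipate any substantial obstacle: the argument reduces to a compatibility check between the two notions of solution, after which the heavy lifting is entirely done by Theorem~\ref{geometricuniqueness}, which itself rests on the local fixed-point construction behind Theorem~\ref{geometricuniqueness1}.
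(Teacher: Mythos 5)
Your proposal is correct and coincides with the paper's own proof: both reduce the statement to Theorem~\ref{geometricuniqueness} by noting that the embedding $C^{\infty}(\overline{I_n}\times[0,1];\mathbb{R}^2)\hookrightarrow W^1_p(I_n;L_p((0,1);\mathbb{R}^2))\cap L_p(I_n;W^4_p((0,1);\mathbb{R}^2))$ makes every smooth solution a solution in the sense of Definition~\ref{geometricsolution1}. No issues.
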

\begin{proof}
	By the Definition~\ref{smoothsol} of smooth solution it is ensured that both $\left(\mathbb{T}(t)\right)$ and $\left(\widetilde{\mathbb{T}}(t)\right)$ are solutions to~\eqref{Triod0} in the time interval $[0,T]$ with initial network $\mathbb{T}_0$. This is due to the embedding
	\begin{equation*}
	 C^{\infty}(\overline{I_n}\times[0,1];\mathbb{R}^2)\hookrightarrow  W^1_p(I_n;L_p((0,1);\mathbb{R}^2))\cap L_p(I_n;W^4_p((0,1);\mathbb{R}^2))\,.
	\end{equation*}
	The result now follows from Theorem~\ref{geometricuniqueness}.
\end{proof}
\begin{lemma}[Existence and uniqueness of a maximal solution]\label{exuniquenssmaxsol}
	Let $p\in(5,\infty)$ and $\mathbb{T}_0$ be a geometrically admissible initial network. 
	There exists a 
	maximal solution $\left(\mathbb{T}(t)\right)_{t\in[0,T_{\max})}$ to
	the  elastic flow with initial datum $\mathbb{T}_0$ 
	in the maximal time interval $[0,T_{\max})$ with $T_{\max}\in(0,\infty)\cup\{\infty\}$. It is geometrically unique on finite time intervals in the sense of Lemma~\ref{geomuniqunesssmooth}.
\end{lemma}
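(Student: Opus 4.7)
The plan is to combine the short time existence result (Theorem~\ref{geometricexistence}), the parabolic smoothing property (Theorem~\ref{analyticsmoothsol}), and the uniqueness statement of Lemma~\ref{geomuniqunesssmooth} in a standard extension-by-restart argument.

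First I would apply Theorem~\ref{geometricexistence} to the geometrically admissible initial network $\mathbb{T}_0$ to obtain a positive time $T_1>0$ and a $\boldsymbol{E}_{T_1}$-parametrisation $\gamma_0$ of a solution $(\mathbb{T}(t))_{t\in[0,T_1]}$. Theorem~\ref{analyticsmoothsol} then upgrades $\gamma_0$ to be of class $C^\infty([\varepsilon,T_1]\times[0,1])$ for every $\varepsilon>0$, so this already satisfies Definition~\ref{smoothsol} with a single time chart $I_0=(0,T_1)$. Let
\[
\mathcal{S}:=\{T>0 : \text{there exists a smooth solution in } (0,T] \text{ with initial datum } \mathbb{T}_0\},
\]
which is non-empty, and set $T_{\max}:=\sup\mathcal{S}\in(0,\infty]$.

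To build the maximal solution I would take a strictly increasing sequence $T_n\nearrow T_{\max}$ with corresponding smooth solutions $(\mathbb{T}^{(n)}(t))_{t\in[0,T_n]}$ on overlapping intervals. Lemma~\ref{geomuniqunesssmooth} ensures that for $m\le n$ the networks $\mathbb{T}^{(m)}(t)$ and $\mathbb{T}^{(n)}(t)$ coincide on their common domain $[0,T_m]$, so we may unambiguously define $\mathbb{T}(t):=\mathbb{T}^{(n)}(t)$ for any $n$ with $T_n\ge t$. To check that the collection of $\boldsymbol{E}$–parametrisations provided by Theorem~\ref{geometricexistence} and the smoothings from Theorem~\ref{analyticsmoothsol} actually assemble into a family $\{\gamma_n\}_{n\in\mathbb N}$ satisfying Definition~\ref{smoothsol}, I would choose the intervals $I_0=(0,b_0)$ with $b_0\in(0,T_1)$ and, for $n\ge 1$, restart the flow at the smooth, hence admissible, network $\mathbb{T}(a_n)$ with $a_n<b_{n-1}<b_n$, yielding charts $\gamma_n$ smooth up to the closed interval $\overline{I_n}$, and with $\bigcup_n I_n=(0,T_{\max})$. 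The overlap of $I_{n-1}$ and $I_n$ lies in the interior of both intervals, where both parametrisations are smooth, so the geometric equality of the networks follows from Lemma~\ref{geomuniqunesssmooth}.

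By construction no smooth solution on $(0,\widetilde T]$ with $\widetilde T\ge T_{\max}$ extending $\mathbb{T}$ can exist, for otherwise $\widetilde T\in\mathcal{S}$ and $\widetilde T\le T_{\max}$; if $T_{\max}<\infty$ and $T_{\max}\in\mathcal{S}$, restarting the flow at $\mathbb{T}(T_{\max})$ would yield a strictly larger element of $\mathcal{S}$, a contradiction, so either $T_{\max}=\infty$ or $T_{\max}\notin\mathcal{S}$. Uniqueness on finite subintervals is a direct application of Lemma~\ref{geomuniqunesssmooth}: given another maximal solution $(\widetilde{\mathbb{T}}(t))_{t\in[0,\widetilde T_{\max})}$ with the same initial datum, restricting both to any $[0,\hat T]$ with $\hat T<\min\{T_{\max},\widetilde T_{\max}\}$ gives two smooth solutions and hence pointwise equality of networks, whence $T_{\max}=\widetilde T_{\max}$ and the two maximal families coincide. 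The only mildly delicate point I expect is the bookkeeping of the time charts $\gamma_n$ and their overlaps in Definition~\ref{smoothsol}; once one fixes the intervals $I_n$ so that each restart is performed at a time $a_n$ lying strictly inside $I_{n-1}$ (where the parametrisation is already smooth, hence yields an admissible initial datum for the restart), the rest is a routine patching argument.
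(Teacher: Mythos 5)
Your argument is correct in substance and uses the same ingredients as the paper (short time existence, the smoothing Theorem~\ref{analyticsmoothsol}, and the geometric uniqueness of Lemma~\ref{geomuniqunesssmooth}), but it replaces the paper's one--line maximality step. The paper simply invokes Zorn's Lemma on the (implicit) partially ordered set of smooth solutions, whereas you define $T_{\max}$ as the supremum of existence times and glue solutions along an exhausting sequence $T_n\nearrow T_{\max}$ using Lemma~\ref{geomuniqunesssmooth} to make the gluing well defined. Your route is more explicit and, as a bonus, avoids any set--theoretic machinery; the paper's route is shorter but leaves the verification that the Zorn--maximal element really satisfies Definition~\ref{maximalsolution} entirely to the reader. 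Two remarks. First, your countable family of charts with $\bigcup_n I_n=(0,T_{\max})$ does not literally fit Definition~\ref{smoothsol} (which asks for finitely many charts on a closed interval $(0,T]$), but this is harmless: Definition~\ref{maximalsolution} only requires a smooth solution on each $(0,\hat T]$ with $\hat T<T_{\max}$, and there the restriction of a single $\mathbb{T}^{(n)}$ does the job. Second, and more seriously, your exclusion of the case $T_{\max}\in\mathcal{S}$ rests on restarting the flow at $\mathbb{T}(T_{\max})$, which requires $\mathbb{T}(T_{\max})$ to be a geometrically admissible initial network in the sense of Definition~\ref{admg1}; regularity of the parametrisation and the condition that at least two curves meet at a strictly positive angle are only guaranteed by Definition~\ref{smoothsol} for $t$ in the \emph{open} charts, so they could in principle degenerate exactly at the closed right endpoint. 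You should at least record this as a hypothesis to be checked (the paper makes the analogous assertion without proof for interior times in Theorem~\ref{geometricuniqueness}, and its Zorn argument faces the same issue when matching the maximal element against Definition~\ref{maximalsolution}), so this is a shared imprecision rather than a defect specific to your proof.
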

\begin{proof}
	Combining Theorem~\ref{short time existence}
	and Theorem~\ref{analyticsmoothsol}  we know that 
	there exists a solution $\gamma\in\boldsymbol{E}_T$ of the Analytic Problem for some positive $T$ which is smooth in 
	$[\varepsilon,T)\times[0,1]$ for all $\varepsilon\in(0,T)$.
	This induces a smooth solution $\left(\mathbb{T}(t)\right)$ to the elastic flow in $(0,T]$ in the sense of Definition~\ref{smoothsol} via $\mathbb{T}(t):=\bigcup_{i=1}^3\gamma^i\left([0,T]\right)$.
	The existence of a maximal solution can be obtained using the Lemma of Zorn.
\end{proof}

%

\begin{lemma}\label{onepara}
	Let $p\in(5,\infty)$ and $\mathbb{T}_0$ be a geometrically admissible initial network and
	$\left(\mathbb{T}(t)\right)_{t\in[0,T_{\max})}$ a maximal solution
	to the elastic flow with initial datum $\mathbb{T}_0$ 
	in the maximal time interval $[0,T_{\max})$ with $T_{max}\in (0,\infty)\cup\{\infty\}$.
	Then for all $T\in(0,T_{max})$ the evolution $\mathbb{T}$ admits a regular parametrisation
	$\gamma:=(\gamma^1,\gamma^2,\gamma^3)$ in $[0,T]$ that is smooth in $[\varepsilon,T]\times[0,1]$ for all $\varepsilon>0$.
\end{lemma}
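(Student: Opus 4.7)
The plan is to glue together the smooth parametrisations $\gamma_0,\gamma_1,\dots,\gamma_N$ supplied by Definitions~\ref{maximalsolution} and~\ref{smoothsol} (applied to the restriction of the maximal solution to $(0,T]\subset(0,T_{\max})$) on the time intervals $I_n=(a_n,b_n)$, by successively reparametrising each piece spatially so that it matches the previous one on their overlap. Recall that $a_0=0$, $a_1>0$, $\bigcup_n(a_n,b_n)=(0,T)$, $\gamma_0$ is smooth on $[\varepsilon,b_0]\times[0,1]$ for every $\varepsilon>0$, and $\gamma_n\in C^\infty(\overline{I_n}\times[0,1])$ for $n\geq 1$. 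I set $\tilde\gamma:=\gamma_0$ on $[0,b_0]$ and extend inductively to $[0,b_n]$ for $n=1,\dots,N$.

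For the inductive step, on the overlap $(a_n,b_{n-1})$ both $\tilde\gamma(t,\cdot)$ and $\gamma_n(t,\cdot)$ are regular smooth parametrisations of the same curves $\mathbb{T}^i(t)$ with the same orientation (triple junction at $x=0$, fixed endpoint $P^i$ at $x=1$), so there is a unique smooth family of orientation-preserving diffeomorphisms $\sigma_n^i:(a_n,b_{n-1})\times[0,1]\to[0,1]$, fixing $\{0,1\}$, with
\begin{equation*}
\tilde\gamma^i(t,x)=\gamma_n^i(t,\sigma_n^i(t,x)).
\end{equation*}
Smoothness of $\sigma_n^i$ follows from smoothness of $\tilde\gamma$ and $\gamma_n$ on the closure of the overlap (here I use $a_1>0$, and the smoothness of each $\gamma_n\circ\hat\sigma_n$ already obtained in previous induction steps) together with the implicit function theorem, since $\gamma_n^i(t,\cdot)$ is a smooth embedding of $[0,1]$ onto $\mathbb{T}^i(t)$.

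The key technical step, which I expect to be the main obstacle, is extending $\sigma_n^i$ smoothly to all of $[a_n,b_n]\times[0,1]$ while preserving the diffeomorphism property and the constraint $\sigma_n^i(t,\cdot)\big|_{\{0,1\}}=\mathrm{id}$. I first use a standard Seeley/Whitney extension to obtain a smooth extension $\bar\sigma_n^i$ of $\sigma_n^i$ on $[a_n,b_{n-1}+\delta]\times[0,1]$, with $\delta>0$ taken small enough that $\partial_x\bar\sigma_n^i>0$ and the boundary-fixing property persist (both are open conditions). I then pick a cutoff $\eta\in C^\infty([a_n,b_n];[0,1])$ with $\eta\equiv 1$ on $[a_n,b_{n-1}]$ and $\eta\equiv 0$ on $[b_{n-1}+\delta,b_n]$, and set
\begin{equation*}
\hat\sigma_n^i(t,x):=\eta(t)\,\bar\sigma_n^i(t,x)+(1-\eta(t))\,x.
\end{equation*}
The convex-combination trick with the identity ensures that $\partial_x\hat\sigma_n^i=\eta\,\partial_x\bar\sigma_n^i+(1-\eta)>0$ everywhere and that the boundary points are fixed, so $\hat\sigma_n^i(t,\cdot)$ is an orientation-preserving diffeomorphism of $[0,1]$ for each $t\in[a_n,b_n]$.

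Finally I define $\tilde\gamma^i(t,x):=\gamma_n^i(t,\hat\sigma_n^i(t,x))$ for $t\in[a_n,b_n]$; this agrees with the previous definition of $\tilde\gamma$ on $[a_n,b_{n-1}]$ because $\hat\sigma_n^i=\sigma_n^i$ there, so the extension is consistent and, being a composition of smooth maps, is smooth on $[a_n,b_n]\times[0,1]$ — in particular smooth across $t=b_{n-1}$. After $N$ steps the resulting $\gamma:=\tilde\gamma$ is defined on $[0,b_N]\supseteq[0,T]$. For every $t\in[0,T]$ the function $\gamma(t,\cdot)$ is a regular parametrisation of $\mathbb{T}(t)$ (composition of a regular parametrisation with a diffeomorphism, respectively equal to the regular $\gamma_0(t,\cdot)$ on $[0,b_0]$), and $\gamma$ is smooth on $[\varepsilon,T]\times[0,1]$ for every $\varepsilon>0$: smoothness on $[\varepsilon,b_0]$ comes from the regularisation property of $\gamma_0$ (Theorem~\ref{analyticsmoothsol}), and on each $[a_n,b_n]$ with $n\geq 1$ directly from the formula $\gamma=\gamma_n\circ\hat\sigma_n$, which completes the proof.
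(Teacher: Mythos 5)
Your strategy is genuinely different from the paper's. The paper sidesteps the whole gluing problem by passing to a \emph{canonical} representative: each $\gamma^i_j$ is reparametrised proportionally to arclength via $\sigma^i_j(t,x)=\frac{1}{\ell^i(t)}\int_0^x\vert(\gamma^i_j)_\xi(t,\xi)\vert\,\mathrm{d}\xi$, so that on the overlap the two reparametrised pieces are both constant--speed parametrisations with speed $\ell^i(t)$ of the same oriented curve with the same endpoints, hence \emph{identical}; the glued map is then automatically well defined and smooth, with no transition maps to extend. Your version instead keeps the first piece as it is and pushes the transition diffeomorphisms forward, which forces you to extend $\sigma^i_n$ smoothly in time beyond the overlap. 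Both routes are legitimate; the paper's normalisation is what makes its proof short, and as a by--product it yields $\vert\gamma^i_x(t,x)\vert=\ell^i(t)$, which is used again in the proof of Theorem~\ref{longtimetriod}.

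There is, however, one step of yours that is wrong as stated: the claim that the boundary--fixing property of $\bar\sigma^i_n$ ``persists because it is an open condition.'' The condition $\bar\sigma^i_n(t,0)=0$, $\bar\sigma^i_n(t,1)=1$ is an equality constraint, not an open one, and a Seeley/Whitney extension of $\sigma^i_n$ past $t=b_{n-1}$ will in general violate it for $t>b_{n-1}$. The convex combination with the identity does not repair this either: $\hat\sigma^i_n(t,0)=\eta(t)\bar\sigma^i_n(t,0)$ need not vanish where $\eta(t)\in(0,1]$. If the endpoints are not fixed, $\gamma^i_n(t,\hat\sigma^i_n(t,\cdot))$ no longer covers all of $\mathbb{T}^i(t)$ and fails the concurrency and fixed--endpoint conditions, so the glued map is not a parametrisation of the evolving triod. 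The gap is easily closed: replace $\bar\sigma^i_n(t,x)$ by
\begin{equation*}
\bar\sigma^i_n(t,x)-(1-x)\,\bar\sigma^i_n(t,0)-x\left(\bar\sigma^i_n(t,1)-1\right)\,,
\end{equation*}
which is smooth, coincides with $\sigma^i_n$ on $[a_n,b_{n-1}]\times[0,1]$ (where the correction terms vanish), fixes $x=0$ and $x=1$ for all $t$, and still has strictly positive $x$--derivative for $\delta$ small by continuity. With this correction, and with your (correct) observation that $\partial_x\sigma^i_n$ is bounded below on the compact set $[a_n,b_{n-1}]\times[0,1]$, the rest of your induction goes through. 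A last cosmetic point: your existence and uniqueness of $\sigma^i_n$ on the overlap tacitly uses that $\gamma^i_n(t,\cdot)$ is injective; the paper's argument relies on the same implicit convention (two parametrisations of the same curve are related by a reparametrisation, as in its definition of equality of networks), so this is not a point against you, but it is worth being aware of.
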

\begin{proof}
	Let $T\in(0,T_{max})$ be given.
	As $\left(\mathbb{T}(t)\right)$ is a solution to~\eqref{Triod0} in $[0,T]$, the lengths $\ell^i(t)$ of the curves $\mathbb{T}^i(t)$, $t\in[0,T]$, $i\in\{1,2,3\}$ are uniformly bounded from above and below. Furthermore, for every $\varepsilon>0$ the map $t\mapsto \ell^i(t)$ is smooth on $[\varepsilon,T]$.
	Suppose that $(0,T)=(a_0,b_0)\cup (a_1,b_1)$
	with $0=a_0<a_1<b_0\leq b_1=T_{\max}$ and 
	that $\gamma_0$ and $\gamma_1$
	are regular parametrisations of $\mathbb{T}_t$
	in $(0,b_0)$ and $(a_1,T)$, respectively, as described in Definition~\ref{smoothsol}. In particular, it holds for all $\varepsilon\in(0,b_0)$
	\begin{equation*}
		\gamma^i_0\in  W^1_p((0,b_0);L_p((0,1);\mathbb{R}^2))\cap L_p((0,b_0);W^4_p((0,1);\mathbb{R}^2))\cap C^\infty\left([\varepsilon,b_0]\times[0,1];\mathbb{R}^2\right)
	\end{equation*}
	and
	\begin{equation*}
	\gamma^i_1\in C^\infty\left([a_1,T]\times[0,1];\mathbb{R}^2\right)\,.
	\end{equation*}
	Given $t\in[0,T]$, $j\in\{0,1\}$ and $i\in\{1,2,3\}$ we consider the reparametrisation
	\begin{equation*}
	\sigma^i_j(t):[0,1]\to[0,1]\,,\qquad \sigma^i_j(t)(x):=\sigma^i_j(t,x):=\frac{1}{\ell^i(t)}\int_0^x\left\vert(\gamma^i_j)_\xi(t,\xi)\right\vert\,\mathrm{d}\xi\,.
	\end{equation*}
	For $t\in[0,b_0]$ and $x\in[0,1]$ we define $\widetilde{\gamma}^i_0(t,x):=\gamma^i_0(t,(\sigma^i_0)^{-1}(t,x))$. Analogously, for $t\in[a_1,T]$ and $x\in[0,1]$ we let $\widetilde{\gamma}^i_1(t,x):=\gamma^i_1(t,(\sigma^i_1)^{-1}(t,x))$. Observe that for $t\in[a_1,b_0]$ both $\widetilde{\gamma}^i_0(t)$ and $\widetilde{\gamma}^i_1(t)$ are parametrisations of the curve $\mathbb{T}^i(t)$ with the same speed $\ell^i(t)$. This allows us to conclude for all $t\in[a_1,b_0]$:
	\begin{equation*}
	\widetilde{\gamma}^i_0(t)=\widetilde{\gamma}^i_1(t)\,.
	\end{equation*}
	The desired regular parametrisation $\gamma=\left(\gamma^1,\gamma^2,\gamma^3\right)$ can thus be defined as
	\begin{equation*}
	\gamma^i(t):=\begin{cases}
	\widetilde{\gamma}^i_0(t)\,,\qquad t\in[0,b_0]\,,\\
	\widetilde{\gamma}^i_1(t)\,,\qquad t\in[a_1,T]\,,
	\end{cases}
	\end{equation*}
	which is well defined and smooth in $[\varepsilon,T]\times[0,1]$ for all $\varepsilon>0$. In the case that the interval $(0,T)$ is an arbitrary finite union of intervals $(a_n,b_n)$, the proof follows using this procedure on every intersection.
\end{proof}

\section{Evolution of geometric quantities}\label{bounds}

The aim of this section is to find \emph{a priori} estimates 
for \emph{geometric} quantities related to the flow. Let $p\in(5,\infty)$ and $\mathbb{T}_0$ be a geometrically admissible initial network.
We consider a maximal solution $\left(\mathbb{T}(t)\right)_{t\in [0,T)}$ to the elastic flow starting in $\mathbb{T}_0$ in the maximal time interval $[0,T_{max})$ with $T_{max}\in(0,\infty)\cup\{\infty\}$. Notice that by Lemma~\ref{exuniquenssmaxsol} such a solution exists and is unique on finite time intervals. Furthermore, Lemma~\ref{onepara} implies that in every finite time interval $[0,T]\subset [0,T_{max})$ it can be parametrised by one parametrisation that is smooth away from $t=0$. Thus the arclength parameter is smooth on $[\varepsilon,T]\times[0,1]$ for all $T\in(0,T_{max})$ and all $\varepsilon\in(0,T)$.
Hence all the geometric quantities involved in the 
following computations are \emph{smooth functions}
depending on the time variable $t\in(0,T_{max})$ and the space variable
$s$ (the arclength parameter).

\subsection{Notation and preliminaries}

We introduce here some notation (the same as defined in~\cite{mannovtor}) which will be helpful in the following arguments. 

\begin{dfnz}
We denote by $\pol_\sigma(\ders^h k)$ a  polynomial in $k,\dots,\ders^h k$ with constant 
coefficients in $\mathbb{R}$ such that every monomial it contains is of the form
\begin{equation*}
C \prod_{l=0}^h	(\ders^lk)^{\beta_l}\quad\text{ with} \quad \sum_{l=0}^h(l+1)\beta_l = \sigma\,,
\end{equation*}
where $\beta_l\in\mathbb{N}_0$ for $l\in\{0,\dots,h-1\}$ and $\beta_h\in\mathbb{N}$ 
for at least one monomial.
\end{dfnz}

\begin{dfnz}
We denote by $\mathfrak{q}_\sigma\left(\dert^jT,\ders^hk\right)$ a polynomial in 
$T,\dots,\dert^jT$, $k,\dots,\ders^hk$ with constant coefficients in $\mathbb{R}$ 
such that every monomial it contains is of the form
\begin{equation*}
C \prod_{l=0}^j\left(\dert^lT\right)^{\alpha_l}\prod_{l=0}^h\left(\ders^lk\right)^{\beta_l}\quad
\text{ with} \quad \sum_{l=0}^{j}(4l+1)\alpha_l+\sum_{l=0}^h(l+1)\beta_l = \sigma\,,
\end{equation*}
with $\alpha_l\in\mathbb{N}_0$ for $l\in\{0,\dots,j\}$, $\beta_l\in\mathbb{N}_0$ 
for $l\in\{0,\dots,h\}$. We demand that there are (possibly different) monomials 
that satisfy $\alpha_j\in\mathbb{N}$ and $\beta_h\in\mathbb{N}$, respectively.
\end{dfnz}

\begin{dfnz}
We write $\mathfrak{q}_\sigma (\vert \dert^jT\vert,\vert\ders^hk\vert )$ 
to denote a finite sum of terms of the form
\begin{equation*}
C \prod_{l=0}^j\left\lvert\dert^lT\right\rvert^{\alpha_l}
\prod_{l=0}^h\left\lvert\ders^lk\right\rvert^{\beta_l}\quad
\text{ with} \quad \sum_{l=0}^{j}(4l+1)\alpha_l+\sum_{l=0}^h(l+1)\beta_l = \sigma\,,
\end{equation*}
with $\alpha_l\in\mathbb{N}_0$ for $l\in\{0,\dots,j\}$, $\beta_l\in\mathbb{N}_0$ f
or $l\in\{0,\dots,h\}$. 
Again we demand that there are (possibly different) monomials 
that satisfy $\alpha_j\in\mathbb{N}$ and $\beta_h\in\mathbb{N}$, respectively.
The polynomials $\pol_\sigma(\vert\ders^h k\vert)$  are defined in the same manner.
\end{dfnz}

We notice that 
\begin{align}
\partial_s\left(\pol_\sigma(\ders^h k)\right)&=\pol_{\sigma+1}(\ders^{h+1} k)\,,\nonumber\\
\mathfrak{p}_{\sigma_1}(\partial_s^{h_1}k)\mathfrak{p}_{\sigma_2}
(\partial_s^{h_2}k)&=\mathfrak{p}_{\sigma_1+\sigma_2}(\partial_s^{\max\{h_1,h_2\}}k)\,.
\label{calcpol}
\end{align}

\medskip

\emph{Young's inequality}\\
We will use Young's inequality in the following form:
\begin{equation}
ab\leq \varepsilon a^p+ C(\varepsilon,p,p')b^{p'}
  \end{equation}
with $a,b,\varepsilon>0$, $p,p'\in(1,\infty)$ and $\frac{1}{p}+\frac{1}{p'}=1$.

\medskip
\medskip

We adopt the following convention
to calculate the evolution of a certain geometric quantity $f=(f^1,f^2,f^3)$ 
integrated along the network $\mathbb{T}$ composed of the curves $\mathbb{T}^i$:
\begin{equation*}
\int_{\mathbb{T}_t} f\mathrm{d}s:=\sum_{i=1}^{3}\int_{\mathbb{T}_t^i}^{}f^i\,\mathrm{d}s^i\,.
\end{equation*}
We remind that 
the arclength parameter $s$ varies in $[0,\mathrm{L}(\mathbb{T}^i)]$ with $L\left(\mathbb{T}^i\right)$ denoting the length of the curve $\mathbb{T}^i$
and that
the curves $\mathbb{T}^i$  can be also parametrised by
functions $\gamma^i$ defined on the fixed interval $[0,1]$.
Then 
\begin{equation*}
\sum_{i=1}^{3}\int_{\mathbb{T}_t^i}^{}f^i\,\mathrm{d}s^i
=\sum_{i=1}^3 \int_0^1f\,\vert\gamma^i_x\vert\,\mathrm{d}x\,,
\end{equation*}
as the arclength measure
is given by $\mathrm{d}s=\vert\gamma^i_x\vert\,\mathrm{d}x$ 
on the curve parametrised by $\gamma^i$.\\

\medskip

\emph{$L^p$--norms}\\
In the sequel we will prove that the length of each curve of a triod $\mathbb{T}_t$
evolving by the elastic flow is bounded (see Remark~\ref{boundL})
and we will require that it is also uniformly bounded away from zero.
That is 
\begin{equation}
0<c\leq \mathrm{L}(\mathbb{T}^i_t)\leq C<\infty\,.
\end{equation}

Hence for any
fixed  time $t\in (0,T)$ the interval $[0,\mathrm{L}(\mathbb{T}^i_t)]$ is positive and bounded.
For all $p\in [1,\infty)$ we will write
\begin{equation*}
\lVert f^i\rVert_{L^p(0,\mathrm{L}(\mathbb{T}^i_t))}^p:=\int_{\mathbb{T}^i}\lvert  
f^i\rvert^p\mathrm{d}s\,\quad \text{and}\quad
\lVert f\rVert_{L^p(\mathbb{T}_t)}:=\sum_{i=1}^{3}\lVert f^i\rVert_{L^p(0,\mathrm{L}(\mathbb{T}^i_t))}\,.
\end{equation*}
We will also use the $L^\infty$--norm
\begin{equation*}
\lVert f^i\rVert_{L^\infty(0,\mathrm{L}(\mathbb{T}^i_t))}:=
\mathrm{ess\; sup}_{L^\infty(0,\mathrm{L}(\mathbb{T}^i_t))} \,\vert f^i\vert
\,\quad \text{and}\quad
\lVert f\rVert_{L^\infty(\mathbb{T}_t)}:=\sum_{i=1}^{3}\lVert f^i\rVert_{L^\infty(0,\mathrm{L}(\mathbb{T}^i_t))}\,.
\end{equation*}
Whenever we are considering continuous functions, we identify the supremum norm with the $L^\infty$ norm and denote it by $\left\lVert\cdot\right\rVert_\infty$.

\medskip

We underline here that for sake of notation we will simply write $\Vert\cdot\Vert_{L^p}$
instead of $\lVert \cdot \rVert_{L^p(0,\mathrm{L}(\mathbb{T}^i_t))}$
both for $p\in (0,\infty)$ and $p=\infty$.

\medskip

\emph{Gagliardo--Nirenberg Inequality}\\
Let $\eta$ be a smooth regular curve in $\mathbb{R}^2$ 
with finite length $\mathrm{L}$ and let $u$ be a smooth function defined on $\eta$. 
Then for every $j\geq 1$, $p\in [2,\infty]$ and $n\in\{0,\dots,j-1\}$ we have the estimates
\begin{equation*}
\lVert \partial_s^nu\rVert_{L^p}\leq C_{n,j,p}\lVert \partial_s^ju\rVert_{L^2}^\sigma\lVert u\rVert_{L^2}^{1-\sigma}+\frac{B_{n,j,p}}{\mathrm{L}^{j\sigma}}\lVert u\rVert_{L^2}
\end{equation*}
where 
\begin{equation*}
\sigma=\frac{n+1/2-1/p}{j}
\end{equation*}
and the constants $C_{n,j,p}$ and $B_{n,j,p}$ are independent of $\eta$.
In particular, if  $p=+\infty$, 
\begin{equation}\label{int2}     
{\Vert\partial_s^n u\Vert}_{L^\infty}     \leq C_{n,m}  
{\Vert\partial_s^m  u\Vert}_{L^2}^{\sigma}       
{\Vert u\Vert}_{L^2}^{1-\sigma}+      
 \frac{B_{n,m}}{{\mathrm L}^{m\sigma}}{\Vert         
u\Vert}_{L^2}\qquad\text{ { with} }\quad \text{ $\sigma=\frac{n+1/2}{m}$.}  
\end{equation}

We notice that in the case of a family of curves with length equibounded from below 
by some positive value, the Gagliardo--Nirenberg inequality holds with uniform constants.

\subsection{Basic evolution formulas}

\begin{lemma}[Commutation rules]\label{commrule}
If $\gamma$ moves according to~\eqref{motion} the commutation rule
\begin{equation*}
\partial_{t}\partial_{s}=\partial_{s}\partial_{t}+\left(T_{s}-kA\right)\partial_{s}\,
\end{equation*}
holds. The measure $\mathrm{d}s$ evolves as
\begin{equation*}
\partial_t(\mathrm{d}s)=\left(kA-T_s\right)\mathrm{d}s\,.
\end{equation*}
\end{lemma}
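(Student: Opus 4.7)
The plan is a direct computation using the Frenet relations $\tau_x = |\gamma_x| k\nu$ and $\nu_x = -|\gamma_x| k\tau$ on each curve $\mathbb{T}^i$, together with the fact that $\partial_s = |\gamma_x|^{-1}\partial_x$ and $\mathrm{d}s = |\gamma_x|\mathrm{d}x$. The key auxiliary identity I would derive first is the evolution of the local stretching factor $|\gamma_x|$. Differentiating $|\gamma_x|^2 = \langle \gamma_x, \gamma_x\rangle$ in time and commuting $\partial_t$ with $\partial_x$ (which is free since $x$ and $t$ are independent) gives
\begin{equation*}
\partial_t |\gamma_x|^2 = 2\langle \gamma_{xt}, \gamma_x\rangle = 2|\gamma_x|\langle \gamma_{tx}, \tau\rangle\,.
\end{equation*}

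Next I would substitute $\gamma_t = -A\nu - T\tau$, apply $\partial_x$, and use the Frenet relations to obtain
\begin{equation*}
\gamma_{tx} = (-A_x - T k |\gamma_x|)\nu + (Ak|\gamma_x| - T_x)\tau\,.
\end{equation*}
Taking the inner product with $\tau$ and using $T_x = |\gamma_x|T_s$ yields $\partial_t |\gamma_x| = |\gamma_x|(kA - T_s)$, from which $\partial_t(\mathrm{d}s) = \partial_t|\gamma_x|\,\mathrm{d}x = (kA - T_s)\,\mathrm{d}s$ follows at once, establishing the second assertion.

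For the commutation rule I would apply $\partial_t$ to $\partial_s f = |\gamma_x|^{-1}\partial_x f$ for a smooth function $f$, so that
\begin{equation*}
\partial_t \partial_s f = -\frac{\partial_t|\gamma_x|}{|\gamma_x|^2}\partial_x f + \frac{1}{|\gamma_x|}\partial_x \partial_t f = -(kA - T_s)\partial_s f + \partial_s \partial_t f\,,
\end{equation*}
which rearranges to the claimed identity $\partial_t\partial_s = \partial_s\partial_t + (T_s - kA)\partial_s$. There is no genuine obstacle here; the only minor subtlety is keeping the factors of $|\gamma_x|$ straight when converting between $\partial_x$ and $\partial_s$ derivatives, and recalling that the tangential velocity $T$ defined in~\eqref{Tang} plays no special role in this purely kinematic computation—any choice of tangential speed would produce the same formulas.
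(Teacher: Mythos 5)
Your computation is correct and is exactly the "straightforward computation" the paper omits: deriving $\partial_t|\gamma_x|=|\gamma_x|(kA-T_s)$ from the motion law and the Frenet relations, and then reading off both the evolution of $\mathrm{d}s=|\gamma_x|\,\mathrm{d}x$ and the commutator of $\partial_t$ with $\partial_s=|\gamma_x|^{-1}\partial_x$. No gaps; your closing remark that the result holds for any tangential velocity is also consistent with how the lemma is used later.
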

\begin{proof}
The proof follows by straightforward computations.
\end{proof}

\begin{lemma}\label{basicformulae}
The tangent, unit normal and curvature of a curve moving by~\eqref{motion} satisfy
\begin{align}
\partial_{t}\tau&=-\left(A_{s}+T k\right)\nu\,,\nonumber\\
\partial_{t}\nu&=\left(A_{s}+T k\right)\tau\,,\nonumber\\
\partial_tk&=\left\langle \partial_{t}\boldsymbol{\kappa},\nu\right\rangle 
=-A_{ss}-T\partial_sk-k^{2}A\,\nonumber\\
&=-2\partial_{s}^{4}k-5k^{2}\partial_{s}^{2}k-6k\left(\partial_{s}k\right)^{2}
-T\partial_{s}k-k^{5}+\mu \left( \partial_{s}^{2}k+k^3\right)\,.\label{kt}
\end{align}
Moreover  if $\gamma$ moves according to~\eqref{motion} 
the following formula holds for any $j\in\mathbb{N}$:
\begin{align}
\partial_{t}\partial_{s}^{j}k=&-2\partial_{s}^{j+4}k	
+\mathfrak{p}_{j+5}\left(\partial_{s}^{j+2}k\right)-T\partial_{s}^{j+1}k+\mu\,\mathfrak{p}_{j+3}
(\partial_s^{j+2}k)\label{derivativekt}
\end{align}
\end{lemma}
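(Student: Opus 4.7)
The plan is to derive each of the four identities in turn from the commutation rules in Lemma~\ref{commrule} together with the Frenet relations $\tau_s=k\nu$, $\nu_s=-k\tau$, and then to establish the general formula for $\partial_t\partial_s^j k$ by induction on $j$.

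For $\partial_t\tau$ I would apply the commutation rule to $\tau=\gamma_s$, which gives $\partial_t\tau = \partial_s\gamma_t + (T_s-kA)\tau$. Substituting the motion equation~\eqref{motion} and expanding via Frenet yields $\partial_s\gamma_t = -A_s\nu + Ak\tau - T_s\tau - Tk\nu$, so that the two tangential contributions cancel against $(T_s-kA)\tau$ and only $-(A_s+Tk)\nu$ survives. Since $|\nu|=1$, the derivative $\partial_t\nu$ must be tangential, and differentiating $\langle\nu,\tau\rangle=0$ in time produces $\langle\partial_t\nu,\tau\rangle = -\langle\nu,\partial_t\tau\rangle = A_s+Tk$, which gives the second formula.

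For $\partial_t k$, I would exploit $\boldsymbol{\kappa}=\tau_s=k\nu$ and commute once more: $\partial_t\tau_s = \partial_s\partial_t\tau + (T_s-kA)\tau_s$. Plugging in the expression for $\partial_t\tau$ just obtained, projecting onto $\nu$, and comparing with $\partial_t(k\nu) = k_t\nu + k(A_s+Tk)\tau$ yields the compact form $k_t = -A_{ss} - T\partial_s k - k^2 A$. The explicit polynomial version follows by substituting $A = 2\partial_s^2 k+k^3-\mu k$ and using $(k^3)_{ss} = 3k^2\partial_s^2 k + 6k(\partial_s k)^2$; the curvature terms and the $\mu$-terms then collect exactly as stated.

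The general formula is proved by induction on $j$, the base case $j=0$ being the preceding identity once one recognises that $-5k^2\partial_s^2 k-6k(\partial_s k)^2-k^5$ belongs to $\mathfrak{p}_5(\partial_s^2 k)$ and $\partial_s^2 k + k^3$ belongs to $\mathfrak{p}_3(\partial_s^2 k)$. For the step from $j-1$ to $j$, I would write $\partial_t\partial_s^j k = \partial_s(\partial_t\partial_s^{j-1} k) + (T_s-kA)\partial_s^j k$, differentiate the inductive hypothesis using $\partial_s\mathfrak{p}_\sigma(\partial_s^h k) = \mathfrak{p}_{\sigma+1}(\partial_s^{h+1} k)$ from~\eqref{calcpol}, and observe that the $-T_s\partial_s^j k$ produced by differentiating $-T\partial_s^j k$ cancels exactly with the $T_s\partial_s^j k$ coming from the correction term. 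The remaining $-kA\partial_s^j k = -2k(\partial_s^2 k)\partial_s^j k - k^4\partial_s^j k + \mu k^2\partial_s^j k$ is then absorbed into $\mathfrak{p}_{j+5}(\partial_s^{j+2}k)$ and $\mu\,\mathfrak{p}_{j+3}(\partial_s^{j+2}k)$. The only real obstacle is the bookkeeping: one must check that every new monomial carries the correct weight $\sum_l (l+1)\beta_l$, which is immediate from the exponents $1+3+(j+1)=j+5$, $4+(j+1)=j+5$ and $2+(j+1)=j+3$.
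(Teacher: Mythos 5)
Your proposal is correct and follows exactly the route the paper intends: the paper's proof is simply ``direct computations'', and your derivation via the commutation rule of Lemma~\ref{commrule}, the Frenet relations, the substitution $A=2k_{ss}+k^3-\mu k$, and induction on $j$ with the weight bookkeeping $1+3+(j+1)=j+5$, $4+(j+1)=j+5$, $2+(j+1)=j+3$ is precisely that computation carried out in full. All the individual steps check out, including the cancellation of the tangential terms in $\partial_t\tau$ and of $T_s\partial_s^j k$ in the induction step.
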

\begin{proof}
The proof follows by direct computations.
\end{proof}

\subsection{Bounds on curvature and length} 

\begin{lemma}\label{decreasingenergy}
For every $t\in (0,T_{max})$ it holds
\begin{align*}
\frac{\mathrm{d}}{\mathrm{d}t}\int_{\mathbb{T}(t)}k^{2}+\mu\,
\mathrm{d}s&=-\int_{\mathbb{T}(t)}^{} A^2\,\mathrm{d}s\,\qquad\text{and}\qquad 
E_{\mu}(\mathbb{T}(t))\leq E_{\mu}(\mathbb{T}_0)\,.
\end{align*}
\end{lemma}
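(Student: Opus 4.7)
The plan is to differentiate the elastic energy in time using the evolution formulas of Lemma~\ref{commrule} and Lemma~\ref{basicformulae}, integrate by parts in the arclength variable so that the bulk contribution reduces to $-\int_{\mathbb{T}(t)} A^2\,\mathrm{d}s$, and verify that all boundary contributions cancel owing to the conditions in~\eqref{Triod0}. Concretely, combining $\partial_t(\mathrm{d}s) = (kA - T_s)\,\mathrm{d}s$ with $\partial_t k = -A_{ss} - T k_s - k^2 A$, a direct computation yields
\[
\frac{\mathrm{d}}{\mathrm{d}t}\int_{\mathbb{T}(t)} (k^2+\mu)\,\mathrm{d}s = \int_{\mathbb{T}(t)}\bigl[-2kA_{ss} - k^3 A + \mu k A\bigr]\,\mathrm{d}s - \int_{\mathbb{T}(t)} \partial_s(Tk^2 + \mu T)\,\mathrm{d}s,
\]
since the tangential contributions $-2kTk_s - (k^2+\mu)T_s$ collapse to a single total arclength derivative.

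Two integrations by parts on $-\int 2k A_{ss}\,\mathrm{d}s$ transform the remaining bulk part into $-\int A(2k_{ss} + k^3 - \mu k)\,\mathrm{d}s = -\int A^2\,\mathrm{d}s$ and generate a finite list of evaluations at the endpoints of each curve. At each fixed endpoint $P^i$ the Dirichlet condition $\gamma^i(t,1) = P^i$ forces $\gamma^i_t(t,1)=0$, hence $A^i(t,1) = T^i(t,1) = 0$, and together with $k^i(t,1) = 0$ this annihilates every contribution at $x = 1$. At the triple junction the curvature condition $k^i(t,0) = 0$ kills most terms, leaving as the only possibly non--zero contribution the sum
\[
-\sum_{i=1}^3 2 k_s^i(t,0)\, A^i(t,0) + \mu \sum_{i=1}^3 T^i(t,0).
\]

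The main step is to show that this residual sum vanishes. Let $V := \gamma^i_t(t,0)$ be the common velocity at the triple junction guaranteed by the concurrency condition, and note that the orthonormal decomposition $V = -A^i \nu^i - T^i \tau^i$ in the Frenet frame of curve $i$ gives $A^i(t,0) = -V\cdot \nu^i(t,0)$ and $T^i(t,0) = -V\cdot \tau^i(t,0)$. Substituting and invoking the third order condition $\sum_i 2 k_s^i \nu^i = \mu \sum_i \tau^i$ at $x = 0$, both sums simplify to $\mu\, V\cdot \sum_i \tau^i(t,0)$ with opposite signs and cancel. This proves the identity $\frac{\mathrm{d}}{\mathrm{d}t}\int(k^2+\mu)\,\mathrm{d}s = -\int A^2\,\mathrm{d}s$ for every $t\in(0,T_{\max})$; integrating on $[0,t]$ and invoking the continuity of $E_\mu$ at $t=0$ ensured by Theorem~\ref{embeddingBUC} together with the initial regularity $W_p^{4-\nicefrac{4}{p}}\hookrightarrow C^{3+\alpha}$ then yields the monotonicity $E_\mu(\mathbb{T}(t)) \leq E_\mu(\mathbb{T}_0)$.
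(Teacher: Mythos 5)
Your proof is correct, but it takes a more self-contained route than the paper does: the paper's entire proof of this lemma is a one-line appeal to the gradient-flow structure established in the reference \cite{bargarnu}, whereas you carry out the first-variation computation explicitly. Your computation checks out at every step: the tangential contributions $-2kTk_s-(k^2+\mu)T_s$ do collapse to $-\partial_s\bigl(T(k^2+\mu)\bigr)$, the two integrations by parts on $-\int 2kA_{ss}\,\mathrm{d}s$ produce $-\int A^2\,\mathrm{d}s$ plus the boundary evaluations $-2kA_s+2k_sA$, the fixed endpoints are annihilated by $\gamma^i_t(t,1)=0$ together with $k^i(t,1)=0$, and at the junction the residual $\sum_i\bigl(-2k_s^iA^i+\mu T^i\bigr)$ equals $\bigl\langle V,\sum_i(2k_s^i\nu^i-\mu\tau^i)\bigr\rangle$, which vanishes by the third order condition. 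What your approach buys is twofold: it makes the lemma independent of the external reference, and it makes visible that the energy identity holds for \emph{any} tangential velocity, since $T$ enters only through a total arclength derivative and through the junction terms, which are controlled by the common junction velocity $V$ and the boundary conditions alone; this is consonant with the paper's emphasis that the long-time analysis should not depend on the choice of tangential reparametrisation. Your boundary-term bookkeeping is in fact the same style of argument the paper later performs for $\frac{\mathrm{d}}{\mathrm{d}t}\int|\partial_s^2k|^2\,\mathrm{d}s$ in Lemma~\ref{espressioneduedevk}, so the technique is fully consistent with the rest of the text. The final passage from the differential identity to $E_\mu(\mathbb{T}(t))\leq E_\mu(\mathbb{T}_0)$ via continuity of the energy at $t=0$, justified by the embedding of Theorem~\ref{embeddingBUC}, is also sound.
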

\begin{proof}
This result follows from
the gradient flow structure, see~\cite{bargarnu}.
\end{proof}

\begin{rem}\label{boundL}
As a consequence for every $t\in (0,T_{max})$ and for every $\mu> 0$
\begin{equation}\label{boundlength}
\int_{\mathbb{T}(t)} k^2\,\mathrm{d}s\leq E_{\mu} (\mathbb{T}_0)\,,\qquad\text{and}
\qquad 
L(\mathbb{T}(t))\leq \frac{1}{\mu} E_{\mu} (\mathbb{T}_0)<\infty\,.
\end{equation}

Notice that the global length of the evolving triod is bounded
from below away from zero by the value of the length
shortest path connecting the three points 
$P^1,P^2$ and $P^3$. 
Unfortunately this does not give a bound on the length of the single curve, 
the length of (at most) one curve 
can go to zero during the evolution.
\end{rem}

\subsection{Bound on $\partial_s^2k$}\label{kssbounded}

\begin{lemma}\label{espressioneduedevk}
For $t\in (0,T_{max})$ it holds
\begin{align*}
\frac{\mathrm{d}}{\mathrm{d}t}\int_{\mathbb{T}_t}\vert\partial^2_s k\vert^{2}\,\mathrm{d}s
&=\int _{\mathbb{T}_t}
 -\vert 2\partial_s^4k \vert^2 -2\mu\vert\partial_s^3k \vert^2 +\mathfrak{p}_{10}\left(\partial_{s}^{3}k\right)
 +\mathfrak{p}_{8}\left(\partial_{s}^{2}k\right)\,\mathrm{d}s\\
&+\sum_{i=1}^3\left. 
\mathfrak{q}_7\left(T^i,\partial_s^3k^i\right)
+\mathfrak{q}_5\left(T^i,\partial_sk^i\right)
+\mathfrak{p}_5\left(\partial_s^2k^i\right)
\right|_{3\text{--point}}\,,
\end{align*}
where $T^i$ is appearing in the polynomials with power $1$.
\end{lemma}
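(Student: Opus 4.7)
The plan is to differentiate under the integral sign, substitute the known expression for $\partial_t\partial_s^2 k$, and integrate by parts to expose the dissipative terms. More precisely, Lemma~\ref{commrule} gives
$$
\frac{d}{dt}\int_{\mathbb{T}_t}(\partial_s^2 k)^2\,ds = \int_{\mathbb{T}_t}\!2\,\partial_s^2 k\,\partial_t\partial_s^2 k\,ds + \int_{\mathbb{T}_t}(\partial_s^2 k)^2(kA - T_s)\,ds,
$$
and I would plug in identity~\eqref{derivativekt} with $j=2$,
$$
\partial_t\partial_s^2 k = -2\,\partial_s^6 k + \mathfrak{p}_7(\partial_s^4 k) - T\,\partial_s^3 k + \mu\,\mathfrak{p}_5(\partial_s^4 k).
$$
Two integrations by parts in $-4\!\int \partial_s^2 k\,\partial_s^6 k\,ds$ produce the negative-definite term $-\int|2\partial_s^4 k|^2\,ds$ plus boundary contributions of the form $\partial_s^2 k\,\partial_s^5 k$ and $\partial_s^3 k\,\partial_s^4 k$ evaluated at $s\in\{0,L^i\}$. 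One integration by parts in the leading $\mu$-contribution $2\mu\!\int \partial_s^2 k\,\partial_s^4 k\,ds$ yields $-2\mu\!\int(\partial_s^3 k)^2\,ds$ plus a boundary of the form $\mu\,\partial_s^2 k\,\partial_s^3 k$.

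The key cancellation involves the tangential velocity $T$. Writing $2\partial_s^2 k\,\partial_s^3 k = \partial_s\bigl((\partial_s^2 k)^2\bigr)$ and integrating by parts, the $-T\,\partial_s^3 k$ contribution becomes
$$
-\bigl[T(\partial_s^2 k)^2\bigr]_{\partial} + \int_{\mathbb{T}_t}T_s\,(\partial_s^2 k)^2\,ds,
$$
whose interior piece cancels exactly the $-T_s(\partial_s^2 k)^2$ appearing in $\partial_t(ds)$. The remaining bulk contributions (the interior integral of $\mathfrak{p}_7(\partial_s^4 k)$ after one further integration by parts to bring its space derivatives down to order three, the lower order portion of $\mu\,\mathfrak{p}_5(\partial_s^4 k)$, and $kA(\partial_s^2 k)^2$ with $A = 2k_{ss} + k^3 - \mu k$) are polynomials of the expected weights and assemble, via the calculus~\eqref{calcpol}, into $\mathfrak{p}_{10}(\partial_s^3 k) + \mathfrak{p}_8(\partial_s^2 k)$.

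It remains to localise all surviving boundary contributions at the triple junction. At each fixed endpoint $\gamma^i(t,1) = P^i$ one has $\gamma_t^i = 0$, hence $T^i = 0$ and $A^i = 0$; combined with $k^i(t,1)=0$ and $A^i = 2k^i_{ss} + (k^i)^3 - \mu k^i$ this forces $\partial_s^2 k^i = 0$; differentiating $k^i(t,1)=0$ in $t$ and invoking~\eqref{kt} then yields $\partial_s^4 k^i = 0$ at the fixed endpoints, so every boundary contribution there vanishes. At the triple junction, the identity $k^i(t,0)=0$ combined with~\eqref{kt} evaluated at $x=0$ gives the crucial algebraic relation
$$
\partial_s^4 k^i\big|_{\text{junction}} = \tfrac{\mu}{2}\,\partial_s^2 k^i - \tfrac{T^i}{2}\,\partial_s k^i,
$$
which allows me to eliminate $\partial_s^4 k^i$ in the boundary term $\partial_s^3 k^i\,\partial_s^4 k^i$. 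An analogous reduction for $\partial_s^5 k^i$, obtained by differentiating in $t$ the concurrency of the junction velocity (equivalently, by differentiating the third-order condition together with the motion equation for $k$), handles the $\partial_s^2 k^i\,\partial_s^5 k^i$ boundary term, and every remaining junction contribution fits into the claimed form $\mathfrak{q}_7(T^i, \partial_s^3 k^i) + \mathfrak{q}_5(T^i, \partial_s k^i) + \mathfrak{p}_5(\partial_s^2 k^i)$ with $T^i$ appearing only linearly. The main obstacle I anticipate is precisely this last bookkeeping step: systematically reducing every junction contribution to the claimed polynomial shape using only the curvature, concurrency, third-order and motion conditions, while keeping $T^i$ linear throughout.
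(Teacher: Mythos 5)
Your proposal is correct and follows essentially the same route as the paper: differentiate under the integral with the commutation rule, substitute~\eqref{derivativekt} with $j=2$, integrate by parts to produce $-\int\vert 2\partial_s^4k\vert^2 - 2\mu\vert\partial_s^3k\vert^2\,\mathrm{d}s$, kill the fixed-endpoint contributions exactly as you describe, and reduce the junction terms via the time-differentiated curvature, concurrency and third-order conditions. The one step you leave schematic --- eliminating $\partial_s^2k^i\,\partial_s^5k^i$ at the junction --- is carried out in the paper by pairing $\partial_t\bigl(\sum_i(2\partial_sk^i\nu^i-\mu\tau^i)\bigr)=0$ with the common junction velocity $A^1\nu^1+T^1\tau^1$ and summing over $i$ (it is not a per-curve reduction), which is precisely the mechanism you name.
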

\begin{proof}
By direct computation 
\begin{align*}
&\frac{\mathrm{d}}{\mathrm{d}t}\int_{\mathbb{T}_t}\vert\partial_s^2 k\vert ^{2}\,\mathrm{d}s\\
&=\int_{\mathbb{T}_t}\partial_s^2k\{2\partial_t\partial_s^2k
+\partial_s^2k\left(kA-T_s\right) \}\,\mathrm{d}s\\
&=\int_{\mathbb{T}_t}\partial_s^2k\left\lbrace -4\partial_s^6k
-10k^2\partial_s^4 k+2\mu\,\partial_s^4k-36\left(\partial_sk\right)^2\partial_s^2k
+\mathfrak{p}_7(\partial_s^3 k)+\mathfrak{p}_5(\partial_s^2 k)\right.\\
& \left.
-2T\partial_s^3 k-T_s\partial^2_s k\right\rbrace\,\mathrm{d}s\,.
\end{align*}
Integrating by parts once the term
$\int \partial^2_s k\left(-10k^2\partial_s^4 k
-36\left(\partial_sk\right)^2\partial_s^2k+2\mu\, \partial_s^4k\right) \,\mathrm{d}s$
and twice
$\int -4\partial^2_s k \partial_s^6k  \,\mathrm{d}s$
we get
\begin{align*}
\frac{\mathrm{d}}{\mathrm{d}t}&\int_{\mathbb{T}_t}\vert\partial^2_s k\vert^{2}\,\mathrm{d}s
=  \int _{\mathbb{T}_t}
 - \vert 2\partial_s^4k \vert^2 
 -2\mu\vert\partial_s^3k \vert^2
 +\mathfrak{p}_{10}\left(\partial_{s}^{3}k\right)
 +\mathfrak{p}_{8}\left(\partial_{s}^{2}k\right)
\,\mathrm{d}s\\
+& \left.\left\lbrace  
4 \partial^3_s k^i\partial_s^4k^i 
-4\partial^2_s k^i \partial_s^5k^i 
-12\left(\partial_sk^i\right)^3\partial^2_sk^i
-(10(k^i)^2-2\mu)\partial^2_s k^i \partial_s^3 k-T^i\left(\partial_s^2 k^i\right)^2
\right\rbrace \right|_{bdry}\,.\nonumber
\end{align*}
We focus now on the boundary terms.
It is easy to see that at the fixed end--points 
the contribution is zero.
Indeed the curvature is zero, the velocity is zero (hence the second derivative of the curvature
is zero and $T^i$ is zero) 
and using~\eqref{kt} one notices that also the fourth derivative of the curvature is zero.
Hence (using the fact that $k^i(0)=0$) it remains to  deal with
\begin{align}\label{bondarypart}
 \sum_{i=1}^3 
 \left( 4\partial^3_s k^i\partial_s^4k^i 
  -4 \partial^2_s k^i\partial_s^5k^i
    -12\left(\partial_sk^i\right)^3 \partial^2_sk^i
+2\mu\,\partial^2_s k^i \partial_s^3 k^i
-T^i\left(\partial_s^2 k^i\right)^2\right)\,,
\end{align}
where for sake of notation we omitted the dependence on $x=0$.
Differentiating in time the curvature condition $k^i=0$ for $i\in\{1,2,3\}$ at the triple junction we get
\begin{align*}
0&= \partial_t k^i
=-2\partial_{s}^{4}k^i-5(k^i)^{2}\partial_{s}^{2}k^i-6k^i\left(\partial_{s}k^i\right)^{2}
	-T^i\partial_{s}k^i-\left(k^i\right)^{5}+\mu \left( \partial_{s}^{2}k^i+\left(k^i\right)^3\right)\\
&=-2\partial_{s}^{4}k^i
	-T^i\partial_{s}k^i+\mu  \partial_{s}^{2}k^i\,.
\end{align*}
Thus, at the triple junction we have
\begin{equation}\label{consequencecurvcond}
\sum_{i=1}^3
2  \partial_t k^i \partial^3_s k^i
=\sum_{i=1}^3
\left(4\partial^3_s k^i\partial_{s}^{4}k^i
+2T^i\partial_{s}k^i\partial^3_s k^i-2\mu\, \partial_{s}^{2}k^i\partial^3_s k^i\right) =0\,.
\end{equation}
Moreover,
differentiating in time both the concurrency condition
and the  third order condition  at the 
triple junction, we get
$A^i\nu^i+T^i\tau^i=A^j\nu^j+T^j\tau^j$  and 
$\partial_t( \sum 2\partial_s k^i\nu^i-\mu \tau^i)=0$ for $i,j\in\{1,2,3\}$ which implies
\begin{align*}
0&=\left\langle A^1\nu^1+T^1\tau^1\,,\, \partial_t  \left(\sum_{i=1}^3
2\partial_s k^i\nu^i-\mu \tau^i\right)\right\rangle\\
&=\sum_{i=1}^3 \left\langle A^i\nu^i+T^i\tau^i\,,\, 
 \partial_t(2\partial_s k^i\nu^i-\mu \tau^i)\right\rangle\\
&= \sum_{i=1}^3 \left\langle A^i\nu^i+T^i\tau^i , 
(  2\partial_t\partial_sk^i+\mu(A_s^i+T^ik^i))\nu^i
+2\partial_sk^i(A_s^i+T^ik^i)\tau^i
\right\rangle\\
&=\sum_{i=1}^3\left( \left(2\partial_t\partial_s k^i
+\mu A^i_s\right)A^i+ 2\partial_sk^i A^i_sT^i\right)\\
&=\sum_{i=1}^3
2\left(-4\partial_s^5 k^i
-12\left(\partial_s k^i\right)^3
-2T^i\partial_s^2k^i+4\mu\,\partial_s^3k^i-\mu^2\partial_sk^i\right)\partial_s^2k^i
+4T^i\partial_sk^i\partial_s^3k^i-2\mu\,T^i(\partial_sk^i)^2 \,,
\end{align*}
that is 
\begin{align*}
\sum_{i=1}^3
&\left(-4\partial_s^2k^i\partial_s^5 k^i
-12\left(\partial_s k^i\right)^3\partial_s^2k^i+4\mu\, \partial_s^2k^i\partial_s^3k^i 
-2T^i(\partial_s^2k^i)^2\right.\\
&\left.-\mu^2\partial_sk^i\partial_s^2k^i+2T^i\partial_sk^i\partial_s^3k^i-\mu T^i(\partial_sk^i)^2\right)=0\,.
\end{align*}
Combined with~\eqref{consequencecurvcond} this yields
\begin{align*}
\sum_{i=1}^3
&\left(4\partial^3_s k^i\partial_{s}^{4}k^i-4\partial_s^2k^i\partial_s^5 k^i
-12\left(\partial_s k^i\right)^3\partial_s^2k^i
+2\mu\, \partial_s^3k^i \partial_s^2k^i
-2T^i(\partial_s^2k^i)^2\right.\\
&\left.-\mu^2\partial_sk^i\partial_s^2k^i+4T^i\partial_sk^i\partial_s^3k^i-\mu T^i(\partial_sk^i)^2\right)
=0\,.
\end{align*}
Hence we can express the sum~\eqref{bondarypart} as
\begin{align*}
\sum_{i=1}^3
\left(T^i(\partial_s^2k^i)^2
+\mu^2\partial_sk^i\partial_s^2k^i-4T^i\partial_sk^i\partial_s^3k^i+\mu T^i(\partial_sk^i)^2\right)\,.
\end{align*}
Combined with the previous computations this gives the desired result.
\end{proof}

We have obtained an explicit expression for 
$\frac{\mathrm{d}}{\mathrm{d}t}\Vert\partial_s^2k\Vert_2$.
Differently from $\frac{\mathrm{d}}{\mathrm{d}t}\Vert k\Vert_2$,
the sign of the expression is not clearly determined, hence we cannot
easily say that $\Vert\partial_s^2k\Vert_2$ is decreasing during the evolution.
Our aim is to estimate the polynomials involved in the formula in order to get
at least a bound for $\Vert\partial_s^2k\Vert_2$.

We underline that from now on the constant $C$ may vary from line to line. 

\begin{lemma}\label{boundint}
Let 
$\int_{\mathbb{T}_t}^{}\lvert\mathfrak{p}_{10}\left(\partial_s^{3}k\right)\rvert
+\lvert\mathfrak{p}_{8}\left(\partial_s^{2}k\right)\rvert\mathrm{d}s$
be the integral of the two polynomials appearing in Lemma~\ref{espressioneduedevk}.
Suppose that the lengths of the three curves of the triod $\mathbb{T}_t$ are
uniformly  bounded
away from zero for all $t\in [0,T_{max})$. 
Then the following estimates hold
for all $t\in (0,T_{max})$:
\begin{align*}
\int_{\mathbb{T}_t}^{} \mathfrak{p}_{10}\left(\partial_s^{3}k\right)\mathrm{d}s
\leq \lVert\partial_s^{4}k\rVert_{L^2}^2+C\lVert k\rVert_{L^2}^2+C\lVert k\rVert^{18}_{L^2}\,,\\
\int_{\mathbb{T}_t}^{}\mathfrak{p}_{8}\left(\partial_s^{2}k\right)\mathrm{d}s
\leq \frac{\mu}{2}\lVert\partial_s^{3}k\rVert_{L^2}^2
+C\lVert k\rVert_{L^2}^2+C\lVert k\rVert^{14}_{L^2}\,.
\end{align*}
\end{lemma}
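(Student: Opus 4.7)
The plan is to estimate each monomial of the polynomials $\mathfrak{p}_{10}(\partial_s^3 k)$ and $\mathfrak{p}_8(\partial_s^2 k)$ separately, using a combination of H\"older's inequality, the Gagliardo--Nirenberg interpolation inequality~\eqref{int2}, and Young's inequality, and then to sum over the (finitely many) monomials. By definition, a monomial in $\mathfrak{p}_{10}(\partial_s^3 k)$ has the form $C\prod_{l=0}^3(\partial_s^l k)^{\beta_l}$ with $\sum_{l=0}^3 (l+1)\beta_l=10$ and $\beta_3\geq 1$ in at least one monomial. I would first apply H\"older's inequality on each curve $\mathbb{T}^i$ with exponents $p_l\in[2,\infty]$ satisfying $\sum_l \beta_l/p_l=1$ to get
\begin{equation*}
\int_{\mathbb{T}^i}\prod_{l=0}^3|\partial_s^l k|^{\beta_l}\,\mathrm{d}s\leq \prod_{l=0}^3\|\partial_s^l k\|_{L^{p_l}}^{\beta_l}.
\end{equation*}

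Next I would apply the Gagliardo--Nirenberg inequality~\eqref{int2} to each factor, interpolating between $L^2$ and the top derivative that we wish to produce, namely $\partial_s^4 k$ for the first estimate. Writing $n:=\sum_l \beta_l$ and choosing the $p_l$ so that all interpolation exponents are admissible, one obtains
\begin{equation*}
\|\partial_s^l k\|_{L^{p_l}}\leq C\|\partial_s^{4} k\|_{L^2}^{\sigma_l}\|k\|_{L^2}^{1-\sigma_l}+C\|k\|_{L^2},\qquad \sigma_l=\tfrac{l+1/2-1/p_l}{4},
\end{equation*}
where the lower order constant is uniform thanks to the assumed lower bound on the lengths $\mathrm{L}(\mathbb{T}^i_t)$. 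Expanding the product and collecting exponents, the leading contribution is of the form $\|\partial_s^4 k\|_{L^2}^{\alpha}\|k\|_{L^2}^{\beta}$ with
\begin{equation*}
\alpha=\sum_l\sigma_l\beta_l=\tfrac{1}{4}\bigl(\textstyle\sum_l \beta_l(l+1/2)-1\bigr)=\tfrac{18-n}{8},\qquad \beta=n-\alpha=\tfrac{9n-18}{8},
\end{equation*}
and in particular $\alpha<2$ for every admissible $n$ (the constraint $\beta_3\geq 1$ together with $\sum(l+1)\beta_l=10$ forces $n\geq 3$). By Young's inequality with conjugate exponents $(2/\alpha,2/(2-\alpha))$ and a small parameter,
\begin{equation*}
\|\partial_s^4 k\|_{L^2}^{\alpha}\|k\|_{L^2}^{\beta}\leq \varepsilon\|\partial_s^4 k\|_{L^2}^2+C(\varepsilon)\|k\|_{L^2}^{\tfrac{2\beta}{2-\alpha}},
\end{equation*}
and a direct calculation shows that $\tfrac{2\beta}{2-\alpha}=18$ independently of $n$. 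Choosing $\varepsilon$ small enough to cover all monomials yields the first estimate, after noting that the remaining mixed terms arising from the lower order part of~\eqref{int2} produce only powers $\|k\|_{L^2}^m$ with $2\leq m\leq n\leq 10$, which are bounded by $C\|k\|_{L^2}^2+C\|k\|_{L^2}^{18}$ by standard interpolation.

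The second estimate follows in exactly the same fashion, with $h=2$, $\sigma=8$, and interpolation against $\|\partial_s^3 k\|_{L^2}$; the analogous algebraic bookkeeping gives $\alpha=\tfrac{14-n}{6}$, $\beta=\tfrac{7n-14}{6}$, and $\tfrac{2\beta}{2-\alpha}=14$, and absorption into $\tfrac{\mu}{2}\|\partial_s^3 k\|_{L^2}^2$ is achieved by choosing the Young parameter sufficiently small. The main obstacle to writing this out cleanly is purely bookkeeping: one must verify that for every admissible multi--index $(\beta_l)$ the interpolation exponents $\sigma_l$ lie in $[0,1]$ so that~\eqref{int2} is applicable, and that the final exponents $\alpha$ and $\tfrac{2\beta}{2-\alpha}$ come out uniformly as stated. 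This is the kind of routine but delicate calculation that has to be done with care; the rest of the argument is a mechanical application of H\"older, Gagliardo--Nirenberg and Young.
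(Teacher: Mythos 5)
Your proposal is correct and follows essentially the same route as the paper: estimate each monomial via H\"older's inequality, apply the Gagliardo--Nirenberg inequality (with constants uniform thanks to the lower bound on the lengths) to interpolate each factor against $\lVert\partial_s^{4}k\rVert_{L^2}$ (resp.\ $\lVert\partial_s^{3}k\rVert_{L^2}$) and $\lVert k\rVert_{L^2}$, check that the total interpolation exponent is strictly less than $2$, and absorb with Young's inequality; your algebra giving the final exponents $18$ and $14$ independently of the monomial matches the paper's $2(2m+5)$ for $m\in\{1,2\}$. The only difference is cosmetic: the paper fixes the concrete H\"older exponents $p_l=\frac{2m+6}{l+1}$, whereas you leave them general subject to $\sum_l\beta_l/p_l=1$, which is equivalent since your exponent count does not depend on that choice.
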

\begin{rem}
The constants $1$ and $\mu/2$ in front of $\lVert\partial_s^{4}k\rVert_{L^2}^2$
and $\lVert\partial_s^{3}k\rVert_{L^2}^2$ will play a special role later in   
Lemma~\ref{estimatetogether}. In this Lemma they can be chosen arbitrarily small.
\end{rem}
\begin{proof}
To obtain the desired estimates we adapt~\cite[pag 260--261]{mannovtor} to our situation.

Let $m\in\{1,2\}$.
Every monomial of $\mathfrak{p}_{2m+6}\left(\partial_s^{m+1}k\right)$ is of the shape
\begin{equation*}
C\prod_{l=0}^{m+1}\left(\partial_s^lk\right)^{\alpha_l} 
\end{equation*}
with $\alpha_l\in\mathbb{N}_0$ and $\sum_{l=0}^{m+1}\alpha_l(l+1)=2m+6$. 
We define $J:=\{l\in\{0,\dots,m+1\}:\alpha_l\neq 0\}$
and for every $l\in J$  we set
$$
\beta_l:=\frac{2m+6}{(l+1)\alpha_l}\,.
$$
We observe that $\sum_{l\in J}^{}\frac{1}{\beta_l}=1$ and $\alpha_l\beta_l>2$
for every $l\in\{0,\ldots,j\}$ such that $\alpha_l\neq 0$. Thus the H\"older inequality implies
\begin{equation*}
C\int_{\mathbb{T}_t}\prod_{l\in J}^{} (\partial_s^lk)^{\alpha_l}\mathrm{d}s\leq C\prod_{l\in J}^{}\left(\int_{\mathbb{T}_t}\lvert\partial_s^lk\rvert^{\alpha_l\beta_l}\mathrm{d}s\right)^{\frac{1}{\beta_l}}=C\prod_{l\in J}^{}\lVert\partial_s^lk\rVert^{\alpha_l}_{L^{\alpha_l\beta_l}}\,.
\end{equation*}
Applying the Gagliardo--Nirenberg inequality for every $l\in J$ yields for every $i\in\{1,2,3\}$
\begin{equation*}
\lVert\partial_s^lk^i\rVert_{L^{\alpha_l\beta_l}}\leq C_{l,m,\alpha_l,\beta_l}\lVert\partial_s^{m+2}k^i\rVert_{L^2}^{\sigma_l}\lVert k^i\rVert_{L^2}^{1-\sigma_l}+\frac{B_{l,m,\alpha_l,\beta_l}}{L^{(m+2)\sigma_l}}\lVert k^i\rVert_{L^2}\,
\end{equation*}
where for all $l\in J$ the coefficient $\sigma_l$ is given by
$$
\sigma_l=\frac{l+1/2-1/(\alpha_l\beta_l)}{m+2}\,.
$$
We may choose 
$$
C=\max\left\{C_{l,m,\alpha_l,\beta_l},\frac{B_{l,m,\alpha_l,\beta_l}}{L^{(m+2)\sigma_l}}:l\in J \right\}\,.
$$ 
Since 
the polynomial $\mathfrak{p}_{2m+6}\left(\partial_s^{m+1}k\right)$
consists of finitely many monomials (whose number depends on $m$)
of the above type with coefficients independent 
of time and the points on the curve, we can write
\begin{align*}
\int_{\mathbb{T}_t}\mathfrak{p}_{2m+6}\left(\partial_s^{m+1}k\right)\,\mathrm{d}s
&\leq C\int_{\mathbb{T}_t}^{}\prod_{l\in J}^{}\lvert\partial_s^lk\rvert^{\alpha_l}\mathrm{d}s
\leq C\prod_{l\in J }^{}\lVert\partial_s^lk\rVert^{\alpha_l}_{L^{\alpha_l\beta_l}}\\
&\leq C(L_t)\prod_{l\in J}^{}\lVert k\rVert^{(1-\sigma_l)\alpha_l}_{L^2}\left(\lVert\partial_s^{m+2}k\rVert_{L^2}+\lVert k\rVert_{L^2}\right)^{\sigma_l\alpha_l}_{L^2}\\
&= C(L_t)\lVert k\rVert^{\sum_{l\in J}(1-\sigma_l)\alpha_l}_{L^2}\left(\lVert\partial_s^{m+2}k\rVert_{L^2}+\lVert k\rVert_{L^2}\right)^{\sum_{l\in J}\sigma_l\alpha_l}_{L^2}
\end{align*}
for every $t\in(0,T_{max})$ such that the flow exists. Here the constant $C(L_t)$ depends on the lengths of \emph{each} curve at time $t$. Moreover we have
\begin{align*}
\sum_{l\in J}\sigma_l\alpha_l&
=2-\frac{1}{(m+2)^2}<2\,.
\end{align*}
Applying Young's inequality with $p:=\frac{2}{\sum_{l\in J}\sigma_l\alpha_l}$ and $q:=\frac{2}{2-\sum_{l\in J}^{}\sigma_l\alpha_l}$ we obtain
\begin{align*}
C\int_{\mathbb{T}_t}^{}\prod_{l\in J}^{}\lvert\partial_s^lk\rvert^{\alpha_l}\mathrm{d}s&\leq \frac{C(L_t)}{\varepsilon}\lVert k\rVert^{2\frac{\sum_{l\in J}(1-\sigma_l)\alpha_l}{2-\sum_{l\in J}^{}\sigma_l\alpha_l}}_{L^2}+\varepsilon C(L_t) \left(\lVert\partial_s^{m+2}k\rVert_{L^2}+\lVert k\rVert_{L^2}\right)^{2}_{L^2}
\end{align*}
where
\begin{align*}
2\frac{\sum_{l\in J}(1-\sigma_l)\alpha_l}{2-\sum_{l\in J}^{}\sigma_l\alpha_l}&
=2(2m+5)\,.
\end{align*}
As $C(L_t)$ depends only on $m$ and the length of each curve of the solution at time $t$ and as the single lengths are bounded from below by hypothesis, we get choosing $\varepsilon$ small enough 
\begin{align*}
\int_{\mathbb{T}_t}\mathfrak{p}_{2m+6}\left(\partial_s^{m+1}k\right)\,\mathrm{d}s
&\leq\varepsilon \left(\lVert\partial_s^{m+2}k\rVert_{L^2}+\lVert k\rVert_{L^2}\right)^{2}_{L^2}+\frac{C}{\varepsilon}\lVert k\rVert^{2(2m+5)}_{L^2}
\,.
\end{align*}
To conclude in our case it is enough to take  $m\in\{1,2\}$ and 
choose a suitable $\varepsilon>0$.
\end{proof}
In the following Lemma we will express the tangential velocity at the triple junction in terms of the normal velocity similarly as in~\cite{garckenov}.
\begin{lemma}\label{tangentialvelocityboundary}
	Given $t\in[0,T_{max})$ we let $\tau^i(t):=\tau^i(t,0)$ be the unit tangent vector to the curve $\mathbb{T}^i(t)$ at the triple junction and $\alpha^1(t)$, $\alpha^2(t),\alpha^3(t)$ be the angle at the triple junction
	between$\tau^2(t)$ and  $\tau^3(t)$, $\tau^3(t)$ and $\tau^1(t)$, and $\tau^1(t)$ and  
	$\tau^2(t)$, respectively. Suppose that there exists $\rho>0$ such that
	\begin{equation}\label{nondegeneracy}
	\inf_{t\in[0,T_{max})}\max\left\{\left\vert\sin\alpha^1(t)\right\vert,\left\vert\sin\alpha^2(t)\right\vert,\left\vert\sin\alpha^3(t)\right\vert\right\}\geq \rho\,.
	\end{equation}
	Then for every $t\in[0,T_{max})$ the tangential velocities $T^i(t):=T^i(t,0)$ at the triple junction are linear combinations of the normal velocities $A^i(t):=A^i(t,0)$ with coefficients uniformly bounded in time.
\end{lemma}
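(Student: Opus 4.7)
The concurrency condition $\gamma^1(t,0)=\gamma^2(t,0)=\gamma^3(t,0)$ holds for every $t$, so differentiating in time and inserting the motion law $\gamma^i_t=-A^i\nu^i-T^i\tau^i$ gives, at the triple junction,
\begin{equation*}
A^i\nu^i+T^i\tau^i\;=\;A^j\nu^j+T^j\tau^j\qquad\text{for all }i,j\in\{1,2,3\}.
\end{equation*}
The plan is to solve this $\mathbb{R}^2$-valued identity for $T^i$ by taking the scalar product with $\nu^j$; using $\langle\nu^j,\tau^j\rangle=0$ and $|\nu^j|=1$ yields
\begin{equation*}
T^i\,\langle\tau^i,\nu^j\rangle\;=\;A^j-A^i\,\langle\nu^i,\nu^j\rangle.
\end{equation*}
Writing $\tau^m=(\cos\theta_m,\sin\theta_m)$ and $\nu^m=(-\sin\theta_m,\cos\theta_m)$ one finds $|\langle\tau^i,\nu^j\rangle|=|\sin(\theta_i-\theta_j)|$, which is the sine of the unsigned angle $\beta^{ij}$ between $\tau^i$ and $\tau^j$. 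Whenever $\sin\beta^{ij}\neq 0$ this gives
\begin{equation*}
T^i\;=\;\frac{A^j-A^i\,\langle\nu^i,\nu^j\rangle}{\langle\tau^i,\nu^j\rangle},
\end{equation*}
an expression of $T^i$ as a linear combination of $A^i,A^j$ with both coefficients bounded in modulus by $|\sin\beta^{ij}|^{-1}$ (since $|\langle\nu^i,\nu^j\rangle|\leq 1$).

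It therefore suffices to show that for every $t\in[0,T_{\max})$ and every $i\in\{1,2,3\}$ one can select $j=j(i,t)\neq i$ with $|\sin\beta^{i,j(i,t)}(t)|$ bounded below in terms of $\rho$ only. With the identifications $|\sin(\theta_1-\theta_2)|=|\sin\alpha^3|$ and its cyclic analogues, the addition formula $\sin(\theta_i-\theta_j)=\sin(\theta_i-\theta_k)\cos(\theta_k-\theta_j)+\cos(\theta_i-\theta_k)\sin(\theta_k-\theta_j)$ yields the triangle-type inequality
\begin{equation*}
|\sin\alpha^k|\;\leq\;|\sin\alpha^i|+|\sin\alpha^j|\qquad\text{for every permutation }(i,j,k)\text{ of }(1,2,3).
\end{equation*}
For fixed $i$, the two quantities $\{|\sin\beta^{ij}|:j\neq i\}$ coincide with $\{|\sin\alpha^l|:l\neq i\}$. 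By hypothesis~\eqref{nondegeneracy} one has $\max_l|\sin\alpha^l(t)|\geq\rho$; if this maximum is attained at some $l\neq i$, the required lower bound $\rho$ is immediate, while if it is attained at $l=i$ the triangle inequality forces $\max_{l\neq i}|\sin\alpha^l|\geq\rho/2$. Hence the choice of $j(i,t)$ maximising $|\sin\beta^{ij}(t)|$ gives a linear representation of $T^i$ in terms of $A^i$ and $A^{j(i,t)}$ with coefficients of modulus at most $2/\rho$, uniformly in $t$.

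The main (and essentially only) obstacle is establishing the triangle-type inequality for the three sines, which upgrades the weak hypothesis ``the maximum sine is bounded below'' into the stronger statement ``for each tangent, at least one adjacent pairwise angle has sine bounded below''. Once this is in hand, the explicit formula for $T^i$ and the uniform control of its coefficients follow by elementary linear algebra in $\mathbb{R}^2$.
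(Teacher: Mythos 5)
Your proof is correct, but it takes a genuinely different route from the paper's. The paper tests the identity $A^i\nu^i+T^i\tau^i=A^j\nu^j+T^j\tau^j$ against the three tangents $\tau^1,\tau^2,\tau^3$, obtains a coupled $3\times 3$ linear system $\mathcal{M}(t)T(t)=(\dots)$, and inverts it by Cramer's rule; the non--degeneracy hypothesis enters through the lower bound $\det\mathcal{M}(t)=1-\cos\alpha^1\cos\alpha^2\cos\alpha^3\geq 1-\sqrt{1-\rho^2}>0$. You instead project the same identity onto $\nu^j$, which decouples the system because $\langle\tau^j,\nu^j\rangle=0$, and solve for each $T^i$ individually as $T^i=\bigl(A^j-A^i\langle\nu^i,\nu^j\rangle\bigr)/\langle\tau^i,\nu^j\rangle$. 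The price is that you must guarantee, for each fixed $i$, that \emph{some} $j\neq i$ has $\left\vert\sin\beta^{ij}\right\vert$ bounded below, whereas the hypothesis only controls the maximum over all three angles; your sine triangle inequality $\left\vert\sin\alpha^k\right\vert\leq\left\vert\sin\alpha^i\right\vert+\left\vert\sin\alpha^j\right\vert$ (a direct consequence of the addition formula) closes exactly this gap and yields the explicit coefficient bound $2/\rho$. Both arguments are sound; yours is more elementary (no determinant computation) and gives a cleaner quantitative constant, at the cost of a $t$--dependent choice of $j(i,t)$ — which is harmless here, since the lemma is only ever used through the estimate $\vert T^i\vert\leq C\sum_j\vert A^j\vert$ with $C$ uniform in time.
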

\begin{rem}
	The above condition~\eqref{nondegeneracy} means that for all $t\in[0,T_{max})$ the network $\mathbb{T}(t)$ is non degenerate in the sense that $\mathrm{span}\left\{\nu^1(t,0),\nu^2(t,0),\nu^3(t,0)\right\}=\mathbb{R}^2$. Notice that this condition appears in the Definition~\ref{adm} of geometrically admissible initial networks as it is needed to prove the validity of the Lopatinskii--Shapiro condition, see~\cite[Lemma 3.14]{garmenzplu}. We will refer to~\eqref{nondegeneracy} as the\textit{ uniform non--degeneracy condition}.
\end{rem}
\begin{proof}
	Given $t\in[0,T_{max})$ differentiating the concurrency condition in time yields at the triple junction
	\begin{equation*}
	A^1(t)\nu^1(t)+T^1(t)\tau^1(t)=A^2(t)\nu^2(t)+T^2(t)\tau^2(t)=A^3(t)\nu^3(t)+T^3(t)\tau^3(t)\,.
	\end{equation*}
	Testing these identities with $\tau^1(t)$, $\tau^2(t)$, $\tau^3(t)$ implies
	\begin{equation*}
	\begin{pmatrix}
	-\left\langle\tau^1(t),\tau^2(t)\right\rangle& 1 &0\\
	0 & - \left\langle \tau^2(t),\tau^3(t)\right\rangle &1 \\
	1 & 0 &-\left\langle \tau^3(t),\tau^1(t)\right\rangle  &
	\end{pmatrix}
	\begin{pmatrix}
	T_1(t) \\ T_2(t) \\ T_3(t)
	\end{pmatrix}
	=\begin{pmatrix}
	\left\langle \nu^1 (t),\tau^2(t)\right\rangle A_1(t) \\ \left\langle \nu^2(t), \tau^3(t)\right\rangle A_2(t) \\ \left\langle \nu^3(t),\tau^1 (t)\right\rangle A_3(t)\,.
	\end{pmatrix}
	\end{equation*}
	The $3\times 3$--matrix on the left hand side will be denoted by $\mathcal{M}(t)$ in the following. Its determinant is given by
	\begin{equation*}
	\det\mathcal{M}(t)=1- \left\langle\tau^2(t),\tau^1(t)\right\rangle \left\langle \tau^3(t),\tau^2(t)\right\rangle \left\langle \tau^1(t),\tau^3(t)\right\rangle\,.
	\end{equation*}
	By Cramer's rule each component $T^i(t)$ of the unique solution $T(t)$ of the above system can be expressed as a linear combination of $A^1(t)$, $A^2(t)$, $A^3(t)$ with coefficients that are polynomials in the entries of $\mathcal{M}(t)$, $\left\langle \nu^1 (t),\tau^2(t)\right\rangle $, $\left\langle \nu^2(t), \tau^3(t)\right\rangle $, $\left\langle \nu^3(t),\tau^1 (t)\right\rangle$ and $\left(\det\mathcal{M}(t)\right)^{-1}$. The condition~\eqref{nondegeneracy} ensures that these coefficients are uniformly bounded in $[0,T_{max})$. Indeed, notice that
	\begin{equation*}
	\inf_{t\in[0,T_{max})}\det \mathcal{M}(t)= 1- \cos\left(\alpha^1(t)\right)\cos\left(\alpha^2(t)\right)\cos\left(\alpha^3(t)\right)\geq 1-\sqrt{1-\rho^2}>0\,.
	\end{equation*}
\end{proof}
\begin{lemma}\label{boundboundary} 
	Let 
\begin{align*}
\sum_{i=1}^3\left.\left( 
\mathfrak{q}_7\left(T^i,\partial_s^3k^i\right)
+\mathfrak{q}_5\left(T^i,\partial_sk^i\right)
+\mathfrak{p}_5\left(\partial_s^2k^i\right)
\right)\right|_{3\text{--point}}
\end{align*}
be the boundary terms appearing in Lemma~\ref{espressioneduedevk}.
Suppose that the length of the three curves of the triod $\mathbb{T}_t$
are uniformly bounded away from zero for all $t\in(0,T_{max})$.
Moreover suppose that the uniform non--degeneracy condition~\eqref{nondegeneracy} is satisfied.
Then the following estimates hold
for all $t\in (0,T_{max})$:
\begin{align*}
\sum_{i=1}^3\left. 
\mathfrak{q}_7\left(T^i,\partial_s^3k^i\right)\right|_{3\text{--point}}
&\leq \lVert\partial_s^{4}k\rVert_{L^2}^2+C\lVert k\rVert_{L^2}^2+C\lVert k\rVert^{18}_{L^2}\,,\\
\sum_{i=1}^3\left. \mathfrak{q}_5\left(T^i,\partial_sk^i\right)\right|_{3\text{--point}}
&\leq \frac{\mu}{4}\lVert\partial_s^{3}k\rVert_{L^2}^2+C\lVert k\rVert_{L^2}^2+C\lVert k\rVert^{14}_{L^2}\,,\\
\sum_{i=1}^3\left.  \mathfrak{p}_5\left(\partial_s^2k^i\right)\right|_{3\text{--point}}
&\leq \frac{\mu}{4}\lVert\partial_s^{3}k\rVert_{L^2}^2+C\lVert k\rVert_{L^2}^2+C\lVert k\rVert^{a_*}_{L^2}\,.
\end{align*}
\end{lemma}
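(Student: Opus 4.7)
The strategy mirrors that of Lemma~\ref{boundint}: reduce each boundary expression to a pointwise product of derivatives $|\partial_s^l k(0)|$, pass to $L^\infty$ norms, apply the Gagliardo--Nirenberg interpolation inequality~\eqref{int2}, and conclude with Young's inequality.

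First I would exploit the curvature boundary condition $k^i(t,0)=0$ to discard every monomial in which $k$ (with no derivative) occurs. Combined with the scaling constraint in the definitions of $\mathfrak{p}_\sigma$ and $\mathfrak{q}_\sigma$, together with the fact, stated in Lemma~\ref{espressioneduedevk}, that $T^i$ appears with power exactly one, a direct enumeration shows that the three boundary expressions collapse to linear combinations of the surviving monomials
$$T^i \partial_s k^i \partial_s^3 k^i, \qquad T^i (\partial_s k^i)^3, \qquad T^i (\partial_s^2 k^i)^2$$
for $\mathfrak{q}_7$; the single monomial $T^i (\partial_s k^i)^2$ for $\mathfrak{q}_5$; and the single monomial $\partial_s^2 k^i \partial_s k^i$ for $\mathfrak{p}_5$.

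Next, Lemma~\ref{tangentialvelocityboundary}, which relies on the uniform non-degeneracy condition~\eqref{nondegeneracy}, yields
$$|T^i(t,0)| \leq C \sum_{j=1}^3 |A^j(t,0)| = 2C\sum_{j=1}^3 |\partial_s^2 k^j(t,0)|$$
(the identity $A^j=2\partial_s^2 k^j$ at $x=0$ uses $k^j=0$) with $C$ independent of $t$. Substituting, every remaining term becomes a pointwise product $\prod_l |\partial_s^l k(0)|^{\beta_l}$ with $l\geq 1$ and respective total derivative-scalings $9$, $7$ and $5$. Bounding $|f(0)|\leq \|f\|_{L^\infty}$ and applying~\eqref{int2}---whose constants are uniform in $t$ by the lower bound on each length---with the highest derivative $m=4$ in the first case (forced by the presence of $\partial_s^3 k$, whose $L^\infty$ norm requires $m\geq 4$ in the interpolation) and $m=3$ in the other two, produces leading contributions of the form
$$C\|\partial_s^4 k\|_{L^2}^{15/8}\|k\|_{L^2}^{9/8}, \qquad C\|\partial_s^3 k\|_{L^2}^{11/6}\|k\|_{L^2}^{7/6}, \qquad C\|\partial_s^3 k\|_{L^2}^{4/3}\|k\|_{L^2}^{2/3}.$$
Young's inequality with conjugate exponents $(16/15,16)$, $(12/11,12)$ and $(3/2,3)$ respectively converts these into $\varepsilon \|\partial_s^4 k\|_{L^2}^2 + C_\varepsilon \|k\|_{L^2}^{18}$, $\varepsilon \|\partial_s^3 k\|_{L^2}^2 + C_\varepsilon\|k\|_{L^2}^{14}$ and $\varepsilon \|\partial_s^3 k\|_{L^2}^2 + C_\varepsilon\|k\|_{L^2}^2$ (so $a_* = 2$); choosing $\varepsilon$ small enough relative to $1$ and $\mu/4$ yields the asserted inequalities.

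The main technical obstacle is the bookkeeping: one has to systematically enumerate all surviving monomials after imposing $k^i(0)=0$ and substituting the bound on $T^i$, and then verify that for every one of them the exponent produced by Gagliardo--Nirenberg in front of the relevant norm $\|\partial_s^m k\|_{L^2}$ is strictly less than $2$, so that Young's inequality may be applied with an arbitrarily small prefactor. Once this is checked, the scaling structure of $\mathfrak{p}_\sigma$ and $\mathfrak{q}_\sigma$ automatically fixes the exponents of $\|k\|_{L^2}$ on the right-hand side.
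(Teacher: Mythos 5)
Your proof is correct and follows essentially the same route as the paper's: factor out the single power of $T^i$, bound $\lvert T^i(t,0)\rvert$ by $C\lVert\partial_s^2k\rVert_{L^\infty}$ via Lemma~\ref{tangentialvelocityboundary} together with $A^i=2\partial_s^2k^i$ at the junction, estimate the boundary values by $L^\infty$ norms, and close with Gagliardo--Nirenberg interpolation (uniform constants from the length lower bound) followed by Young's inequality. The only, harmless, difference is that you first discard monomials containing an underived $k$ using $k^i(t,0)=0$, whereas the paper estimates all products of total scaling $9$, $7$, $5$ directly; your interpolation exponents $\nicefrac{15}{8}$, $\nicefrac{11}{6}$, $\nicefrac{4}{3}$ and the resulting powers $18$, $14$ and $a_*=2$ are consistent with the stated conclusion.
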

\begin{rem}
As before 
the constants $1$ and $\mu/4$ in front of $\lVert\partial_s^{4}k\rVert_{L^2}^2$
and $\lVert\partial_s^{3}k\rVert_{L^2}^2$  in this Lemma can be chosen arbitrarily small.
\end{rem}
\begin{proof}
Since $T^i$ is appearing with power one in the polynomials 
$\mathfrak{q}_{7}(T^i,\partial_s^3 k^i)$ and $\mathfrak{q}_{5}(T^i,\partial_s k^i)$,
we can write
\begin{equation*}
\mathfrak{q}_{7}(T^i,\partial_s^3 k^i)
=T^i\left(\mathfrak{p}_{6}(\partial_s^3 k^i)\right)
\qquad\text{and}\qquad
\mathfrak{q}_{5}(T^i,\partial_s k^i)
=T^i\left(\mathfrak{p}_{4}(\partial_s k^i)\right)\,.
\end{equation*}
By Lemma~\ref{tangentialvelocityboundary} for every $t\in (0,T_{max})$ and $i\in\{1,2,3\}$ it holds at the triple junction
\begin{align*}
\vert T^i\vert
&\leq 
C \left(\vert A^1\vert
+\vert A^2\vert
+\vert A^3\vert
\right)=C \left(\vert \partial_s^2k^{1}\vert
+\vert \partial_s^2k^{2}\vert
+\vert \partial_s^2k^{3}\vert
\right)
\leq C\sum_{i=1}^3\Vert \partial_s^2k^i\Vert_{\infty}\\
&\leq C\Vert \partial_s^2k\Vert_{\infty} 
= C \Vert \partial_s^2k\Vert_{L^\infty}\,,
\end{align*}
with a constant $C$ independent of $t$ where the equality $\Vert \partial_s^2k\Vert_{\infty}=\Vert \partial_s^2k\Vert_{L^\infty}$
holds because all $\partial_s^2k^i$
are smooth functions.
Hence 
$\sum_{i=1}^3  \left. \mathfrak{q}_{7}(T^i,\partial_s^3 k^i)\right|_{3\text{--point}}
\leq \sum_{i=1}^3\left. \vert T^i\vert
\left(\mathfrak{p}_{6}(\vert\partial_s^3 k^i\vert)\right)\right|_{3\text{--point}}$
can be controlled with a sum of terms like 
$C\prod_{l=0}^{3}\Vert\ders^{l}k\Vert_{L^\infty}^{\alpha_l}$
with $\sum_{l=0}^3(l+1)\alpha_l=9$.
Similarly
$\sum_{i=1}^3  \left. \mathfrak{q}_{5}(T^i,\partial_s k^i)\right|_{3\text{--point}}$
can be controlled by a sum of terms of type
$C\prod_{l=0}^{2}\Vert\ders^{l}k\Vert_{L^\infty}^{\alpha_l}$
with 
$\sum_{l=0}^2(l+1)\alpha_l=7$
and also
$\sum_{i=1}^3 \left.  \mathfrak{p}_{j}(\partial_s^m k)\right|_{3\text{--point}}$
can be controlled
by 
$C\prod_{l=0}^{2}\Vert\ders^{l}k\Vert_{L^\infty}^{\alpha_l}$
with  $\sum_{l=0}^2(l+1)\alpha_l=5$.

Again we follow~\cite[pag 261--262]{mannovtor}.
We use interpolation inequalities with $p=+\infty$,
\begin{equation}
\Vert\ders^lk\Vert_{L^\infty}\leq C_l\left(
    {\Vert\partial_s^{m+1}  k\Vert}_{L^2}^{\sigma_l}
    {\Vert k\Vert}_{L^2}^{1-\sigma_l}+
    {\Vert k\Vert}_{L^2}\right)
\end{equation}
with $\sigma_l=\frac{l+1/2}{m+1}$, hence
\begin{align*}
C\prod_{l=0}^{m}\Vert\ders^{l}k\Vert_{L^\infty}^{\alpha_l}
\leq&\, C \prod_{l=0}^{m}\left(
    {\Vert\partial_s^{m+1}  k\Vert}_{L^2}
    + {\Vert k\Vert}_{L^2}\right)^{\sigma_l\alpha_l}
{\Vert k\Vert}_{L^2}^{(1-\sigma_l)\alpha_l}\\
\leq&\, C \left(    {\Vert\partial_s^{m+1}  k\Vert}_{L^2}
    + {\Vert k\Vert}_{L^2}\right)^{\sum_{l=0}^j\sigma_l\alpha_l}
{\Vert k\Vert}_{L^2}^{\sum_{l=0}^j(1-\sigma_l)\alpha_l}
\end{align*}
and
\begin{align*}
\sum_{l=0}^m\sigma_l\alpha_l
<2\,,
\end{align*}
as it takes the values $\frac{63}{32}$, $\frac{35}{18}$ and $\frac{25}{18}$.
By Young's inequality
$$
\left(    {\Vert\partial_s^{4}  k\Vert}_{L^2}
    + {\Vert k\Vert}_{L^2}\right)^{\sum_{l=0}^3\sigma_l\alpha_l}
{\Vert k\Vert}_{L^2}^{\sum_{l=0}^3(1-\sigma_l)\alpha_l}
\leq \varepsilon \left(    {\Vert\partial_s^{4}  k\Vert}_{L^2}
    + {\Vert k\Vert}_{L^2}\right)^2
+ C {\Vert  k\Vert}_{L^2}^{a_*}
$$
with 
$$
a_*=
2\frac{\sum_{l=0}^3(1-\sigma_l)\alpha_l}
{2-\sum_{l=0}^3\sigma_l\alpha_l}\,.
$$
For a suitable choice of 
 $\varepsilon>0$ we get the result.
\end{proof}

\begin{prop}\label{estimatetogether}
Let $\left(\mathbb{T}(t)\right)$ be a maximal solution to the elastic flow with initial datum $\mathbb{T}_0$ in the maximal time interval $[0,T_{max})$ with $T_{max}\in(0,\infty)\cup\{\infty\}$ and let $E_\mu(\mathbb{T}_0)$ be the elastic energy
of the initial network. 
Suppose that for $t\in(0,T_{max})$ the lengths of the three curves of the triod $\mathbb{T}_t$
are uniformly bounded away from zero and 
that the uniform non--degeneracy condition~\eqref{nondegeneracy} is satisfied.
Then for all $t\in (0,T_{max})$ it holds
\begin{equation}\label{estimate}
\frac{\mathrm{d}}{\mathrm{d}t}\int_{\mathbb{T}_t}
\left\lvert\partial_s^2k\right\rvert^2 \,\mathrm{d}s
\leq C(E_\mu(\mathbb{T}_0))\,.
\end{equation}
\end{prop}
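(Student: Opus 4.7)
The plan is to combine the three previous results of this section in the obvious way: the explicit evolution formula from Lemma~\ref{espressioneduedevk} produces two negative ``good'' contributions, namely $-\int_{\mathbb{T}_t} |2\partial_s^4 k|^2\,\mathrm{d}s$ and $-2\mu\int_{\mathbb{T}_t}|\partial_s^3 k|^2\,\mathrm{d}s$, together with interior polynomial integrals of type $\mathfrak{p}_{10}(\partial_s^3 k)$, $\mathfrak{p}_8(\partial_s^2 k)$ and boundary contributions of type $\mathfrak{q}_7(T^i,\partial_s^3 k^i)$, $\mathfrak{q}_5(T^i,\partial_s k^i)$ and $\mathfrak{p}_5(\partial_s^2 k^i)$ evaluated at the triple junction. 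The hypothesis that each length $L(\mathbb{T}^i(t))$ stays bounded away from zero activates Lemma~\ref{boundint}, and the uniform non--degeneracy condition~\eqref{nondegeneracy} activates Lemma~\ref{boundboundary} (via Lemma~\ref{tangentialvelocityboundary}). The main (and only non--bookkeeping) point is to check that the sum of all ``bad'' $\|\partial_s^4 k\|_{L^2}^2$ and $\|\partial_s^3 k\|_{L^2}^2$ contributions produced by Lemmata~\ref{boundint} and~\ref{boundboundary} is strictly dominated by the coefficients $4$ and $2\mu$ of the good terms.

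More precisely, I would start from Lemma~\ref{espressioneduedevk}, apply Lemma~\ref{boundint} to the two interior integrals to absorb them against $\|\partial_s^4 k\|_{L^2}^2 + \tfrac{\mu}{2}\|\partial_s^3 k\|_{L^2}^2$ plus lower order $\|k\|_{L^2}$--powers, and then apply Lemma~\ref{boundboundary} to the three boundary sums to absorb them against $\|\partial_s^4 k\|_{L^2}^2 + \tfrac{\mu}{4}\|\partial_s^3 k\|_{L^2}^2 + \tfrac{\mu}{4}\|\partial_s^3 k\|_{L^2}^2$ plus lower order $\|k\|_{L^2}$--powers. Adding these estimates yields
\begin{equation*}
\frac{\mathrm{d}}{\mathrm{d}t}\int_{\mathbb{T}_t}|\partial_s^2 k|^2\,\mathrm{d}s
\;\leq\; -4\|\partial_s^4 k\|_{L^2}^2-2\mu\|\partial_s^3 k\|_{L^2}^2
+2\|\partial_s^4 k\|_{L^2}^2+\mu\|\partial_s^3 k\|_{L^2}^2+C\bigl(\|k\|_{L^2}^2+\|k\|_{L^2}^{a}\bigr)
\end{equation*}
for a suitable power $a=\max\{18,14,a_\ast\}$, with $C$ depending only on $\mu$ and on the uniform lower bound for the single lengths and for $|\sin\alpha^j|$. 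The two leading terms on the right hand side then reorganise as $-2\|\partial_s^4 k\|_{L^2}^2-\mu\|\partial_s^3 k\|_{L^2}^2\leq 0$, so they may be discarded.

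Finally, Remark~\ref{boundL} (which follows from the monotonicity of the elastic energy in Lemma~\ref{decreasingenergy}) gives $\int_{\mathbb{T}_t} k^2\,\mathrm{d}s\leq E_\mu(\mathbb{T}_0)$ uniformly in $t\in(0,T_{\max})$, so $\|k\|_{L^2}^2\leq E_\mu(\mathbb{T}_0)$ and $\|k\|_{L^2}^a\leq E_\mu(\mathbb{T}_0)^{a/2}$. Substituting these two bounds into the inequality above yields a right hand side depending only on $E_\mu(\mathbb{T}_0)$ (and, through $C$, on $\mu$ and on the structural constants coming from the hypotheses), which is exactly~\eqref{estimate}. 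The only real subtlety is the choice of the small constants $\varepsilon$ in Lemmata~\ref{boundint} and~\ref{boundboundary}: they must be fixed up front so that the four ``bad'' coefficients in front of $\|\partial_s^4 k\|_{L^2}^2$ and the three in front of $\|\partial_s^3 k\|_{L^2}^2$ add up to strictly less than the respective good coefficients $4$ and $2\mu$; this is the arithmetic that the remarks after Lemmata~\ref{boundint} and~\ref{boundboundary} anticipate.
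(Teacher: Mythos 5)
Your proposal is correct and follows essentially the same route as the paper: the paper's proof of Proposition~\ref{estimatetogether} is exactly the combination of Lemma~\ref{espressioneduedevk} with the absorption estimates of Lemmata~\ref{boundint} and~\ref{boundboundary} and the curvature bound of Remark~\ref{boundL}, arriving at $\frac{\mathrm{d}}{\mathrm{d}t}\int_{\mathbb{T}_t}\vert\partial_s^2k\vert^2\,\mathrm{d}s\leq\int_{\mathbb{T}_t}-2\vert\partial_s^4k\vert^2-\mu\vert\partial_s^3k\vert^2\,\mathrm{d}s+C(E_\mu(\mathbb{T}_0))$, which matches your coefficient bookkeeping ($-4+2=-2$ and $-2\mu+\tfrac{\mu}{2}+\tfrac{\mu}{4}+\tfrac{\mu}{4}=-\mu$). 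Your only cosmetic imprecision is phrasing the lower-order contributions as a single power $a=\max\{18,14,a_*\}$ rather than bounding each power of $\Vert k\Vert_{L^2}$ separately by the corresponding power of $E_\mu(\mathbb{T}_0)$, but this does not affect the argument.
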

\begin{proof}
Combining Lemma~\ref{espressioneduedevk},~\ref{boundint}
and~\ref{boundboundary} with the
bound on the curvature we get
\begin{align*}
\frac{\mathrm{d}}{\mathrm{d}t}\int_{\mathbb{T}_t}\vert\partial^2_s k\vert^{2}\,\mathrm{d}s
&\leq \int _{\mathbb{T}_t}
 -2\vert \partial_s^4k \vert^2 -\mu\vert\partial_s^3k \vert^2 \,\mathrm{d}s
+ C(E_\mu(\mathbb{T}_0))
 \leq C(E_\mu(\mathbb{T}_0))\,.
\end{align*}
\end{proof}

\section{Long time behaviour}

\subsection{Long time behaviour of the elastic flow of triods}

\begin{teo}\label{longtimetriod}
Let $p\in(5,10)$ and $\mathbb{T}_0$ be a geometrically admissible initial network. Suppose that $\left(\mathbb{T}(t)\right)_{t\in[0,T_{\max})}$ is a 
maximal solution to
the elastic flow with initial datum $\mathbb{T}_0$ 
in the maximal time interval $[0,T_{\max})$ with $T_{\max}\in (0,\infty)\cup\{\infty\}$ in the sense of Definitions~\ref{smoothsol} and~\ref{maximalsolution}. Then $$T_{max}=\infty$$ or at least one of the following happens:
\begin{itemize}
\item[(i)] the inferior limit of the length of one curve of $\mathbb{T}(t)$ is zero as $t\nearrow T_{max}$.
\item[(ii)] $\liminf_{t\nearrow T_{max}}\max\left\{\left\vert\sin\alpha^1(t)\right\vert,\left\vert\sin\alpha^2(t)\right\vert,\left\vert\sin\alpha^3(t)\right\vert\right\}=0$, where $\alpha^1(t)$, $\alpha^2(t)$ and $\alpha^3(t)$ are the angles at the triple junction.
\end{itemize}
\end{teo}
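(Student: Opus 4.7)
The plan is to argue by contrapositive. Assume $T_{max}<\infty$ and that neither (i) nor (ii) holds, so there exist $c,\rho>0$ with
\[
L(\mathbb{T}^i(t))\geq c\ \text{for all }i\in\{1,2,3\},\qquad \max_{j}|\sin\alpha^j(t)|\geq \rho\quad\text{for all }t\in[0,T_{max}).
\]
I aim to produce, for every $t\in[\varepsilon,T_{max})$ with a fixed small $\varepsilon>0$, an admissible parametrisation of $\mathbb{T}(t)$ whose norm in $W_p^{4-\nicefrac{4}{p}}$, together with $\min_{i,x}|\widetilde{\gamma}^i_x|$ and $|\widetilde{\gamma}_x(0)|$, is bounded independently of $t$. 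Theorem~\ref{short time existence} (together with Theorem~\ref{geometricexistence}) then furnishes a \emph{uniform} existence time $T_*>0$ for the continuation; choosing $t\in(T_{max}-T_*,T_{max})$ gives a smooth extension past $T_{max}$, which contradicts maximality by uniqueness (Lemma~\ref{geomuniqunesssmooth}).

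The first step is geometric a priori control. Lemma~\ref{decreasingenergy} and Remark~\ref{boundL} yield $\|k(t)\|_{L^2(\mathbb{T}(t))}^2\leq E_\mu(\mathbb{T}_0)$ and a uniform upper bound on the total length. Because the flow is smooth on $(0,T_{max})$ by Theorem~\ref{parabolicsmoothing}, I fix any $\varepsilon>0$ and apply Proposition~\ref{estimatetogether} on $[\varepsilon,T_{max})$ (its hypotheses are guaranteed by the failure of (i) and (ii)) and integrate to obtain
\[
\sup_{t\in[\varepsilon,T_{max})}\|\partial_s^2 k(t)\|_{L^2}^2\leq \|\partial_s^2 k(\varepsilon)\|_{L^2}^2+C(E_\mu(\mathbb{T}_0))\,T_{max},
\]
which is finite precisely because $T_{max}<\infty$. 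Combining these $L^2$-bounds with the uniform lower bound on each $L^i(t)$, the Gagliardo--Nirenberg inequalities (applicable with equibounded constants thanks to the length bound) yield uniform control of $\|k\|_{L^\infty}$, $\|\partial_s k\|_{L^\infty}$ and $\|\partial_s k\|_{L^2}$.

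Next, at each $t\in[\varepsilon,T_{max})$ I reparametrise each curve by its arclength-proportional map $\widetilde{\gamma}^i(t,\cdot):[0,1]\to\mathbb{R}^2$, so $|\widetilde{\gamma}^i_x|\equiv L^i(t)$. The formulas
\[
\widetilde{\gamma}^i_{xx}=(L^i)^2 k^i\nu^i,\qquad \widetilde{\gamma}^i_{xxx}=(L^i)^3\bigl(\partial_s k^i\,\nu^i-(k^i)^2\tau^i\bigr),
\]
\[
\widetilde{\gamma}^i_{xxxx}=(L^i)^4\bigl((\partial_s^2 k^i-(k^i)^3)\nu^i-3k^i\partial_s k^i\,\tau^i\bigr)
\]
together with the previous bounds show that $\|\widetilde{\gamma}(t)\|_{W^{4,2}((0,1);(\mathbb{R}^2)^3)}$ is bounded uniformly in $t$. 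The Sobolev--Slobodeckij embedding $W^{4,2}((0,1))\hookrightarrow W_p^{4-\nicefrac{4}{p}}((0,1))$ requires $4-\tfrac{1}{2}\geq (4-\tfrac{4}{p})-\tfrac{1}{p}$, i.e. $p\leq 10$, which holds under the assumption $p\in(5,10)$; this is where the upper threshold on $p$ is forced. Hence $\sup_{t\in[\varepsilon,T_{max})}\|\widetilde{\gamma}(t)\|_{W_p^{4-\nicefrac{4}{p}}}<\infty$, while $\min_{i,x}|\widetilde{\gamma}^i_x(t,x)|\geq c$ and $|\widetilde{\gamma}_x(t,0)|$ are bounded by construction.

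Since $\mathbb{T}(t)$ is a smooth solution of \eqref{Triod0}, it automatically satisfies the boundary conditions of Definition~\ref{admg1}, and by the failure of (ii) it is non-degenerate; therefore each $\widetilde{\gamma}(t)$ parametrises a geometrically admissible initial network. Theorem~\ref{short time existence} then provides a continuation on $[t,t+T_*]$ with $T_*>0$ depending only on the three quantities controlled above, hence uniform in $t$. Picking $t\in(T_{max}-T_*,T_{max})$, Theorem~\ref{geometricexistence} and Theorem~\ref{parabolicsmoothing} produce a smooth solution strictly beyond $T_{max}$, contradicting its maximality. The key difficulty is the quantitative passage from $L^2$-type curvature estimates to a \emph{uniform} bound in the trace space $W_p^{4-\nicefrac{4}{p}}$ demanded by the short-time theory; this interpolation/embedding constraint is exactly what enforces $p<10$ in the statement.
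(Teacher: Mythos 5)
Your argument is correct and follows essentially the same route as the paper's proof: integrate the a priori estimate of Proposition~\ref{estimatetogether} under the negation of (i) and (ii), pass to the constant--speed parametrisation, convert curvature bounds into a uniform $W^{4,2}$ bound, embed into $W_p^{4-\nicefrac{4}{p}}$ (which is exactly where $p<10$ enters, as in the paper), and restart the flow with a uniform existence time to contradict maximality. The only detail you leave implicit is the zeroth--order bound $\|\widetilde{\gamma}(t)\|_{L^2}$, which the paper obtains from the fixed endpoints together with the uniform upper bound on the lengths (the networks stay in a fixed ball); this is needed to close the $W^{4,2}$ estimate but is immediate.
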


\begin{proof}
Let $\mathbb{T}$ be a maximal solution of the elastic flow
in $[0,T_{\max})$. Suppose that the two assertions $(i)$ and $(ii)$ are not fulfilled and that $T_{max}$ is finite. Then the lengths of the three curves of $\mathbb{T}(t)$ are uniformly bounded away from zero on $[0,T_{max})$ and the uniform non--degeneracy condition~\eqref{nondegeneracy} is satisfied.
Observe that by hypothesis, smoothness of the flow on $[\varepsilon,T]$ for all positive $\varepsilon$ and all $T\in(\varepsilon,T_{max})$ and the short time existence result, the lengths $\ell^i(t)$ of the curves $\mathbb{T}^i(t)$ are uniformly bounded from below on $[0,T_{max})$. 
Remark~\ref{boundL} implies further that $\ell^i(t)$ are uniformly bounded from above on $[0,T_{max})$ and that $k\in L^{\infty}((0,T_{\max});L^2(\mathbb{T}_t))$. Let $\varepsilon\in \left(0,\frac{T_{max}}{100}\right)$ and $\delta\in\left(0,\frac{T_{max}}{4}\right)$ be fixed. Integrating~\eqref{estimate} on the interval $\left(\varepsilon,T_{max}-\delta\right)$ gives
\begin{equation*}
\int_{\mathbb{T}_{T_{max}-\delta}}^{}\vert\partial^2_s k\vert^{2}\,\mathrm{d}s\leq \int_{\mathbb{T}_{\varepsilon}}^{}\vert\partial^2_s k\vert^{2}\,\mathrm{d}s+C\left(E_\mu\left(\mathbb{T}_0\right)\right)T_{max}
\end{equation*}
which implies $\partial_s^2k\in L^\infty\left((\varepsilon,T_{max}-\delta);L^2\left(\mathbb{T}_t\right)\right)$.

By interpolation we obtain $k$, $\partial sk\in L^\infty\left((\varepsilon,T_{max}-\delta);L^\infty\left(\mathbb{T}_t\right)\right)$.
Proposition~\ref{onepara} implies that the evolution $\left(\mathbb{T}(t)\right)$ can be parametrised by one map $\gamma=(\gamma^1,\gamma^2,\gamma^3)$ which is smooth on $[\varepsilon,T_{max}-\delta]\times[0,1]$. By construction of this map, $\vert\gamma^i_x(t,x)\vert=\ell^i(t)$ for all $x\in[0,1]$ and all $t\in[\varepsilon,T_{max}-\delta]$ which implies in particular
\begin{equation*}
0<c\leq\sup_{t\in[\varepsilon,T_{max}-\delta],x\in[0,1]}\vert\gamma^i(t,x)\vert \leq C<\infty\,.
\end{equation*}
By direct computation, we observe the following identities for the curvature vectors
\begin{equation}\label{choiceofpara}
\vec{k}^i(t,x)=\frac{\gamma^i_{xx}(t,x)}{\left(\ell^i(t)\right)^2}\,,\qquad
\vec{k}_s^i(t,x)=\frac{\gamma^i_{xxx}(t,x)}{\left(\ell^i(t)\right)^3}\,,\qquad
\vec{k}_{ss}^i(t,x)=\frac{\gamma^i_{xxxx}(t,x)}{\left(\ell^i(t)\right)^4}\,.
\end{equation}
Combining~\eqref{choiceofpara} with Remark~\ref{boundL}
we obtain for all $t\in[0,T_{max})$
\begin{equation*}
\int_{0}^1 \frac{\vert \gamma^i_{xx}(t,x) \vert^2}{(\ell^i(t))^3}\,\mathrm{d}x
=\int_{\mathbb{T}^i_t}\vert \vec{k}^i(t,s)\vert^2\,\mathrm{d}s
=\int_{\mathbb{T}^i_t} k^i(t,s) ^2\,\mathrm{d}s \leq E_{\mu}(\mathbb{T}_0)\,, 
\end{equation*}
hence 
\begin{equation*}
\sup_{t\in[\varepsilon,T_{\max}-\delta)}\int_0^1\vert \gamma^i_{xx}(t,x)\vert^2\,\mathrm{d}x\leq C^3 E_{\mu}(\mathbb{T}_0)<\infty\,.
\end{equation*}
As 
\begin{equation*}
\vec{k}_{ss}=\left(k_{ss}-k^3\right)\nu-3kk_s\tau
\end{equation*} the previous observations immediately give $\vec{k}_{ss}\in L^\infty\left((\varepsilon,T_{max}-\delta);L^2\left(\mathbb{T}_t\right)\right)$. This implies for every $t\in(\varepsilon,T_{max}-\delta)$ and a constant $\tilde{C}$ not depending on $\delta$
\begin{equation*}
\int_{0}^1 \frac{\vert \gamma^i_{xxxx}(t,x) \vert^2}{(\ell^i(t))^7}\,\mathrm{d}x
=\int_{\mathbb{T}^i_t}\vert \vec{k}_{ss}^i(t,s)\vert^2\,\mathrm{d}s
\leq \tilde{C}\,,
\end{equation*}
which yields
\begin{equation*}
\sup_{t\in(\varepsilon,T_{\max}-\delta)}\int_0^1\vert \gamma^i_{xxxx}(t,x) \vert^2\,\mathrm{d}x\leq \tilde{C}C^7<\infty\,.
\end{equation*}
As a consequence we obtain 
\begin{equation*}
\gamma^i_{xx}\in L^{\infty}((\varepsilon,T_{max}-\delta), L^2(0,1))\qquad\text{and}\qquad
\gamma^i_{xxxx}\in L^{\infty}((\varepsilon,T_{max}-\delta), L^2(0,1))\,.
\end{equation*}
A uniform bound in time and space on the third derivative of $\gamma^i$ can be obtained by interpolation. This bound is independent of $\delta$. Further, $\gamma^i_x\in L^\infty\left((\varepsilon,T_{max}-\delta);L^\infty((0,1))\right)$ as $\gamma^i$ is parametrised with constant speed equal to the length.
As one endpoint of each curve $\mathbb{T}^i$ is fixed during the evolution and as the lengths $\ell^i(t)$ of the curves are bounded from above uniformly in time on $[0,T_{max})$, the networks $\mathbb{T}(t)$ remain inside a ball $B_R(0)$ for all $t\in[0,T_{max})$ and a suitable choice of $R$. This allows us to conclude for all $\delta$ and $\varepsilon$ as above
\begin{equation*}
\gamma\in L^\infty\left((\varepsilon,T_{max}-\delta);W^4_2\left((0,1);(\mathbb{R}^2)^3\right)\right)
\end{equation*}
where the norm is bounded by a constant independent of $\delta$. The Sobolev Embedding Theorem implies for all $p\in(5,10)$
\begin{equation*}
\gamma\in L^\infty\left((\varepsilon,T_{max}-\delta);W^{4-\nicefrac{4}{p}}_p\left((0,1);(\mathbb{R}^2)^3\right)\right)
\end{equation*}
where the norm is bounded by a constant $\boldsymbol{C}$ not depending on $\delta$. Notice that $\gamma(T_{max}-\delta)$ is an admissible initial parametrisation for all $\delta\in\left(0,\frac{T_{max}}{4}\right)$ in the sense of Definition~\ref{adm}. By Theorem~\ref{short time existence} there exists a uniform time $\boldsymbol{T}$ of existence depending on $\boldsymbol{C}$ for all initial values $\gamma(T_{max}-\delta)$. Let $\delta:=\frac{\boldsymbol{T}}{2}$. Then Theorem~\ref{short time existence} implies the existence of a regular solution
\begin{equation*}
\eta\in W_p^1\left((T_{max}-\delta, T_{max}+\delta);L_p\left((0,1);(\mathbb{R}^2)^3\right)\right)\cap L_p\left((T_{max}-\delta, T_{max}+\delta);W_p^4\left((0,1);(\mathbb{R}^2)^3\right)\right)
\end{equation*}
to the system~\eqref{TriodC^0} with $\eta\left(T_{max}-\delta\right)=\gamma\left(T_{max}-\delta\right)$. By Theorem~\ref{analyticsmoothsol} we obtain 
\begin{equation*}
\eta\in C^\infty\left([T_{max}-\frac{\delta}{2},T_{max}+\delta]\times[0,1];(\mathbb{R}^2)^3\right)\,.
\end{equation*} 
The two parametrisations $\gamma$ and $\eta$ defined on $(0,T_{max}-\frac{\delta}{3})$ and $\left(T_{max}-\frac{\delta}{2},T_{max}+\delta\right)$, respectively, define a smooth solution $\left(\widetilde{\mathbb{T}}(t)\right)$ to the elastic flow on the time interval $(0,T_{max}+\delta]$ with initial datum $\mathbb{T}_0$ in the sense of Definition~\ref{smoothsol} coinciding with $\mathbb{T}$ on $(0,T_{max})$. This contradicts the maximality of $T_{max}$.
\end{proof}


\subsection{A remark on the definition of maximal solutions}

The aim of this section is to show that the assumption of smoothness in  Definition~\ref{maximalsolution} is not needed.

\begin{dfnz}[Sobolev maximal solution]
Let $T\in(0,\infty)\cup\{\infty\}$, $p\in(5,\infty)$ and $\mathbb{T}_0$ 
be a geometrically admissible initial network.
A time--dependent family of triods $\left(\mathbb{T}_t\right)_{t\in[0,T)}$
is a Sobolev maximal solution to
the elastic flow with initial datum $\mathbb{T}_0$ in $[0,T)$
if it is a solution 
(in the sense of Definition~\ref{geometricsolution1}) 
in $(0,\hat{T}]$ for all $\hat{T}<T$ 
and if there does not exist a solution 
$\left(\widetilde{\mathbb{T}}(\tau)\right)$ in $(0,\widetilde{T}]$ with $\widetilde{T}\geq T$ 
and such that $\mathbb{T}=\widetilde{\mathbb{T}}$ in $(0,T)$.
\end{dfnz}

The existence and uniqueness of a Sobolev maximal solution is easy to prove.

\medskip

Suppose that $\mathbb{T}_0$ is a geometrically admissible initial network,
$\left(\mathbb{T}_t\right)_{t\in[0,T)}$ is a Sobolev maximal solution
to the elastic flow with initial datum $\mathbb{T}_0$ in $[0,T)$
and $(\widetilde{\mathbb{T}}_t)_{t\in[0,\widetilde{T})}$ is a maximal solution
to the elastic flow with initial datum $\mathbb{T}_0$ in $[0,\widetilde{T})$
in the sense of Definition~\ref{maximalsolution}.
Then Theorem~\ref{geometricuniqueness} implies that
$\mathbb{T}$ and $\widetilde{\mathbb{T}}$ coincide in $[0,\min\{T,\widetilde{T}\})$.

\medskip

A priori it is possible that $\widetilde{T}<T$ and hence 
$\left(\mathbb{T}_t\right)_{t\in[0,T)}$ is a Sobolev maximal solution which is smooth
until $\widetilde{T}$ but has a sudden loss of regularity for $t>\widetilde{T}$.
We show that this can not be the case.

Suppose by contradiction that $\widetilde{T}<T$ and 
\begin{equation}\label{lossofreg}
\widetilde{T}=\sup\{t\in (0,T)\;:\; \mathbb{T}_t\;\text{is smooth in the sense of 
Definition~\ref{smoothsol}}\,\}\,.
\end{equation}
Since $\mathbb{T}$ is a solution to the elastic flow in $[0,T)$ with $T>\widetilde{T}$,
for all $t\in[0,\widetilde{T}]$ the length of all curves of the triod are uniformly bounded
away from zero and the uniform non--degeneracy condition~\eqref{nondegeneracy} is fulfilled.
Moreover by Lemma~\ref{onepara},
$\mathbb{T}$ admits a regular smooth parametrisation
$\gamma$ in $[\varepsilon,\widetilde{T}-\delta]\times[0,1]$ for all $\varepsilon,\delta >0$.
We can hence apply the same arguments as in the proof of Theorem~\ref{longtimetriod}
to obtain a smooth extension of $\mathbb{T}$ in the time interval 
$[0,\widetilde{T}+\delta]$, a contradiction to~\eqref{lossofreg}. 

We summarize this result in the following:
\begin{lemma}
	Let $p\in(5,\infty)$ and $\mathbb{T}_0$ be a geometrically admissible initial network. 
	There exists a 
	maximal solution $\left(\mathbb{T}(t)\right)_{t\in[0,T_{\max})}$ to
	the  elastic flow with initial datum $\mathbb{T}_0$ 
	in the maximal time interval $[0,T_{\max})$ with $T_{\max}\in(0,\infty)\cup\{\infty\}$. It is
		smooth in the sense of Definition~\ref{maximalsolution},
		 geometrically unique on finite time intervals in the sense of Lemma~\ref{geomuniqunesssmooth},
and for all $T\in(0,T_{max})$ the evolution $\mathbb{T}$ admits a regular parametrisation
	$\gamma:=(\gamma^1,\gamma^2,\gamma^3)$ in $[0,T]$ that is smooth in $[\varepsilon,T]\times[0,1]$ for all $\varepsilon>0$.
\end{lemma}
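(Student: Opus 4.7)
The plan is to assemble the statement from ingredients already in place: existence of a Sobolev maximal solution by Zorn's Lemma, geometric uniqueness via Theorem~\ref{geometricuniqueness}, automatic smoothness via the parabolic regularisation of Theorem~\ref{analyticsmoothsol}, and the one-parametrisation statement via Lemma~\ref{onepara}. The only genuine work is to exclude a \emph{sudden loss of regularity} past a smooth maximal time, and for this I would re-run the bootstrap from the proof of Theorem~\ref{longtimetriod}.

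First I would use Theorem~\ref{short time existence} together with Zorn's Lemma on the partially ordered set of solutions in the sense of Definition~\ref{geometricsolution1} extending $\mathbb{T}_0$ (ordered by time-interval inclusion plus agreement on the common interval) to produce a Sobolev maximal solution $\bigl(\mathbb{T}(t)\bigr)_{t\in[0,T_{\max})}$. Theorem~\ref{geometricuniqueness} guarantees that this object is well-defined and geometrically unique on every finite sub-interval. On the other hand Lemma~\ref{exuniquenssmaxsol} provides a smooth maximal solution $\bigl(\widetilde{\mathbb{T}}(t)\bigr)_{t\in[0,\widetilde T_{\max})}$ in the sense of Definition~\ref{maximalsolution}. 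Since a smooth solution is in particular a solution in the Sobolev sense, Theorem~\ref{geometricuniqueness} forces $\mathbb{T}=\widetilde{\mathbb{T}}$ on $[0,\min\{T_{\max},\widetilde T_{\max}\})$, and by the maximality of $T_{\max}$ one has $\widetilde T_{\max}\le T_{\max}$.

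Next I would show $\widetilde T_{\max}=T_{\max}$. Assume by contradiction that $\widetilde T_{\max}<T_{\max}$, and set
\[
\widetilde T := \sup\bigl\{t\in(0,T_{\max})\ :\ \mathbb{T}\ \text{is smooth on}\ (0,t]\ \text{in the sense of Definition~\ref{smoothsol}}\bigr\}<T_{\max}.
\]
Because $\mathbb{T}$ remains a Sobolev solution on an open neighbourhood of $[0,\widetilde T]$, the lengths $\ell^i(t)$ stay uniformly bounded below and the uniform non--degeneracy condition \eqref{nondegeneracy} holds on $[0,\widetilde T+\eta]$ for some small $\eta>0$. Then I would apply exactly the estimates of Section~\ref{bounds}: Lemma~\ref{decreasingenergy} gives $L^\infty_t L^2_s$ bounds on $k$ and a uniform upper bound on the lengths; Proposition~\ref{estimatetogether} gives an $L^\infty_t L^2_s$ bound on $k_{ss}$ after integration in time starting from a small positive initial time (where the flow is already smooth by Theorem~\ref{analyticsmoothsol}). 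Interpolation yields $L^\infty$ control of $k$ and $k_s$. Reparametrising each curve by constant speed equal to its length, as in the proof of Theorem~\ref{longtimetriod}, I translate these geometric bounds into
\[
\gamma\in L^\infty\bigl((\varepsilon,\widetilde T);W^{4-\nicefrac4p}_p((0,1);(\mathbb{R}^2)^3)\bigr)
\]
for every $p\in(5,10)$, with a bound independent of the endpoint; restriction to any $p\in(5,\infty)$ is harmless since the assertion concerns only smoothness, obtained at positive times via Theorem~\ref{analyticsmoothsol}. The hard part is exactly this step: turning purely geometric $L^2$--curvature bounds into a uniform parametric Sobolev bound that matches the trace-space of the short-time existence theorem. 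Once this bound is in hand, Theorem~\ref{short time existence} supplies a uniform existence time $\boldsymbol T$ for the initial datum $\gamma(\widetilde T-\delta)$ for every sufficiently small $\delta>0$; choosing $\delta<\boldsymbol T/2$ and gluing via geometric uniqueness gives a solution smooth on $(0,\widetilde T+\boldsymbol T/2]$, contradicting the definition of $\widetilde T$.

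Therefore $\widetilde T_{\max}=T_{\max}$ and the Sobolev maximal solution is smooth in the sense of Definition~\ref{maximalsolution}. Geometric uniqueness on finite intervals is inherited from Lemma~\ref{geomuniqunesssmooth} via the identification $\mathbb{T}=\widetilde{\mathbb{T}}$. Finally, for every $T\in(0,T_{\max})$ Lemma~\ref{onepara} supplies a single regular parametrisation $\gamma=(\gamma^1,\gamma^2,\gamma^3)$ of $\mathbb{T}$ on $[0,T]$, smooth on $[\varepsilon,T]\times[0,1]$ for every $\varepsilon>0$, completing the proof.
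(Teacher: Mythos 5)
Your proposal is correct and follows essentially the same route as the paper: produce a Sobolev maximal solution, identify it with the smooth maximal solution of Lemma~\ref{exuniquenssmaxsol} via Theorem~\ref{geometricuniqueness}, and rule out a sudden loss of regularity by observing that the Sobolev solution persists past the putative smooth maximal time, so the length lower bound and the uniform non--degeneracy condition~\eqref{nondegeneracy} hold there, allowing the a priori estimates of Section~\ref{bounds} and the extension argument of Theorem~\ref{longtimetriod} to be re-run for a contradiction. Your explicit remark on passing from the $p\in(5,10)$ restriction of the bootstrap to general $p\in(5,\infty)$ via the parabolic smoothing of Theorem~\ref{analyticsmoothsol} is a point the paper leaves implicit, but it does not change the argument.
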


\subsection{Long time behaviour of the elastic flow of Theta--networks}
In this section we show that all the above results hold true also in the case of Theta--networks
(see Definition~\ref{deftheta}).
It is straightforward to adapt the proofs of (geometric) short time existence and geometric uniqueness to the case of Theta--networks. The calculations that were done to treat the boundary terms at the triple junction of the triod are precisely the ones needed for both triple junctions of the Theta. In particular the elastic flow of Theta--networks satisfies the a priori estimates. The difficulty lies in the proof of the long time existence result. In contrast to the elastic flow of triods no points of the Theta--network are fixed during the evolution. The presence of fixed endpoints was used in Theorem~\ref{longtimetriod} to find a uniform in time and space $L^\infty$--bound on the parametrisations. As these arguments are no longer possible in the Theta--case, we prove a refinement of the short time existence result, namely that the time interval within which the Analytic Problem is well posed does not depend on the $L^p$--norm of the initial parametrisation (see Theorem~\ref{shorttimethetarefined}).

Analogously to the elastic flow of triods we use the following notion of geometric solution.

\begin{dfnz}[Elastic flow of a Theta--network]\label{geometricsolution1theta}
	Let $p\in (5,\infty)$ and $T>0$.
	Let $\Theta_0$ be a geometrically admissible initial Theta--network.
	A time dependent family of Theta--networks $\left(\Theta(t)\right)$ is a solution to
	the elastic flow with initial datum $\Theta_0$ 
	in  $[0,T]$ 
	if and only if there exists a collection of time dependent
	parametrisations
	\begin{equation*}
	\gamma^i_n\in  W^1_p(I_n;L_p((0,1);\mathbb{R}^2))\cap L_p(I_n;W^4_p((0,1);\mathbb{R}^2))\,,
	\end{equation*}
	with $n\in\{0,\dots, N\}$ for some $N\in\mathbb{N}$, $I_n:=(a_n,b_n)\subset \mathbb{R}$, $a_n\leq a_{n+1}$, $b_n\leq b_{n+1}$, $a_n<b_n$
	and $\bigcup_n (a_n,b_n)=(0,T)$ such that for all $n\in\{0,\dots,N\}$ and $t\in I_n$, $\gamma_n(t)=\left(\gamma^1(t),\gamma^2(t),\gamma^3(t)\right)$ is a regular parametrisation of $\Theta(t)$.
	Moreover each 
	$\gamma_n$
	needs to satisfy the following system
	\begin{equation}\label{Thetaelastic}
	\begin{cases}
	\begin{array}{lll}
	\left(\left\langle \gamma_t^{i}, \nu^i\right\rangle  \nu^i\right)(t,x)
	=-\left((2k_{ss}^{i}+\left(k^{i}\right)^{3}-\mu k^{i})\nu^{i}\right)(t,x)& &\text{motion,}\\
	\gamma^{1}\left(t,y\right)=\gamma^{2}\left(t,y\right)=\gamma^{3}\left(t,y\right)&
	&\text{concurrency condition,}\\
	k^{i}(t,y)=0 & &\text{curvature condition,}\\
	\sum_{i=1}^{3}\left(2k_{s}^{i}\nu^{i}-\mu\tau^{i}\right)(t,y)=0& &\text{third order condition,}\\
	\end{array}
	\end{cases}
	\end{equation}
	for every  $t\in  I_n,x\in\left(0,1\right)$, $y\in\{0,1\}$ and for $i\in\{1,2,3\}$. Finally we ask that $\gamma_n(0,[0,1])=\Theta_0$ whenever $a_n=0$. 
	
	The family $\left(\Theta(t)\right)$ is a \textit{smooth} solution to the elastic flow with initial datum $\Theta_0$ in $(0,T]$ if there exists a collection $\gamma_n$, $n\in\{0,\dots,N\}$, satisfying all requirements as above such that additionally $a_1>0$, $\gamma_n^i\in C^{\infty}(\overline{I_n}\times[0,1];\mathbb{R}^2)$ for all $n\in\{1,\dots,N\}$ and for all $\varepsilon\in(0,b_0)$, $\gamma^i_0\in C^\infty\left([\varepsilon,b_0]\times[0,1];\mathbb{R}^2\right)$.
\end{dfnz}

\begin{dfnz}\label{admtheta}
	Let $p\in(5,\infty)$. A Theta--network $\Theta_0$ is a geometrically admissible initial network for 
	system~\eqref{Thetaelastic}  if
	\begin{itemize}
		\item[-] there exists a parametrisation $\sigma=(\sigma^1,\sigma^2,\sigma^3)$ 
		of $\Theta_0$ 
		such that every curve $\sigma^i$ is regular and
		\begin{equation*}
		\sigma^i\in W_p^{4-\nicefrac{4}{p}}((0,1);\mathbb{R}^2)\,.
		\end{equation*}	
		\item[-] at both triple junctions $k^i_0=0$ ,
		$\sum_{i=1}^{3}\left(2k_{0,s}^{i}\nu_0^{i}-\mu\tau^{i}_0\right)=0$, and 
		$\mathrm{span}\{\nu^1_0,\nu^2_0,\nu^3_0\}=\mathbb{R}^2$.
	\end{itemize}
\end{dfnz}
As in the case of triods we transform the geometric problem~\eqref{Thetaelastic} to a parabolic quasilinear system of PDEs.

\begin{dfnz}\label{admptheta}
	Let $p\in(5,\infty)$. A parametrisation $\varphi=(\varphi^1,\varphi^2,\varphi^3)$
	of an initial Theta--network $\Theta_0$
	is an admissible initial parametrisation for system~\eqref{thetapde} if each curve $\varphi^i$ is regular, $\varphi^i\in W_p^{4-\nicefrac{4}{p}}((0,1);\mathbb{R}^2)$ and for $y\in\{0,1\}$ the parametrisations $\varphi^i$
	satisfy the concurrency, second and third order condition
	and $\mathrm{span}\{\nu^1_0(y),\nu^2_0(y),\nu^3_0(y)\}=\mathbb{R}^2$.
\end{dfnz}

\begin{dfnz}\label{thetanalytic}
	Let $T>0$ and $p\in(5,\infty)$. Given an admissible initial parametrisation 
	$\varphi$
	the time dependent parametrisation $\gamma=(\gamma^1,\gamma^2,\gamma^3)\in\boldsymbol{E}_{{T}}$
	is a solution of the Analytic Problem for Theta--networks with initial value $\varphi$ in $[0,T]$ if
	the curve $\gamma^i(t)$ is regular for all $t\in[0,T]$ and the following system is satisfied 
	for almost every $t\in\left(0,T\right),x\in\left(0,1\right)$ and for $i\in\{1,2,3\}$, $y\in\{0,1\}$:
	\begin{equation}\label{thetapde}
	\begin{cases}
	\begin{array}{lll}
	\gamma_t^{i}(t,x)=-A^{i}(t,x)\nu^{i}(t,x)-T^{i}(t,x)\tau^{i}(t,x)& &\text{motion,}\\
	\gamma^{1}\left(t,y\right)=\gamma^{2}\left(t,y\right)=\gamma^{3}\left(t,y\right)&
	&\text{concurrency condition,}\\
	\gamma_{xx}^{i}(t,y)=0 & &\text{second order condition,}\\
	\sum_{i=1}^{3}\left(2k_{s}^{i}\nu^{i}-\mu\tau^{i}\right)(t,y)=0 &  &
	\text{third order condition,}\\
	\gamma^i(0,x)=\varphi^i(x)&  &
	\text{initial condition}\,,
	\end{array}
	\end{cases}
	\end{equation}
	where the tangential velocity is defined in~\eqref{Tang}.
\end{dfnz}
Repeating precisely the same arguments as in \S~\ref{shorttimeexistence} we obtain the following result.
\begin{teo}\label{shorttimetheta}
	Let $p\in(5,\infty)$ and $\varphi$ be an admissible initial parametrisation. There exists a positive time $\widetilde{T}\left(\varphi\right)$ depending on $\min_{i\in\{1,2,3\},x\in[0,1]}\vert\varphi^i_x(x)\vert$ and $\left\lVert\varphi\right\rVert_{W_p^{4-\nicefrac{4}{p}}\left((0,1)\right)}$ such that for all $\boldsymbol{T}\in (0,\widetilde{T}(\varphi)]$ the system~\eqref{thetanalytic} has a solution in 
	\begin{equation*}
	\boldsymbol{E}_{\boldsymbol{T}}=W_p^{1}\left((0,\boldsymbol{T});L_p\left((0,1);(\mathbb{R}^2)^3\right)\right)\cap L_p\left((0,\boldsymbol{T});W_p^4\left((0,1);(\mathbb{R}^2)^3\right)\right)
	\end{equation*}
	which is unique in $\boldsymbol{E}_{\boldsymbol{T}}\cap\overline{B_M}$ where
	\begin{equation*}
	M:=2\max\left\{\sup_{T\in(0,1]}\vertiii{L_T^{-1}}_{\mathcal{L}\left(\mathbb{F}_T,\mathbb{E}_T\right)},1\right\}\max\left\{\vertiii{\mathcal{E}\varphi}_{\mathbb{E}_1},\vertiii{\left(N_{1,1}(\mathcal{E}\varphi),N_{1,2}(\mathcal{E}\varphi),\varphi\right)}_{\mathbb{F}_1}\right\}\,.
	\end{equation*}
\end{teo}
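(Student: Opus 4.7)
The plan is to transcribe the fixed-point argument of Section~\ref{shorttimeexistence} almost verbatim, modifying only those ingredients which encode \emph{where} the boundary conditions are imposed. Since in~\eqref{thetapde} the distinguished endpoints $\gamma^i(t,1)=P^i$ of the triod have been replaced by a second triple junction, the linearisation around $\varphi$ carries concurrency, second-order and (linearised) third-order conditions at both $y=0$ and $y=1$; accordingly the inhomogeneous boundary datum of the linear system lives in $W_p^{\nicefrac{1}{4}-\nicefrac{1}{4p}}((0,T);(\mathbb{R}^2)^2)$, with one vector at each junction. Parabolicity in the sense of Solonnikov and the Lopatinskii--Shapiro condition at a triple junction were already verified in~\cite[\S 3.3.2]{garmenzplu}; since the two junctions are disjoint, the verification at $y=1$ is symmetric to that at $y=0$, so Theorem~\ref{linearexistence} (i.e.~\cite[Theorem~5.4]{solonnikov2}) applies unchanged and gives the analog of Lemma~\ref{Lisomorphism}: an isomorphism $L_T:\mathbb{E}_T\to\mathbb{F}_T$ where $\mathbb{E}_T$ now enforces concurrency and the second-order condition at both endpoints.

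Next I would copy over the uniform-in-time machinery developed between Lemma~\ref{Lisomorphism} and Lemma~\ref{Luniformlybounded}: the extension operators of Lemmata~\ref{extensionE_T}--\ref{extensionboundarydata}, the equivalent norms of Corollary~\ref{equivalent norm $E_T$}, and the H\"{o}lder-in-time embeddings of Proposition~\ref{hölderspacetime} and Lemmata~\ref{embeddingBUCimeindependent}--\ref{uniformcalphac1} depend only on the functional-analytic structure of the solution space and not on the geometry of the boundary, so they transfer literally to the present situation and yield $\sup_{T\in(0,1]}\vertiii{L_T^{-1}}_{\mathcal{L}(\mathbb{F}_T,\mathbb{E}_T)}<\infty$.

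I then define the nonlinear maps $N_{T,1}(\gamma):=(f^i(\gamma^i))_i$ and $N_{T,2}(\gamma):=(b(\gamma)|_{x=0},\,b(\gamma)|_{x=1})$ on $\mathbb{E}_T^\varphi\cap\overline{B_M}$. Here Lemma~\ref{curveregular} is insensitive to the boundary conditions: for $T$ small enough one has the uniform lower bound $\inf_{t,x}|\gamma^i_x(t,x)|\geq \boldsymbol{c}:=\tfrac{1}{2}\min_{i,x}|\varphi^i_x(x)|$, which simultaneously controls $|\gamma^i_x(t,y)|^{-1}$ at both $y\in\{0,1\}$. This is precisely why the constant $\mathfrak{c}=|\varphi_x(0)|$ that appeared in the triod statement disappears here; both boundary components of $N_{T,2}$ are treated by the argument of Proposition~\ref{N_2 contractive} applied separately at $y=0$ and $y=1$, and summing the two contributions with the Lipschitz estimate on $N_{T,1}$ from Proposition~\ref{N_1 contractive} delivers a bound of the form $C(\boldsymbol{c},M)T^\sigma\vertiii{\gamma-\widetilde\gamma}_{\mathbb{E}_T}$ with some $\sigma\in(0,1)$.

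To conclude, following Corollary~\ref{K_T contractive} and Proposition~\ref{selfmapping} one fixes $M$ as prescribed and chooses $\widetilde{T}(\varphi)\in(0,1]$ so small that $K_T:=L_T^{-1}\circ(N_{T,1},N_{T,2},\varphi)$ is a contraction mapping $\mathbb{E}_T^\varphi\cap\overline{B_M}$ into itself; Banach's Contraction Mapping Principle then yields the unique fixed point. The only place where something genuinely new could arise is in the Lopatinskii--Shapiro verification at the second junction, but this is symmetric to the one at the first and is therefore already covered by~\cite[\S 3.3.2]{garmenzplu}; the remainder is pure bookkeeping, so I do not anticipate any substantive obstacle.
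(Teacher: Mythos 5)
Your proposal is correct and takes essentially the same route as the paper: the paper's entire proof of Theorem~\ref{shorttimetheta} is the remark that one repeats the arguments of \S~\ref{shorttimeexistence} verbatim, and the adaptations you identify (boundary datum now a pair in $W_p^{\nicefrac{1}{4}-\nicefrac{1}{4p}}\left((0,T);(\mathbb{R}^2)^2\right)$, Lopatinskii--Shapiro at the second junction by symmetry with the first, contraction estimates for $N_{T,2}$ at both $y=0$ and $y=1$, absorption of $\mathfrak{c}$ into $\boldsymbol{c}$ and $\left\lVert\varphi\right\rVert_{W_p^{4-\nicefrac{4}{p}}}$) are exactly the bookkeeping that remark presupposes.
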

To prove a refinement of the above theorem we introduce the following notation.
\begin{dfnz}
	Given $p\in(5,\infty)$ and $\eta\in W_p^{4-\nicefrac{4}{p}}\left((0,1);(\mathbb{R}^2)^3\right)$ we let
	\begin{equation*}
	\left\lvert\eta\right\rvert_{W_p^{4-\nicefrac{4}{p}}((0,1);(\mathbb{R}^2)^3)}:=\sum_{j=1}^3\left\lVert\partial_x^j\eta\right\rVert_{L_p\left((0,1);(\mathbb{R}^2)^3\right)}+\left[\partial_x^3\eta\right]_{1-\frac{4}{p},p}\,.
	\end{equation*}
\end{dfnz}
We will now show that the existence time depends on $\left\lVert\varphi\right\rVert_{W_p^{4-\nicefrac{4}{p}}((0,1))}$ only via $\left\lvert\varphi\right\rvert_{W_p^{4-\nicefrac{4}{p}}((0,1))}$. This is due to the fact that the problem for the Theta--network is translationally invariant.
\begin{teo}\label{shorttimethetarefined}
		Let $p\in(5,\infty)$ and $\varphi$ be an admissible initial parametrisation. There exists a time $T\left(\varphi\right)\in(0,1]$ depending on $\min_{i\in\{1,2,3\},x\in[0,1]}\vert\varphi^i_x(x)\vert$ and $\left\vert\varphi\right\vert_{W_p^{4-\nicefrac{4}{p}}\left((0,1)\right)}$ such that for all $\boldsymbol{T}\in (0,T(\varphi)]$ the system~\eqref{thetanalytic} has a solution in 
		\begin{equation*}
		\boldsymbol{E}_{\boldsymbol{T}}=W_p^{1}\left((0,\boldsymbol{T});L_p\left((0,1);(\mathbb{R}^2)^3\right)\right)\cap L_p\left((0,\boldsymbol{T});W_p^4\left((0,1);(\mathbb{R}^2)^3\right)\right)\,.
		\end{equation*}
\end{teo}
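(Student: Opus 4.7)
The plan is to exploit the translational invariance of the Analytic Problem~\eqref{thetapde} for Theta--networks: the motion equation, the second and third order conditions and the non--degeneracy condition on normals depend only on spatial derivatives of $\gamma$, while the concurrency condition is invariant under simultaneously translating all three curves by the same constant. In contrast to the triod case there are no fixed endpoints, so for every $c\in\mathbb{R}^2$ the curve $\gamma$ solves~\eqref{thetapde} with initial value $\varphi$ if and only if $\gamma+c$ solves it with initial value $\varphi+c$. The idea is therefore to reduce the assertion to a translated initial parametrisation whose \emph{full} norm $\|\cdot\|_{W_p^{4-4/p}}$ is controlled by the seminorm $\lvert\varphi\rvert_{W_p^{4-4/p}}$, and to invoke Theorem~\ref{shorttimetheta} for that translate.

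Concretely, first I would set $\widetilde{\varphi}^i:=\varphi^i-\varphi^1(0)$ for $i\in\{1,2,3\}$. By the concurrency condition at $y=0$ one has $\widetilde{\varphi}^i(0)=0$, so by H\"older's inequality
\[
|\widetilde{\varphi}^i(x)|\leq\int_0^1|\widetilde{\varphi}^i_y(y)|\,\mathrm{d}y\leq\|\widetilde{\varphi}^i_x\|_{L_p((0,1))},
\]
which gives $\|\widetilde{\varphi}\|_{L_p}\leq C\|\widetilde{\varphi}_x\|_{L_p}$ and therefore
\[
\|\widetilde{\varphi}\|_{W_p^{4-\nicefrac{4}{p}}((0,1))}\leq C\bigl(\|\widetilde{\varphi}\|_{L_p}+\lvert\widetilde{\varphi}\rvert_{W_p^{4-\nicefrac{4}{p}}}\bigr)\leq C\lvert\widetilde{\varphi}\rvert_{W_p^{4-\nicefrac{4}{p}}}=C\lvert\varphi\rvert_{W_p^{4-\nicefrac{4}{p}}((0,1))}.
\]
All items of Definition~\ref{admptheta} are preserved by a simultaneous translation (the tangents, normals, curvatures and their derivatives coincide with those of $\varphi$), so $\widetilde{\varphi}$ is an admissible initial parametrisation with the same minimum of $|\widetilde{\varphi}^i_x|$ and the same seminorm as $\varphi$.

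Next I would apply Theorem~\ref{shorttimetheta} to $\widetilde{\varphi}$ to obtain a time $\widetilde{T}(\widetilde{\varphi})>0$ depending only on $\min_{i,x}|\widetilde{\varphi}^i_x(x)|$ and $\|\widetilde{\varphi}\|_{W_p^{4-\nicefrac{4}{p}}}$, together with a solution $\widetilde{\gamma}\in\boldsymbol{E}_{\boldsymbol{T}}$ of~\eqref{thetapde} with initial value $\widetilde{\varphi}$ for every $\boldsymbol{T}\in(0,\widetilde{T}(\widetilde{\varphi})]$. The bounds from the previous step show that $\widetilde{T}(\widetilde{\varphi})$ is in fact controlled from below purely in terms of $\min_{i,x}|\varphi^i_x(x)|$ and $\lvert\varphi\rvert_{W_p^{4-\nicefrac{4}{p}}}$, so it may be taken as the desired $T(\varphi)$. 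Setting $\gamma(t,x):=\widetilde{\gamma}(t,x)+\varphi^1(0)$, the spatial constant does not affect any time or space derivative and hence $\gamma\in\boldsymbol{E}_{\boldsymbol{T}}$; by the translation invariance pointed out at the beginning, $\gamma$ solves~\eqref{thetapde} with initial datum $\varphi$.

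There is no real obstacle beyond the observation of translation invariance: the only mildly delicate point is to verify that every ingredient appearing in the admissibility definition and in the linearisation of~\eqref{thetapde} is genuinely translation invariant (in particular, the linear compatibility conditions and the span condition on the normals), after which the argument is a direct reduction to Theorem~\ref{shorttimetheta}. Note that a uniqueness statement is not asserted in the theorem, so the fixed--point radius $M$ (which does depend on the full norm) plays no role here.
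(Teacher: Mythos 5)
Your proposal is correct and follows essentially the same route as the paper: translate the initial parametrisation so that a triple junction sits at the origin, control the $L_p$--norm of the translate by $\lVert\varphi_x\rVert_{L_p}\leq\lvert\varphi\rvert_{W_p^{4-\nicefrac{4}{p}}}$, apply Theorem~\ref{shorttimetheta} to the translate, and use the translation invariance of system~\eqref{thetapde} (no fixed endpoints) to carry the solution back. The only cosmetic difference is that the paper takes the maximum of the two existence times $\widetilde{T}(\varphi)$ and $\widetilde{T}(\varphi^{-\varphi(0)})$, which changes nothing of substance.
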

\begin{proof}
	Given $p\in(5,\infty)$ and an initial parametrisation $\varphi$ we let $\widetilde{T}(\varphi)$ be the time of existence as in the statement of Theorem~\ref{shorttimetheta}. Let further $v\in\mathbb{R}^2$ be fixed and $(\varphi^v)^i:[0,1]\to\mathbb{R}^2$ be defined by $(\varphi^v)^i(x):=\varphi^i(x)+v$. Then $\varphi^v:=\left((\varphi^v)^1,(\varphi^v)^2,(\varphi^v)^3\right)$ is an admissible initial parametrisation to system~\eqref{thetapde}. Observe that all derivatives of $\varphi$ and $\varphi^v$ of order one or higher coincide. Thus by Theorem~\ref{shorttimetheta} there exists a time $\widetilde{T}(\varphi^v)$ depending on $\min_{i\in\{1,2,3\},x\in[0,1]}\vert\varphi^i_x(x)\vert$, $\left\vert\varphi\right\rvert_{W_p^{4-\nicefrac{4}{p}}\left((0,1)\right)}$ and $\left\lVert\varphi^v\right\rVert_{L_p((0,1))}$ such that for all $T\in(0,\widetilde{T}(\varphi^v)]$ the system~\eqref{thetapde} with initial datum $\varphi^v$ has a solution in $\boldsymbol{E}_{T}$. We want to show that $\widetilde{T}(\varphi)$ and $\widetilde{T}(\varphi^v)$ can be replaced by $\max\{\widetilde{T}(\varphi),\widetilde{T}(\varphi^v)\}$. Let $T\in (0,\widetilde{T}(\varphi)]$ and $\gamma\in\boldsymbol{E}_T$ be a solution to system~\eqref{thetapde} with initial datum $\varphi$. Then $\gamma^v:=\left((\gamma^v)^1,(\gamma^v)^2,(\gamma^v)^3\right)$ with $(\gamma^v)^i(t,x):=\gamma^i(t,x)+v$ for $t\in[0,T]$, $x\in[0,1]$, lies in $\boldsymbol{E}_T$ and is a solution to~\eqref{thetapde} with initial datum $\varphi^v$. Conversely, given $T\in(0,\widetilde{T}(\varphi^v)]$ and a solution $\eta$ to~\eqref{thetapde} with initial datum $\varphi^v$, the function $\eta^{-v}:=\left((\eta^{-v})^1,(\eta^{-v})^2,(\eta^{-v})^3\right)$ with $(\eta^{-v})^i(t,x):=\eta^i(t,x)-v$ for $t\in[0,T]$, $x\in[0,1]$, is a solution to~\eqref{thetapde} in $\boldsymbol{E}_T$ with initial datum $\varphi$. 
	With the particular choice of $v:=-\varphi(0)$ we observe that the shifted network $\varphi^v$ has one triple junction in the origin and satisfies for all $x\in[0,1]$, 
	\begin{equation*}
	\varphi^v(x)=\varphi^v(0)+\int_0^x\left\lvert\varphi_x(y)\right\rvert\mathrm{d}y\leq \left\lVert\varphi_x\right\rVert_{L^p\left((0,1);(\mathbb{R}^2)^3\right)}
	\end{equation*}
	and thus $\left\lVert\varphi^v\right\rVert_{L^p\left((0,1);(\mathbb{R}^2)^3\right)}\leq \left\lVert\varphi_x\right\rVert_{L^p\left((0,1);(\mathbb{R}^2)^3\right)}$. This shows that the existence time $$ T(\varphi):=\max\left\{\widetilde{T}(\varphi),\widetilde{T}\left(\varphi^{-\varphi(0)}\right)\right\} $$ 
	depends on $\left\lVert\varphi\right\rVert_{W_p^{4-\nicefrac{4}{p}}((0,1))}$ only via $\left\lvert\varphi\right\rvert_{W_p^{4-\nicefrac{4}{p}}((0,1))}$.
\end{proof}

As in the case of triods we obtain parabolic regularisation for system~\eqref{thetapde}, geometric existence and uniqueness, and the bounds established in \S~\ref{bounds}. Maintaining the notion of maximal solution introduced in Definition~\ref{maximalsolution} the existence and uniqueness of a maximal solution follows with the same arguments as in the case of triods, see Lemma~\ref{exuniquenssmaxsol}. Using suitable reparametrisations such that the curves of the evolving network are parametrised with constant speed equal to the length of the curve, we deduce as in Lemma~\ref{onepara} that on compact subintervals of $[0,T_{max})$ the maximal solution can be described by one parametrisation $\gamma$. Given a small $\varepsilon$ the arguments in the proof of Theorem~\ref{longtimetriod} imply
\begin{equation*}
\sup_{\delta\in (0,\frac{T_{max}}{4})}\sup_{t\in(\varepsilon,T_{max}-\delta)}\left\vert\gamma(t)\right\vert_{W_p^{4-\nicefrac{4}{p}}}\left((0,1);(\mathbb{R}^2)^3\right)\leq \boldsymbol{C}\,.
\end{equation*} 
Thanks to the refined short time existence result~\ref{shorttimethetarefined} we obtain the following Theorem:

\begin{teo}\label{longtimetheta}
	Let $p\in(5,10)$ and $\Theta_0$ be a geometrically admissible initial network. Suppose that $\left(\Theta(t)\right)_{t\in[0,T_{\max})}$ is a 
	maximal solution to
	the elastic flow with initial datum $\Theta_0$ 
	in the maximal time interval $[0,T_{\max})$ with $T_{\max}\in (0,\infty)\cup\{\infty\}$ in the sense of Definitions~\ref{smoothsol} and~\ref{maximalsolution}. Then $$T_{max}=\infty$$ or at least one of the following happens:
	\begin{itemize}
		\item[(i)]  the inferior limit of the length of at least one curve of  $\Theta(t)$ is zero as $t\nearrow T_{max}$.
		\item[(ii)] at one of the triple junctions $\liminf_{t\nearrow T_{max}}\max\left\{\left\vert\sin\alpha^1(t)\right\vert,\left\vert\sin\alpha^2(t)\right\vert,\left\vert\sin\alpha^3(t)\right\vert\right\}=0$, where $\alpha^1(t)$, $\alpha^2(t)$ and $\alpha^3(t)$ are the angles at the respective triple junction.
	\end{itemize}
\end{teo}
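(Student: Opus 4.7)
The plan is to mimic the contradiction argument used in the proof of Theorem~\ref{longtimetriod}, adapting it to Theta--networks where no endpoints are fixed. Supposing $T_{max}<\infty$ and that neither (i) nor (ii) occurs, I would first extract uniform lower bounds on the lengths $\ell^i(t)$ of the three curves of $\Theta(t)$ on $[0,T_{max})$ and the uniform non--degeneracy condition~\eqref{nondegeneracy} at \emph{both} triple junctions. The boundary computations in Section~\ref{bounds} were performed at a single triple junction, but the curvature, third order and concurrency conditions hold identically at the two junctions of the Theta; since Lemma~\ref{tangentialvelocityboundary} only relies on concurrency and non--degeneracy of the tangents, the same Cramer's rule argument applies at each junction, so Proposition~\ref{estimatetogether} remains valid in the Theta case with analogous constants (one simply sums the boundary contributions over the two junctions).

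Fixing $\varepsilon\in(0,T_{max}/100)$ and $\delta\in(0,T_{max}/4)$, integrating the differential inequality~\eqref{estimate} on $(\varepsilon,T_{max}-\delta)$ gives $\partial_s^2 k\in L^\infty((\varepsilon,T_{max}-\delta);L^2(\Theta_t))$, and Gagliardo--Nirenberg interpolation then yields uniform $L^\infty$ control on $k$ and $\partial_s k$. Invoking the Theta analogue of Lemma~\ref{onepara} (which transfers verbatim, as the reparametrisation argument is purely local on each curve), I would parametrise $\Theta(t)$ by a single map $\gamma$ whose components have constant speed $|\gamma^i_x(t,x)|=\ell^i(t)$. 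Via the identities~\eqref{choiceofpara} the $L^2$--bounds on $\vec{k}$ and $\vec{k}_{ss}$ translate into uniform $L^2$--bounds on $\gamma^i_{xx}$ and $\gamma^i_{xxxx}$; interpolation supplies the control on $\gamma^i_{xxx}$, and $|\gamma^i_x|=\ell^i(t)$ is itself bounded. Hence $\gamma(t)\in W^4_2((0,1);(\mathbb{R}^2)^3)$ uniformly in $\delta$, and the Sobolev embedding gives, for every $p\in(5,10)$, a uniform bound on the \emph{seminorm} $|\gamma(t)|_{W_p^{4-\nicefrac{4}{p}}}$.

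The main obstacle, and the reason the proof diverges from the triod case, is that no point of the Theta--network is fixed during the evolution. Consequently one cannot confine $\Theta(t)$ inside a fixed ball $B_R(0)$, and the full norm $\|\gamma(t)\|_{W_p^{4-\nicefrac{4}{p}}}$ cannot be controlled uniformly in time. This is precisely what motivates the refined short--time existence result Theorem~\ref{shorttimethetarefined}, whose existence time depends on the initial datum only through the lower bound on $|\varphi^i_x|$ and the seminorm $|\varphi|_{W_p^{4-\nicefrac{4}{p}}}$, both of which I have controlled above. Applying it to each initial datum $\gamma(T_{max}-\delta)$ produces a uniform-in-$\delta$ existence time $\boldsymbol{T}>0$; selecting $\delta=\boldsymbol{T}/2$ and gluing the resulting solution to $\gamma$ via the parabolic regularisation of Theorem~\ref{analyticsmoothsol} together with the geometric uniqueness yields a smooth extension of $\Theta$ onto $[0,T_{max}+\boldsymbol{T}/2]$, contradicting the maximality of $T_{max}$. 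The remaining technical checks are that the reparametrised map $\gamma(T_{max}-\delta)$ is a genuine admissible initial parametrisation for~\eqref{thetapde} (the boundary conditions and non--degeneracy are closed under uniform limits of smooth solutions) and that the lower bound on $|\gamma^i_x|$ is preserved through the reparametrisation, which follows from the uniform lower bound on the lengths $\ell^i$.
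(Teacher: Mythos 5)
Your proposal is correct and follows essentially the same route as the paper: the a priori estimates of Section~\ref{bounds} are applied at both triple junctions, the constant--speed reparametrisation of Lemma~\ref{onepara} yields a single smooth parametrisation with a uniform bound only on the \emph{seminorm} $\left\lvert\gamma(t)\right\rvert_{W_p^{4-\nicefrac{4}{p}}}$ (the full norm being out of reach since no point is fixed), and the refined short time existence result, Theorem~\ref{shorttimethetarefined}, whose existence time depends on the initial datum only through this seminorm and the lower bound on $\lvert\varphi^i_x\rvert$, supplies the uniform restarting time that contradicts maximality. You correctly identify the translation--invariance issue as the sole point of divergence from the triod case, which is precisely the paper's reasoning.
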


\subsection{Long time behaviour of general networks}

Now that we have completely described the long time behaviour 
of triods and Theta--networks, 
we are in the position to prove Theorem~\ref{main}.

First of all we
fix the following convention.

Let $\mathcal{N}$ be a  network
composed of $j\in\mathbb{N}$ curves $\mathcal{N}^1,\ldots,\mathcal{N}^j$ with $m\in\mathbb{N}$
triple junctions $O^1,\ldots,O^m$ and (if present) $l\in\mathbb{N}_0$ end--points 
$P^1, P^2,\dots, P^l$
and let $\gamma(t)=\left(\gamma^1(t),\ldots,\gamma^j(t)\right)$ 
be a regular parametrisation of $\mathcal{N}(t)$.
We assume (up to possibly reordering the family of
curves and ``inverting'' their parametrisation) that for every $r\in\{0,\ldots,l\}$
the end point $P^r$ of the network is given by $\gamma^r(t,1)$. 
Moreover, with an abuse of notation, 
we denote by $\gamma^{p,1},\gamma^{p,2},\gamma^{p,3}$
the three curves concurring at the triple junction $O^p$
and for $y\in\{1,2,3\}$ we denote by
$\tau^{py}(t,O^p), \nu^{py}(t,O^p), k^{py}(t,O^p), k_s^{py}(t,O^p)$
the respective exterior unit tangent vectors, unit normal vector, curvature and 
first arclength derivative of the curvature at $O^p$ of the three curves
$\gamma^{py}(t,\cdot)$.

\begin{dfnz}[Elastic flow of networks]\label{geometricsolutiongeneral}
Let $p\in (5,\infty)$ and $T>0$.
Let $\mathcal{N}_0$ be a geometrically admissible initial network
composed of $j$ curves $\mathcal{N}_0^1,\dots,\mathcal{N}_0^j$ with $m$
triple junctions $O^1,\ldots,O^m$ and (if present) $l$ end--points $P^1, \dots, P^l$.
A time dependent family of homeomorphic  networks $\left(\mathcal{N}(t)\right)$ 
is a solution to the elastic flow with initial datum $\mathcal{N}_0$ in  $[0,T]$ 
if and only if there exists a collection of time dependent parametrisations
	\begin{equation*}
	\gamma^i_n\in  W^1_p(I_n;L_p((0,1);\mathbb{R}^2))\cap L_p(I_n;W^4_p((0,1);\mathbb{R}^2))\,,
	\end{equation*}
	with $i\in\{1,\dots,j\}$ and $n\in\{0,\dots, N\}$ for some $N\in\mathbb{N}$, $I_n:=(a_n,b_n)\subset \mathbb{R}$, $a_n\leq a_{n+1}$, $b_n\leq b_{n+1}$, $a_n<b_n$
	and $\bigcup_n (a_n,b_n)=(0,T)$ such that for all $n\in\{0,\dots,N\}$ and $t\in I_n$, $\gamma_n(t)=\left(\gamma^1_n(t),\ldots,\gamma^j_n(t)\right)$ is a regular parametrisation of $\mathcal{N}(t)$.
	Moreover each 
	$\gamma_n$
	needs to satisfy the following system
	\begin{equation}\label{networkelastic}
	\begin{cases}
	\begin{array}{lll}
	\left(\left\langle \gamma_t^{i}, \nu^i\right\rangle  \nu^i\right)(t,x)
	=-\left((2k_{ss}^{i}+\left(k^{i}\right)^{3}-\mu k^{i})\nu^{i}\right)(t,x)& &\text{motion,}\\
	\gamma^{p,1}\left(t,O^p\right)=\gamma^{p,2}\left(t,O^p\right)=\gamma^{p,3}\left(t,O^p\right)&
	&\text{concurrency condition,}\\
	k^{p,y}(t,O^p)=0 & &\text{curvature condition,}\\
	\sum_{y=1}^{3}\left(2k_{s}^{p,y}\nu^{p,y}-\mu\tau^{p,y}\right)(t,O^p)=0& &\text{third order condition,}\\
    \gamma^r(t,1)=P^r& &\text{fixed endpoints,}\\
	k^{r}(t,1)=0 & &\text{curvature condition,}
	\end{array}
	\end{cases}
	\end{equation}
	for every  $t\in  I_n,x\in\left(0,1\right)$, $y\in\{1,2,3\}$, $p\in\{1,\ldots,m\}$, $r\in\{0,\ldots,l\}$ and for $i\in\{1,\ldots,j\}$. Finally we ask that $\gamma_n(0,[0,1])=\mathcal{N}_0$ whenever $a_n=0$. 
\end{dfnz}

\begin{dfnz}[Geometrically admissible initial network]
	Let $p\in(5,\infty)$. A network $\mathcal{N}_0$ composed of $j\in\mathbb{N}$ curves $\mathcal{N}_0^1,\dots,\mathcal{N}_0^j$
	with $m\in\mathbb{N}$
	triple junctions $O^1,\ldots,O^m$ and (if present) $l\in\mathbb{N}_0$ end--points 
	$P^1, P^2,\dots, P^l$
	is geometrically admissible for 
	system~\eqref{networkelastic} if
	\begin{itemize}
		\item[-] there exists a regular parametrisation $\sigma=(\sigma^1,\dots,\sigma^j)$ 
		of $\mathcal{N}_0$ 
		such that 
		\begin{equation*}
		\sigma^i\in W_p^{4-\nicefrac{4}{p}}((0,1);\mathbb{R}^2)\,.
		\end{equation*}	
		\item[-]  at each triple junction $O^p$ the three concurring curves $\sigma^{p,1},\sigma^{p,2},\sigma^{p,3}$ satisfy $k^{py}(O^p)=0$ and
		$\sum_{y=1}^{3}\left(2k^{py}_{s}\nu^{py}-\mu\tau^{py}\right)(O^p)=0$ and at least two curves form a strictly positive angle;
		\item[-] at each fixed end point $P^r$ it holds $k^r(1)=0$.
	\end{itemize}
\end{dfnz}

The notions of smooth solution (Definition~\ref{smoothsol}) and maximal solution 
(Definition~\ref{maximalsolution}) of the elastic flow of a triod
can be easily adapted to the general case.

\medskip

\textit{Proof of Theorem~\ref{main}}

The proof of geometric existence and uniqueness and parabolic smoothing presented in the
previous sections extends to the case of
a geometrically admissible initial network $\mathcal{N}_0$.
Indeed the results 
rely on the uniform parabolicity of the system and on the fact that the Lopatinskii--Shapiro
and compatibility conditions are satisfied.
All the estimates of Section~\ref{bounds} hold true for a more general network
provided that none of the lengths of the curves goes to zero and the uniform non--degeneracy condition
is satisfied.
If the network $\mathcal{N}_0$ has at least one fixed end point, then we obtain the 
desired result as a corollary of Theorem~\ref{longtimetriod}. If instead 
the network $\mathcal{N}_0$ has only triple junctions but no fixed end points, the theorem
follows using the refined version of the short time existence (Theorem~\ref{shorttimethetarefined}) 
that allows to conclude as in 
Theorem~\ref{longtimetheta}.
\qed

\medskip

\medskip

We conclude the paper with some observations.

\medskip

None of the possibilities listed in Theorem~\ref{main} excludes the others. Indeed it is possible that as the time approaches $T_{max}$, both the lengths of one or several curves of the network and the angles between the curves at one or more triple junctions tend to zero, regardless of $T_{\max}$ being finite or infinite.
To convince the reader of the high chances of this phenomenon
we say a few words about the minimization problem in the class
of Theta--networks naturally associated to the flow, namely
$$
\inf \{E_{\mu}(\Theta)\,\vert \, \Theta\; \mbox{is a Theta--network}\}\,.
$$
The infimum is zero and it is not a minimum. 
An example of a minimizing sequence is the following:
two arcs of a circle of radius $1$ and of length $\varepsilon$
that meet with a segment (of length $\sqrt{2}\sqrt{1-\cos\varepsilon}\sim\varepsilon$)
forming  angles of $\frac\varepsilon2$. Then 
$E_{\mu}(\Theta)=2\varepsilon+\mu(2\varepsilon+\sqrt{2}\sqrt{1-\cos\varepsilon})$.
Letting $\varepsilon\to 0$, the lengths of all curves and the angles at both triple junctions tend to zero and $E_{\mu}(\Theta)\to 0$.

\medskip

If the network $\mathcal{N}_0$ has no fixed end points (as in the case of the Theta--network)
we are not able to exclude that as $t\to T_{\max}$ the entire
configuration $\mathcal{N}_t$ ``escapes" to infinity.
In the case of networks with at least one fixed end point $P^1$ in $\mathbb{R}^2$ the global length of $\mathcal{N}_t$ is bounded by
$\frac{1}{\mu}E_{\mu}(\mathcal{N}_0)=:R$ (see Remark~\ref{boundL}). Hence
as $t\to T_{\max}$ the entire $\mathcal{N}_t$ remains in a ball of center $P^1$
and radius $R$. 
\medskip

There is a slight difference in the point $(i)$ of Theorem~\ref{longtimetriod}
with respect to Theorem~\ref{longtimetheta}
and Theorem~\ref{main}.
The global length of a triod is bounded from below away from zero by the value of the length of the
shortest path connecting the three distinct fixed end points $P^1$ , $P^2$ and $P^3$.
Unfortunately this does not give a bound on the length of the single curves,
but clearly the length of \emph{at most} one curve can go to zero during the evolution.

\medskip

Consider now the case of the Theta: as $t\to T_{\max}$  
the length of more than one curve can go to zero if 
the angles go to zero.
Suppose by contradiction that the lengths of the curves 
$\gamma^1$ and $\gamma^2$ go to zero and 
that all the angles are uniformly bounded
away from zero.
We can see the union  of $\gamma^1$ and $\gamma^2$ as a 
 closed curve with two angles $\alpha,\beta$. 
Then a consequence of~\cite[Theorem~A.1]{danovplu}
is that if the lengths of both $\gamma^1$ and $\gamma^2$ 
go to zero but the angles are uniformly bounded away from zero,
the $L^2$--norm of the scalar curvature blows up, 
a contradiction to the bound in~\ref{boundL}.

\medskip

\medskip

\textbf{Acknowledgements}

The authors gratefully acknowledge the support by the Deutsche Forschungsgemeinschaft (DFG) via the GRK 1692 “Curvature, Cycles, and Cohomology”.

\bibliographystyle{amsplain}

\bibliography{Elastic-flow-II_final2.bib}
\end{document}